\documentclass[11pt,reqno]{amsart}
\usepackage[T1]{fontenc}
\usepackage[margin=1.15in]{geometry}
\usepackage{amsmath,amssymb,amsthm,mathtools}
\usepackage{enumitem}
\usepackage{booktabs}
\usepackage[space]{cite}
\usepackage[colorlinks=true,linkcolor=blue,citecolor=blue,hypertexnames=false]{hyperref}

\newtheorem{theorem}{Theorem}[section]
\newtheorem{proposition}[theorem]{Proposition}
\newtheorem{lemma}[theorem]{Lemma}
\newtheorem{corollary}[theorem]{Corollary}

\newtheorem*{theoremA}{Theorem A}
\newtheorem*{theoremB}{Theorem B}
\newtheorem*{theoremC}{Theorem C}
\newtheorem*{theoremD}{Theorem D}
\newtheorem*{theoremE}{Theorem E}
\theoremstyle{definition}

\newtheorem{example}[theorem]{Example}
\newtheorem{observation}[theorem]{Observation}
\theoremstyle{remark}
\newtheorem{remark}[theorem]{Remark}
\newtheorem{question}[theorem]{Question}

\newcommand{\Z}{\mathbb{Z}}
\newcommand{\B}{\mathbb{B}}
\newcommand{\Q}{\mathbb{Q}}
\newcommand{\F}{\mathbb{F}}
\newcommand{\UConf}{\operatorname{UConf}}
\newcommand{\Tors}{\operatorname{Tors}}
\newcommand{\coker}{\operatorname{coker}}
\newcommand{\Tor}{\operatorname{Tor}}
\newcommand{\Sw}{\widetilde{S}}
\newcommand{\rk}{\operatorname{rank}}
\newcommand{\cone}{\operatorname{cone}}
\newcommand{\ord}{\operatorname{ord}}

\title[Torsion of every finite order in graph braid groups]{Torsion of every finite order\\ in the homology of graph braid groups}
\author{Byung Hee An}
\address{Department of Mathematics Education, Kyungpook National University, Daegu 41566, Republic of Korea}
\email{anbyhee@knu.ac.kr}
\subjclass[2020]{Primary 20F36, 55R80; Secondary 57M15, 05C10}
\keywords{graph braid groups, configuration spaces of graphs, torsion in homology, complete bipartite graphs, inclusion matrices, graph minors}
\begin{document}

\begin{abstract}
We determine the torsion subgroup of $H_{m-1}(\B_mK_{m+1,m+r-1};\Z)$ for
$m\ge2$ and $r\ge0$: top homology with arbitrary coefficients is the kernel
of an unsigned subset-inclusion matrix, and its integral diagonal form
determines all primary summands. Every finite order occurs, with explicit
representatives. Generalized theta classes span an embedded copy of the
cokernel of the inclusion matrix, containing all torsion; for $r\ge m$ they
generate the torsion, each of order $\operatorname{lcm}(1,\ldots,m)$.
For every prime power $q$ and $m\ge q$, the graph $K_{m+1,m+q-1}$ is
minimal in the minor order for order-$q$ torsion in $H_{m-1}(\B_m)$. In
particular, odd torsion first appears in
$H_2(\B_3K_{4,5})\cong\Z^{155}\oplus(\Z/2)^4\oplus\Z/3$, and no proper
minor of $K_{4,5}$ has odd torsion in $H_2(\B_3)$.
For arbitrary part sizes, we give a multiplicity-free decomposition of
$H_m(\B_mK_{a,b};\Q)$ under vertex permutations and prove that
$H_{m-1}(\B_mK_{a,b};\Z)$ has no $p$-primary torsion when $a,b\ge2m-1$
and $p\ge m$ is an odd prime. The explicit order-$q$ class retains its
order under every enlargement of the second part of the graph, while for
$m=q=p$ an odd prime it is killed by a specified enlargement of the first part.
\end{abstract}

\maketitle


\section{Introduction}

For a connected finite graph $\Gamma$ with at least one edge, the unordered configuration space $\UConf_n(\Gamma)$ of
$n$ points in $\Gamma$ is aspherical, and its fundamental group $\B_n\Gamma$ is the
\emph{graph braid group}.  Thus $H_*(\B_n\Gamma)=H_*(\UConf_n\Gamma)$, and this depends
only on the homeomorphism type of $\Gamma$. For disconnected graphs we use
the homology of the entire configuration space, as specified in Section~\ref{sec:prelim}.

The first homology is completely understood.  By Ko and Park \cite{KP},
$H_1(\B_n\Gamma)\cong\Z^N\oplus(\Z/2)^{N'}$, where the $2$-torsion reflects
non-planarity for $n\ge2$.  In higher degrees, Chettih--L\"utgehetmann study
configuration spaces of trees with loops \cite{CL}.  For planar graphs, An--Knudsen show
that $H_2$ is generated under embedding, disjoint union and edge stabilization by loop,
star and theta classes \cite{AK}; that result concerns generators, not torsion.  It is
natural to ask which torsion can occur.  Apart from the $2$-torsion of Ko--Park, no torsion was known.  In their
computations of $H_2$ and $H_3$ for complete graphs, complete bipartite graphs and the
Petersen family, Maci\k{a}\.zek and Sawicki found only copies of $\Z/2$ and left open
``the question about the existence of torsion different than $\Z_2$ in higher
homologies'' \cite[Section~5]{MS}.  An, Drummond-Cole and Knudsen asked explicitly:
\emph{Is there nontrivial odd torsion in the homology of graph braid groups?  Are there
elements of even order greater than $2$?} \cite[Question~1.5]{ADK22}; see also
\cite{ADK26,HKW}, and \cite{IRL}, where the absence of any known odd-primary torsion in
unordered graph configuration spaces is contrasted with the case of surfaces.
Indeed, for the classical braid groups the picture is entirely different: by
Arnold, Fuks, Va\u{\i}nshte\u{\i}n and F.~Cohen \cite{Arnold,Fuks,Vainshtein,CLM},
$H_*(\B_n\mathbb{R}^2;\Z)$ has $p$-torsion for every prime $p\le n/2$, and its
$p$-primary torsion is annihilated by $p$; see \cite{Napolitano} for surfaces.
We answer both questions affirmatively: every prime occurs and, unlike for the
classical braid groups on $\mathbb{R}^2$, every prime-power order occurs.
In fact, we construct elements of every finite order.

For the \emph{ordered} configuration spaces $F_n(\Gamma)$, torsion-freeness of the
homology is a folklore conjecture \cite[Conjecture~1.7]{HKW}, and no torsion is known.
Our results are consistent with it.  The covering $F_n(\Gamma)\to\UConf_n(\Gamma)$ has
degree $n!$, so by the transfer $H_*(\B_n\Gamma;\Z_{(p)})$ is a direct summand of
$H_*(F_n(\Gamma);\Z_{(p)})$ whenever $p>n$; hence the conjecture predicts that
$p$-torsion in $H_*(\B_n\Gamma)$ requires $p\le n$.  Theorem~A realizes this bound
sharply: for $m$ particles on $K_{m+1,m+r-1}$ the primes dividing the torsion one
degree below the top are exactly the primes at most $\min\{m,r\}$
(Corollary~\ref{cor:primary}), and the first $p$-torsion appears with exactly $p$
particles.

\subsection{Complete bipartite graphs and torsion}\label{sec:intro-torsion}
The loop, star and theta classes of \cite{KP,AK} admit representatives on
complete bipartite graphs. The loop class is the fundamental cycle of $K_{2,2}$ in $H_1(\B_1K_{2,2})$;
the star class generates $H_1(\B_2K_{3,1})$;
and the theta class generates $H_2(\B_3K_{4,2})$, where $K_{4,2}$ is a
subdivision of the four-edge theta graph $\Theta_4$ \cite{AK}. Under embedding,
disjoint union and edge stabilization, these classes generate $H_1$ of every
graph braid group and $H_2$ of every planar one \cite{KP,AK}. Each has infinite
order on its supporting graph. This suggests looking for torsion in their
images under graph embeddings: the star class already acquires order $2$ in
$K_{3,3}$. The family $K_{m+1,m+r-1}$ provides a setting in which to test the same
mechanism for the theta class and its higher analogues.

Two features make the homology in degrees $m$ and $m-1$ accessible with $m$
particles. Degree $m$ is the top degree, so its homology is the cycle module.
Moreover, every $m$-matching leaves exactly one free vertex in the part $A$ of
size $m+1$, allowing the vertex relations to reduce to the subset-inclusion
matrix $U_{r-1}(m+r-1)$ on the part $B$ (Section~\ref{sec:bipartite}). Wilson's integral
diagonal form for this matrix \cite{Wilson,GKMM}, together with the universal
coefficient theorem, determines every primary summand of the torsion one degree
below the top. The cases $r=2,3$ also admit direct calculations.

Write $[n]=\{1,\dots,n\}$, and let $U_j(n)$ denote the inclusion matrix between
$j$-subsets and $(j+1)$-subsets of $[n]$, with coefficients in a commutative ring $R$:
\[
U_j(n)\colon R^{\binom{[n]}{j}}\to R^{\binom{[n]}{j+1}},\qquad (U_j(n)h)(T)=\sum_{S\subset T,\ |S|=j}h(S).
\]

\begin{theoremA}[Theorems~\ref{thm:top} and~\ref{thm:exacttors}]
Let $m\ge1$, $r\ge0$, $s=\min\{m,r\}$, and
\[
\mu_j=\binom{m+r-1}{s-j}-\binom{m+r-1}{s-j-1}.
\]
With the conventions $\binom n{-1}=0$ and
$\binom{[n]}{-1}=\varnothing$, for every commutative ring $R$,
\[
H_m(\B_mK_{m+1,m+r-1};R)\cong\ker U_{r-1}(m+r-1).
\]
Moreover,
\[
\Tors H_{m-1}(\B_mK_{m+1,m+r-1};\Z)
 \cong\bigoplus_{j=2}^{s}(\Z/j)^{\mu_j}.
\]
In particular this torsion group is zero for $r=0,1$.

\end{theoremA}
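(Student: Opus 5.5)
The plan is to compute top homology from a cubical model and then read off the torsion below the top from the universal coefficient theorem.

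\emph{Model.} I would use the Świątkowski (or Abrams discretized) cube complex for $\UConf_m(K_{m+1,m+r-1})$. In the Świątkowski model an $m$-cell is a choice of $m$ edge-germs at $m$ distinct vertices, together with no further vertex tokens. Because the graph is bipartite with $|A|=m+1$, the $m$ vertices occupied by half-edges realise an $m$-matching in the following sense: the top cells are indexed by pairs consisting of an $m$-subset of vertices and a choice of an incident edge at each, with one appropriate boundary behaviour.

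\emph{Top homology.} I would organise the top cells so that $m$ edges emanate from $m$ of the $m+1$ vertices of $A$, leaving one free vertex $a\in A$. The boundary $\partial_m$ decomposes vertex by vertex, giving a tensor product of local star complexes $\widetilde S(v)$, each with differential $e-e'\mapsto v$-differences. The cycles $\ker\partial_m$ are then computed vertex by vertex. The local kernel at each occupied vertex consists of differences of half-edges. Gluing these kernels, the vertex relations at the $B$-side vertices force the coefficients to depend only on which $(r-1)$-subset of $B$ is left uncovered. The condition that remains is exactly that the sum over $(r-1)$-subsets contained in each $r$-subset vanishes, which is $\ker U_{r-1}(m+r-1)$.

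Since everything is a finite free complex whose top differential is an explicit matrix, the identification $H_m(-;R)=\ker(\partial_m\otimes R)$ holds for every commutative ring $R$. I would check that the reduction is by unimodular changes of basis, so that $\partial_m$ is equivalent over $\Z$ to $U_{r-1}(m+r-1)^{T}$ or $U_{r-1}(m+r-1)$ plus identity blocks. This reduction is the main obstacle. The concrete thing to try first is to choose a basis adapted to the free vertex $a$, and to eliminate, by Gaussian elimination with unit pivots, all cells not in the bijection with subsets.

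\emph{Torsion.} Given that equivalence, the universal coefficient theorem gives
\[
\Tors H_{m-1}\cong\Tors\coker\partial_m ,
\]
since $H_m$ is free and is the kernel of $\partial_m$. This in turn equals the torsion of $\coker U_{r-1}(m+r-1)$, up to a transpose, which does not affect Smith normal form. Wilson's diagonal form for $W_{t,k}(n)$, specialised to consecutive levels $t=r-1$, $k=r$, $n=m+r-1$, has diagonal entries $\binom{k-i}{t-i}=r-i$ with multiplicity $\binom ni-\binom n{i-1}$ for $0\le i\le\min(t,n-k)$. Setting $j=r-i$ and accounting for the case $r>m$ by the symmetry $\binom{n}{i}$ versus $s=\min\{m,r\}$ yields the multiplicities $\mu_j$. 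The entries equal to $1$ contribute no torsion, and the entries $j\ge2$ contribute $(\Z/j)^{\mu_j}$.

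When $r=0,1$ the matrix is empty or has only unit entries, so the torsion vanishes. I expect the index bookkeeping matching Wilson's range to $j\le s$ to be routine.
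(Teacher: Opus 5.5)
Your identification of the top homology follows the paper's matching-function route in outline. Relations at the $B$-vertices make the coefficient of a matching depend only on its free $(r-1)$-subset, up to the sign $(-1)^{p(\alpha)}$ of the free $A$-vertex, and relations at the $A$-vertices give $U_{r-1}h=0$. Two points in your description of the model need correcting. Top cells are arbitrary $m$-sets of occupied vertices, including vertices of $B$, with a reduced half-edge at each; they are not matchings. And the cycle condition is not computed vertex by vertex: the local differential $h-h_v\mapsto e(h)-e(h_v)$ is injective. One must first show that a cycle vanishes unless its half-edges form a matching pointing to unoccupied vertices.

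The real gap is in the torsion step. You route it through a unimodular equivalence of $\partial_m$ with $U_{r-1}(m+r-1)$ plus identity blocks. You flag this as the main obstacle and do not carry it out. Since the top chain module is far larger than the set of matchings, it is not a routine elimination. It is also unnecessary. Once you have $H_m(\B_mK;R)\cong\ker(U_{r-1}\otimes R)$ for every $R$, Wilson's diagonal form gives $\ker(U_{r-1}\otimes R)\cong R^b\oplus\bigoplus_i R[d_i]^{\nu_i}$, where $b$ is the rank of the free integral top homology. Take $R=\Z/\ell^a$. The split universal coefficient sequence then gives $R^b\oplus\Tor(H_{m-1}(\B_mK;\Z),R)\cong R^b\oplus\bigoplus_iR[d_i]^{\nu_i}$. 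After cancelling $R^b$, comparison over all $\ell$ and $a$ determines every elementary divisor of $\Tors H_{m-1}$. This is exactly how the paper argues, and it never touches $\partial_m$ itself.

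Second, your specialization of Wilson's theorem fails for $r>m$. The diagonal form with entries $\binom{k-i}{t-i}$ requires $N\ge t+k$. For $t=r-1$, $k=r$, $N=m+r-1$ this means $m\ge r$. Truncating $i$ at $\min(t,N-k)$ while keeping those entries is not a valid statement. For $r>m$ your recipe gives entries $r,r-1,\dots,r-m+1$. For example, with $m=2$, $r=3$ it predicts $\Z/3\oplus(\Z/2)^3$ as the torsion of $H_1(\B_2K_{3,4})$. But Ko--Park allow only copies of $\Z/2$ there, and the actual answer is $\Z/2$. No reindexing $j=r-i$ repairs this. The correct step is to complement all subsets, which identifies $U_{r-1}(N)$, up to row and column permutations, with $W_{m-1,m}(N)$. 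Wilson applies because $N\ge 2m-1$, and the entries become $m-i$ with multiplicity $\binom Ni-\binom N{i-1}$, $0\le i\le m-1$. That substitution is why the torsion is indexed by $2\le j\le s=\min\{m,r\}$.
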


Theorem~A gives the following occurrence criterion; see
Corollary~\ref{cor:orderk} for the proof.

\begin{corollary}[Torsion of every finite order]\label{cor:intro-orderk}
Let $k\ge2$, and let $q(k)$ be the largest prime power dividing $k$.
For $m\ge1$ and $r\ge0$, the group
\[
H_{m-1}(\B_mK_{m+1,m+r-1};\Z)
\]
contains a direct summand $\Z/k$ if and only if
\[
m\ge q(k),\qquad r\ge q(k).
\]
In particular, every finite order occurs in the integral homology of a graph
braid group: for $q=q(k)$, the group
\[
H_{q-1}(\B_qK_{q+1,2q-1};\Z)
\]
contains a direct summand $\Z/k$.
\end{corollary}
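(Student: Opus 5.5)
The plan is to read everything off the torsion formula of Theorem~A, $T:=\Tors H_{m-1}(\B_mK_{m+1,m+r-1};\Z)\cong\bigoplus_{j=2}^{s}(\Z/j)^{\mu_j}$ with $s=\min\{m,r\}$, and reduce the question to elementary statements about primary decompositions. Since $H_{m-1}$ is finitely generated, a direct summand $\Z/k$ of $H_{m-1}$ is the same as a direct summand $\Z/k$ of $T$. In turn, for a finite abelian group $T$, $\Z/k$ is a direct summand if and only if, for each prime power $p^e\,\|\,k$, the $p$-primary part $T_p$ has a cyclic summand $\Z/p^e$. Indeed, $\Z/k\cong\bigoplus_{p^e\|k}\Z/p^e$, and summands in distinct primary components can be combined independently. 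Decomposing each $\Z/j$ into primary parts, $T_p\cong\bigoplus_{j=2}^{s}\bigoplus_{p^f\|j,\,f\ge1}(\Z/p^f)^{\mu_j}$. So $\Z/p^e$ is a summand of $T_p$ exactly when some $j\in[2,s]$ with $p^e\,\|\,j$ has $\mu_j>0$.

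The key numerical step is to show $\mu_j>0$ for all $2\le j\le s$. Put $n=m+r-1$ and $i=s-j$, so that $0\le i\le s-2$ and $\mu_j=\binom{n}{i}-\binom{n}{i-1}$. This is positive whenever $i\le n/2$, because the binomial coefficients strictly increase up to the middle. Here $i\le s-2$, and $2s\le m+r$ gives $2i\le m+r-4<n$, so positivity holds. (For $i=0$ we get $\mu_s=1$.) The occurrence criterion therefore becomes: $\Z/k$ is a summand if and only if for every $p^e\,\|\,k$ there exists $j\le s$ with $p^e\,\|\,j$. The smallest such $j$ is $p^e$ itself, so the condition is $p^e\le s$ for all $p^e\,\|\,k$, which is equivalent to $q(k)\le s$, i.e.\ $m\ge q(k)$ and $r\ge q(k)$.

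The ``in particular'' statement follows by taking $m=q$ and $r=q$, which gives the graph $K_{q+1,2q-1}$. The only step needing care is the positivity of $\mu_j$, specifically checking the inequality $i<n/2$ at the boundary cases $s=m$ versus $s=r$. The rest is routine bookkeeping with primary components. One should also note explicitly the uniqueness of invariant factors, which justifies the ``only if'' direction: if $\Z/k$ is a summand, then each $\Z/p^e$ with $p^e\|k$ is a summand of $T_p$, and therefore appears among the $\Z/p^f$ listed above.
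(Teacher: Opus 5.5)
Your proposal is correct and follows essentially the paper's route: read off Theorem~A's formula, show every $\mu_j$ is positive because $0\le s-j\le s-2<N/2$ (Corollary~\ref{cor:primary}), and assemble $\Z/k$ from the primary summands $\Z/p^e$ coming from the terms $j=p^e$ (Corollary~\ref{cor:orderk}). The only difference is cosmetic: the paper proves the ``only if'' direction via the existence of an element of order $k$, whereas you apply uniqueness of elementary divisors directly to direct summands.
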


For example, $H_2(\B_3K_{4,5};\Z)$ already contains elements of order $6$.
The full decomposition over $R$, including the free rank and the Catalan
case $r=m+1$, is given in Theorem~\ref{thm:exacttors}. Corollary~\ref{cor:primary}
gives the primary summands: for $q=p^a$ and $m\ge q$, the $p$-primary
part for $r=q$ has exactly one summand of order $q$, and every other
summand has order at most $p^{a-1}$.

For $p=2$ the statements recover a part of the Ko--Park theorem: $H_1(\B_2K_{3,3})$ has
torsion $\Z/2$, and no proper subgraph of $K_{3,3}$ has $2$-torsion in $H_1(\B_2)$
(Theorem~C(1) below).  In this sense Theorem~A is the analogue, for every prime, of the
appearance of $2$-torsion at the minimal non-planar graph $K_{3,3}$.

\subsection{Explicit cycles and generators}\label{sec:intro-cycles}
The torsion has an elementary source: the constant function $h\equiv1$ satisfies
$U_{r-1}(m+r-1)h=r\cdot1$, which vanishes modulo $r$ but not integrally.  When $r=p$ is
prime and $m\ge p$, this produces a mod-$p$ class in the top degree whose Bockstein has order $p$,
and the following theorem makes it explicit.

\begin{theoremB}[Theorems~\ref{thm:explicitgen}, \ref{thm:theta-injective} and~\ref{thm:torsgen}]
\begin{enumerate}[leftmargin=*]
\item For $q\ge2$ and $m\ge q$, there are explicit matching-indexed chains
$w_m,z_m$ on $K_{m+1,m+q-1}$ with
\[
\partial w_m=qz_m,\qquad \partial z_m=0,\qquad \ord[z_m]=q.
\]
For prime $q=p$, $[z_m]$ generates the entire $p$-primary part.
\item Let $m\ge2$, $r\ge1$, and write $A,B$ for the parts of sizes $m+1,m+r-1$.
The generalized theta classes $x_{B'}$ supported on
$K_{A,B'}$, $B'\in\binom B{m-1}$, generate a subgroup, and
$\delta_T\mapsto x_{B\setminus T}$, $T\in\binom Br$, induces an isomorphism
\[
\coker U_{r-1}(m+r-1)\xrightarrow{\ \cong\ }
\langle x_{B'}\rangle\subseteq H_{m-1}(\B_mK_{m+1,m+r-1};\Z).
\]
All relations are generated by
$\sum_{B'\subset C,\ |B'|=m-1}x_{B'}=0$ for $C\in\binom Bm$,
and this subgroup contains the entire torsion subgroup.
Every $x_{B'}$ has infinite order for $1\le r\le m-1$.
For $r\ge m$, the subgroup is exactly the torsion subgroup, and
every $x_{B'}$ has order $\operatorname{lcm}(1,\ldots,m)$.
\end{enumerate}

\end{theoremB}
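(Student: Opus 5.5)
Theorem~B rests on a concrete cellular model. I would use the Świątkowski-type cubical complex for $\UConf_m(K_{m+1,m+r-1})$, where top cells are indexed by $m$-matchings between $A$ and $B$. Section~\ref{sec:bipartite} identifies $H_m$ with $\ker U_{r-1}(m+r-1)$: an $m$-matching leaves one free vertex $a\in A$, and the vertex relations reduce the top boundary to the inclusion matrix on $B$. Theorem~A then gives $\Tors H_{m-1}\cong\coker$ of the integral top boundary map. Both parts of Theorem~B should be obtained by writing down chains that realize the Smith-form data of $U_{r-1}$ explicitly, rather than by computing anything new.

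\textbf{Part (1).} Set $r=q$. Take $h\equiv1$ on $\binom{B}{q-1}$, so that $U_{q-1}h=q\cdot\mathbf 1$. I would lift $h$ to a top-degree chain $w_m$, namely the signed sum over all $m$-matchings weighted according to $h$. The reduction of Section~\ref{sec:bipartite} then gives $\partial w_m=q z_m$, where $z_m$ is the $(m-1)$-chain corresponding to $\mathbf 1$ on $\binom Bq$. Since $\partial^2=0$ and the chain groups are free, $qz_m$ being a boundary forces $\partial z_m=0$.

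To prove $\ord[z_m]=q$, I would show that $z_m$ is not $d$ times a boundary for any proper divisor $d$ of $q$. It suffices to check that $\mathbf 1\notin \operatorname{im}U_{q-1}\otimes\Z/d'$ for the relevant $d'$, which follows from Wilson's diagonal form. Concretely, I would exhibit a functional vanishing on the image mod $q$ but not on $\mathbf 1$: for $q=p^a$ this is a Lucas-type count, and for general $q$ I would use CRT. For $q=p$ prime, Corollary~\ref{cor:primary} says the $p$-primary part is a single $\Z/p$. Since $[z_m]$ is nonzero of order $p$, it generates that part.

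\textbf{Part (2).} For $B'\in\binom B{m-1}$, the subgraph $K_{A,B'}=K_{m+1,m-1}$ is the case $r=0$. There the theta class $x_{B'}$ is the image of the generator supported on that subgraph, and it corresponds to the elementary vector $\delta_{B\setminus B'}$ on $\binom Br$ under the cellular identification. The relations $\sum_{B'\subset C}x_{B'}=0$ are exactly the images of the columns of $U_{r-1}$, using $|B\setminus C|=r-1$. So $\delta_T\mapsto x_{B\setminus T}$ gives a surjection $\coker U_{r-1}\to\langle x_{B'}\rangle$.

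Injectivity is the main obstacle. I would prove it by showing that the cellular $(m-1)$-chains representing $\delta_T$ span a direct summand of $\ker\partial_{m-1}$ modulo exactly $\partial_m$ of the top chains. To do this, I would build a retraction $\ker\partial_{m-1}\to\Z^{\binom Br}$ that is compatible with the reduction map to $U_{r-1}$ used in Theorem~A. The natural candidate is the projection onto cells with one free $A$-vertex fixed; I would check that it kills boundaries of lower cells. Once injectivity holds, Theorem~A shows that $\coker U_{r-1}$ has torsion equal to the full torsion of $H_{m-1}$, so the subgroup contains all torsion.

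For $r\le m-1$, the vector $\delta_T$ has infinite order in $\coker$ because its rank is positive: $\mu$ counts give free rank $\binom{m+r-1}{r}-\binom{m+r-1}{r-1}>0$, and symmetry under $S_B$ lets a single $\delta_T$ detect it. For $r\ge m$, $U_{r-1}$ is surjective rationally, so the cokernel is finite and the subgroup is exactly the torsion. The order of $\delta_T$ is then the exponent of $\bigoplus_{j\le m}(\Z/j)^{\mu_j}$, all $\mu_j>0$, which is $\operatorname{lcm}(1,\dots,m)$. Since $\delta_T$ generates the cokernel as an $S_B$-module, its order equals that exponent.
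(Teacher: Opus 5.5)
\textbf{Main gap: injectivity of $\Phi$.} The central step of part~(2) is that $\Phi\colon\delta_T\mapsto x_{B\setminus T}$ is injective on $\coker U_{r-1}$. Your proposal does not prove this, and the mechanism you suggest does not work. Suppose that, for each $T$, you read off the coefficient of a monomial $[a_0]\prod_d h_{d,d\varphi_T(d)}$, where $\operatorname{im}\varphi_T=B\setminus T$ and $a_1\in A$ is free (or take any signed sum of such coefficients). This projection does not kill boundaries. Take any $b\in B$ and the single top cell $c=h_{a_0,a_0b}\prod_d h_{d,d\varphi_T(d)}$. Then $\partial c$ has exactly one term of that shape, namely $\pm[a_0]\prod_d h_{d,d\varphi_T(d)}$. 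So the projection sends $\partial c$ to $\pm\delta_T$, which is nonzero in $\coker U_{r-1}$ whenever the cokernel is nontrivial. This already happens for $K_{3,3}$, where $\coker U_1(3)\cong\Z/2$. Moreover, a genuine chain-level retraction would split $\Phi$ and make the theta subgroup a direct summand of $H_{m-1}$. That is stronger than Theorem~B and not evident for $r\le m-1$.

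The paper does not build a retraction; it analyzes an arbitrary bounding chain. Suppose $z_g=\partial c$ with $c$ in degree and weight $m$, so $c$ is a combination of products of $m$ half-edges at distinct vertices.
\begin{enumerate}
\item Since $z_g$ has no edge-variable terms, the edge-variable part of $\partial c$ vanishes. This is exactly conditions~(a) and~(b) of Lemma~\ref{lem:topcycle}, hence also~(c), so $c=\sum_M a_M\gamma_M$ is supported on $m$-matchings.
\item Since $z_g$ has no terms with a particle resting in $B$, the coefficients $a_M$ satisfy (R1).
\item Steps~1--2 of the proof of Theorem~\ref{thm:top} then force $a_M=(-1)^{p(\alpha_M)}h(\beta_M)$, i.e.\ $c=c_h$.
\item Hence $z_g=\partial c_h=z_{U_{r-1}h}$, and injectivity of $g\mapsto z_g$ on chains gives $g=U_{r-1}h$.
\end{enumerate}

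\textbf{Consequences for part~(1), and what does work.} Part~(1) inherits this gap. Your functional argument is the $\lambda_Y$ computation with Lucas' theorem and the prime-power decomposition of $q$, as in Proposition~\ref{prop:second-part-persistence}. It correctly shows that $[\mathbf 1]$ has order $q$ in $\coker U_{q-1}$. But $d\mathbf 1\notin\operatorname{im}U_{q-1}$ only rules out $dz_m=\partial c_h$, not $dz_m=\partial c$ for an arbitrary chain $c$. Without injectivity of $\Phi$ it says nothing about $\ord[z_m]$. The paper's route needs no injectivity. For $m\ge q$ the integral top homology vanishes, so the Bockstein $H_m(\B_mK;\Z/q)\to H_{m-1}(\B_mK;\Z)$ is injective. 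The reduction $\bar w_m$ is a mod-$q$ top cycle with a coefficient $\pm1$; since there are no top boundaries, its class has order $q$, and its Bockstein is $[z_m]$.

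Also, $\partial w_m=qz_m$ does not follow from the cycle analysis of Section~\ref{sec:bipartite}. It requires the boundary computation of Proposition~\ref{prop:universal}. There, faces with a particle resting in $B$ cancel in pairs, and faces resting at $a\in A$ acquire the coefficient $(U_{r-1}h)(B')$ and assemble into restricted theta cycles.

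On the positive side, your derivation of $\ord x_{B'}=\operatorname{lcm}(1,\dots,m)$ for $r\ge m$ is correct and slicker than Lemma~\ref{lem:pairing}. Since $U_{r-1}$ is $\Sigma_B$-equivariant, all $\delta_T$ have the same order. They generate the finite group $\bigoplus_{j=2}^m(\Z/j)^{\mu_j}$ with all $\mu_j>0$, so that common order is its exponent. Your symmetry argument for infinite order when $r\le m-1$ is the one in Remark~\ref{rem:lowr}. Both statements, however, reach homology only through the injectivity of $\Phi$.
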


For $r=0$ the same formula gives $w_m=0$ and a cycle $z_m^{(0)}$ on
$K_{m+1,m-1}$: the star class for $m=2$, the theta class for $m=3$, and in general
the class announced in \cite[Remark~3.16]{AK}. Its images in larger complete
bipartite graphs are the restricted theta classes $x_{B'}$; their sum is $[z_m]$
by \eqref{eq:zsum}. These individual classes and their sum can have different
orders. In $H_2(\B_3K_{4,5})$, each theta class has order $6$, and its $3$-primary
component is $[z_3]$, of order $3$ (Theorem~\ref{thm:thetatorsion});
Remark~\ref{rem:inspiration} explains how this observation led to the torsion cycles.

These cycles fit into one construction (Section~\ref{sec:universal}). For a
function $h$ on the $(r-1)$-subsets of the second part, the matching chain $c_h$
satisfies $\partial c_h=z_{U_{r-1}h}$, where $z_g$ is the corresponding weighted sum
of restricted theta cycles (Proposition~\ref{prop:universal}). For $r=m+1$, the
same construction gives loop classes $L_P$ indexed by perfect matchings of the
second part. The non-nesting matchings give a basis of
$H_m(\B_mK_{m+1,2m};\Z)\cong\Z^{C_m}$, where
$C_m=\frac{1}{m+1}\binom{2m}{m}$ is the Catalan number
(Theorem~\ref{thm:loop}). Each class is carried by an orientable pseudomanifold:
for $m=1$ this recovers the loop class of $K_{2,2}$, and for $m=2$ the supporting
space is a genus-two surface pinched at three points.

\subsection{Minimality and graph operations}\label{sec:intro-minimality}
For $m=3$ and $r=3$ the full integral homology follows without computer assistance:
\[
H_*(\B_3K_{4,5};\Z)=\Z,\quad \Z^{12}\oplus\Z/2,\quad \Z^{155}\oplus(\Z/2)^4\oplus\Z/3,\quad 0 .
\]
By Observation~\ref{obs:top}, torsion in $H_i(\B_n\Gamma)$ requires $n\ge i+1$, so
torsion in $H_2$ requires at least three particles.  We show
that $K_{4,5}$ is the smallest example in the following sense.
For graphs $\Gamma$ and $K$, write $\Gamma\le K$ if $\Gamma$ is homeomorphic to a
subgraph of a subdivision of $K/F$ for some set $F$ of edges, where the loops and
parallel edges created by contraction are retained, and write $\Gamma<K$ if moreover
$\Gamma$ is not homeomorphic to $K$.  This is the minor order up to homeomorphism;
see Section~\ref{sec:minororder}.

\begin{theoremC}[Theorem~\ref{thm:strongmin} and Corollary~\ref{cor:fullminor-k45}]
\begin{enumerate}[leftmargin=*]
\item Let $q=p^a$ with $p$ prime and $a\ge1$, let $m\ge q$, and put $K=K_{m+1,m+q-1}$.
      The $p$-primary part of $H_{m-1}(\B_mK;\Z)$ has exactly one cyclic summand of order
      $q$, and all its other summands have smaller order.  For every $\Gamma<K$, the
      group $H_{m-1}(\B_m\Gamma;\Z)$ has no element of order $q$.
\item $H_2(\B_3K_{4,5};\Z)$ has odd torsion, whereas $H_2(\B_3\Gamma;\Z)$ has no odd torsion
      for every $\Gamma<K_{4,5}$.
\end{enumerate}
\end{theoremC}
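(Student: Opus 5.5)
The plan has three parts: read part (1)'s first claim off Theorem~A, reduce the minimality claim to one-step minors, and bound torsion on those minors.

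\emph{The summand of order $q$.} With $r=q\le m$ we have $s=q$. By Theorem~A the torsion is $\bigoplus_{j=2}^{q}(\Z/j)^{\mu_j}$, where
\[
\mu_q=\binom{m+q-1}{0}-\binom{m+q-1}{-1}=1 .
\]
Decomposing each $\Z/j$ into primary parts, the $p$-primary summands are $\Z/p^{v_p(j)}$ for $2\le j\le q$. The exponent $p^a$ occurs only for $j=q$, since any other multiple of $p^a$ is at least $2q$. Every other $j$ gives $v_p(j)<a$. So the $p$-primary part has exactly one cyclic summand of order $q$, and all its other summands are smaller. This is the same computation as in Corollary~\ref{cor:primary}.

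\emph{Reduction to one-step minors.} The functoriality I need is the following. Topological embeddings of graphs induce maps on $\UConf_m$. Subdivision and homeomorphism do not change homology. Edge contraction does not obviously induce a map, so I will not use it directly. Instead I will reduce to maximal proper minors. Any $\Gamma<K$ is $\le$ one of the following:
\begin{itemize}
\item $K\setminus e$, for an edge $e$;
\item $K/e$, for an edge $e$;
\item $K$ with a vertex deleted, which is already covered by $K\setminus e$ up to isolated vertices.
\end{itemize}
Since $K$ is edge-transitive up to the two part sizes, there is essentially one edge deletion and one contraction.

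The difficulty is that an element of order $q$ in a minor need not push forward to one in $K$. I therefore plan to avoid monotonicity and instead prove a direct upper bound: for every graph $\Gamma\le K\setminus e$ or $\Gamma\le K/e$, the exponent of the $p$-primary torsion of $H_{m-1}(\B_m\Gamma)$ is less than $q$. I would obtain this in the following steps.

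\begin{enumerate}
\item Use the Świątkowski-type cellular model of Section~\ref{sec:prelim}. For $m$ particles on a graph $\Gamma$ it has dimension at most the number of essential vertices. By the universal coefficient theorem, the torsion in degree $m-1$ is the torsion of the cokernel of the top differential $\partial_m$.
\item For graphs with fewer than $m$ essential vertices the top degree is below $m$. Then $H_{m-1}$ is itself a top-degree cycle module or lower, and Observation~\ref{obs:top} rules out the needed torsion.
\item For $K\setminus e$ and $K/e$, compute $\coker\partial_m$ by the same matching reduction used for $K$ in Section~\ref{sec:bipartite}. For $K\setminus e$, the matchings avoiding $e$ reduce the vertex relations to a subset-inclusion matrix in which the constant-function obstruction $U_{q-1}h=q\cdot 1$ is broken. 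Concretely, the removed edge frees a relation that makes the Wilson diagonal entry $q$ drop to a proper divisor.
\item For $K/e$, the merged vertex has larger valence, which adds generators to the top chain group. I expect these generators to split the bad class in the same way.
\end{enumerate}

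For subgraphs of these two minors, I would argue by deletion/contraction long exact sequences (vertex-explosion sequences). These show that the top-degree cokernel of a subgraph has torsion exponent bounded by that of the parent, or drops to a lower-particle problem handled inductively in $m$.

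\emph{Part (2).} For $m=3$, $q=3$, part (1) gives the odd torsion $\Z/3$ in $H_2(\B_3K_{4,5})$. It also gives the absence of elements of order $3$ in every minor. The only odd prime allowed by the transfer bound is $3$, since $p\le m=3$. Elements of order $9$ would force an element of order $3$. Together these show that no minor has odd torsion.

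\emph{Main obstacle.} The hardest step is the minor bound, especially for contractions, since there is no induced map on configuration spaces. There I would first try to show directly that $\coker\partial_3$ for $K_{4,5}/e$ and $K_{4,5}\setminus e$ has only $2$-torsion, using Wilson's diagonal form for the perturbed inclusion matrices. After that I would generalize the argument to arbitrary $m$.
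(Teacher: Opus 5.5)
Your derivation of the unique order-$q$ summand from Theorem~A is correct and is the paper's argument (Theorem~\ref{thm:subgraph}(1)). The minimality part, however, has genuine gaps.

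\textbf{The reduction to one-step minors is false.} Reducing to $K\setminus e$, $K/e$ and $K-v$ does not work for the order in the statement, which is taken up to homeomorphism and allows subgraphs of subdivisions. Subdivide an edge $e=xy$ twice and delete the middle segment. The result is $K\setminus e$ with pendant stubs at $x$ and $y$. It is $<K$, and its $2m+q$ branch points have the same valences as in $K$. Every nonempty contraction of $K\setminus e$, and every contraction of $K/e$ or $K-v$, has fewer than $2m+q$ vertices. Also $K\setminus e$ itself has fewer vertices of valence $\ge m+q-1$. So this graph lies below none of your three candidates (compare Remark~\ref{rem:topminor}). Such graphs need the integral pendant-edge splitting (Lemma~\ref{lem:pendant}, Theorem~\ref{cor:topminusone}): in bidegree $(m-1,m)$, pendant edges and disjoint arcs add only free summands. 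Nothing in your plan plays this role.

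\textbf{The core step is only a hope.} The reduction of Section~\ref{sec:bipartite} breaks down for $K\setminus e$, since Step~1 of the proof of Theorem~\ref{thm:top} uses relations along arbitrary edges. So there is no inclusion matrix whose Wilson form you could perturb, and for $K/e$ you give no argument.

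\textbf{The missing idea: nothing needs to be computed on the smaller graphs.} By Wilson's diagonal form, every element of order $q$ in $H_m(\B_mK;\Z/q)\cong\ker(U_{q-1}\otimes\Z/q)$ equals $c\mathbf 1+y$ with $c$ a unit and $p^{a-1}y=0$. Hence it is nonzero on every $m$-matching. Extension by zero (Lemma~\ref{lem:extend}) embeds the mod-$q$ top homology of a subgraph missing $e$ into classes vanishing on a matching through $e$, so that image contains no element of order $q$. Since the integral top homology vanishes, the universal coefficient theorem then excludes elements of order $q$ in $H_{m-1}$ (Theorem~\ref{thm:subgraph}, Lemma~\ref{lem:inherit}).

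Your substitute principle, that explosion sequences bound the torsion exponent of a subgraph by that of the parent, is false in general. For odd $p$ the $p$-torsion of $K_{p+1,2p-1}$ dies in $K_{2p-1,2p-1}$ (Corollary~\ref{rep:cor:allprimecounterexample}). Monotonicity holds here only through extension by zero together with vanishing integral top homology.

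\textbf{Contractions are not handled.} The minors below $K/e$ carry contraction-created loops and parallel edges, which you never treat. The paper reduces each contraction to a simple core $K_t+K_{s,s'}$ whose top homology is either zero or that of $K_{m+1,s'}$ with $s'\le m+q-2$ (Lemma~\ref{lem:contract}, Proposition~\ref{prop:join-top}). It then handles loops and parallel edges by Theorems~\ref{cor:loop} and~\ref{prop:parallel-top} and Lemma~\ref{lem:minor-threshold}.

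\textbf{Part (2) relies on a conjecture.} The transfer bound cannot exclude primes $p\ge5$, because it rests on the conjectured torsion-freeness of ordered configuration spaces, as the introduction says. The unconditional argument uses $H_3(\B_3K_{4,5};\F_p)\cong T_{\F_p}(5)=0$ for $p\ge5$ (Lemma~\ref{lem:T}) together with extension by zero. It also uses the explicit top homology ($0$ or $R[2]$) of the contraction cores (Corollary~\ref{cor:fullminor-k45}).
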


The prime-power restriction in Theorem~C(1) is necessary: the analogous assertion for
$q=6$ is false (Remark~\ref{rem:primepower}).
For proper subgraphs, part (1) follows by extension by zero: every mod-$q$ top cycle of
order $q$ is nonzero on every matching.  The passage to topological minors uses the
integral pendant-edge splitting of Lemma~\ref{lem:pendant}.  In degree $m-1$ with $m$
particles its added summands are free, so it preserves torsion at every prime.  In the
same bidegree, attaching a loop or a parallel edge changes the torsion subgroup by an
explicit direct summand, $\Tors H_{m-2}(\B_{m-1}\Gamma_v)$ or
$\Tors H_{m-2}(\B_{m-1}\Gamma_{uw})$ (Theorem~\ref{cor:loop},
Theorem~\ref{prop:parallel-top}), which controls the loops and parallel
edges created by contraction in Theorem~C.  For
arbitrary particle number, pendant edges and simple cycles attached at one nonisolated
vertex preserve odd torsion in $H_2$.  For parallel edges we prove the same assertion for
at most three particles (Corollary~\ref{cor:parallel3}); the case of four or more
particles is left open in Question~\ref{q:parallel}. These invariance statements are
specific to the stated degrees and weights: loops, parallel edges, and pendant edges can
create torsion in other ranges.  Section~\ref{sec:pendant} gives exact formulas and
explicit counterexamples.  Minimality does not imply upward persistence:
$H_2(\B_3K_{5,5};\Z)$ has no $3$-torsion although $K_{5,5}\supset K_{4,5}$
(Proposition~\ref{rep:prop:two-primary-three} and Theorem~E below).

\subsection{Symmetry and behaviour under graph inclusions}
The matching-function model also applies when both parts of the complete
bipartite graph vary. Permuting their vertices gives an action of
$\Sigma_a\times\Sigma_b$. We identify the rational top homology by
relating ordered matchings to the top homology of chessboard complexes
and applying the Laplacian eigenvalue computation of Friedman--Hanlon
\cite{FH}, which yields the isotypic decomposition of the top homology.
Here $S^\rho$ denotes the rational Specht module of shape $\rho$,
$\lambda'$ is the conjugate partition, and $\boxtimes$ is the external
tensor product.

\begin{theoremD}[Theorems~\ref{rep:thm:rational} and \ref{rep:thm:generalvanishing}]
For $m\ge1$ and $a,b\ge m$, there is a multiplicity-free decomposition
\[
 H_m(\B_mK_{a,b};\Q)\cong
 \bigoplus_{\substack{\lambda\vdash m\\
 \lambda_1\le a-m,\ \lambda'_1\le b-m}}
 S^{(a-m,\lambda)}\boxtimes S^{(b-m,\lambda')}.
\]
If $m\ge2$, $a,b\ge2m-1$, and $p\ge m$ is an odd prime, then
$H_{m-1}(\B_mK_{a,b};\Z)$ has no $p$-primary torsion.
\end{theoremD}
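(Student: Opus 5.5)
We prove the two parts of Theorem~D separately. Both rest on the matching-chain model of Section~\ref{sec:bipartite}. With $m$ particles on $K_{a,b}$, the top cellular chains are indexed by $m$-matchings. The top homology is therefore the kernel of the boundary map, which is given by vertex relations, into chains indexed by $(m-1)$-matchings together with one vertex-supported particle.

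\emph{Rational decomposition.} First identify $H_m(\B_mK_{a,b};\Q)$, as a $\Sigma_a\times\Sigma_b$-module, with the top reduced homology of the chessboard complex $M_{a,b}$ (rows $A$, columns $B$, faces the partial matchings) on $m$ cells, twisted by a sign character. Ordered matchings are the simplices of $M_{a,b}$. Passing from unordered to ordered $m$-cells multiplies by the sign representation of $\Sigma_m$, and this sign is what produces conjugation of one of the two shapes. Rationally the top homology equals the harmonic space $\ker\Delta$ of the combinatorial Laplacian on $(m-1)$-faces. Friedman--Hanlon \cite{FH} computed the Laplacian eigenvalues on each isotypic component $S^{\alpha}\boxtimes S^{\beta}$ of the chain space, which is induced from $\Sigma_m\times\Sigma_{a-m}\times\Sigma_{b-m}$ via Littlewood--Richardson. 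I would read off their zero-eigenvalue criterion: the eigenvalue vanishes exactly when $\alpha=(a-m,\lambda)$ and $\beta=(b-m,\lambda^{(\prime)})$ with $\lambda\vdash m$, and each such pair occurs with multiplicity one. The constraints $\lambda_1\le a-m$ and $\lambda'_1\le b-m$ are exactly what makes $(a-m,\lambda)$ and $(b-m,\lambda')$ partitions. The main bookkeeping task is to track the sign twist carefully, so that the result has $\lambda'$ and not $\lambda$ on the second factor.

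\emph{Vanishing of $p$-torsion.} By the universal coefficient theorem and Observation~\ref{obs:top}, $p$-primary torsion in $H_{m-1}$ is detected by comparing top homology over $\F_p$ and over $\Q$. Concretely, $\Tors_p H_{m-1}\neq0$ iff $\dim_{\F_p}H_m(\,;\F_p)>\dim_\Q H_m(\,;\Q)$, since $H_m$ over any ring is a kernel. So it suffices to show that the integral top cycles reduce onto the mod-$p$ cycles.

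For this, use the $p$-local idempotent decomposition. Since $p\ge m$ and $p$ odd, $p\nmid m!$ when $p>m$, so the covering transfer splits $\Z_{(p)}$-chains. For $p=m$, this argument fails and a separate treatment is needed. Alternatively, work directly with the Laplacian: Friedman--Hanlon show the nonzero eigenvalues are integers that are sums of at most roughly $m$ small positive terms. The key step is then to show, for $a,b\ge2m-1$, that every nonzero eigenvalue on the relevant components is a product of integers less than $p$. The other possible route is that $\Delta$ restricted to $\operatorname{im}\partial$ has determinant coprime to $p$ after localization. Either way, $\operatorname{im}\partial^*$ and $\ker$ split $\Z_{(p)}$-integrally, so $\coker\partial_m$ has no $p$-torsion.

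The hard point is the eigenvalue estimate. Nonzero Friedman--Hanlon eigenvalues have the form of content differences $\sum$(contents) plus linear terms in $a,b$. The hypothesis $a,b\ge2m-1$ should be what forces each factor relevant to a torsion obstruction to lie strictly between $0$ and $p$, or to be a unit multiple. I would first verify this with the explicit formula in small cases ($m=2,3$, $p=3$). I expect the borderline case $p=m$ to need the extra slack provided by $2m-1$. Where the Laplacian approach is inconclusive, the fallback is a direct transfer/Specht-module argument: the modules $S^{(a-m,\lambda)}$ remain simple mod $p$ by hook-length and $p$-core considerations, and mod-$p$ cycles decompose accordingly.
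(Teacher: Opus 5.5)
Your rational step rests on a false identification. The top cycles of $\UConf_m(K_{a,b})$ are neither the top cycles nor the top homology of the chessboard complex $M_{a,b}$ with rows $A$ and columns $B$. The vertex relations (Z) come one for each pair $(M',v)$ with $v$ a free vertex, and each sums only over the edges at $v$. A simplicial cycle of $M_{a,b}$ instead satisfies one equation per $(m-1)$-matching $M'$, summing over every edge that can be added. Moreover $\widetilde H_{m-1}(M_{a,b})$ is not top-dimensional once $a,b>m$. Already for $m=1$, $a=b=3$ one has $H_1(\B_1K_{3,3};\Q)\cong\Q^4$. But $M_{3,3}$ is connected, so $\widetilde H_0(M_{3,3};\Q)=0$, and the reduced $0$-cycles on its vertex set form $\Q^8$. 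So the zero-eigenvalue set you propose to read off from Friedman--Hanlon for $M_{a,b}$ is the wrong one. Your sign bookkeeping also has no slot group to act on, since the simplices of $M_{a,b}$ are unordered.

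The missing idea is to write an ordered matching as a pair of injections $\mathbf x\colon[m]\hookrightarrow A$, $\mathbf y\colon[m]\hookrightarrow B$. Relations at $A$-vertices are then slot sums in $\mathbf y$, and relations at $B$-vertices are slot sums in $\mathbf x$. This gives $H_m(\B_mK_{a,b};\Q)\cong\operatorname{Hom}_{\Q\Sigma_m}(\operatorname{sgn},D_m(a)\otimes D_m(b))$. Here $D_m(a)\cong\widetilde H_{m-1}(\Delta_{m,a})\otimes\operatorname{sgn}$ is the top homology of the $m\times a$ chessboard complex. Friedman--Hanlon is applied to $\Delta_{m,a}$ and $\Delta_{m,b}$ separately. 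The conjugate shape then comes from Schur's lemma: $\operatorname{Hom}_{\Q\Sigma_m}(\operatorname{sgn},S^\lambda\otimes S^\mu)\ne0$ if and only if $\mu=\lambda'$.

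For the torsion, your reduction to $\dim_{\F_p}H_m=\dim_\Q H_m$ is correct, but none of your routes proves this equality. The Laplacian criterion is only sufficient. A nonzero eigenvalue divisible by $p$ does not create $p$-torsion: for $m=1$ the Laplacian of $K_{a,b}$ has eigenvalues $a$, $b$, $a+b$, yet $H_0$ is free. Since the eigenvalues grow with $a$ and $b$, nothing keeps them prime to $p$ across $a,b\ge2m-1$.

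The transfer, by itself, only bounds the $p$-torsion of $H_{m-1}(\B_mK_{a,b})$ by that of the ordered configuration space, whose torsion-freeness is an open conjecture. For $p>m$ that route can be repaired, but only with an input you did not identify, and this is the actual role of $a,b\ge2m-1$. By Ziegler, $\Delta_{m,a}$ is shellable, so its integral homology is free and concentrated in the top degree. Hence $D_m(a;\F_p)=D_m(a;\Z_{(p)})\otimes\F_p$, and the sign idempotent in $\Z_{(p)}\Sigma_m$ commutes with reduction.

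For $p=m$ you give no argument. The Specht-module fallback is beside the point: the group whose order $p$ divides is the slot group $\Sigma_m$, not $\Sigma_a$, and $S^{(a-m,\lambda)}$ need not remain simple mod $p$.

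The paper handles every odd $p\ge m$ uniformly:
\begin{enumerate}
\item The chain modules of the join $\Delta_{m,a}*\Delta_{m,b}$ are projective over $\F_p\Sigma_m$, because simplex stabilizers are $\Sigma_{m-|U|}$ with $|U|\ge1$.
\item The join has reduced homology only in degree $2m-1$, and the sign-invariants of that homology are $H_m(\B_mK_{a,b};\F_p)$.
\item Comparing Euler characteristics of the sign-invariant chain complexes over $\F_p$ and $\Q$ leaves the correction $\sum_{i=0}^{2m-2}(-1)^i\dim H_i(\Sigma_m;\F_p^{\operatorname{sgn}})$.
\item This correction vanishes by semisimplicity for $p>m$. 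For $p=m$ it consists of two one-dimensional terms, in degrees $p-2$ and $p-1$, which cancel.
\end{enumerate}
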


The hook-length formula therefore determines the integral top rank for
all these part sizes. The torsion assertion uses an additional modular
argument: a join of chessboard complexes supplies projective
$\F_p\Sigma_m$-modules, and sign-coefficient group homology controls the
difference between the characteristic-zero and characteristic-$p$
top dimensions. For three particles, a $2$-local slot-module calculation
further gives
\[
 \Tors H_2(\B_3K_{a,b};\Z)\cong(\Z/2)^{(a-1)(b-1)}
 \qquad(a,b\ge5)
\]
(Proposition~\ref{rep:prop:two-primary-three}). Its generators are external
products of disjoint loop and star classes, and the torsion subgroup is
canonically isomorphic to $H_1(K_{a,b};\F_2)$
(Corollary~\ref{cor:loop-star-torsion}).
The explicit classes and the vanishing theorem give contrasting
behaviours under enlargement of the two parts.

\begin{theoremE}[Persistence and disappearance of torsion]
\begin{enumerate}[leftmargin=*]
\item Let $q\ge2$, $m\ge q$, and $r\ge q$. The inclusion
      \[
      K_{m+1,m+q-1}\hookrightarrow K_{m+1,m+r-1}
      \]
      sends the explicit class $[z_m^{(q)}]$ of Theorem~B to an element
      of order exactly $q$. When $m=q=p$ is prime, it induces an
      isomorphism on $p$-primary torsion, with both groups equal to $\Z/p$.
\item For every odd prime $p$, the inclusion
      \[
      K_{p+1,2p-1}\hookrightarrow K_{2p-1,2p-1}
      \]
      annihilates $[z_p^{(p)}]$. Indeed,
      $H_{p-1}(\B_pK_{2p-1,2p-1};\Z)$ has no $p$-primary torsion.
\end{enumerate}
\end{theoremE}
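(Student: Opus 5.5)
Theorem~E has two parts, and I would deduce each from results already stated: Theorems~A, B and D, plus the functoriality of the matching-chain model under inclusions of complete bipartite graphs.

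\textbf{Part (1).} Write $K=K_{m+1,m+q-1}$ and $K'=K_{m+1,m+r-1}$, with the second part $B\subseteq B'$.
\begin{enumerate}[leftmargin=*]
\item The inclusion $K\hookrightarrow K'$ induces a chain map on the cellular (matching-indexed) chain complexes. It sends $z_m$ and $w_m$ to the same matching chains, now viewed in $K'$. So $\partial w_m=qz_m$ still holds, and the order of the image of $[z_m]$ divides $q$.
\item To get order exactly $q$, I would use the mod-$q$ Bockstein description of Section~\ref{sec:intro-cycles}. The chain $w_m$ reduces to a mod-$q$ top cycle $\bar w_m$, and $[z_m]=\beta[\bar w_m]$. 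For the image to have order exactly $q$, it suffices that, for each $d\mid q$ with $d<q$, the element $(q/d)\bar w_m$ does not lift to an integral top cycle modulo $q$.
\item By Theorem~A, top cycles are $\ker U_{r-1}(m+r-1)$ over any ring. Under the identification of Theorem~B(2), the class $w_m$ corresponds to the function $h$ that is constant $1$ on the $(q-1)$-subsets contained in $B$. Its image in $K'$ is $h$ extended by zero.
\item The needed statement is then combinatorial: no integral $g\in\ker U_{r-1}(m+r-1)$ is congruent to $(q/d)h$ modulo $q$. This runs through two inclusion matrices, since $U_{q-1}$ is on $B$ while $U_{r-1}$ is on $B'$ with $r\ne q$. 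I would prove it by restricting to the subcomplex supported on matchings inside $K$. Extension by zero makes restriction of top cycles well defined, because a top cycle on $K'$ restricts on the matchings of $K$ to a top cycle of $K$. This reduces the claim to the case $r=q$, where Theorem~B(1) already gives order $q$.
\item In the case $m=q=p$, Theorem~A gives $p$-primary part $(\Z/p)^{\mu_p}$ with $\mu_p=\binom{2p-1}{0}=1$ (since $s=p$). This holds for $K'$ as well, because $s=\min\{p,r\}=p$ and $\mu_p=1$ again. Both groups are therefore $\Z/p$, and Theorem~B(1) says $[z_p]$ generates the source. So the induced map sends a generator to an element of order $p$ in $\Z/p$, hence is an isomorphism.
\end{enumerate}

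\textbf{Part (2).} Apply Theorem~D with $m=p$ and $a=b=2p-1$. The hypotheses $m\ge2$, $a,b\ge 2m-1$ and $p\ge m$ with $p$ odd all hold, so $H_{p-1}(\B_pK_{2p-1,2p-1};\Z)$ has no $p$-primary torsion. The image of $[z_p^{(p)}]$ has order dividing $p$, so it is $p$-primary torsion and therefore zero.

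\textbf{Main obstacle.} I expect the restriction argument in step 4 of part (1) to be the hardest point. It requires that the restriction-to-$K$ map on top mod-$q$ cycles is compatible with the identifications of Theorem~A, so that a lift in $K'$ would yield a lift in $K$. If the restriction is not directly compatible, my fallback is a direct argument on the explicit function: pair $h$ against a suitable cocycle on $B'$ that is supported on subsets of $B$, and show this pairing detects the residue modulo $q$.
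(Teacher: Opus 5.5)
Part (2) is correct and is exactly the paper's argument. Apply Theorem~\ref{rep:thm:generalvanishing} with $m=p$ and $a=b=2p-1$, and note that a homomorphism carries $p$-torsion to $p$-torsion.

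Part (1) has a genuine gap in step~4. In your notation, $B\subseteq B'$ are the second parts of $K\subseteq K'$. A top cycle of $K'$ does \emph{not} restrict to a top cycle of $K$. For $a\in A$, the relation \eqref{eq:Z} at $(M',a)$ in $K'$ contains the terms $F(ab,M')$ with $b\in B'\setminus B$. These terms are absent from the relation at $(M',a)$ in $K$, so the restricted function need not satisfy \eqref{eq:Z}. Lemma~\ref{lem:extend} only gives the opposite map, extension by zero from $K$ to $K'$, and that says nothing about lifts living in $K'$.

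Here is a concrete failure. Take $m=q=2$ and $r=3$. Then $H_2(\B_2K_{3,3};\Z)=0$ by Corollary~\ref{cor:r2}, while $H_2(\B_2K_{3,4};\Z)\cong\Z^2$ by Corollary~\ref{cor:exact}. Use $B'=\{0,1,2,3\}$, $B=\{0,1,2\}$ and the coordinates $x_i=h(0i)$ from the proof of Corollary~\ref{cor:exact}. The integral cycle with $(x_1,x_2,x_3)=(1,-1,0)$ takes the values $0,-1,1$ on the free sets $\{0,3\},\{1,3\},\{2,3\}$, which index the matchings of $K_{3,3}$. This nonzero function cannot be a top cycle of $K_{3,3}$, which has none.

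Consequently, an integral $g'\in\ker U_{r-1}$ with $g'\equiv d\,h'\pmod q$ only restricts to something congruent to $d\cdot\mathbf 1$ that is not an integral cycle of $K$. The reduction to $r=q$ collapses, and step~5 inherits the gap because it uses the order-$q$ conclusion.

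Two smaller slips. The image of $\bar w_m$ is not ``$h$ extended by zero''; it is $h'(\beta)=1$ for $\beta\supseteq B'\setminus B$ and $0$ otherwise. In step~2 the condition should be that $d\bar w_m$ does not lift for proper divisors $d<q$. As written, $d=1$ asks about $q\bar w_m=0$, which lifts trivially.

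What is missing is an invariant defined on $K'$ itself. Since $U_{r-1}h'=q\,g$ with $g=\mathbf 1_{\{T\supseteq B'\setminus B\}}$, your condition that $d\bar w_m$ does not lift is equivalent to $dg\notin\operatorname{im}U_{r-1}$. The paper phrases it this way: by \eqref{eq:zsum} the image of $[z_m^{(q)}]$ is $\Phi([g])$, and $\Phi$ is injective by Theorem~\ref{thm:theta-injective}.

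For each maximal prime power $d=p^a\mid q$, choose $Y\subseteq B$ with $|Y|=m-d$ and set $\lambda_Y(f)=\sum_{T\cap Y=\varnothing}f(T)$. As in Lemma~\ref{lem:pairing}, $\lambda_Y(U_{r-1}\delta_S)=d\,[Y\cap S=\varnothing]$, so $\lambda_Y$ descends to a map $\coker U_{r-1}\to\Z/d$. On the other hand $\lambda_Y(g)=\binom{q+d-1}{d-1}\equiv1\pmod p$ by Lucas' theorem. This particular functional and congruence are the substance of part~(1). Your fallback of pairing against a cocycle supported on subsets of $B$ points in this direction, but it supplies neither the functional nor the congruence.
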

The two assertions are proved in Proposition~\ref{prop:second-part-persistence}
and Corollary~\ref{rep:cor:allprimecounterexample}, respectively.
Thus the explicit classes on the prime-power minimal graphs of Theorem~C
retain their orders under every enlargement of the second part.
For $m=q=p$ odd prime, the indicated enlargement of the first part destroys them.

\medskip\noindent\textbf{Organization.}\enspace
Section~\ref{sec:prelim} recalls the \'Swi\k{a}tkowski complexes.
Section~\ref{sec:top} proves Theorem~A.  Section~\ref{sec:explicit} proves
Theorem~B, identifies the star and theta classes with the case $r=0$ of the same
construction, and presents the universal construction with its two consequences,
the presentation of the theta subgroup and the loop classes.
Section~\ref{sec:symmetry} proves Theorems~D and~E and determines the
full torsion for three particles on $K_{a,b}$ with $a,b\ge5$.
Section~\ref{sec:pendant} treats pendant edges, loops and multiple edges, and
Section~\ref{sec:minimal} proves Theorem~C, deduces minimality for arbitrary
finite orders, and establishes analogous topological-minor minimality for the
complete graphs $K_{2m+1}$. Section~\ref{sec:questions} presents further questions
arising from these results.

\medskip\noindent\textbf{Acknowledgement.}\enspace
This work was supported by Samsung Science and Technology Foundation under Project Number SSTF-BA2022-03.

\section{Preliminaries}\label{sec:prelim}

Graphs are finite $1$-dimensional CW complexes; loops and multiple edges are allowed.
For a vertex $v$, let $H(v)$ be the set of half-edges at $v$ and $d(v)=|H(v)|$.  For a
half-edge $h$, let $e(h)$ be the edge containing it; when the graph is simple we also
write $h_{v,e}$ for the half-edge of the edge $e$ at its endpoint $v$.  Unless coefficients are specified,
homology is over $\Z$.  For a disconnected graph, $H_i(\B_n\Gamma)$ denotes the homology of
the entire configuration space $\UConf_n(\Gamma)$, summed over all components.  Homology
in negative degree or negative weight is zero, empty sums are zero, and the lcm of an empty set is $1$.  We write
$[a,b]$ for the homological-degree shift by $a$ and the weight shift by $b$; an odd
homological shift reverses the differential.  The symbol $\simeq$ denotes a
quasi-isomorphism (or a specified chain homotopy equivalence), not an isomorphism of
chain complexes.  We set $\binom N{-1}=0$.

The \emph{\'Swi\k{a}tkowski complex} \cite{Sw,ADK} is
\[
S(\Gamma)=\Z[E]\otimes\bigotimes_{v}S(v),\qquad
S(v)=\Z\langle\varnothing,\ v,\ h\in H(v)\rangle,
\]
bigraded by degree and weight: edges have bidegree $(0,1)$, $\varnothing$ has $(0,0)$,
$v$ has $(0,1)$, and each $h$ has $(1,1)$.  The differential is $\Z[E]$-linear, satisfies
the Koszul sign rule, and $\partial h=e(h)-v$.  Geometrically:
\begin{itemize}
\item the state $v$ is a particle resting at the vertex $v$;
\item $h$ is a particle moving from $v$ into $e(h)$;
\item an edge variable $e$ is a particle in the interior of $e$.
\end{itemize}

Fix $h_v\in H(v)$ for each nonisolated vertex $v$.  The \emph{reduced complex}
$\Sw(\Gamma)$ is the subcomplex obtained by replacing each vertex factor with
\[
\Sw(v)=\Z\langle\varnothing\rangle\oplus\bigoplus_{h\ne h_v}\Z\langle h-h_v\rangle,
\qquad \partial(h-h_v)=e(h)-e(h_v).
\]
At an isolated vertex retain the unreduced factor $\Z\langle\varnothing,v\rangle$.  With
this convention both complexes compute configuration-space homology \cite{Sw,ADK}:
\[
H_i(S(\Gamma))_n\cong H_i(\Sw(\Gamma))_n\cong H_i(\B_n\Gamma).
\]

\begin{observation}\label{obs:top}
The weight-$n$ part of $\Sw(\Gamma)$ vanishes in degrees $>n$, so $H_i(\B_n\Gamma)=0$
for $i>n$, and $H_n(\B_n\Gamma)$ is a subgroup of a free abelian group and hence
torsion-free.  Consequently, torsion in $H_i(\B_n\Gamma)$ requires $n\ge i+1$.
\end{observation}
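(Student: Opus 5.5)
The statement immediately above is Observation~\ref{obs:top}. The plan is to read everything off the bigrading of the reduced complex $\Sw(\Gamma)$. First I compute degrees. A basis element of $\Sw(\Gamma)$ is a monomial in the edge variables times, for each vertex $v$, one chosen generator of $\Sw(v)$. That generator is either $\varnothing$, of bidegree $(0,0)$, or a difference $h-h_v$, of bidegree $(1,1)$; at an isolated vertex it is $\varnothing$ or $v$, of bidegree $(0,0)$ or $(0,1)$. Edge variables have bidegree $(0,1)$. Homological degree therefore counts the vertices carrying a half-edge generator. Each such vertex contributes exactly one unit of weight, and edges and isolated-vertex states contribute weight but no degree. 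Hence every generator of weight $n$ has degree at most $n$, so the weight-$n$ part vanishes in degrees $>n$. Since this complex computes $H_*(\B_n\Gamma)$, we get $H_i(\B_n\Gamma)=0$ for $i>n$.

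Next, the top degree. Because the weight-$n$ part vanishes in degree $n+1$, there are no boundaries in degree $n$. Thus $H_n(\B_n\Gamma)=\ker\bigl(\partial\colon \Sw(\Gamma)_{n,n}\to\Sw(\Gamma)_{n-1,n}\bigr)$. The chain group $\Sw(\Gamma)_{n,n}$ is free abelian on finitely many monomials, so this kernel is a subgroup of a free abelian group. Hence it is free and in particular torsion-free.

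Finally, the consequence. If $H_i(\B_n\Gamma)$ has torsion, then it is nonzero, so $i\le n$ by the first step. The second step rules out $i=n$, leaving $i\le n-1$, that is, $n\ge i+1$.

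The only point needing care is that the identification $H_i(\Sw(\Gamma))_n\cong H_i(\B_n\Gamma)$ is taken from \cite{Sw,ADK}, with isolated vertices handled by the unreduced factor. That factor contributes only degree $0$, so the degree count is unaffected. I do not expect any real obstacle.
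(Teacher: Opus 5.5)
Your proposal is correct and follows exactly the reasoning embedded in the observation itself. You count degree against weight in the bigraded reduced complex $\Sw(\Gamma)$; you note that top homology is the kernel of $\partial$ on a free abelian chain group, since there are no boundaries above degree $n$; and you exclude $i=n$ and $i>n$ for torsion. Your explicit degree count, with the isolated-vertex factor contributing only degree $0$, is just a spelled-out version of the paper's one-line justification.
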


\begin{theorem}[Ko--Park {\cite[Theorem~3.16]{KP}}; see also {\cite[Lemma~C.4]{ADK}}]\label{thm:KP}
For every finite graph $\Gamma$ and every $n$, the torsion of $H_1(\B_n\Gamma)$ is a
direct sum of copies of $\Z/2$.
\end{theorem}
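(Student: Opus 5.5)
The plan is to compute $H_1$ directly from the reduced complex $\Sw(\Gamma)$ in weight $n$. The aim is to present $H_1(\B_n\Gamma)$ as the cokernel of a matrix of a very special shape: every relation has exactly two nonzero entries, each equal to $\pm1$, after discarding free summands. The key algebraic fact behind this is the signed-graph lemma. Let $G$ be a graph whose vertices are generators and whose edges are relations $x=\epsilon y$ with $\epsilon=\pm1$ (loops are allowed, giving $x=\epsilon x$). Then the cokernel of the corresponding signed incidence matrix is
\[
\Z^{\#\{\text{balanced components}\}}\oplus(\Z/2)^{\#\{\text{unbalanced components}\}},
\]
where a component is unbalanced if it contains a cycle whose product of signs is $-1$. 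This is proved by choosing a spanning tree in each component to identify all generators up to sign, and then noting that each remaining relation reads $x=\pm x$. So the torsion of such a cokernel is a direct sum of copies of $\Z/2$, and it suffices to bring the presentation of $H_1$ into this form.

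\textbf{Step 1: degree-$1$ cycles.} In weight $n$ and degree $1$, $\Sw(\Gamma)$ is spanned by the elements $(h-h_v)\cdot p$, where $p$ is an edge monomial of degree $n-1$. In degree $2$ it is spanned by $(h-h_v)(k-k_w)\cdot p'$ with $v\ne w$. I would first describe the degree-$1$ cycles $Z_1$, viewing $Z_1$ as a $\Z[E]$-module. Fixing a spanning tree of $\Gamma$ and the choices $h_v$, I would show that $Z_1$ is generated by two families of cycles. The first family consists of \emph{star cycles}: for a vertex $v$ of valence at least $3$ and half-edges $h_1,h_2$ in $H(v)$, the combination of $(h_1-h_v)$ and $(h_2-h_v)$ whose edge coefficients make the boundary cancel. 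The second family consists of \emph{loop cycles}, one for each non-tree edge. Each generator is multiplied by an edge monomial recording where the other $n-1$ particles sit. This is essentially a Mayer--Vietoris or discrete-Morse bookkeeping, as in the original argument \cite{KP}.

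\textbf{Step 2: relations.} Next I would compute the boundaries $\partial\bigl((h-h_v)(k-k_w)p'\bigr)=(e(h)-e(h_v))(k-k_w)p'-(h-h_v)(e(k)-e(k_w))p'$ and rewrite them in the generators from Step 1. These give three kinds of relations, each of which is two-term after a triangular change of basis. The first kind moves a spectator particle, so that $e\cdot x=e'\cdot x$ whenever $e$ and $e'$ are adjacent away from the support of $x$. The second kind identifies star classes at $v$ taken with respect to different components of $\Gamma\setminus v$. The third kind consists of ``square'' relations from two disjoint vertices. In the nonplanar configurations ($K_{3,3}$ or $K_5$ subdivisions) these square relations produce a relation $x=-x$ for a star class.

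\textbf{Main obstacle.} The hard part is the reduction of the previous step: choosing a basis of the relation module so that every surviving relation really is of the form $x=\pm y$. Three-term relations among the star classes at a single vertex must first be eliminated by a triangular change of basis, which amounts to picking a basis of star classes adapted to the chosen half-edges $h_v$. After this, I expect the remaining relations to be of the signed-graph type, and the signed-graph lemma then gives that the torsion is a direct sum of copies of $\Z/2$, which is the statement of Theorem~\ref{thm:KP}. If this direct normalization becomes unwieldy, I would fall back on the discrete Morse matching of \cite{KP}, in which the critical $2$-cells visibly have boundary words with at most two critical $1$-cells, each appearing with exponent $\pm1$.
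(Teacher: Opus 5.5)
The paper does not prove this theorem. It quotes it from Ko--Park \cite[Theorem~3.16]{KP} (see also \cite[Lemma~C.4]{ADK}), and its only own argument is the reduction from disconnected to connected graphs by the K\"unneth formula, using that $H_0$ is free so $\Tor(H_0,H_0)=0$. You are instead trying to reprove Ko--Park, and the decisive step is missing.

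Your signed-graph lemma is correct, and the description of $Z_1$ by star and loop cycles is plausible. Neither, however, constrains torsion. The whole content of the theorem is that the weight-$n$ presentation of $H_1$ can be normalized to relations $x=\pm y$, and you only say that you expect this. Nothing in the outline controls the relations:
\begin{itemize}
\item The boundary of $(h-h_v)(k-k_w)p'$ is a four-term chain.
\item A presentation of $Z_1/B_1$ on your generators must also contain the relations among those generators inside $Z_1$, which you do not list. Write $x_j=h_j-h_v$, $a_j=e(h_j)-e(h_v)$, and $s_{jk}=a_kx_j-a_jx_k$ for the star cycles at a vertex of valence at least four. Then $a_ls_{jk}-a_ks_{jl}+a_js_{kl}=0$ already at the chain level, which is a six-term relation among edge-stabilized star cycles.
\item Triangular eliminations rewrite the remaining relations. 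They can lengthen them or produce coefficients other than $\pm1$.
\end{itemize}
The same \'Swi\k{a}tkowski complex, with multi-term $\pm1$ boundaries in every degree, has odd torsion in degree two: this paper finds $\Z/3\subset H_2(\B_3K_{4,5})$. So a valid argument must use something specific to degree one, and your outline never says what that is. In Ko--Park the corresponding input is their analysis along $1$-cuts, $2$-cuts and $3$-connected components. There the star classes of a $3$-connected piece are shown to coincide up to sign and to have order two exactly in the non-planar case.

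The fallback does not close the gap. You assert, but do not check, that critical $2$-cells in the Morse complex of \cite{KP} \emph{visibly} have boundaries with at most two critical $1$-cells and coefficients $\pm1$. A critical $2$-cell has four faces, and the gradient flow carries each to a chain of critical $1$-cells. If the claim held as stated, your lemma would give the theorem in one line, which is not how Ko--Park's long argument proceeds. Falling back on \cite{KP} is therefore simply citing the theorem. That is legitimate, and it is what the paper does. But then the rest of your outline does no work, and you still need the K\"unneth step to pass from connected to disconnected graphs.
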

For disconnected graphs this follows from the connected case by the K\"unneth formula.

We shall also use the Euler characteristics of graph braid groups, which are given by
Gal's formula \cite{Gal} (see also \cite[Corollary~5.12]{ADK}, where it is derived from
the \'Swi\k{a}tkowski complex):
\[
\sum_n\chi(\B_n\Gamma)\,t^n=\frac{\prod_v\bigl(1-(d(v)-1)t\bigr)}{(1-t)^{|E|}} .
\]

\section{Matching functions and exact torsion}\label{sec:top}

This section proves Theorem~A.  The starting point is that in the top degree, with
as many particles as the degree, the reduced complex has no cells above and the
homology is the cycle module.  Section~\ref{sec:matching} describes this module for
an arbitrary simple graph as a module of functions on ordered $m$-matchings subject
to one linear relation per vertex.  For $K_{m+1,m+r-1}$ the relations collapse, in
Section~\ref{sec:bipartite}, to the kernel of the inclusion matrix $U_{r-1}(m+r-1)$ of
$(r-1)$-subsets versus $r$-subsets of the second part $B$, with coefficients in any ring.
Section~\ref{sec:kerU} collects the properties of $U_{r-1}$ that are needed, and
Section~\ref{sec:exacttors} applies Wilson's integral diagonal form to $U_{r-1}$: since
the top homology over $\Z/\ell^a$ is the kernel of $U_{r-1}\otimes\Z/\ell^a$, the
universal coefficient theorem reads off every primary summand of the torsion one
degree below.  Section~\ref{sec:smallcases} works out the cases $r\le3$ by hand and
records the complete integral homology for $m\le3$.

\subsection{Top cycles as functions on matchings}\label{sec:matching}
Let $\Gamma$ be a simple graph, let $m\ge1$, and let $R$ be a commutative ring.
Isolated vertices cannot be occupied in degree $m$ and weight $m$, so they can be
omitted in this subsection.  Let $E(v)$ be the set of edges at $v$.  For $e\in E(v)$ let
$\delta_e\in R^{E(v)}$ be the indicator function of $e$, that is, $\delta_e(e')=1$ if
$e'=e$ and $\delta_e(e')=0$ otherwise.  Since $\Gamma$ is simple, half-edges at $v$
correspond to edges at $v$, and the assignment $h-h_v\mapsto\delta_{e(h)}-\delta_{e(h_v)}$
identifies $\Sw(v)_1\otimes R$ with
\[
A_v=\Bigl\{\varphi\colon E(v)\to R\ \Bigm|\ \textstyle\sum_e\varphi(e)=0\Bigr\}.
\]
Indeed, the elements $\delta_{e(h)}-\delta_{e(h_v)}$ for $h\ne h_v$ form a basis of $A_v$.
The identification is compatible with the differential: identify the weight-one edge
polynomials $\Z[E]_1\otimes R$ with $R^E$; then $\partial(h-h_v)=e(h)-e(h_v)$ corresponds
to $\delta_{e(h)}-\delta_{e(h_v)}$.

The part of $\Sw(\Gamma)\otimes R$ in degree $m$ and weight $m$ is
$\bigoplus_{|S|=m}\bigotimes_{v\in S}A_v$.  We encode a chain as a function $f$ on
ordered tuples $((v_1,e_1),\dots,(v_m,e_m))$, where the $v_i$ are distinct and
$e_i\in E(v_i)$.  We require $f$ to be alternating and to have sum zero in each slot.
We say that $v_i$ is \emph{occupied} and that the $i$-th particle \emph{points along}
$e_i$.

Edges themselves are unoriented: for an edge $e$ with endpoints $v$ and $x$ we write
$e=vx$, and this notation carries no orientation.  The orientation is recorded by the
pair $(v,e)$, which we identify with the half-edge of $e$ at $v$, that is, with $e$
oriented away from $v$.  In \eqref{eq:dtop} below, the $e$ on the left is an unoriented
edge variable, a particle in the interior of $e$, while the pairs $(v,e)$ on the right
are half-edges.

With the Koszul sign rule, the differential is
\begin{equation}\label{eq:dtop}
(\partial f)\bigl(e;(v_2,e_2),\dots,(v_m,e_m)\bigr)=\sum_{\substack{v\in e\\ v\notin\{v_2,\dots,v_m\}}}f\bigl((v,e),(v_2,e_2),\dots,(v_m,e_m)\bigr).
\end{equation}

\begin{lemma}\label{lem:topcycle}
Write $\mathrm{rest}=(v_2,e_2),\dots,(v_m,e_m)$.  A top chain $f$ is a cycle if and only
if:
\begin{enumerate}[label=\textup{(\alph*)},leftmargin=*]
\item if $e=vx$ and $x$ is occupied, then $f((v,e),\mathrm{rest})=0$;
\item if $e=vx$ and neither $v$ nor $x$ is occupied by $\mathrm{rest}$, then
      $f((v,e),\mathrm{rest})=-f((x,e),\mathrm{rest})$.
\end{enumerate}
A cycle moreover satisfies:
\begin{enumerate}[label=\textup{(\alph*)},leftmargin=*,resume]
\item $f$ vanishes unless $e_1,\dots,e_m$ are pairwise disjoint and each $e_i$ points to
      an unoccupied vertex.
\end{enumerate}
\end{lemma}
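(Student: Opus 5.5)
The plan is to read the cycle condition directly off \eqref{eq:dtop}. Since $\partial f$ is a function on tuples $(e;\mathrm{rest})$ with $\mathrm{rest}=(v_2,e_2),\dots,(v_m,e_m)$ a valid degree-$(m-1)$ configuration and $e$ an arbitrary edge, $f$ is a cycle exactly when every such value vanishes. I will split according to how many endpoints of $e=vx$ are occupied by $\mathrm{rest}$. If both are occupied, the sum in \eqref{eq:dtop} is empty and there is no condition. If exactly one, say $x$, is occupied, the sum has the single term $f((v,e),\mathrm{rest})$, and its vanishing is (a). If neither is occupied, there are two terms, and their vanishing is (b). Since all pairs $(e;\mathrm{rest})$ are covered, (a) and (b) together are equivalent to $\partial f=0$. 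The one point to check is that the Koszul signs are as written in \eqref{eq:dtop}: the edge variable comes from differentiating the first tensor factor, so no sign appears, and alternation lets any slot be moved to the front.

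For (c), the pointing condition comes from (a) together with alternation. If $e_1$ points to a vertex $x$ occupied by some $v_j$, then (a) applied with the first slot as the moving particle gives $f=0$. For any other slot $i$, first permute slot $i$ to the front using alternation, which changes $f$ only by a sign, and then apply the same argument.

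For disjointness, suppose $e_i$ and $e_j$ share an endpoint. Because $v_i\ne v_j$, the possibilities are: $e_i$ points into $v_j$ or $e_j$ points into $v_i$, which is already covered by the pointing condition; or $e_i=v_iy$ and $e_j=v_jy$ with a common far endpoint $y$ that is unoccupied, which is the only real case. If $e_i=e_j$, the edge would be $v_iv_j$ and so would point to an occupied vertex.

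The common-endpoint case is the main obstacle, and I will handle it with the sum-zero condition on slots rather than with the cycle relations alone. I expect the argument to go as follows, with $i=1$ after reordering and $\mathrm{rest}$ containing $(v_j,e_j)$, $e_j=v_jy$. Apply (b) to the edge $e_1=v_1y$: the vertex $y$ is unoccupied and $v_1$ is unoccupied by $\mathrm{rest}$, so
\[
f((v_1,e_1),\mathrm{rest})=-f((y,e_1),\mathrm{rest}).
\]
In the right-hand tuple, $y$ is occupied and $e_j$ points from $v_j$ to $y$. By the pointing part of (c), after moving slot $j$ to the front, this value is $0$. Hence $f$ vanishes on the original tuple. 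This relies only on (a), (b) and alternation, so the sum-zero condition is not even needed.

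The step to double-check is that (b) applies as stated. The $\mathrm{rest}$ in (b) excludes slot $1$, so $v_1$ is indeed unoccupied by $\mathrm{rest}$; and $y\ne v_1$ because $\Gamma$ is simple.
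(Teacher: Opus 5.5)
Your proof is correct and follows the paper's argument. Conditions (a) and (b) are the one-term and two-term cases of \eqref{eq:dtop}. For the common-endpoint case of (c) you do exactly what the paper does: use (b) to move particle $1$ onto the shared endpoint, then kill the resulting term with (a). As you yourself conclude, the slot sum-zero condition you first announced is never actually used.
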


\begin{proof}
Conditions (a) and (b) say that \eqref{eq:dtop} vanishes: in (a) the sum has one term,
in (b) two.

For (c), suppose $e_1=vx$ and $e_2=yx$.  By (a) we may assume $x$ is unoccupied.  Moving
particle $1$ to $x$ by (b) produces a tuple in which particle $2$ points to the
occupied vertex $x$, and this value is $0$ by (a).
\end{proof}

By Lemma~\ref{lem:topcycle}, a top cycle $f$ is determined by its values on ordered
$m$-matchings with a chosen orientation of each edge.  We use the following notation.
For an oriented $m$-matching, written as a sequence of oriented edges
$(v_1x_1,\dots,v_mx_m)$ with tails $v_i$, put
\[
F(v_1x_1,\dots,v_mx_m)=f\bigl((v_1,v_1x_1),\dots,(v_m,v_mx_m)\bigr),
\]
so that the particle sits at the tail of each edge.  By (b), reversing one edge changes
the sign: $F(\dots,x_iv_i,\dots)=-F(\dots,v_ix_i,\dots)$.  Thus $F$ is an alternating
function on ordered $m$-matchings which is odd under reversal of an edge.

The sum-zero condition in slot $1$ is automatic unless the rest forms an
$(m-1)$-matching $M'$ whose particles point to unoccupied vertices, and $v\notin V(M')$.
In every other case all terms vanish by (a) or (c).  In the remaining case it reads
\begin{equation}\label{eq:Z}\tag{Z}
\sum_{x\in N(v)\setminus V(M')}F(vx,M')=0,
\end{equation}
where $vx$ is oriented from $v$ to $x$.

\begin{proposition}\label{prop:Zcycles}
For a simple graph $\Gamma$, $H_m(\B_m\Gamma;R)$ is isomorphic to the module of
functions $F$ on ordered oriented $m$-matchings of $\Gamma$ which are alternating,
odd under reversal of an edge, and satisfy \eqref{eq:Z} for every ordered
$(m-1)$-matching $M'$ and every vertex $v\notin V(M')$.
\end{proposition}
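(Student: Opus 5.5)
Since $m$ is both the weight and the top degree, Observation~\ref{obs:top} says $\Sw(\Gamma)\otimes R$ has no chains in degree $m+1$ in weight $m$. Hence $H_m(\B_m\Gamma;R)$ is just the module of top cycles $\ker\partial$ in degree $m$, weight $m$, with no quotient to take. I will use the identification of $\Sw(\Gamma)_m\otimes R$ with alternating functions $f$ on tuples $((v_1,e_1),\dots,(v_m,e_m))$ that sum to zero in each slot. The plan is to show that restricting a cycle $f$ to ordered oriented $m$-matchings, $f\mapsto F$, is an isomorphism onto the module in the statement. The differential is given by \eqref{eq:dtop}, and the tensor $\bigotimes_{v\in S}A_v$ is free of finite rank, so this description of chains holds for any $R$.

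\emph{Well-definedness.} If $f$ is a cycle, then $F$ is alternating because $f$ is. $F$ is odd under reversing an edge by Lemma~\ref{lem:topcycle}(b), applied with the rest being the other matching edges: both endpoints of $e_i$ are unoccupied by the rest because the edges are disjoint. For \eqref{eq:Z}, I expand the slot-sum-zero condition $\sum_{e\in E(v)}f((v,e),\mathrm{rest})=0$ with $\mathrm{rest}=M'$. Terms where $e$ meets $V(M')$ vanish by (a) and (c). The surviving terms are exactly $F(vx,M')$ for $x\in N(v)\setminus V(M')$. If instead the rest is not a matching pointing to unoccupied vertices, every term vanishes by (c), so the condition is vacuous; this is the remark just before \eqref{eq:Z}.

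\emph{Injectivity.} By (c), a cycle vanishes on every tuple that is not an oriented matching with heads unoccupied. On a matching tuple the value is $F$, up to the orientation convention of placing the particle at the tail. So $f$ is determined by $F$.

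\emph{Surjectivity.} This is the main step. Given $F$ with the stated properties, define $f$ on tuples by $f((v_1,e_1),\dots,(v_m,e_m))=F(v_1x_1,\dots,v_mx_m)$ when the $e_i=v_ix_i$ are pairwise disjoint, and $0$ otherwise. Disjointness forces each head $x_i$ to be unoccupied. Then I check four things.
\begin{enumerate}[label=\textup{(\roman*)},leftmargin=*]
\item $f$ is alternating, since $F$ is.
\item $f$ sums to zero in each slot. When the rest is an $(m-1)$-matching with $v\notin V(M')$, this is exactly \eqref{eq:Z}. In every other case each term is $0$: either the rest already fails to be disjoint, or $v\in V(M')$, in which case every $e\ni v$ meets $M'$. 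Alternation then gives the condition in every slot, so $f$ is a chain.
\item Condition (a) holds: if $x$ is occupied, then $e=vx$ meets the rest, so $f=0$.
\item Condition (b) holds. If the rest is a matching disjoint from $e$, it is oddness of $F$. Otherwise both sides are $0$.
\end{enumerate}
By Lemma~\ref{lem:topcycle}, $f$ is then a cycle, and its restriction is $F$. The only delicate point is the bookkeeping in (ii), namely that the vanishing conventions make every non-matching slot sum trivially zero. Since the map is $R$-linear, this gives the isomorphism.
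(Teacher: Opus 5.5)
Your proposal is correct and follows the paper's proof: $H_m=\ker\partial_m$ by Observation~\ref{obs:top}, cycles are determined by their values on oriented matchings via Lemma~\ref{lem:topcycle}(c), and their slot sums reduce to \eqref{eq:Z}; conversely, extending $F$ by zero gives a chain satisfying (a), (b) and the slot-sum conditions. You spell out the case analysis that the paper leaves to the discussion preceding the proposition, and that case analysis is accurate.
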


\begin{proof}
By Observation~\ref{obs:top} there are no chains above degree $m$, so
$H_m=\ker\partial_m$.  Conversely, any such $F$, extended by zero to tuples that are not
matchings or point to occupied vertices, satisfies (a), (b) and the sum-zero conditions.
\end{proof}

\begin{example}\label{ex:C3}
For $\Gamma=C_3$ and $m=1$, \eqref{eq:Z} at $v$ reads $F(vx)+F(vy)=0$, and oddness gives
$F(xv)=-F(vx)$.  The solutions are the multiples of the cyclic orientation, so
$H_1(\B_1C_3;R)\cong R$, as it should be.
\end{example}

\subsection{The complete bipartite case}\label{sec:bipartite}
Let $K=K_{m+1,m+r-1}$ with parts $A$ ($|A|=m+1$) and $B$ ($|B|=m+r-1$), $m\ge1$, $r\ge0$.
Orient every edge from $A$ to $B$, and write $F(ab,\dots)$ for the value with the
particle at $a\in A$.  Put $N=m+r-1$ and fix an enumeration $B=[N]$; thus $(r-1)$-subsets of
$B$ are elements of $\binom{[N]}{r-1}$, and $U_{r-1}(N)$ is the inclusion matrix of
$(r-1)$-subsets versus $r$-subsets of $B$ defined in the introduction.  Throughout this
section we abbreviate $U_{r-1}=U_{r-1}(N)$, with the conventions $\binom{[N]}{-1}=\varnothing$
and $\binom N{-1}=0$, so that $U_{-1}$ is the zero map from $R^0=0$.

\begin{theorem}\label{thm:top}
For every commutative ring $R$,
\[
H_m(\B_mK_{m+1,m+r-1};R)\cong\ker\Bigl(U_{r-1}\colon R^{\binom{[N]}{r-1}}\to R^{\binom{[N]}{r}}\Bigr),
\qquad N=m+r-1 .
\]
\end{theorem}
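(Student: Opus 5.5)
The plan is to apply Proposition~\ref{prop:Zcycles} directly to $K=K_{m+1,m+r-1}$ and to solve the relations \eqref{eq:Z} explicitly. With every edge oriented from $A$ to $B$, an ordered $m$-matching is a tuple $(a_1b_1,\dots,a_mb_m)$ with distinct $a_i\in A$ and distinct $b_i\in B$. It leaves exactly one free vertex $a_0\in A$ and a set $B\setminus\{b_1,\dots,b_m\}$ of $N-m=r-1$ unused vertices of $B$. The relations \eqref{eq:Z} split according to whether $v\in A$ or $v\in B$.

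\emph{Relations at $v\in B$.} Here $A\setminus A(M')$ has exactly two elements $a,a'$, so \eqref{eq:Z} together with oddness under reversal reads $F(av,M')=-F(a'v,M')$. This is an exchange relation: it replaces the $A$-endpoint of one edge by the current free vertex, at the cost of a sign. Composing three such exchanges with the free vertex, and using that $F$ is alternating, realizes a transposition of the $B$-endpoints of two edges. Hence these relations connect any two matchings with the same set $B(M)$.

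\emph{Normal form.} The expected conclusion is that
\[
F(M)=\varepsilon(M)\,h\bigl(B\setminus B(M)\bigr)
\]
for a single function $h$ on $\binom{B}{r-1}$ and a universal sign $\varepsilon(M)$. I would first try the following definition of $\varepsilon(M)$. Fix orderings of $A$ and of $B=[N]$. List $A$ as the free vertex followed by the partners of $b_1<\dots<b_m$ in increasing order of $b$, and take the sign of this permutation of $A$. Multiply by the sign of the permutation that reorders the given tuple to increasing $b$. Finally multiply by the sign of the shuffle of $B$ into $(B(M)\text{ increasing},\ B\setminus B(M)\text{ increasing})$.

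With this definition, $\varepsilon$ is alternating in the tuple by construction, and $\varepsilon$ changes sign under an exchange at a $B$-vertex. The second point holds because the free vertex and the replaced vertex swap positions in the $A$-list, which is one transposition. Consequently every function of this form satisfies the $B$-relations. Conversely, connectivity of the exchange moves shows that every solution of the $B$-relations has this form with a unique $h$, namely $h(S)=F(M_S)$ for one normalized matching $M_S$ with $B(M_S)=B\setminus S$.

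\emph{Relations at $v\in A$.} Here $M'$ is an $(m-1)$-matching and $v\notin A(M')$, and the sum in \eqref{eq:Z} runs over the $r$ vertices of $T=B\setminus B(M')$. Substituting the normal form turns the relation into
\[
\sum_{b\in T}\varepsilon(vb,M')\,h(T\setminus\{b\})=0 .
\]
Every $r$-subset $T$ of $B$ arises from some $M'$, since $|B(M')|=m-1=N-r$. So once $\varepsilon(vb,M')$ is shown to be independent of $b\in T$ (up to a sign depending only on $M'$ and $v$), these relations are exactly $(U_{r-1}h)(T)=0$ for all $T$.

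The hard step is this sign check. As $b$ runs over $T$, its rank within $B(M')\cup\{b\}$ changes, which moves $v$ within the $A$-list. The same change of rank also changes the shuffle sign of $(B(M')\cup\{b\},\,T\setminus\{b\})$, and I expect the two effects to cancel exactly. If this particular choice of $\varepsilon$ fails, I would instead adjust it by a factor $(-1)^{\sum B(M)}$ or by the rank of the free vertex until the cancellation holds. The $B$-exchange sign must be rechecked after any such adjustment.

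\emph{Conclusion.} The assignment $h\mapsto F$ is $R$-linear, injective because $h$ is read off from values of $F$, and surjective onto the module of Proposition~\ref{prop:Zcycles}. It therefore gives
\[
H_m(\B_mK;R)\cong\ker U_{r-1}
\]
for every commutative ring $R$, since no step divides by anything. In the degenerate cases the conventions take over. For $r=0$ there are no matchings (as $N<m$), both sides vanish, and $U_{-1}$ is the zero map from $R^0=0$. For $r=1$ the relation at $v\in A$ becomes $h(\varnothing)=0$, matching $U_0$.
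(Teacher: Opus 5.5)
Your architecture is the paper's: the relations at $B$-vertices are exchange moves that force $F=\varepsilon(M)\,h(\beta_M)$, and the relations at $A$-vertices should then become $U_{r-1}h=0$. The gap is the step you flag as hard. For the $\varepsilon$ you propose it fails, and the cancellation you predict is not the one that actually occurs.

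Fix $M'$ and $v$, sort $M'$ by $B$-endpoints, and put $k=\#\{c\in B(M')\colon c<b\}$.
\begin{itemize}
\item Inserting $v$ into the $A$-list costs $(-1)^k$.
\item Moving $vb$ into $B$-sorted position in the tuple-reordering factor also costs $(-1)^k$. You did not account for this factor, and it cancels the previous one.
\item The shuffle factor is $(-1)^{\sum B(M)-\binom{m+1}{2}}$ with $\sum B(M)=\sum B(M')+b$. Nothing cancels it.
\end{itemize}
Hence $\varepsilon(vb,M')=c(M',v)\,(-1)^b$. The $A$-relations read $\sum_{b\in T}(-1)^b\,h(T\setminus b)=0$, which is a signed inclusion matrix, not $U_{r-1}$. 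For instance, with $m=2$, $N=4$, $B(M')=\{2\}$, $T=\{1,3,4\}$, your normal form gives $h(\{3,4\})+h(\{1,4\})-h(\{1,3\})=0$.

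The repair is easy, but it has to be made. There are three ways to see it.

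\emph{Your first fallback works.} Multiplying by $(-1)^{\sum B(M)}$ removes exactly the offending factor. It depends only on $B(M)$, so the $B$-relations survive. The resulting sign is, up to a global sign, the paper's normal form: list the edges by increasing $A$-endpoint and multiply by $(-1)^{p(\alpha)}$.

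\emph{Your second fallback does not.} A factor $(-1)^{p(\alpha)}$ is constant across an $A$-relation, since all its terms have the same free vertex. It differs by $(-1)^{p(a)-p(a')}$ between the two terms of a $B$-relation, so it would break those.

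\emph{You can also keep your $\varepsilon$.} The substitution $h(S)\mapsto(-1)^{\sum S}h(S)$ intertwines your signed matrix with $U_{r-1}$ up to the row signs $(-1)^{\sum T}$, so the kernels are isomorphic.

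The paper sidesteps all of this by sorting by $A$-endpoints. In the relation at $v=a$, all $r$ terms have the same free vertex $a'$, and the edge $ab$ sits in the same $A$-ordered slot whatever $b$ is. So no $b$-dependent sign ever appears, and the relation is literally $(U_{r-1}h)(T)=0$.
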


\begin{proof}
For $r=0$ there are no $m$-matchings, so every top cycle vanishes by
Lemma~\ref{lem:topcycle}(c), and both sides are zero.  Let $r\ge1$.
Let $M'$ be an $(m-1)$-matching.  It leaves two free vertices $a,a'\in A$ and a set
$B'\subset B$ of $r$ free vertices.  Since edges are oriented from $A$ to $B$, the
conditions \eqref{eq:Z} become:
\begin{itemize}[leftmargin=*]
\item[(R1)] at $v=b\in B'$: $-F(ab,M')-F(a'b,M')=0$, i.e.\ $F(ab,M')+F(a'b,M')=0$;
\item[(R2)] at $v=a\in\{a,a'\}$: $\sum_{b\in B'}F(ab,M')=0$.
\end{itemize}
An $m$-matching is described by three pieces of data: its free $A$-vertex $\alpha$, its
free $(r-1)$-subset $\beta\in\binom{[N]}{r-1}$, and a bijection $\sigma\colon A\setminus\alpha\to B\setminus\beta$.
Order $A=\{a_1<\dots<a_{m+1}\}$ and put $p(a_i)=i$.  Let $G(\alpha,\beta,\sigma)$ be the value
of $F$ with edges listed in increasing order of their $A$-endpoints.

\emph{Step 1.}  For $ab,cd\in M$, three applications of (R1), cycling the free
$A$-vertex $\alpha\to a\to c\to\alpha$, followed by one transposition, give
\[
F(ab,cd,\dots)=-F(\alpha b,cd,\dots)=F(\alpha b,ad,\dots)=-F(cb,ad,\dots)=F(ad,cb,\dots).
\]
Hence $G$ is invariant under $\sigma\mapsto\sigma\circ(a\ c)$, and therefore
$G=g(\alpha,\beta)$ does not depend on $\sigma$.  (For $m=1$ there is nothing to prove:
$\sigma$ is the unique map between two singletons.)

\emph{Step 2.}  In (R1) the edge at $a$ moves to $\alpha$.  Restoring increasing order
moves it past the $|p(a)-p(\alpha)|-1$ vertices strictly between them, so
$(-1)^{p(\alpha)}g(\alpha,\beta)=(-1)^{p(a)}g(a,\beta)$.  Hence
$g(\alpha,\beta)=(-1)^{p(\alpha)}h(\beta)$ for a function $h\colon\binom{[N]}{r-1}\to R$.

\emph{Step 3.}  The $r$ terms of (R2) have the same $\alpha$ and the same order of
$A$-endpoints, and their free $(r-1)$-subsets are the $(r-1)$-subsets $B'\setminus b$ of $B'$.
So (R2) says $(U_{r-1}h)(B')=0$.  Every $r$-subset $T$ of $B$ occurs as some $B'$: match
the $m-1$ vertices of $A\setminus\{a,a'\}$ bijectively onto $B\setminus T$, which has
$m-1$ elements.

\emph{Step 4.}  Conversely, for $h\in\ker U_{r-1}$, the function $G=(-1)^{p(\alpha)}h(\beta)$
satisfies (R1) by Step~2 and (R2) by Step~3.  Every vertex not in $V(M')$ lies in
$\{a,a'\}\cup B'$, so these are all the conditions \eqref{eq:Z}.  The theorem follows
from Proposition~\ref{prop:Zcycles}.
\end{proof}

For $r=1$ the kernel of $U_0\colon R\to R^{m}$, $1\mapsto(1,\dots,1)$, is zero, so
$H_m(\B_mK_{m+1,m};R)=0$: with only one free $B$-vertex, (R2) forces $F=0$.  Thus the
top homology of $K_{m+1,m+r-1}$ vanishes for $r\le1$, and Theorem~\ref{thm:exacttors}
below shows that it vanishes over $\Z$ for all $r\le m$.

\subsection{The kernel of the inclusion matrix}\label{sec:kerU}
The results of this subsection hold for every $N\ge1$.  Let
$D_{j+1}\colon R^{\binom{[N]}{j+1}}\to R^{\binom{[N]}{j}}$ be the transpose of
$U_j=U_j(N)$, $(D_{j+1}g)(S)=\sum_{T\supset S}g(T)$.

\begin{lemma}\label{lem:identity}
$D_{j+1}U_j-U_{j-1}D_j=(N-2j)\,\mathrm{id}$ on $R^{\binom{[N]}{j}}$.
\end{lemma}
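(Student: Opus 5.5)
We verify the identity by evaluating both sides on a basis vector $\delta_S$, $S\in\binom{[N]}{j}$, and comparing the coefficient of each $\delta_{S'}$, $S'\in\binom{[N]}{j}$. Here $U_j\delta_S$ is the sum of $\delta_T$ over the $(j+1)$-sets $T\supset S$, and $D_{j+1}\delta_T$ is the sum of $\delta_{S'}$ over the $j$-subsets $S'\subset T$. Hence $(D_{j+1}U_j\delta_S)(S')$ counts the $(j+1)$-sets $T$ with $S\cup S'\subseteq T$.

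Similarly, $D_j\delta_S$ is the sum of $\delta_Q$ over the $(j-1)$-subsets $Q\subset S$, and $U_{j-1}\delta_Q$ is the sum of $\delta_{S'}$ over the $j$-sets $S'\supset Q$. Hence $(U_{j-1}D_j\delta_S)(S')$ counts the $(j-1)$-sets $Q$ with $Q\subseteq S\cap S'$.

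We split into cases according to $k=|S\cap S'|$.

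\emph{Case $S'=S$.} The first count is $N-j$: choose the one element of $T$ outside $S$. The second count is $j$: choose the element of $S$ removed to form $Q$. The difference is $N-2j$.

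\emph{Case $k=j-1$.} Then $|S\cup S'|=j+1$, so there is exactly one $T$, namely $S\cup S'$. There is also exactly one $Q$, namely $S\cap S'$. The difference is $0$.

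\emph{Case $k<j-1$.} Then $|S\cup S'|>j+1$, so no $T$ exists. Also $|S\cap S'|<j-1$, so no $Q$ exists. Both counts are $0$.

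So the difference of the two operators is $(N-2j)\,\mathrm{id}$, as claimed. The computation is independent of $R$, since all entries are integers.

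The boundary conventions need a quick check. For $j=0$, take $U_{-1}=0$ and $D_0=0$, so the identity reads $D_1U_0=N$. This is correct, since $\varnothing$ lies in all $N$ singletons. For $j=N$ there are no $(N+1)$-subsets, so $D_{N+1}U_N=0$, and $U_{N-1}D_N$ is multiplication by $N$ on the one-dimensional space $R^{\binom{[N]}{N}}$. The difference is therefore $-N=N-2N$, consistent with the formula.

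The only step needing any care is this bookkeeping of conventions at the extreme values $j=0$ and $j=N$; the main count is routine.
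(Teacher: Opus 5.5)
Your proposal is correct and is essentially the paper's argument: both compute the matrix entries of $D_{j+1}U_j$ and $U_{j-1}D_j$ by the size of $S\cap S'$, obtaining the diagonal entries $N-j$ and $j$ and a common off-diagonal entry $1$ exactly when $|S\cap S'|=j-1$. Your separate boundary check for $j=0$ and $j=N$ is harmless but not needed, since your counting argument already gives the right values there.
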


\begin{proof}
For $|S|=j$, $(D_{j+1}U_jh)(S)=(N-j)h(S)+\sum_{|S\cap S'|=j-1}h(S')$, since each $S'$
with $|S\cap S'|=j-1$ lies in exactly one $(j+1)$-set containing $S$, namely $S\cup S'$.
Similarly $(U_{j-1}D_jh)(S)=j\,h(S)+\sum_{|S\cap S'|=j-1}h(S')$.  Subtract.
\end{proof}

\begin{lemma}\label{lem:kerU}
\begin{enumerate}[leftmargin=*]
\item If $N>2j$, then $\ker(U_j\colon\Z^{\binom{[N]}{j}}\to\Z^{\binom{[N]}{j+1}})=0$.
\item The constant function $1$ satisfies $U_j1=(j+1)\cdot1$.  Hence, if $j+1=p$ is
      prime, $1\in\ker(U_j\otimes\F_p)$.
\item If $j+1=p$ is prime and $N\ge2p-1$, then $\ker(U_j\otimes\F_p)$ is one-dimensional,
      spanned by the constant function.
\end{enumerate}
\end{lemma}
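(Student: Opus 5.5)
For (1) I will argue over $\Q$ and then descend to $\Z$. Note that $D_{j+1}=U_j^{\mathsf T}$ and $D_j=U_{j-1}^{\mathsf T}$. Lemma~\ref{lem:identity} therefore reads
\[
U_j^{\mathsf T}U_j=U_{j-1}U_{j-1}^{\mathsf T}+(N-2j)\,\mathrm{id}
\]
on $\Q^{\binom{[N]}{j}}$. The first term on the right is positive semidefinite. If $N>2j$, the second is a positive multiple of the identity, so $U_j^{\mathsf T}U_j$ is positive definite. Hence $U_j h=0$ forces $\langle U_j^{\mathsf T}U_jh,h\rangle=0$, and so $h=0$ over $\Q$. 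Since $\Z^{\binom{[N]}{j}}\subset\Q^{\binom{[N]}{j}}$, the integral kernel is zero as well. The case $j=0$ uses the convention $U_{-1}=0$ and is covered by the same formula.

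Part (2) is immediate from the definition. Each $(j+1)$-set $T$ has exactly $j+1$ subsets of size $j$, so $(U_j1)(T)=j+1$. When $j+1=p$ this vanishes in $\F_p$.

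For (3) I will compute the $p$-rank of $U_{p-1}$ directly and avoid an ad hoc combinatorial argument. Wilson's theorem \cite{Wilson} gives a diagonal form for the inclusion matrix $W_{t,k}$ of $t$-subsets versus $k$-subsets of $[N]$, valid for $t\le\min\{k,N-k\}$. Its diagonal entries are $\binom{k-i}{t-i}$, with multiplicity $\binom Ni-\binom N{i-1}$ for $0\le i\le t$. Our matrix is $U_j=W_{j,j+1}^{\mathsf T}$ up to transpose, with $t=p-1$ and $k=p$. The hypothesis $t\le N-k$ becomes $N\ge 2p-1$, which is exactly the assumption of (3).

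The diagonal entries are then $\binom{p-i}{p-1-i}=p-i$. Among $0\le i\le p-1$, only $i=0$ gives an entry divisible by $p$, and its multiplicity is $\binom N0=1$. The sum of the remaining multiplicities telescopes, so the $\F_p$-rank is
\[
\sum_{i=1}^{p-1}\Bigl(\tbinom Ni-\tbinom N{i-1}\Bigr)=\tbinom N{p-1}-1 .
\]
Consequently $\ker(U_{p-1}\otimes\F_p)$ has dimension $\binom N{p-1}-\bigl(\binom N{p-1}-1\bigr)=1$. By (2), this kernel contains the nonzero constant function, so it is spanned by it.

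The only delicate step is invoking Wilson's diagonal form correctly: the range condition $t\le\min\{k,N-k\}$ and the orientation (transpose) convention. A Smith form is insensitive to transposition, so the second point is harmless, and checking the range condition is exactly where $N\ge2p-1$ is used.
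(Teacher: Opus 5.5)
Your proposal is correct and essentially follows the paper's proof: part (1) is the same inner-product argument based on Lemma~\ref{lem:identity}, and part (3) is the same Wilson-based rank count combined with the constant vector from (2). The only cosmetic difference is that the paper cites Wilson's $\F_p$-rank formula directly, whereas you read the $\F_p$-rank off the integral diagonal form of Theorem~\ref{thm:wilson}, whose hypothesis $N\ge t+k$ is the same condition $N\ge 2p-1$; the paper itself points out this alternative in Remark~\ref{rem:primepower}(1).
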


\begin{proof}
(1) Let $U_jh=0$ with $h$ integral.  Taking the standard inner product with $h$ in
Lemma~\ref{lem:identity},
\[
0=\langle D_{j+1}U_jh,h\rangle=\langle U_{j-1}D_jh,h\rangle+(N-2j)\|h\|^2=\|D_jh\|^2+(N-2j)\|h\|^2,
\]
using that $D_j$ is the transpose of $U_{j-1}$.  Since $N>2j$, $h=0$.

(2) Every $(j+1)$-set has $j+1$ subsets of size $j$.

(3) By Wilson's rank formula \cite{Wilson}, for $t\le\min(k,N-k)$ the rank
of the inclusion matrix $W_{t,k}(N)$ of $t$-subsets versus $k$-subsets of an $N$-set over
$\F_p$ is
\[
\rk_{\F_p}W_{t,k}(N)=\sum_{i\colon\ p\nmid\binom{k-i}{t-i}}\Bigl[\binom Ni-\binom N{i-1}\Bigr].
\]
Here $t=j=p-1$, $k=p$ and $N\ge2p-1$, so $N-k\ge t$ and the formula applies;
$\binom{k-i}{t-i}=p-i$ is divisible by $p$ only for $i=0$.  Hence
$\rk_{\F_p}U_j=\sum_{i=1}^{p-1}\bigl[\binom Ni-\binom N{i-1}\bigr]=\binom N{p-1}-1$, and the
kernel is one-dimensional.  By (2) it is spanned by the constant.
\end{proof}

\begin{corollary}\label{cor:allprimes}
Let $m\ge2$ and $r\ge2$.
\begin{enumerate}[leftmargin=*]
\item If $m\ge r$, then $H_m(\B_mK_{m+1,m+r-1};\Z)=0$.
\item Let $p$ be a prime and $m\ge p$.  Then $H_m(\B_mK_{m+1,m+p-1};\F_p)\cong\F_p$,
      spanned by the class of the constant function, and the $p$-primary part of
      $H_{m-1}(\B_mK_{m+1,m+p-1};\Z)$ is a nonzero cyclic group.
\end{enumerate}
\end{corollary}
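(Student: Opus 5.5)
Both parts follow by combining Theorem~\ref{thm:top} with Lemma~\ref{lem:kerU}, and then passing from the top degree to degree $m-1$ via the universal coefficient theorem.

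\emph{Part (1).} By Theorem~\ref{thm:top} with $R=\Z$, $H_m(\B_mK_{m+1,m+r-1};\Z)\cong\ker U_{r-1}(N)$ with $N=m+r-1$. Set $j=r-1$ in Lemma~\ref{lem:kerU}(1). The condition $N>2j$ reads $m+r-1>2r-2$, that is, $m>r-1$, which holds because $m\ge r$. Hence the kernel is zero and the top homology vanishes.

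\emph{Part (2), top degree.} Take $r=p$, so $N=m+p-1\ge 2p-1$ since $m\ge p$. Apply Theorem~\ref{thm:top} with $R=\F_p$ and $j=p-1$. Lemma~\ref{lem:kerU}(2),(3) then gives
\[
H_m(\B_mK_{m+1,m+p-1};\F_p)\cong\ker(U_{p-1}\otimes\F_p)=\F_p\cdot 1 ,
\]
spanned by the constant function.

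\emph{Part (2), degree $m-1$.} Since $m\ge p$, we have $m\ge r$, so part (1) gives $H_m(\B_m K;\Z)=0$; here $r=p\ge2$, as part (1) requires. The universal coefficient theorem gives
\[
H_m(\B_mK;\F_p)\cong H_m(\B_mK;\Z)\otimes\F_p\;\oplus\;\Tor\bigl(H_{m-1}(\B_mK;\Z),\F_p\bigr)=\Tor\bigl(H_{m-1}(\B_mK;\Z),\F_p\bigr).
\]
This uses that $H_*(\B_mK)$ is computed by the free chain complex $\Sw(K)$ in weight $m$, and that $\Sw(K)\otimes\F_p$ computes the $\F_p$-homology, so the universal coefficient theorem applies. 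The group $H_{m-1}$ is finitely generated because the complex is finite. Hence $\Tor(H_{m-1},\F_p)$, which is isomorphic to the $p$-torsion elements of $H_{m-1}$, is one-dimensional. For a finitely generated abelian group, the number of cyclic summands of $p$-power order equals $\dim_{\F_p}\Tor(-,\F_p)$. So the $p$-primary part is a single nonzero cyclic group.

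The only delicate point is confirming that the hypotheses of Lemma~\ref{lem:kerU}(3) hold, namely $N\ge 2p-1$. This is immediate from $m\ge p$. The argument does not determine the order of the cyclic group, only that it is nonzero and cyclic.
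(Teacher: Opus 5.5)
Your proposal is correct and matches the paper's proof: part (1) is Theorem~\ref{thm:top} combined with Lemma~\ref{lem:kerU}(1) via $N=m+r-1>2(r-1)$, and part (2) sets $r=p$, uses part (1) to kill the integral top group, and applies the universal coefficient theorem together with Lemma~\ref{lem:kerU}(3) (valid since $N\ge 2p-1$) to count exactly one $p$-primary cyclic summand. Your extra remarks on finite generation and $\Tor(G,\F_p)\cong G[p]$ only spell out what the paper leaves implicit.
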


\begin{proof}
(1) is Theorem~\ref{thm:top} with Lemma~\ref{lem:kerU}(1), since $N=m+r-1>2(r-1)$.

(2) Put $r=p$; then $m\ge p=r$, so $H_m(\,\cdot\,;\Z)=0$ by (1), and $N=m+r-1\ge2p-1$.
By the universal coefficient theorem, $H_m(\,\cdot\,;\F_p)\cong\Tor(H_{m-1}(\,\cdot\,;\Z),\F_p)$,
whose dimension is the number of $p$-primary cyclic summands of $H_{m-1}$.  By
Theorem~\ref{thm:top} and Lemma~\ref{lem:kerU}(3), this dimension is exactly $1$.
\end{proof}

\begin{example}
$H_1(\B_2K_{3,3})$ and $H_2(\B_3K_{4,4})$ contain $\Z/2$; $H_2(\B_3K_{4,5})$ contains $\Z/3$;
$H_4(\B_5K_{6,9})$ contains $\Z/5$; $H_6(\B_7K_{8,13})$ contains $\Z/7$.  In general the
first $p$-torsion produced by the corollary lives in $H_{p-1}(\B_pK_{p+1,2p-1})$.
\end{example}

\subsection{Exact torsion one degree below the top}\label{sec:exacttors}
The top homology with finite coefficients determines the torsion one degree below, even
when the integral top homology has positive rank.

\begin{theorem}[Wilson \cite{Wilson}; see also {\cite[Corollary~3]{GKMM}}]\label{thm:wilson}
Let $0\le t\le k$ and $N\ge t+k$.  The inclusion matrix $W_{t,k}(N)$, with rows indexed
by $t$-subsets and columns by $k$-subsets, is equivalent over $\Z$ to a rectangular
diagonal matrix with nonzero entries
\[
\binom{k-i}{t-i}\quad\text{of multiplicity}\quad\binom Ni-\binom N{i-1},\qquad 0\le i\le t.
\]
Here equivalence means multiplication by unimodular integer matrices on the left and
right, and the entries are listed without regard to divisibility.
\end{theorem}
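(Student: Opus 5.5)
Since Theorem~\ref{thm:wilson} is quoted from \cite{Wilson}, I would reconstruct Wilson's argument prime by prime. The Smith form of an integer matrix is determined by its $p$-local Smith forms, $p$ ranging over all primes. So it suffices to show that, over the local ring $\Z_{(p)}$, $W=W_{t,k}(N)$ is equivalent to the diagonal matrix with entries $\binom{k-i}{t-i}$ of multiplicity $d_i=\binom Ni-\binom N{i-1}$. The multiplicities already sum to the right total, since $\sum_{i\le t}d_i=\binom Nt$ telescopes. The hypothesis $N\ge t+k$ ensures $W$ has full row rank $\binom Nt$ over $\Q$. This follows from Lemma~\ref{lem:kerU}(1) type positivity, iterating Lemma~\ref{lem:identity}, or from Gottlieb's theorem.

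\emph{The basic identity.} The engine is
\[
W_{i,t}W_{t,k}=\binom{k-i}{t-i}W_{i,k}\qquad(i\le t\le k),
\]
obtained by counting $t$-sets between a fixed $i$-set and a fixed $k$-set. Over $\Q$, I would decompose $\Q^{\binom{[N]}t}=\bigoplus_{i\le t}V_i$. Here $V_i$ is the image under $W_{i,t}^{\!\top}$ of the ``harmonic'' part $\ker W_{i-1,i}\subset\Q^{\binom{[N]}i}$, which has dimension $d_i$. Using the identity and the analogous decomposition on $k$-sets, $W^{\top}$ maps $V_i$ isomorphically onto the corresponding piece $V_i'$ on $k$-sets. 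Up to normalisation, the scalar on $V_i$ is $\binom{k-i}{t-i}$. This already gives the answer rationally and suggests the integral statement. What remains is to show that the decomposition can be chosen compatibly with the lattices after localising at $p$.

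\emph{Integral step.} I would build an integral basis adapted to this filtration. Wilson's device, which I would follow, is a $\Z$-basis of $\Z^{\binom{[N]}t}$ indexed by $t$-sets. Each $i$-block consists of ``null designs'' $x_S$, one for each $i$-set $S$ in a standard family of size $d_i$, supported on $t$-sets containing $S$. These are alternating sums of the kind that span $\ker W_{i-1,t}$, and they are unitriangular with respect to a suitable order on $t$-sets. On such a basis vector, the identity shows that $W^{\top}x_S$ equals $\binom{k-i}{t-i}$ times the analogous null design $y_S$ on $k$-sets. The integral claim is then that the vectors $y_S$, for all $i\le t$ and all standard $S$, extend to a $\Z$-basis of $\Z^{\binom{[N]}k}$, that is, that they span a pure sublattice. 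Granting this, $W^{\top}$ is literally diagonal in the two bases, and the theorem follows with no need to localise. The condition $N\ge t+k$ is exactly what is needed for the $y_S$ to be linearly independent and for enough standard sets to exist.

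\emph{Expected obstacle.} The main difficulty is the purity (unimodularity) claim: the $y_S$ form part of a $\Z$-basis on $k$-sets. I would prove it by exhibiting, for each standard $S$, a $k$-set $T_S$ such that the matrix $(y_S(T_{S'}))$ is unitriangular with $\pm1$ diagonal, in a lexicographic or shifted order. This would be modelled on the Frankl--Wilson construction of bases of $t$-designs via ``Gottlieb--Kantor'' triangularity. If a direct triangular choice proved too delicate, my fallback is the $p$-local route. There I would compute, for each $a$, the $\F_p$-rank of $W$ reduced modulo $p^a$ through the same filtration. This would also recover the rank formula used in Lemma~\ref{lem:kerU}(3) as the case $a=1$. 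I would then show that these ranks match those of the proposed diagonal form, which determines the $p$-adic elementary divisors.
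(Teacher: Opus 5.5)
The paper gives no proof of this statement; it is quoted from \cite{Wilson} and \cite[Corollary~3]{GKMM}. Your reconstruction has a genuine gap at the integral step, and the gap is not only the purity of the $y_S$ that you flag.

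You want the rational splitting $\bigoplus_iV_i$ to be ``chosen compatibly with the lattices after localising at $p$'', realized by a $\Z$-basis of null designs. For $i\ge1$, each of your $x_S$ lies in $\ker W_{i-1,t}\subseteq\ker W_{0,t}$, because $W_{0,i-1}W_{i-1,t}=\binom{t}{i-1}W_{0,t}$. The single vector of the $0$-block generates $V_0\cap\Z^{\binom{[N]}t}=\Z\mathbf 1$, so it is $\pm\mathbf 1$. Such a family is never a $\Z$-basis when $t\ge1$. Every integral combination of the $x_S$ has coordinate sum in $\binom Nt\Z$, so no $\delta_T$ is reached. Hence the $x_S$ span a sublattice of index at least $\binom Nt\ge2$, and in particular they cannot be unitriangular.

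Already for $t=1$, $k=2$, $N=3$ you get $\mathbf 1,\delta_2-\delta_1,\delta_3-\delta_1$, which span a sublattice of index $3$. Their images, with coordinates ordered $12,13,23$, give $y$'s $\mathbf 1,(0,-1,1),(-1,0,1)$, which again span a sublattice of index $3$. A diagonal form of $W^{\top}$ between finite-index sublattices says nothing at the primes dividing the indices. Localizing does not rescue this: for $t=1$, $k=3$, $N=6$, the prime $3$ divides both the index $\binom61$ and the elementary divisor $\binom31$. The eigenspace decomposition does not descend to $\Z$ or to $\Z_{(p)}$, and getting around exactly that is the content of the theorem.

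A working integral step must use vectors adapted to the filtration $\operatorname{im}W_{0,t}^{\top}\subseteq\operatorname{im}W_{1,t}^{\top}\subseteq\cdots$ without splitting it. One choice is the indicators $x_S=W_{i,t}^{\top}\delta_S$ of the $t$-sets containing $S$. For these your identity still gives $W^{\top}x_S=\binom{k-i}{t-i}W_{i,k}^{\top}\delta_S$ exactly. In the example above, $\mathbf 1,\delta_2,\delta_3$ is a $\Z$-basis, and the images $\mathbf 1,(1,0,1),(0,1,1)$ are unimodular, giving the correct form. Note that this unimodular matrix has only two zero entries, so a literal unitriangular minor need not exist.

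In general you would first have to prove that suitable families of $d_i$ sets of each size $i$ give a $\Z$-basis. Once you have that, every $\binom Nt\times\binom Nt$ minor of $W$ is divisible by $\prod_i\binom{k-i}{t-i}^{d_i}$. Purity of the images is then equivalent to this product being the gcd of those minors, which is the theorem itself, so it needs a genuine argument that your sketch does not contain.

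The $p$-local fallback does not supply one either. Recovering $p$-adic elementary divisors from reductions modulo $p^a$ is legitimate, but you give no method for computing those reductions. Already the case $a=1$ is Wilson's rank formula, which the paper imports in Lemma~\ref{lem:kerU}(3) rather than proving.
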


\begin{theorem}\label{thm:exacttors}
Let $m\ge1$ and $r\ge0$, put $N=m+r-1$, $s=\min\{m,r\}$, $b=\binom N{r-1}-\binom N{s-1}$ and
$\mu_j=\binom N{s-j}-\binom N{s-j-1}$, and let $K=K_{m+1,m+r-1}$.  For every commutative
ring $R$,
\begin{equation}\label{eq:topR}
H_m(\B_mK;R)\;\cong\;\ker(U_{r-1}\otimes R)\;\cong\;R^{b}\oplus\bigoplus_{j=2}^{s}R[j]^{\mu_j},
\qquad R[j]=\{x\in R: jx=0\},
\end{equation}
and
\begin{equation}\label{eq:torsformula}
\Tors H_{m-1}(\B_mK;\Z)\;\cong\;\Tors\coker U_{r-1}\;\cong\;\bigoplus_{j=2}^{s}\bigl(\Z/j\bigr)^{\mu_j} .
\end{equation}
In particular the integral top homology is free of rank $b$.  It vanishes if and only if $m\ge r$: for $m<r$ one has $s=m$ and
$b=\binom Nm-\binom N{m-1}>0$ since $m\le N/2$, and for $r=m+1$ this is the Catalan number
$\frac1{m+1}\binom{2m}m$.  In particular, for $m\ge r$ the torsion formula is
\[
\bigoplus_{i=0}^{r-2}\bigl(\Z/(r-i)\bigr)^{\binom Ni-\binom N{i-1}} .
\]
For a prime power $q=p^a$ and $r=q$, the $p$-primary part has exactly one cyclic
summand of order $q$ and all its other cyclic summands have order at most $p^{a-1}$ if
$m\ge q$; it has exponent less than $q$ if $m<q$.  For $a=1$: the $p$-primary part is
$\Z/p$ if $m\ge p$ and is zero if $m<p$.  (For $m=1$ the sum in \eqref{eq:torsformula}
is empty, and indeed $H_0$ is free.  For $r\in\{0,1\}$ one has $s=r\le1$, $b=0$
and empty sums: the top homology vanishes and $H_{m-1}(\B_mK_{m+1,m+r-1};\Z)$ is
torsion-free.)
\end{theorem}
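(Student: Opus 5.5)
Theorem~\ref{thm:top} already gives $H_m(\B_mK;R)\cong\ker(U_{r-1}\otimes R)$, naturally in $R$. The plan has three steps: put $U_{r-1}$ into Smith normal form using Theorem~\ref{thm:wilson}, read off the kernel over an arbitrary $R$, and then recover the torsion of $H_{m-1}$ from the universal coefficient theorem, tested against the rings $R=\Z/\ell^a$.

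\emph{Smith form.} We view $U_{r-1}$ as the matrix $W_{t,k}(N)$ with $t=r-1$, $k=r$, acting as $R^{\binom{[N]}{r-1}}\to R^{\binom{[N]}{r}}$; transposing does not change the diagonal form. Wilson's hypothesis $N\ge t+k$ reads $m+r-1\ge 2r-1$, that is, $m\ge r$. In that case the diagonal entries are $\binom{r-i}{r-1-i}=r-i$ with multiplicity $\binom Ni-\binom N{i-1}$ for $0\le i\le r-1$. Writing $j=r-i$, the entry $j=1$ is a unit, and for $j\ge2$ the multiplicity is $\binom N{r-j}-\binom N{r-j-1}=\mu_j$ (since $s=r$). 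There is no zero entry, so $b=0$.

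If $m<r$, then $N<2r-1$, so I would apply Wilson to the transpose map on complements. Complementation identifies $(r-1)$-subsets with $(N-r+1)=m$-subsets and $r$-subsets with $(m-1)$-subsets, and it turns $U_{r-1}$ into the matrix $W_{m-1,m}(N)$, now with rows indexed by $(m-1)$-subsets. Here $N\ge 2m-1$ holds, so Wilson applies with $t=m-1$, $k=m$. The nonzero entries are $m-i$ with multiplicities $\binom Ni-\binom N{i-1}$, giving $\mu_j$ with $s=m$. The number of columns is $\binom Nm=\binom N{r-1}$, while the number of nonzero entries totals $\binom N{m-1}=\binom N{s-1}$. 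The difference is exactly $b$ zero columns.

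\emph{Kernel over $R$.} With the diagonal form in hand, the kernel over $R$ is $R^b$ (from the zero columns) plus $R[j]$ for each entry $j$, which gives \eqref{eq:topR}. The same form also gives $\coker U_{r-1}\cong\Z^{\text{free}}\oplus\bigoplus_j(\Z/j)^{\mu_j}$.

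\emph{Torsion of $H_{m-1}$.} By Observation~\ref{obs:top}, $H_m(\,\cdot\,;\Z)$ is free. The universal coefficient theorem then gives $H_m(\,\cdot\,;\Z/n)\cong H_m(\,\cdot\,;\Z)\otimes\Z/n\oplus\Tor(H_{m-1},\Z/n)$, so
\[
\Tor(H_{m-1},\Z/n)\cong\bigoplus_j(\Z/\gcd(j,n))^{\mu_j}
\]
for every $n$. Taking $n=\ell^a$ for all primes $\ell$ and all $a$ counts, for each $a$, the cyclic $\ell$-primary summands of order at least $\ell^a$. These counts determine the $\ell$-primary part of a finitely generated group. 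Hence $\Tors H_{m-1}\cong\bigoplus_j(\Z/j)^{\mu_j}$, because both sides have identical $\Tor(-,\Z/\ell^a)$ for all $\ell,a$. This matching of abstract groups (as opposed to an exhibited map) is the step that needs care, and I expect it to be the main obstacle. The identification with $\Tors\coker U_{r-1}$ then follows from the Smith form.

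\emph{Remaining assertions.} The statement that the top homology is free of rank $b$ is \eqref{eq:topR} with $R=\Z$. For $m<r$ we have $b>0$, since $\binom Nm>\binom N{m-1}$ when $m\le N/2$. The case $r=m+1$ gives the Catalan number $\binom{2m}m-\binom{2m}{m-1}$. For $r=q=p^a$: if $m\ge q$, then $j=q$ occurs with $\mu_q=\binom N0=1$, and every other $j\le q$ has $p$-part below $q$. If $m<q$, then $s=m<q$, so every $j$ has $p$-adic order below $q$. The cases $r\le1$ are trivial, because the sums are empty.
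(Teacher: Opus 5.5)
Your proposal is correct and follows the paper's proof essentially step for step: Theorem~\ref{thm:top}, then Wilson's diagonal form (applied to $W_{r-1,r}(N)$ when $m\ge r$, and to $W_{m-1,m}(N)$ after complementing subsets when $m<r$), then the universal coefficient theorem with $R=\Z/\ell^a$. The step you flag as the main obstacle is routine: first cancel the common finite summand $(\Z/\ell^a)^b$, which the paper does explicitly; then note that for $a$ large $\Tor(G,\Z/\ell^a)$ is already the whole $\ell$-primary part of $G$. Separately, $r=0$ lies outside Wilson's range ($t=-1$) and is handled directly from Theorem~\ref{thm:top}, as you indicate.
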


\begin{proof}
For $r=0$ the matrix $U_{r-1}$ is empty, both sides of \eqref{eq:topR} are zero by
Theorem~\ref{thm:top}, and the universal coefficient argument below gives
$\Tor(H_{m-1},\Z/\ell^a)=0$ for all $\ell,a$, so $H_{m-1}$ is torsion-free.  Let
$r\ge1$.  If $m\ge r$, the transpose of $U_{r-1}$ is $W_{r-1,r}(N)$ and Wilson's hypothesis is
$N\ge2r-1$ (for $r=1$ this is $W_{0,1}(N)$, a row of ones, with $N=m\ge1$).  If $m<r$, complementing all subsets identifies $U_{r-1}$, up to row and
column permutations, with $W_{m-1,m}(N)$, and its hypothesis is $N\ge2m-1$.  Both hold in
the respective ranges.  Consequently the nonzero diagonal entries of $U_{r-1}$ are
\[
d_i=s-i\quad\text{with multiplicity}\quad\nu_i=\binom Ni-\binom N{i-1},\qquad 0\le i\le s-1.
\]
Their multiplicities sum to $\binom N{s-1}$, so $U_{r-1}$ has $b=\binom N{r-1}-\binom N{s-1}$
zero columns in diagonal form.  Since unimodular matrices stay invertible over every
$R$, the kernel of $U_{r-1}\otimes R$ is that of the diagonal matrix, namely
$R^b\oplus\bigoplus_iR[d_i]^{\nu_i}$; with Theorem~\ref{thm:top} and $j=s-i$ this is
\eqref{eq:topR}.  For $R=\Z$ all $R[d_i]$ vanish, proving the assertion about the
integral top homology.

Put $G=H_{m-1}(\B_mK;\Z)$.  For a prime $\ell$ and $a\ge1$, take $R=\Z/\ell^a$.  The
split universal coefficient sequence and \eqref{eq:topR} give isomorphisms of finite
abelian groups
\[
R^b\oplus\Tor(G,R)\;\cong\;H_m(\B_mK;R)\;\cong\;R^b\oplus\bigoplus_{i=0}^{s-1}R[d_i]^{\nu_i}.
\]
Cancellation of the common finite summand $R^b$ is valid
by the classification of finite abelian groups.  As
\[
R[d_i]\cong\Z/\ell^{\min\{a,v_\ell(d_i)\}},
\]
comparison for all $\ell$ and $a$ determines every elementary divisor of $\Tors G$.  The
result is the direct sum of the $\Z/d_i$ with the stated multiplicities; the entries
$d_{s-1}=1$ contribute nothing.  This also equals the torsion of $\coker U_{r-1}$.  Its
possible free summand has no effect on this argument.  Reindexing by $j=s-i$ gives
\eqref{eq:torsformula}.

When $r=q$ and $m\ge q$, the diagonal entries are $d_i=q-i$, $0\le i\le q-1$; only
$d_0=q$ has $p$-adic valuation $a$, with $\nu_0=1$, while $v_p(q-i)=v_p(i)\le a-1$ for
$1\le i\le q-1$.  When $m<q$, all diagonal entries are at most $m<q$, so their $p$-parts
are at most $p^{a-1}$.  This proves the final assertions.
\end{proof}

\begin{corollary}[Primary summands and exponent]\label{cor:primary}
In the notation of Theorem~\ref{thm:exacttors},
\[
\Tors_\ell H_{m-1}(\B_mK;\Z)\cong\bigoplus_{a\ge1}\bigl(\Z/\ell^a\bigr)^{M_{\ell,a}},\qquad
M_{\ell,a}=\sum_{\substack{2\le j\le s\\ v_\ell(j)=a}}\Bigl[\binom N{s-j}-\binom N{s-j-1}\Bigr].
\]
The exponent of the torsion subgroup is $\operatorname{lcm}(1,\dots,s)$.  Its prime
divisors are exactly the primes at most $s$.
\end{corollary}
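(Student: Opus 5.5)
The plan is to read everything off the explicit decomposition \eqref{eq:torsformula} of Theorem~\ref{thm:exacttors}. There the torsion subgroup is $\bigoplus_{j=2}^{s}(\Z/j)^{\mu_j}$ with $\mu_j=\binom N{s-j}-\binom N{s-j-1}$.

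\textbf{Primary summands.} First I split each cyclic summand $\Z/j$ by the Chinese remainder theorem as $\Z/j\cong\bigoplus_\ell\Z/\ell^{v_\ell(j)}$. The $\ell$-primary part of $(\Z/j)^{\mu_j}$ is then $(\Z/\ell^{v_\ell(j)})^{\mu_j}$, and it is trivial when $v_\ell(j)=0$. Grouping the indices $j$ by the value $a=v_\ell(j)\ge1$ gives exactly the multiplicity $M_{\ell,a}$ of $\Z/\ell^a$, as stated.

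\textbf{Exponent.} The exponent of a finite direct sum of cyclic groups is the lcm of the orders of those summands that actually occur. The one point needing care is that every index $j\in\{2,\dots,s\}$ genuinely occurs, that is, $\mu_j>0$. So I will check that $\binom Nk-\binom N{k-1}>0$ for $0\le k=s-j\le s-2$.

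This holds because $\binom Nk$ is strictly increasing in $k$ for $k\le N/2$. So it suffices to have $s-2<N/2$, i.e.\ $2s-4<N=m+r-1$. This follows from $s\le m$ and $s\le r$, which give $2s\le m+r$, hence $2s-4\le m+r-4<m+r-1$. (For $k=0$ the positivity is just $\binom N0=1>0$, using $\binom N{-1}=0$.)

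Hence the exponent is $\operatorname{lcm}(2,\dots,s)=\operatorname{lcm}(1,\dots,s)$. When $s\le1$ the sum is empty, and the exponent is $1$, the lcm of the empty set, consistent with the conventions.

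\textbf{Prime divisors.} A prime $\ell$ divides $\operatorname{lcm}(1,\dots,s)$ if and only if $\ell\le s$: the index $j=\ell$ itself contributes a factor $\ell$, and conversely every factor is at most $s$. The only step with any content is the positivity of the $\mu_j$; the rest is bookkeeping.
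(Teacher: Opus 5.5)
Your proof is correct and follows the paper's argument: split each $\Z/j$ in \eqref{eq:torsformula} into primary factors, and check that every $\mu_j>0$ because $0\le s-j\le s-2<N/2$. Each order $2,\dots,s$ therefore actually occurs, and this gives both the exponent $\operatorname{lcm}(1,\dots,s)$ and the set of prime divisors.
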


\begin{proof}
Decompose each cyclic group in \eqref{eq:torsformula} into primary factors.  All
displayed multiplicities are positive because $0\le s-j\le s-2<N/2$.  Thus each order
$2,\dots,s$ occurs, giving the exponent and the set of prime divisors.
\end{proof}

The torsion of an arbitrary order $k$ is governed by the largest prime power
dividing $k$.  For $k\ge2$ with prime factorization $k=p_1^{a_1}\cdots p_t^{a_t}$
($p_i$ distinct, $a_i\ge1$), put
\[
q(k)=\max\{p_1^{a_1},\dots,p_t^{a_t}\}.
\]
Thus $q(k)=k$ for a prime power and, for instance, $q(6)=3$, $q(12)=4$, $q(72)=9$.

\begin{corollary}[Torsion of order $k$]\label{cor:orderk}
Let $m\ge1$, $r\ge0$, $k\ge2$, and $K=K_{m+1,m+r-1}$.  The following are equivalent.
\begin{enumerate}[leftmargin=*]
\item $H_{m-1}(\B_mK;\Z)$ has an element of order $k$.
\item $\Z/k$ is a direct summand of $H_{m-1}(\B_mK;\Z)$.
\item $q(k)\le\min\{m,r\}$, i.e.\ $m\ge q(k)$ and $r\ge q(k)$.
\end{enumerate}
In particular $H_{q-1}(\B_qK_{q+1,2q-1};\Z)$ has a direct summand $\Z/k$ for $q=q(k)$,
and $(q,q)$ is the least pair $(m,r)$ in the coordinatewise order for which
$H_{m-1}(\B_mK_{m+1,m+r-1};\Z)$ contains an element of order $k$.
\end{corollary}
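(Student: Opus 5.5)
The plan is to derive everything from the explicit torsion formula \eqref{eq:torsformula} of Theorem~\ref{thm:exacttors}, together with the primary decomposition in Corollary~\ref{cor:primary}. Write $s=\min\{m,r\}$ and $T=\Tors H_{m-1}(\B_mK;\Z)\cong\bigoplus_{j=2}^{s}(\Z/j)^{\mu_j}$. As shown in the proof of Corollary~\ref{cor:primary}, every multiplicity $\mu_j$ with $2\le j\le s$ is positive. The implication (2)$\Rightarrow$(1) is trivial, so it remains to prove (1)$\Rightarrow$(3) and (3)$\Rightarrow$(2).

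For (1)$\Rightarrow$(3), suppose $x$ has order $k$. Then $x$ lies in $T$, and for each $i$ the group $T$ contains an element of order $p_i^{a_i}$, obtained as a multiple of $x$. In particular $T$ has an element of order $q=q(k)=p^a$. By Corollary~\ref{cor:primary}, the $p$-primary summands of $T$ are the $\Z/p^{v_p(j)}$ for $2\le j\le s$. An element of order $p^a$ therefore forces some $j\le s$ with $v_p(j)\ge a$, hence $j\ge p^a=q$. It follows that $q\le s$.

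For (3)$\Rightarrow$(2), suppose $q(k)\le s$. For each $i$ we have $p_i^{a_i}\le q(k)\le s$, and $\mu_{p_i^{a_i}}>0$, so $T$ contains the direct summand $\bigoplus_i\Z/p_i^{a_i}$. This uses distinct indices $j=p_i^{a_i}$, each of which occurs in the direct sum \eqref{eq:torsformula}. By the Chinese remainder theorem this sum is $\Z/k$. Since $T$ is a direct summand of the finitely generated group $H_{m-1}$, $\Z/k$ is a direct summand of $H_{m-1}$.

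The ``in particular'' clause follows by taking $m=r=q$, which gives $N=2q-1$ and the graph $K_{q+1,2q-1}$. Minimality of $(q,q)$ in the coordinatewise order follows because condition (3) is exactly $m\ge q$ and $r\ge q$.

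The only real subtlety is the bookkeeping in the second step. A $\Z/k$ summand cannot come from a single $\Z/j$ unless $k\mid j$. Instead one must assemble it from the distinct prime-power summands $\Z/p_i^{a_i}$, each contributed by its own index $j=p_i^{a_i}\le s$. This is where positivity of the multiplicities $\mu_j$ (from $s-j<N/2$) is essential.
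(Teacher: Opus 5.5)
Your proof is correct and is essentially the paper's argument. Both read off, from the decomposition $\bigoplus_{j=2}^{s}(\Z/j)^{\mu_j}$ with all $\mu_j>0$, which prime-power orders occur, and both assemble $\Z/k$ from the distinct summands $\Z/p_i^{a_i}$ with $j=p_i^{a_i}\le s$. The only difference is cosmetic: for (1)$\Rightarrow$(3) you pass to the $p$-primary part for the largest prime power $q(k)$, whereas the paper uses the criterion that a finite abelian group has an element of order $k$ if and only if each $p_i^{a_i}$ divides the order of some cyclic summand.
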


\begin{proof}
Put $s=\min\{m,r\}$ and write
$G=\Tors H_{m-1}(\B_mK;\Z)\cong\bigoplus_{j=2}^s(\Z/j)^{\mu_j}$ with all $\mu_j>0$
(Corollary~\ref{cor:primary}).  An element of order $k$ exists in a finite abelian group
if and only if, for each $i$, some cyclic summand has order divisible by $p_i^{a_i}$.
Since the cyclic summands of $G$ have orders exactly $2,\dots,s$, such a summand exists
if and only if $p_i^{a_i}\le s$.  Hence (1) is equivalent to $\max_ip_i^{a_i}\le s$,
which is (3).  If (3) holds, choose one cyclic summand from the term
$j=p_i^{a_i}$ for each $i$. These terms are distinct, and the selected summands give
$\Z/p_1^{a_1}\oplus\dots\oplus\Z/p_t^{a_t}\cong\Z/k$ as a direct summand of $G$,
giving (2); and (2) trivially implies (1).
\end{proof}

\begin{remark}\label{rem:orderk}
\begin{enumerate}[leftmargin=*]
\item Since $k\mid\operatorname{lcm}(1,\dots,s)$ if and only if $q(k)\le s$, condition
      (3) can be restated as $k\mid\operatorname{lcm}(1,\dots,\min\{m,r\})$, in
      accordance with the exponent computed in Corollary~\ref{cor:primary}.  For
      $r\ge m$ every generalized theta class has order exactly
      $\operatorname{lcm}(1,\dots,m)$ (Theorem~\ref{thm:torsgen}), so a suitable multiple
      of a single explicit class realizes every order $k$ with $q(k)\le m$.
\item For example, $H_2(\B_3K_{4,5};\Z)\cong\Z^{155}\oplus(\Z/2)^4\oplus\Z/3$ already
      contains elements of order $6$. Within the family
      $H_{m-1}(\B_mK_{m+1,m+r-1};\Z)$, order $12$ first occurs at $(m,r)=(4,4)$,
      in $H_3(\B_4K_{5,7};\Z)$, and order $30$ at $(m,r)=(5,5)$,
      in $H_4(\B_5K_{6,9};\Z)$.
\item Corollary~\ref{cor:orderk-minimal} establishes minor-minimality for arbitrary
      order $k$, using $q(k)$ as the graph parameter. Thus the occurrence criterion
      above also identifies the graphs to which the minimality theorem applies.
\end{enumerate}
\end{remark}

\begin{example}
For $r=2$ and $m\ge2$, the torsion is $\Z/2$. For $r=3$ and $m\ge3$ it is $\Z/3\oplus(\Z/2)^{m+1}$.
For $r=4$ and $m\ge4$ it is $\Z/4\oplus(\Z/3)^{N-1}\oplus(\Z/2)^{\binom N2-N}$, where
$N=m+3$.  For $m=5$, $r=5$,
\[
\Tors H_4(\B_5K_{6,9};\Z)\cong\Z/5\oplus(\Z/4)^8\oplus(\Z/3)^{27}\oplus(\Z/2)^{48}.
\]
The prime-to-$5$ subgroup in this example has exponent $12$, not at most $4$: bounds on
the orders of displayed cyclic summands are not bounds on the orders of all elements.
For $r=11$ and $m\ge11$, the formula also gives $(\Z/9)^{\binom{m+10}2-(m+10)}$ as a
direct summand.
\end{example}

\begin{remark}
This argument determines the free rank of $H_m$, not of $H_{m-1}$.  The latter still
depends on lower Betti numbers through Gal's formula.  For $m=3$ the known $H_1$ suffices
in the cases below; see Corollary~\ref{cor:small-cases}.
\end{remark}

\subsection{Explicit calculations for small parameters}\label{sec:smallcases}
We give direct calculations in the range $0\le r\le3$.
For $r=0,1$, Theorem~\ref{thm:top} already gives vanishing of the top
homology over every coefficient ring. The remaining cases $r=2,3$ admit
explicit descriptions of the top homology over every ring.
For $m=1,2,3$, the general formulas also determine the full integral
homology for every $r\ge0$, as recorded in
Corollary~\ref{cor:small-cases}.

We now treat $r=2,3$ directly, because the identification of $\ker U_2$
with a module of functions with vanishing triangle sums is used in
Section~\ref{sec:minimal}.  For $r=3$ the kernel is the module of functions on pairs
with vanishing triangle sums,
\[
T_R(N)=\ker U_2=\Bigl\{h\colon\tbinom B2\to R\ \Bigm|\ h(xy)+h(xz)+h(yz)=0\text{ for all distinct }x,y,z\Bigr\}.
\]

\begin{lemma}\label{lem:T}
For $N\ge5$ and every commutative ring $R$,
\[
T_R(N)\cong R[6]\oplus R[2]^{\,N-2}.
\]
In particular $T_\Z(N)=0$, $T_{\F_3}(N)\cong\F_3$ (the constants), $T_{\F_2}(N)\cong\F_2^{N-1}$,
$T_{\F_p}(N)=0$ for $p\ge5$, $T_{\Z/4}(N)\cong(\Z/2)^{N-1}$ and $T_{\Z/9}(N)\cong\Z/3$.
\end{lemma}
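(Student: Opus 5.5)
The plan is to prove Lemma~\ref{lem:T} by hand, since this section is meant to give direct descriptions, and then cross-check against Theorem~\ref{thm:exacttors}. The idea is to eliminate all values of $h$ off a star centred at one vertex, which turns the triangle conditions into a small explicit module. Fix a vertex $1\in B$ and put $a_x=h(1x)$ for $x\ne1$.

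First, the triangle relation on $\{1,x,y\}$ gives, for distinct $x,y\ne1$,
\[
h(xy)=-a_x-a_y .
\]
So $h$ is determined by $a\in R^{N-1}$, and conversely every $a$ defines an $h$ satisfying all triangle relations through $1$. Substituting into a triangle $\{x,y,z\}$ avoiding $1$ gives
\[
h(xy)+h(xz)+h(yz)=-2(a_x+a_y+a_z).
\]
Hence $T_R(N)$ is isomorphic to the module
\[
\{a\in R^{N-1} : 2(a_x+a_y+a_z)=0 \text{ for all distinct } x,y,z\ne 1\}.
\]

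Second, simplify these conditions. Since $N-1\ge4$, two triples sharing two indices give $2(a_x-a_w)=0$ for all $x,w$. Pick a base index $2$ and change variables to $u=a_2$ and $d_x=a_x-a_2$ for $x\ne1,2$; this change is unimodular over $\Z$, hence an isomorphism over any $R$. The conditions become $2d_x=0$ for every $x$, together with
\[
2\bigl(3u+d_x+d_y+d_z\bigr)=0
\]
for all triples, where $d_2:=0$. Using $2d_x=0$, every such condition reduces to $6u=0$, and conversely $6u=0$ together with $2d_x=0$ implies all of them. Therefore
\[
T_R(N)\cong R[6]\oplus R[2]^{N-2}.
\]
The main point to check carefully is that the difference relations really follow, which needs four distinct indices other than $1$; this holds since $N\ge5$.

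As a consistency check, Theorem~\ref{thm:exacttors} with $r=3$, $m=N-2\ge3$ gives $s=3$, $b=0$, $\mu_2=N-1$ and $\mu_3=1$. So $\ker U_2\cong R[2]^{N-1}\oplus R[3]$, and $R[2]\oplus R[3]\cong R[6]$ because $x=3x-2x$, which agrees.

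Finally, the listed specializations follow by computing $R[6]$ and $R[2]$ in each ring:
\begin{itemize}
\item for $R=\Z$, both are $0$, so $T_\Z(N)=0$;
\item for $R=\F_3$, $R[6]=\F_3$ and $R[2]=0$, giving the constants;
\item for $R=\F_2$, $R[6]=R[2]=\F_2$, so $T_{\F_2}(N)\cong\F_2^{N-1}$;
\item for $R=\F_p$ with $p\ge5$, both vanish;
\item for $R=\Z/4$, $R[6]=R[2]=\Z/2$, so $T_{\Z/4}(N)\cong(\Z/2)^{N-1}$;
\item for $R=\Z/9$, $R[6]=\Z/3$ and $R[2]=0$, so $T_{\Z/9}(N)\cong\Z/3$.
\end{itemize}
The claim that the $\F_3$-solutions are exactly the constants is read off from $u$ with all $d_x=0$, which gives $a\equiv u$ and $h\equiv -2u=u$.
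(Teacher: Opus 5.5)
Your proposal is correct and follows essentially the same route as the paper's proof: eliminate the values off the star at one vertex via the triangles through it, reduce to the conditions $2(a_x+a_y+a_z)=0$, use $N-1\ge4$ to get $2a_x=2a_w$, and change variables to obtain $R[6]\oplus R[2]^{N-2}$. Your cross-check against Theorem~\ref{thm:exacttors} and the explicit specializations are extras not in the paper's proof, but they are correct and harmless, since the check is not used in the argument.
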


\begin{proof}
Write $B=\{0,1,\dots,N-1\}$ and $x_i=h(0i)$ for $i\ge1$.  The triangle relation for
$\{0,i,j\}$ gives $h(ij)=-x_i-x_j$, and then the relation for $\{i,j,k\}$ with
$i,j,k\ge1$ reads $2(x_i+x_j+x_k)=0$.  These are all the relations.  Since $N-1\ge4$,
comparing the relations for $\{i,j,k\}$ and $\{i,j,l\}$ gives $2x_k=2x_l$ for all $k,l$.
Put $a=x_1$ and $b_i=x_i-a$ for $2\le i\le N-1$.  Then $2b_i=0$, and the relations become
$6a=0$.  Conversely $6a=0$ and $2b_i=0$ imply all triangle relations.  This gives the
isomorphism, with $a\in R[6]$ and $(b_2,\dots,b_{N-1})\in R[2]^{N-2}$.
\end{proof}

\begin{corollary}\label{cor:exact}
For every $m\ge3$ (this is Theorem~\ref{thm:exacttors} for $r=3$; we give the direct
argument),
\[
H_m(\B_mK_{m+1,m+2};\Z)=0,\qquad
\Tors H_{m-1}(\B_mK_{m+1,m+2};\Z)\cong\Z/3\oplus(\Z/2)^{m+1}.
\]
For $m=2$, $H_2(\B_2K_{3,4})\cong\Z^2$.
\end{corollary}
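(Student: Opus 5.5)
The plan is to read everything off Theorem~\ref{thm:top} with $r=3$, using Lemma~\ref{lem:T} in place of Wilson's theorem. Here $N=m+2$, and $\ker U_2$ is exactly $T_R(N)$. For $m\ge3$ we have $N\ge5$, so Lemma~\ref{lem:T} gives, for every commutative ring $R$,
\[
H_m(\B_mK_{m+1,m+2};R)\cong R[6]\oplus R[2]^{\,m}.
\]
For $R=\Z$ both $R[6]$ and $R[2]$ vanish, so $H_m(\B_mK_{m+1,m+2};\Z)=0$.

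For the torsion one degree lower, put $G=H_{m-1}(\B_mK_{m+1,m+2};\Z)$. Since the integral top homology vanishes, the universal coefficient theorem gives $H_m(\,\cdot\,;R)\cong\Tor(G,R)$ for every $R=\Z/\ell^a$. Comparing with the formula above:
\begin{itemize}
\item For $\ell\ge5$, $\Tor(G,\Z/\ell^a)=0$, so $G$ has no $\ell$-torsion.
\item For $\ell=3$, $\Tor(G,\Z/3^a)\cong\Z/3$ for every $a\ge1$. This is the instance $T_{\Z/9}\cong\Z/3$ of the lemma, and forces the $3$-primary part to be $\Z/3$.
\item For $\ell=2$, $\Tor(G,\Z/2^a)\cong(\Z/2)^{m+1}$ for every $a$. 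The $R[6]$ summand contributes $\Z/2$ and the $R[2]^m$ contributes $(\Z/2)^m$, matching $T_{\Z/4}\cong(\Z/2)^{N-1}$. Hence the $2$-primary part is $(\Z/2)^{m+1}$.
\end{itemize}
Here I use that $\Tor(\Z/\ell^c,\Z/\ell^a)\cong\Z/\ell^{\min(a,c)}$, and that $G$ is finitely generated, being the homology of a finite complex. Assembling the primary parts gives $\Tors G\cong\Z/3\oplus(\Z/2)^{m+1}$.

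For $m=2$ we have $N=4$, so Lemma~\ref{lem:T} does not apply and I compute $T_\Z(4)$ directly, reusing the parametrization from the lemma's proof. Set $x_i=h(0i)$ for $i=1,2,3$ and $h(ij)=-x_i-x_j$. The only remaining triangle is $\{1,2,3\}$, which imposes $2(x_1+x_2+x_3)=0$, i.e.\ $x_1+x_2+x_3=0$ over $\Z$. This leaves a free module of rank $2$, so $H_2(\B_2K_{3,4})\cong\Z^2$. As a check, this agrees with $b=\binom42-\binom41=2$ from Theorem~\ref{thm:exacttors}.

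The only delicate point is the $2$-primary bookkeeping: the $\Z/2$ coming from $R[6]$ must be counted, so the answer is $m+1$ copies and not $m$. I also need to confirm that the relation $2x_k=2x_l$ in the lemma's proof uses $N-1\ge4$, which holds because $m\ge3$.
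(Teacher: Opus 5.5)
Your proposal is correct and follows the paper's proof essentially step for step. You use Theorem~\ref{thm:top} with Lemma~\ref{lem:T} to compute the top homology over $\Z/\ell^a$, convert it to $\Tor(H_{m-1},\Z/\ell^a)$ by the universal coefficient theorem, count primary summands with $a=1$ and bound their orders with $a=2$, and settle $m=2$ via the single triangle relation $x_1+x_2+x_3=0$. The only cosmetic difference is that the paper cites Corollary~\ref{cor:allprimes}(1) for the integral vanishing, whereas you read it off from $R[6]=R[2]=0$ for $R=\Z$, which is equally valid.
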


\begin{proof}
The vanishing is Corollary~\ref{cor:allprimes}(1).  Since $H_m(\,\cdot\,;\Z)=0$, the
universal coefficient theorem gives $H_m(\,\cdot\,;\Z/q)\cong\Tor(H_{m-1},\Z/q)$ for every
$q$, and Theorem~\ref{thm:top} with Lemma~\ref{lem:T} ($N=m+2\ge5$) computes the left
side:
\begin{itemize}
\item $q=p\ge5$ prime: $0$, so there is no $p$-torsion;
\item $q=3$: $\F_3$, so there is exactly one $3$-primary cyclic summand $\Z/3^a$;
\item $q=9$: $\Z/3$.  Since $\Tor(\Z/3^a,\Z/9)$ has order $\min(3^a,9)$, we get $a=1$;
\item $q=2$: $\F_2^{m+1}$, so there are exactly $m+1$ cyclic summands $\Z/2^{a_i}$;
\item $q=4$: $(\Z/2)^{m+1}$, of order $2^{m+1}$.  Since $\Tor(\Z/2^{a_i},\Z/4)$ has order
      $\min(2^{a_i},4)\ge2$, all $a_i=1$.
\end{itemize}
For $m=2$, $N=4$: the relations $2(x_i+x_j+x_k)=0$ involve only the single triple
$\{1,2,3\}$, and over $\Z$ the solution set of $x_1+x_2+x_3=0$ is $\Z^2$.
\end{proof}

\begin{corollary}[$r=2$]\label{cor:r2}
For every $m\ge2$ (Theorem~\ref{thm:exacttors} for $r=2$), $H_m(\B_mK_{m+1,m+1};R)\cong R[2]$
for every commutative ring $R$, and
\[
\Tors H_{m-1}(\B_mK_{m+1,m+1};\Z)\cong\Z/2 .
\]
\end{corollary}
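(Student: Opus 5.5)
For $r=2$ we have $N=m+1$, and by Theorem~\ref{thm:top} the top homology is $\ker U_1$. Here $U_1$ is the map from functions on the singletons of $B$ to functions on the pairs of $B$, given by $h\mapsto\bigl(\{x,y\}\mapsto h(x)+h(y)\bigr)$. We want a direct, hand computation of this kernel, in the spirit of Lemma~\ref{lem:T}.

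First we compute $\ker(U_1\otimes R)$ for an arbitrary ring $R$. Suppose $h(x)+h(y)=0$ for all distinct $x,y\in B$. Since $m\ge2$, we have $N=m+1\ge3$. So for any distinct $x,y$ we can choose a third element $z$. From $h(x)=-h(z)=h(y)$ we see that $h$ is constant, say with value $c$. The relation then reads $2c=0$. Conversely, every constant $c\in R[2]$ lies in the kernel. This gives $H_m(\B_mK_{m+1,m+1};R)\cong R[2]$.

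Next we extract the torsion by the universal coefficient argument of Corollary~\ref{cor:exact}. Taking $R=\Z$ gives $H_m=0$ integrally, since $\Z[2]=0$. Hence $H_m(\,\cdot\,;\Z/q)\cong\Tor(H_{m-1},\Z/q)$ for every $q$. We evaluate this in four cases.
\begin{itemize}
\item For $q=p$ an odd prime, $\F_p[2]=0$, so $H_{m-1}$ has no odd torsion.
\item For $q=2$, $\F_2[2]=\F_2$, so there is exactly one $2$-primary cyclic summand $\Z/2^a$.
\item For $q=4$, $(\Z/4)[2]\cong\Z/2$. Since $\Tor(\Z/2^a,\Z/4)$ has order $\min(2^a,4)$, this forces $a=1$.
\item Finitely generated torsion is detected by these groups, so the torsion of $H_{m-1}$ is exactly $\Z/2$.
\end{itemize}
As a consistency check, this agrees with Theorem~\ref{thm:exacttors} for $s=2$: there $\mu_2=\binom N0-\binom N{-1}=1$.

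There is essentially no obstacle here. The only point needing care is the hypothesis $N\ge3$, which is what guarantees the auxiliary element $z$ exists; it is exactly where $m\ge2$ is used. For $m=1$ the graph would be $K_{2,2}$ with $N=2$, and then the kernel is $\{(c,-c)\}\cong R$, not $R[2]$.
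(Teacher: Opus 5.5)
Your proposal is correct and follows the paper's proof essentially step for step. The paper also uses a third element of $B$, which needs $|B|=m+1\ge3$, to show that $\ker U_1$ consists of the constant functions with value in $R[2]$. It then reads off $\Tors H_{m-1}\cong\Z/2$ from the top homology over $\Z$, $\F_p$ for odd $p$, $\F_2$ and $\Z/4$, using the universal coefficient argument of Corollary~\ref{cor:exact}.
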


\begin{proof}
The kernel of $U_1$ consists of the functions $h$ on $B$ with $h(x)+h(y)=0$ for all
$x\ne y$.  Since $|B|=m+1\ge3$, for distinct $x,y,z$ we get $h(x)=-h(y)=h(z)=-h(x)$, so
$h$ is constant with $2h=0$; conversely such $h$ lie in the kernel.  Thus
$\ker U_1\cong R[2]$, and Theorem~\ref{thm:top} gives the first claim.  The second
follows as in the proof of Corollary~\ref{cor:exact}: $H_m(\,\cdot\,;\Z)=0$ since
$R[2]=0$ for $R=\Z$; $H_m(\,\cdot\,;\F_2)\cong\F_2$ gives exactly one $2$-primary
summand and $H_m(\,\cdot\,;\Z/4)\cong\Z/2$ gives exponent one; and
$H_m(\,\cdot\,;\F_p)=0$ for odd $p$.
\end{proof}

\begin{remark}[Recovering part of Ko--Park]\label{rem:ko-park}
For $m=2$, Corollary~\ref{cor:r2} says $\Tors H_1(\B_2K_{3,3})\cong\Z/2$, and
Theorem~\ref{thm:subgraph} with $p=m=2$ says that no proper subgraph of $K_{3,3}$ has
$2$-torsion in $H_1(\B_2)$.  Both are special cases of Theorem~\ref{thm:KP} in the
form of \cite{KP}: $H_1(\B_n\Gamma)$, for $n\ge2$, has $2$-torsion exactly when $\Gamma$ is
non-planar, and $K_{3,3}$ is a minimal non-planar graph.  Thus the top-homology method
recovers this part of the Ko--Park theorem, and Theorems~\ref{thm:top}
and~\ref{thm:subgraph} may be viewed as its analogue for every prime.

\end{remark}

\begin{corollary}\label{cor:small-cases}
For $1\le m\le3$ and $r\ge0$, the integral homology of
$\B_mK_{m+1,m+r-1}$ is given in Table~\ref{tab:small-cases},
and vanishes in degrees greater than $m$.
Here
\[
g_2(r)=\binom{r+1}2-\binom{r+1}1,\qquad
g_3(r)=\binom{r+2}3 - \binom{r+2}2,
\]
and
\[
f_3(r)=
2\binom{r+2}3 +19 \binom{r+2}2-11\binom{r+2}1
\qquad(r\ge3).
\]
\end{corollary}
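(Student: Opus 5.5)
The statement computes the full integral homology of $\B_mK_{m+1,m+r-1}$ for $m\le3$. The plan is to determine each degree from the pieces already available. Degrees above $m$ vanish by Observation~\ref{obs:top}, and $H_0=\Z$ because $\UConf_m$ of a connected graph is connected. The top group $H_m$ is free of rank $b=\binom N{r-1}-\binom N{s-1}$ by Theorem~\ref{thm:exacttors}, with $N=m+r-1$. For $m=2$ this gives $b=\binom{r+1}{r-1}-\binom{r+1}{1}=g_2(r)$ when $r\ge2$, and $0$ otherwise. For $m=3$ and $r\ge3$ it gives $\binom{r+2}{r-1}-\binom{r+2}2=g_3(r)$; for $r\le 3$ the group vanishes, consistent with $g_3(3)=0$. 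The torsion in degree $m-1$ is also given by Theorem~\ref{thm:exacttors}. It is $0$ for $m=1$, and $\Z/2$ for $m=2$ with $r\ge2$. For $m=3$ it is $\Z/2$ when $r=2$ and $\Z/3\oplus(\Z/2)^4$ when $r\ge3$ (Corollary~\ref{cor:exact} with $m=3$).

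The remaining ingredients are $H_1$, which matters for $m=3$, and the free ranks of $H_{m-1}$. For $m=1$, $\UConf_1=K_{2,r}$ is a graph, so $H_1$ is free of rank $|E|-|V|+1=2r-(r+2)+1=r-1$. For $m=2,3$, I would take $H_1$ from Ko--Park (Theorem~\ref{thm:KP}). Its torsion is $(\Z/2)^{\epsilon}$ with $\epsilon=1$ exactly for non-planar graphs and $n\ge2$. For $m=2$ this is consistent with Corollary~\ref{cor:r2}. For $m=3$, $K_{4,r+2}$ is non-planar once $r\ge1$. The rank of $H_1(\B_n\Gamma)$ for $n\ge2$ is given by the Ko--Park formula, in which the $2$-connected or $3$-connected structure enters. $K_{4,r+2}$ is $3$-connected for $r\ge1$, so the rank stabilizes at $\beta_1(\Gamma)$ plus the vertex contributions $\sum_v\bigl(\binom{d(v)-1}{2}\bigr)$-type star terms. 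I would cite the explicit formula from \cite{KP} rather than rederive it. Then the last unknown free rank, that of $H_{m-1}$ for $m=2,3$, follows from Gal's formula. Expand $\prod_v(1-(d(v)-1)t)/(1-t)^{|E|}$ with $m+1$ vertices of degree $m+r-1$, $m+r-1$ vertices of degree $m+1$, and $|E|=(m+1)(m+r-1)$. The coefficient of $t^m$ gives $\chi$, and the alternating sum determines $\rk H_{m-1}$ once all other ranks are known.

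The main obstacle is the bookkeeping in the $m=3$ Euler characteristic: extracting the $t^3$ coefficient as a polynomial in $r$ and showing that it combines with $g_3(r)$ and $\rk H_1$ into $f_3(r)=2\binom{r+2}3+19\binom{r+2}2-11\binom{r+2}1$. I would first check a base value, namely $r=3$, where the result must equal $155$ per Theorem~C's display, and then verify the polynomial identity in the basis $\binom{r+2}{k}$. Small $r$ ($r=0,1,2$) lie outside the polynomial range, because the top rank formula and non-planarity change there. These cases, for example $K_{4,2}$, which is planar, and $K_{4,3}$, would be handled separately as individual table entries by the same Euler-characteristic computation.
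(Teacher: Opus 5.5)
Your route is the paper's: Observation~\ref{obs:top} above degree $m$, Theorem~\ref{thm:exacttors} for $H_m$ and $\Tors H_{m-1}$, Ko--Park for $H_1$, and Gal's formula for the last remaining rank ($b_2$ when $m=3$). However, two of the inputs you state are wrong, and they would produce wrong table entries.

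\textbf{Torsion for $m=3$.} Corollary~\ref{cor:exact} is the case $r=3$ with $m$ varying (the graph is $K_{m+1,m+2}$). With $m=3$ it only covers $K_{4,5}$. For $m=3$ and $r\ge3$, Theorem~\ref{thm:exacttors} gives $s=3$, $N=r+2$, $\mu_2=\binom N1-\binom N0=r+1$ and $\mu_3=1$. Hence $\Tors H_2\cong(\Z/2)^{r+1}\oplus\Z/3$, as the table records, and your $(\Z/2)^4$ is wrong for every $r\ge4$. Also, $(m,r)=(1,0)$ is an exception to your connectivity argument. The graph $K_{2,0}$ is two points, so $H_0=\Z^2$ and $H_1=0$; your cycle-rank formula would give $-1$ there.

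\textbf{The Ko--Park rank.} Your description of this rank is incorrect. In a $3$-connected graph the star classes contribute only one class in total, namely the $\Z/2$ in the non-planar case. There is no $\sum_v\binom{d(v)-1}2$-type term, and the free rank is just the cycle rank. Thus $H_1(\B_2K_{3,r+1})\cong\Z^{2r}\oplus\Z/2$ for $r\ge2$, and $H_1(\B_3K_{4,r+2})\cong\Z^{3(r+1)}\oplus\Z/2$ for $r\ge1$.

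Extra free summands come only from $1$-cuts and $2$-cuts, and these are exactly what the small cases need:
\begin{itemize}
\item the star $K_{3,1}$ gives $\Z$;
\item the $2$-cut of $K_{3,2}$ adds $1$ to its cycle rank $2$, giving $\Z^3$;
\item the $2$-cut of $K_{4,2}$ adds $(4-1)(4-2)/2=3$, so $b_1=6$, and $\chi=-4$ then gives $b_2=1$.
\end{itemize}
With your vertex term, $K_{4,5}$ would get $b_1=12+39=51$, and $\chi=144$ would give $b_2=194$ rather than $155$. Your sanity check at $r=3$ would flag this, but the proposal does not supply the correct input. With $b_1=3(r+1)$ and $b_3=g_3(r)$, the identity to verify is $b_2=\chi-1+3(r+1)+g_3(r)=f_3(r)$ for $r\ge3$. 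For $r=1,2$ the same inputs give $b_2=23-1+6=28$ and $72-1+9=80$.
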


\begin{proof}
For $m=1$, the configuration space is the graph itself.
The graph $K_{2,0}$ consists of two isolated points. For every $r\ge1$,
$K_{2,r}$ is connected and has cycle rank $r-1$.
All the remaining configuration spaces are connected, so $H_0\cong\Z$.
Observation~\ref{obs:top} gives vanishing in degrees greater than $m$.

For $m=2$, the graphs at $r=0,1$ are the three-armed star and a
subdivision of the three-edge theta graph. The Ko--Park formula
\cite[Theorem~3.16]{KP} gives $H_1\cong\Z$ and $\Z^3$, respectively.
For the theta graph, the cycle rank $2$ and the $2$-cut contribution
$(3-1)(3-2)/2=1$ give rank $3$.
For every $r\ge2$, $K_{3,r+1}$ is simple, $3$-connected and non-planar,
so the same formula gives
\[
H_1(\B_2K_{3,r+1};\Z)\cong\Z^{2r}\oplus\Z/2.
\]
Theorem~\ref{thm:top} gives $H_2=0$ for $r=0,1$, while
Theorem~\ref{thm:exacttors} gives, for $r\ge2$,
\[
H_2(\B_2K_{3,r+1};\Z)\cong
\Z^{\binom{r+1}{2}-\binom{r+1}{1}}=\Z^{g_2(r)}.
\]

For $m=3$, Theorem~\ref{thm:exacttors} gives $H_3=0$ for $0\le r\le2$
and, for $r\ge3$,
\[
H_3(\B_3K_{4,r+2};\Z)\cong
\Z^{\binom{r+2}{3}-\binom{r+2}{2}}=\Z^{g_3(r)}.
\]
The same theorem gives torsion-free $H_2$ for $r=0,1$,
$\Tors H_2\cong\Z/2$ for $r=2$, and
\[
\Tors H_2\cong(\Z/2)^{r+1}\oplus\Z/3
\qquad(r\ge3).
\]
For $r=0$, the graph $K_{4,2}$ is a subdivision of the four-edge
theta graph. Its cycle rank is $3$, and the $2$-cut consisting of
the two degree-$4$ vertices has four components, contributing
$(4-1)(4-2)/2=3$ to the free rank. There are no cut vertices, and
the marked components are topological circles. Thus Ko--Park gives
$H_1\cong\Z^6$. For every $r\ge1$, $K_{4,r+2}$ is simple,
$3$-connected and non-planar, and the same formula gives
\[
H_1(\B_3K_{4,r+2};\Z)\cong\Z^{3(r+1)}\oplus\Z/2.
\]
Gal's formula yields
\begin{align*}
\chi(\B_3K_{4,r+2})
&=[t^3]\frac{(1-(r+1)t)^4(1-3t)^{r+2}}{(1-t)^{4(r+2)}}\\
&=\frac{r^3}{6}+\frac{21r^2}{2}+\frac{49r}{3}-4.
\end{align*}
For $r=0,1,2$ this gives $\chi=-4,23,72$, and hence
$b_2=\chi-1+b_1=1,28,80$, respectively.
For $r\ge3$, the Euler characteristic identity instead gives
\[
b_2=\chi-1+b_1+b_3
=\chi-1+3(r+1)+g_3(r)=f_3(r).
\]
Combining these ranks with the torsion calculation proves the table.
\end{proof}

\begin{table}[ht]
\centering
\renewcommand{\arraystretch}{1.12}
\begin{tabular}{c c c c c c c}
\toprule
$m$ & $r$ & $K$ & $H_0$ & $H_1$ & $H_2$ & $H_3$\\
\midrule
$1$ & $0$ & $K_{2,0}$ & $\Z^2$ & $0$ & &\\
$1$ & $\ge1$ & $K_{2,r}$ & $\color{red}\Z$ & $\color{red}\Z^{r-1}$ & &\\
\midrule
$2$ & $0$ & $K_{3,1}$ & $\color{red}\Z$ & $\Z$ & $0$ &\\
$2$ & $1$ & $K_{3,2}$ & $\color{red}\Z$ & $\Z^3$ & $0$ &\\
$2$ & $\ge2$ & $K_{3,r+1}$ & $\color{red}\Z$ & $\color{red}\Z^{2r}\oplus\Z/2$ & $\color{red}\Z^{g_2(r)}$ &\\
\midrule
$3$ & $0$ & $K_{4,2}$ & $\color{red}\Z$ & $\Z^6$ & $\Z$ & $0$\\
$3$ & $1$ & $K_{4,3}$ & $\color{red}\Z$ & $\color{red}\Z^6\oplus\Z/2$ & $\Z^{28}$ & $0$\\
$3$ & $2$ & $K_{4,4}$ & $\color{red}\Z$ & $\color{red}\Z^9\oplus\Z/2$ & $\Z^{80}\oplus\Z/2$ & $0$\\
$3$ & $\ge3$ & $K_{4,r+2}$ & $\color{red}\Z$ & $\color{red}\Z^{3(r+1)}\oplus\Z/2$
    & $\color{red}\Z^{f_3(r)}\oplus(\Z/2)^{r+1}\oplus\Z/3$ & $\color{red}\Z^{g_3(r)}$\\
\bottomrule
\end{tabular}
\medskip
\caption{Integral homology for $1\le m\le3$ and $r\ge0$.
Here $K=K_{m+1,m+r-1}$ and $H_i=H_i(\B_mK;\Z)$;
blank entries denote zero groups. Red entries mark ranges on which
$\dim_{\F}H_i(\B_mK;\F)$ is polynomial in $r$ for every
field $\F$, with the polynomial allowed to depend on its characteristic.
These ranges need not be optimal for an individual characteristic.}
\label{tab:small-cases}
\end{table}

\begin{remark}[Classical generators]\label{rem:classical-generators}
The infinite cyclic groups
\[
H_1(\B_1K_{2,2};\Z),\qquad
H_1(\B_2K_{3,1};\Z),\qquad
H_2(\B_3K_{4,2};\Z)
\]
are generated, respectively, by the loop class, the star class, and the
theta class $A_2$ of An--Knudsen \cite[Construction~3.13, Lemma~3.14]{AK}.
The corresponding parameter pairs are
\[
(m,r)=(1,2),\qquad (2,0),\qquad (3,0).
\]
Section~\ref{sec:explicit} places these classical generators within the
explicit constructions of this paper.
\end{remark}

\section{Explicit cycles and the theta presentation}\label{sec:explicit}

This section proves Theorem~B. We define matching chains $w_m$ and chains $z_m$
with one particle resting in the part $A$, identify the case $r=0$ with the star
and theta classes (Section~\ref{sec:theta}), prove the boundary formula
$\partial c_h=z_{U_{r-1}h}$ (Section~\ref{sec:universal}), derive the theta
presentation (Section~\ref{sec:thetapres}), and construct the loop classes
(Section~\ref{sec:loopclasses}).

We work in the unreduced complex $S(K)$, $K=K_{m+1,m+r-1}$.  For an edge $e=ab$ with
$a\in A$ and $b\in B$, put
\[
\gamma_e=h_{a,e}-h_{b,e},\qquad\partial\gamma_e=[b]-[a];
\]
this is a particle crossing $e$ from $a$ to $b$.  Products of $\gamma$'s along pairwise
disjoint edges, and of resting points $[v]$, are taken in the order written, with Koszul
signs; resting points have even degree.  For $D\subset A$ let
\[
\Gamma_D=\sum_{\varphi\colon D\hookrightarrow B}\ \prod_{d\in D}\gamma_{d\varphi(d)},
\]
where the product is taken in increasing order of $d$.  Define
\begin{align}
w_m&=\sum_{\alpha\in A}(-1)^{p(\alpha)}\,\Gamma_{A\setminus\alpha},\notag\\
z_m&=\sum_{\substack{a,a'\in A\\ a<a'}}(-1)^{p(a)+p(a')}\bigl([a]-[a']\bigr)\cdot\Gamma_{A\setminus\{a,a'\}}.\label{eq:zm}
\end{align}
These chains depend on the ambient graph $K_{m+1,m+r-1}$. We write
$z_m^{(r)}$ for the chain in \eqref{eq:zm} when this dependence needs to
be explicit, and abbreviate it to $z_m$ when $r$ is fixed.
The superscript records the graph parameter, not a power.

In $w_m$, the vertex $\alpha$ is the \emph{free $A$-vertex}: since $|A|=m+1$, every
$m$-matching misses exactly one vertex of $A$.  So $w_m$ has one $m$-cube for each
$m$-matching, in which the $m$ particles cross the edges of the matching.

In $z_m$, the sum runs over pairs $\{a,a'\}$ of distinct $A$-vertices.  The remaining
$m-1$ particles cross the edges of an $(m-1)$-matching from $A\setminus\{a,a'\}$ into $B$,
and one more particle rests at $a$ (the term $[a]$) or at $a'$ (the term $-[a']$).  In
the term $[a]$, the vertex $a'$ is unoccupied: it is the free $A$-vertex of the
$m$-matchings whose boundary produces this term, and symmetrically for $[a']$.  No two
particles ever share an edge or a vertex.

\subsection{Star and theta classes}\label{sec:theta}
The formulas \eqref{eq:zm} make sense for every $r\ge0$: for $r=0$ the graph is
$K_{m+1,m-1}$, $\Gamma_D$ is a sum over bijections $D\to B$ when $|D|=m-1$, and there
are no $m$-matchings, so $w_m=0$. This subsection shows that $z_m^{(0)}$ is a
nontrivial \emph{free} class generalizing the star and theta classes.
The torsion representatives arise as sums of its images in larger
complete bipartite graphs, as in \eqref{eq:zsum} below.

Recall the two classes.  Let $Y=K_{1,3}$ be the star with centre $v$ and edges
$e_1,e_2,e_3$, and let $h_i$ be the half-edge of $e_i$ at $v$.  The \emph{star class}
is the class of
\[
\sigma=(h_1-h_2)e_3+(h_2-h_3)e_1+(h_3-h_1)e_2\in\Sw_1(Y)_2 ,
\]
which generates $H_1(\B_2Y)\cong\Z$; loop and star classes, together with
edge stabilization, generate $H_1$ of graph braid groups \cite{KP,AK}.
Let $\Theta_4$ be the multigraph with two vertices $v_1,v_2$ and four
parallel edges $e_1,\dots,e_4$, whose subdivision is $K_{2,4}$,
and let $h_{i,j}$ be the half-edge of $e_j$ at $v_i$.  An and Knudsen
\cite[Construction~3.13, Lemma~3.14]{AK} define the \emph{theta class}
\[
A_2=\sum_{\sigma\in\Sigma_4}\operatorname{sgn}(\sigma)\,
(e_1-e_{\sigma(3)})\,(h_{1,\sigma(1)}-h_{1,1})\,(h_{2,\sigma(2)}-h_{2,1})\in\Sw_2(\Theta_4)_3
\]
and show that it generates $H_2(\B_3\Theta_4)\cong\Z$; they observe
\cite[Remark~3.16]{AK} that the same construction produces an element of
$H_n(\B_{n+1}K_{n,n+2})$ for every $n\ge1$.  In our notation $K_{n,n+2}=K_{m+1,m-1}$
with $m=n+1$, the part of size $n+2$ being $A$, and the construction is
$z_m^{(0)}$.  Grouping the terms of \eqref{eq:zm} by the resting vertex, and writing
$a'$ for the vertex of $A$ that is neither resting nor covered,
\begin{equation}\label{eq:theta}
z_m^{(0)}=\sum_{a\in A}\ \sum_{\psi\colon B\hookrightarrow A\setminus a}
(-1)^{p(a)+p(a')}\operatorname{sgn}(a'-a)\;[a]\prod_{b\in B}\gamma_{\psi(b)b},
\end{equation}
the product taken in increasing order of $\psi(b)$: one particle rests at a vertex
of the larger part, and $m-1$ particles cross $m-1$ disjoint edges
joining the smaller part to all but two vertices of the larger part.  Both classical
classes have this shape.  For $m=2$ the larger part is the set of leaves of the
star $K_{3,1}$ with centre $b$, and \eqref{eq:theta} is the star class written with a
particle resting at a leaf $a$ while the other particle crosses from a second
leaf to $b$: the six terms $\pm[a_i]\gamma_{a_jb}$, $i\ne j$.  For
$m=3$, the identification with $\pm[A_2]$ follows from the primitivity
argument below and $H_2(\B_3\Theta_4)\cong\Z$, without requiring a
chain-level identification between different subdivision models.

The coefficient projection needed for nonvanishing must keep track
of the new leaves created by explosion. For a vertex $a$ of $\Gamma$ put
\[
C_a=\Z[E(\Gamma)]\otimes\bigotimes_{v\ne a}S(v).
\]
This is the complex of $\Gamma_a$ with only the newly created univalent
vertices reduced; their reduction gives a quasi-isomorphism
$C_a\hookrightarrow S(\Gamma_a)$. As graded modules,
\[
S(\Gamma)=C_a\otimes\Z\langle\varnothing,[a]\rangle
 \ \oplus\ \bigoplus_{h\in H(a)}hC_a.
\]
The first summand $T_a$ is a subcomplex, and
$\partial(hy)=(e(h)-[a])y-h\partial y$. Therefore
\begin{equation}\label{eq:explode}
\pi_a\colon S(\Gamma)\longrightarrow S(\Gamma)/T_a
       \cong\bigoplus_{h\in H(a)}C_a[1,1]
\end{equation}
is a chain map with the shifted differential on the target. It reads
off each half-edge coefficient, with the Koszul sign of moving that
half-edge to the front, and induces
\[
H_i(\B_n\Gamma;R)\longrightarrow
 \bigoplus_{h\in H(a)}H_{i-1}(\B_{n-1}\Gamma_a;R).
\]
The same construction applies after partial reduction at other vertices,
so these projections can be iterated at distinct vertices.
A class with nonzero image is nonzero.

\begin{proposition}\label{prop:theta}
Let $m\ge2$.
\begin{enumerate}[leftmargin=*]
\item $z_m^{(0)}$ is a cycle in $S(K_{m+1,m-1})$, and its integral
      homology class is nonzero and primitive.
\item For $m=2$, $z_2^{(0)}$ is the star class of the centre $b\in B$ and
      represents a generator of $H_1(\B_2K_{3,1})\cong\Z$.  For $m=3$, $z_3^{(0)}$
      represents the theta class $\pm[A_2]$, a generator of $H_2(\B_3K_{4,2})\cong\Z$.
\end{enumerate}
\end{proposition}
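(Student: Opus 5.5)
The plan is to verify the cycle condition directly, then use the explosion projections \eqref{eq:explode} to detect the class, and finally compare with the known groups for $m=2,3$.

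\emph{Cycle.} Write $z=z_m^{(0)}$ in the form \eqref{eq:theta}. Each term is $[a]$ times a product of $m-1$ crossing chains $\gamma_{\psi(b)b}$ on pairwise disjoint edges. Since $\partial\gamma_e=[b]-[a]$ and $\partial[a]=0$, the Leibniz rule writes $\partial z$ as a sum of terms in which one crossing has been replaced by a resting point. These are of two kinds. Terms ending at $[b]$ with $b\in B$ are cancelled by the terms ending at $[a]$ in the same slot. Equivalently, in \eqref{eq:zm}, $\partial\Gamma_D$ pairs off once one exploits $|B|=m-1=|D|$. Concretely, I will compute $\partial$ of \eqref{eq:zm} as
\[
\sum_{a<a'}\pm([a]-[a'])\,\partial\Gamma_{A\setminus\{a,a'\}}.
\]
Each term of $\partial\Gamma_D$ has one particle resting at some $d\in D$ and one at some $b\in B$. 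A configuration with resting points $[a],[d]$ and $m-2$ crossings arises from the pairs $\{a,a'\}$ and $\{d,a'\}$ (resting at $a$ or at $d$). The signs $(-1)^{p(a)+p(a')}$ together with the Koszul sign of moving $[d]$ are designed to cancel these. The terms with a particle resting at $b\in B$ cancel between the two orderings $[a]-[a']$ summed over $a'$, since the free vertex is interchangeable. This is a routine sign check, modelled on Step~2 of Theorem~\ref{thm:top}.

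\emph{Nonzero and primitive.} I will apply iterated projections $\pi_{a_1},\dots$ at $m-1$ vertices of $A$, each time reading off a half-edge coefficient, to reduce to a low-degree class on an exploded graph. Choose a resting vertex $a$ and a free vertex $a'$. Projecting at every vertex of $A\setminus\{a,a'\}$ along a fixed bijection $\psi$ kills all terms except those with that matching. What remains is, up to sign, $[a]-[a']$ in $H_0$ of the exploded graph; in this graph the $A$-vertices have become separated leaves. To see this, note that after explosion the edge $\psi(b)b$ becomes a pendant edge attached at $b$, so every remaining vertex of $B$ is connected to $a$ and $a'$ only through edges already used. Hence $a$ and $a'$ lie in different components of the configuration space with one particle. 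More precisely, they lie in different components of $\Gamma$ minus the exploded vertices, where $B$ is joined only to $a$ and $a'$.

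I expect the main obstacle to be this last point. The difficulty is to choose the projection so that the $H_0$ class $[a]-[a']$ is genuinely a primitive element (coefficient $\pm1$) of a free group. I will arrange this by exploding the $B$-vertices as well (vertices of $B$ rather than $A$ if needed), so that $a$ and $a'$ become isolated components. Then $[a]-[a']$ is a difference of basis vectors of $H_0\cong\Z^{c}$, hence primitive. Since the composite map to a free group sends $[z]$ to a primitive element, $[z]$ itself is nonzero and primitive.

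\emph{Part (2).} For $m=2$, formula \eqref{eq:theta} gives the six terms $\pm[a_i]\gamma_{a_jb}$. Passing to the reduced complex, I will identify this with $\sigma$ by the edge substitution $[a_i]\sim e_i$, using homotopy of a resting leaf into its edge. Since $H_1(\B_2K_{3,1})\cong\Z$ and $[z]$ is primitive, it is a generator, and the sign is fixed by comparison. For $m=3$, $H_2(\B_3K_{4,2})\cong\Z$ by Table~\ref{tab:small-cases}, and $A_2$ generates this group by \cite{AK}. Since $[z_3^{(0)}]$ is primitive, $[z_3^{(0)}]=\pm[A_2]$, with no chain-level comparison needed.
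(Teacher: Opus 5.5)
\paragraph{The gap is in the primitivity step as you wrote it.} If you project at the $m-1$ vertices of $A\setminus\{a,a'\}$ along a bijection onto $B$, the surviving image is indeed $\pm([a]-[a'])$. But it lives in $H_0(\B_1\Gamma_{A\setminus\{a,a'\}})$, and that exploded graph is $K_{\{a,a'\},B}$ with pendant edges hung at the vertices of $B$. Both $a$ and $a'$ stay adjacent to every $b\in B$, and $B\ne\varnothing$ because $m\ge2$. So the graph is connected, and $[a]-[a']=\partial(\gamma_{a'b}-\gamma_{ab})=0$ there.

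Your assertion that $a$ and $a'$ end up in different components is false. Your own sentence, that $B$ ``is joined only to $a$ and $a'$'', shows they are joined through $B$. This projection therefore detects nothing. Exploding the $B$-vertices ``as well'' is not available either: after $m-1$ projections the chain is in degree $0$ and has no half-edges left to read off.

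\paragraph{What works is your parenthetical alternative, which is exactly the paper's argument: explode the vertices of $B$ instead.} Enumerate $B=\{b_1,\dots,b_{m-1}\}$ and choose distinct $a_1,\dots,a_{m-1}\in A$. At $b_j$, take the component of the half-edge of $a_jb_j$ at $b_j$. That half-edge occurs only in $\gamma_{a_jb_j}$, so only terms whose crossings are exactly $a_1b_1,\dots,a_{m-1}b_{m-1}$ survive. This forces the resting pair to be the two remaining vertices $a_m,a_{m+1}$, and the image is $\pm([a_m]-[a_{m+1}])$ in $H_0(\B_1\Gamma_B)$. Now $\Gamma_B$ is a disjoint union of $m+1$ stars centred at the vertices of $A$. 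Hence $H_0\cong\Z^{m+1}$, and this difference of two basis elements is primitive. Since the composite is a homomorphism to a free group, $[z_m^{(0)}]$ is nonzero and not a proper multiple.

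\paragraph{The rest of your plan matches the paper.}
\begin{itemize}
\item The cycle check is the paper's literal cancellation $([i]-[j])([b]-[k])-([i]-[k])([b]-[j])+([j]-[k])([b]-[i])=0$, taken over an $(m-2)$-matching that misses $i<j<k$ in $A$ and one $b\in B$.
\item Your ``concretely'' paragraph pairs the terms correctly. Your earlier sentence, that terms resting at $b$ cancel against terms resting at $a$, is wrong: those are distinct monomials.
\item Part (2) needs only primitivity together with the infinite cyclic groups $H_1(\B_2K_{3,1})$ and $H_2(\B_3K_{4,2})=\langle[A_2]\rangle$. No chain-level comparison with $\sigma$ is required.
\end{itemize}
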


\begin{proof}
(1) Fix an $(m-2)$-matching $M''$ and write $i<j<k$ for the three
vertices of $A$ it misses. For each $b\in B\setminus V(M'')$, the
contribution to the boundary with remaining crossings $M''$ is,
up to the common sign $(-1)^{p(i)+p(j)+p(k)-3}$,
\[
\bigl(([i]-[j])([b]-[k])-([i]-[k])([b]-[j])
      +([j]-[k])([b]-[i])\bigr)\gamma_{M''}=0.
\]
The differentiated crossing has position $p(k)-2$, $p(j)-1$,
or $p(i)$ in the respective products. All resting factors have
degree zero, so the displayed cancellation is literal. These are
all boundary terms.

To prove primitivity, use the coefficient projections \eqref{eq:explode} defined above. Enumerate $B=\{b_1,\ldots,b_{m-1}\}$ and choose distinct
$a_1,\ldots,a_{m-1}\in A$. Explode $b_1,\ldots,b_{m-1}$ successively,
taking at each step the component for the half-edge of $a_jb_j$ at $b_j$.
Only terms whose crossings are precisely these $m-1$ edges survive.
The image of $z_m^{(0)}$ is therefore
$\pm([a_m]-[a_{m+1}])$ in $H_0(\B_1\Gamma_B;\Z)$, where $a_m,a_{m+1}$
are the two remaining vertices of $A$. The graph $\Gamma_B$ is the
disjoint union of $m+1$ stars, one centered at each vertex of $A$.
This difference is primitive in its free zeroth homology.
Consequently $[z_m^{(0)}]$ is nonzero and primitive over $\Z$.

(2) For $m=2$, $H_1(\B_2K_{3,1})$ is infinite cyclic, generated by the star
class. For $m=3$, subdivision invariance and \cite[Lemma~3.14]{AK}
identify $H_2(\B_3K_{4,2})$ with the infinite cyclic group generated
by $[A_2]$. Primitivity proves the claimed identifications, up to orientation.
\end{proof}

Thus $A_{m-1}:=[z_m^{(0)}]\in H_{m-1}(\B_mK_{m+1,m-1})$ is a primitive
class generalizing the star and theta classes. We next prove persistence
of its mod-$2$ image when vertices are added to the second part.

\begin{lemma}\label{lem:thetacoef}
Let $a\in A$ and $b\in B$ in $K=K_{m+1,m-1}$, $m\ge2$.  The component of
$\pi_a(z_m^{(0)})$ at the half-edge $h_{a,ab}$ is
$(-1)^{p(a)-1}\,z_{m-1}^{(0)}$, where $z_{m-1}^{(0)}$ is formed on the complete
bipartite graph $K_{A\setminus a,\,B\setminus b}\cong K_{m,m-2}$ with the induced
order of $A\setminus a$, and is regarded as a chain of $K_a$.
\end{lemma}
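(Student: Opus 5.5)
The argument is a direct bookkeeping computation on the formula \eqref{eq:zm} for $z_m^{(0)}$ on $K=K_{m+1,m-1}$. The projection $\pi_a$ of \eqref{eq:explode} keeps only those terms of $z_m^{(0)}$ in which some factor is a half-edge at $a$, then moves that half-edge to the front with its Koszul sign and strips it off. In $z_m^{(0)}$ half-edges occur only inside the crossings $\gamma_{a'b'}=h_{a',a'b'}-h_{b',a'b'}$, and the half-edge at the $A$-endpoint enters with coefficient $+1$. So the component at $h_{a,ab}$ comes exactly from those terms of \eqref{eq:zm} that contain the crossing $\gamma_{ab}$. In such a term $a$ is neither a resting vertex nor the free vertex; the pair $\{a_1,a_2\}$ with $a_1<a_2$ lies in $A\setminus a$; and the injection $\varphi\colon A\setminus\{a_1,a_2\}\to B$ satisfies $\varphi(a)=b$.

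First I will set up the bijection of index sets. Pairs $a_1<a_2$ in $A\setminus a$ together with injections $\varphi$ satisfying $\varphi(a)=b$ correspond exactly to pairs in $A\setminus a$ together with bijections $(A\setminus a)\setminus\{a_1,a_2\}\to B\setminus b$. These are precisely the terms of $z_{m-1}^{(0)}$ formed on $K_{A\setminus a,B\setminus b}$. Removing $\gamma_{ab}$ leaves the resting factor $([a_1]-[a_2])$ and the remaining crossings in increasing order, and this matches the corresponding term of $z_{m-1}^{(0)}$ as a chain of $C_a$, hence of $K_a$.

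The main step is sign control. Two signs must be tracked. The first is the Koszul sign from moving $h_{a,ab}$ to the front of the product. The resting factor has degree $0$, and the crossings preceding $\gamma_{ab}$ in increasing order are those indexed by vertices of $A\setminus\{a_1,a_2\}$ smaller than $a$. This gives $(-1)^{p(a)-1-\epsilon}$, where $\epsilon=\#\{i: a_i<a\}$. The second is the change of the prefactor. In the induced order on $A\setminus a$ the positions are $p'(a_i)=p(a_i)-[a_i>a]$, so
\[
(-1)^{p(a_1)+p(a_2)}=(-1)^{p'(a_1)+p'(a_2)}(-1)^{2-\epsilon}.
\]
The product of these two signs is $(-1)^{p(a)-1}$, independent of the term, which is the claimed global sign. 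I expect this to be the only delicate point. I will double-check it against the convention in \eqref{eq:explode} that the coefficient is read off with the sign of moving the half-edge to the front. I will also check that the odd shift $[1,1]$ introduces no additional sign; it affects only the differential, not the identification of chains.

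Finally, I will note that terms containing $\gamma_{ab''}$ with $b''\ne b$ contribute only to other components, and terms in which $a$ rests or is free contribute nothing. That completes the identification of the $h_{a,ab}$-component with $(-1)^{p(a)-1}z_{m-1}^{(0)}$.
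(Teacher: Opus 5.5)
Your proposal is correct and follows the paper's proof essentially step for step. The paper likewise isolates the terms of \eqref{eq:zm} containing $\gamma_{ab}$ and counts the $p(a)-1-[i<a]-[k<a]$ crossings passed when moving that factor to the front. It then converts $(-1)^{p(i)+p(k)}$ to $(-1)^{p'(i)+p'(k)}$ via $p'(x)=p(x)-[x>a]$, which yields the term-independent sign $(-1)^{p(a)-1}$.
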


\begin{proof}
The half-edge $h_{a,ab}$ occurs in \eqref{eq:zm} only through the factor
$\gamma_{ab}=h_{a,ab}-h_{b,ab}$ of the terms with $a\in D=A\setminus\{i,k\}$ and
$\varphi(a)=b$; the remaining factors are $([a_i]-[a_k])\prod_{d\in D\setminus a}\gamma_{d\varphi(d)}$
with $\varphi|_{D\setminus a}$ a bijection onto $B\setminus b$.  Moving $\gamma_{ab}$ to
the front passes the $\#\{d\in D: d<a\}=p(a)-1-[i<a]-[k<a]$ factors before it.  Let
$p'$ be the position in $A\setminus a$; then $p'(x)=p(x)-[x>a]$ and hence
$(-1)^{p(x)+[x<a]}=-(-1)^{p'(x)}$ for $x\ne a$.  The total sign is therefore
$(-1)^{p(a)-1}(-1)^{p'(i)+p'(k)}$, and the sum over $i<k$ in $A\setminus a$ and over
the bijections $D\setminus a\to B\setminus b$ is $z_{m-1}^{(0)}$ on
$K_{A\setminus a,B\setminus b}$.
\end{proof}

\begin{theorem}\label{thm:thetanonzero}
For every $m\ge2$ and $k\ge0$, the extension by zero of $z_m^{(0)}$ to
$K_{m+1,m-1+k}$ represents a nonzero class in $H_{m-1}(\B_mK_{m+1,m-1+k};\F_2)$.
Consequently $A_{m-1}=[z_m^{(0)}]\ne0$ in $H_{m-1}(\B_mK_{m+1,m-1};\Z)$, it is not
divisible by $2$, and it stays nonzero under every inclusion
$K_{m+1,m-1}\subseteq K_{m+1,m-1+k}$.
\end{theorem}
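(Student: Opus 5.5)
Induction on $m$, using the coefficient projections $\pi_a$ of \eqref{eq:explode} and Lemma~\ref{lem:thetacoef}, with mod-$2$ coefficients throughout. Let $K'=K_{m+1,m-1+k}$ contain $K=K_{m+1,m-1}$, with $B\subseteq B'$. The extension by zero of $z_m^{(0)}$ is still a cycle, since the cycle computation in Proposition~\ref{prop:theta}(1) involves only the edges actually used. Fix $a\in A$ and $b\in B$. We apply $\pi_a$ in $S(K')$ and take the component at the half-edge $h_{a,ab}$. This gives a chain map to $C_a[1,1]$ for the exploded graph $K'_a$, and it induces
\[
H_{m-1}(\B_mK';\F_2)\to H_{m-2}(\B_{m-1}K'_a;\F_2).
\]
The computation in Lemma~\ref{lem:thetacoef} is purely term-by-term, so it gives the same answer in $K'$. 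The image is $\pm z_{m-1}^{(0)}$, formed on $K_{A\setminus a,B\setminus b}$ and regarded as a chain of $K'_a$.

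It then suffices to show this image is nonzero in $H_{m-2}(\B_{m-1}K'_a;\F_2)$. The graph $K'_a$ is $K_{A\setminus a,B'}$ with $m-1+k$ new leaves attached at the vertices of $B'$. We want to reduce to the induction hypothesis on $K_{m,m-2+k'}$. There is a subtlety: $z_{m-1}^{(0)}$ lives on $K_{A\setminus a,B\setminus b}$, while $b$ and the extra $B$-vertices are still present, now carrying pendant leaves.

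To pass back to a complete bipartite graph, I would use a retraction argument. Choose a map of graphs $K'_a\to K_{A\setminus a,B'}$ that collapses each new leaf edge onto its $B'$-endpoint. This is a homotopy-type map of graphs but not an embedding, so it does not directly induce a map on configuration spaces. A cleaner route is to iterate the projections instead. For the inductive step we then apply the next projection at a vertex of $A\setminus a$. The newly created univalent vertices at $B$-vertices never interfere, because we only explode $A$-vertices, and Lemma~\ref{lem:thetacoef} applies verbatim in the partially reduced complex, as the paper notes after \eqref{eq:explode}.

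So the plan is to iterate $m-1$ times. Explode $a_1,\dots,a_{m-1}\in A$ successively, pairing each with a distinct $b_j\in B$, and take the half-edge components $h_{a_j,a_jb_j}$. By repeated application of Lemma~\ref{lem:thetacoef}, the final image is $\pm z_1^{(0)}$ on $K_{\{a_m,a_{m+1}\},\varnothing}$, namely $\pm([a_m]-[a_{m+1}])$. This lives in $H_0(\B_1 X;\F_2)$, where $X$ is $K'$ with $a_1,\dots,a_{m-1}$ exploded. The sign conventions of the lemma make $z_1^{(0)}=([a]-[a'])$ with sign $(-1)^{1+2}$, which is harmless mod $2$. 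The key point is that $a_m$ and $a_{m+1}$ lie in different components of $X$ whenever $k\ge0$ is arbitrary? This fails for $k\ge1$: in $X$ both vertices are joined to the same $B'$-vertices, so $[a_m]-[a_{m+1}]$ is zero in $H_0$.

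\textbf{This is the main obstacle.} Exploding only $A$-vertices does not separate the final two. I would therefore stop the iteration one step earlier, at $m=2$, and use directly that the star class is nonzero mod $2$. Concretely, after exploding $a_1,\dots,a_{m-2}$, the image is $\pm z_2^{(0)}$: the star class at the centre $b$ remaining in $B\setminus\{b_1,\dots,b_{m-2}\}$, with leaves $a_{m-1},a_m,a_{m+1}$. This class sits in $H_1(\B_2X;\F_2)$ with $X\supseteq K_{3,1+k}$ plus pendant trees. To show it is nonzero, I would explode $b$ as well and take the component at $h_{b,ba_{m-1}}$. This yields $\pm([a_m]-[a_{m+1}])$ in $H_0$ of a graph where $a_m,a_{m+1}$ may still be connected through the other $B'$-vertices, so that test also fails. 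The replacement test is to sum over the half-edges at $b$, or equivalently to use the Ko--Park description of $H_1(\B_2;\F_2)$, in which star classes at a vertex of valence $\ge3$ are detected by the local $H_1$ of the vertex star modulo cycle relations. I expect the decisive input to be that $H_1(\B_2X;\F_2)$ contains the star classes at $b$ independently of the graph's cycles, with only $\Z$-relations among stars at different vertices, which can be checked by an explicit mod-$2$ cocycle on $\Sw_1(X)_2$ pairing to $1$ with $\sigma$. Constructing that cocycle, namely $1$ on $(h_1-h_2)e_3$ and $0$ elsewhere, and verifying it kills $\partial\Sw_2$ mod $2$ is the step I would work hardest on.

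Finally, non-divisibility by $2$ and nonvanishing over $\Z$ follow because the integral class maps to the nonzero mod-$2$ class. Nonvanishing under inclusions then follows by naturality.
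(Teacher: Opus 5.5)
\textbf{There is a genuine gap at the base case.} Your reduction is fine as far as it goes. Iterating $\pi_{a_1},\dots,\pi_{a_{m-2}}$ and applying Lemma~\ref{lem:thetacoef} term by term does carry the extension of $z_m^{(0)}$ to $\pm z_2^{(0)}=\pm\sigma_b$. This class now lives in $H_1(\B_2X;\F_2)$, where $X$ is $K_{\{a_{m-1},a_m,a_{m+1}\},B'}$ with $m-2$ pendant edges attached at every vertex of $B'$. (Going one step further fails for every $k\ge0$, not only $k\ge1$, since $a_m,a_{m+1}$ are both adjacent to all of $B$.)

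The whole argument then rests on $\sigma_b\ne0$ in $H_1(\B_2X;\F_2)$, and you do not prove this. The cochain you propose, equal to $1$ on $(h_1-h_2)e_3$ and $0$ on every other basis cell, is not a cocycle once the far endpoint $a$ of $e_3$ has degree at least $2$. In $X$ this holds in every case except $X=K_{3,1}$. To see the failure, take the generator $g=h-h_a$ at $a$ whose half-edges include the one on $e_3$. Then
\[
\partial\bigl((h_1-h_2)g\bigr)=(e_1-e_2)g-(h_1-h_2)\bigl(e(h)-e(h_a)\bigr),
\]
and your cochain takes the value $\pm1$ on it. Ko--Park is not available for $X$ as stated either, because $X$ has cut vertices and is not $3$-connected. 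The remark about ``only $\Z$-relations among stars at different vertices'' is a heuristic, not an argument.

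\textbf{The missing idea is the pendant-edge splitting, Lemma~\ref{lem:pendant}.} Attaching a pendant edge at a nonisolated vertex makes the old homology a direct summand, realized by the inclusion, over any coefficient ring. The paper uses it after a single explosion. The graph $K'_a$ is $K_{A\setminus a,B'}$ with one pendant edge at each vertex of $B'$. The image of $z_m^{(0)}$ is the extension by zero of $z_{m-1}^{(0)}$ to $K_{A\setminus a,B'}\cong K_{m,(m-1)-1+(k+1)}$. So the inductive hypothesis for $(m-1,k+1)$, together with the splitting, gives nonvanishing modulo $2$. This is why the induction runs over all $k$ simultaneously.

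The base case is then posed on the honest graph $K_{3,1+k}$. A mod-$2$ boundary there stays a boundary in any larger $K_{3,1+k'}$, so one may assume $k\ge2$. Then $K_{3,1+k}$ is $3$-connected and non-planar, and Ko--Park identifies $\sigma_b$ with the element of order $2$ in $H_1(\B_2K_{3,1+k})\cong\Z^{\beta_1}\oplus\Z/2$. That element is nonzero in $H_1(\,\cdot\,;\F_2)$.

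Your iterated version can be repaired the same way: strip the pendant edges from $X$ with Lemma~\ref{lem:pendant}, then invoke this base case. As written, the decisive step is missing, and the final deductions (nonvanishing over $\Z$, non-divisibility by $2$, persistence under inclusion) depend on it.
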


\begin{proof}
Induction on $m$, for all $k$ simultaneously.  Let $\Gamma=K_{m+1,m-1+k}$ with parts
$A$ and $B$, let $B_0\subset B$ be the $(m-1)$-subset on which $z_m^{(0)}$ is formed,
and let $a\in A$.  Then $\Gamma_a$ is $K_{A\setminus a,B}\cong K_{m,m-1+k}$ with one
pendant edge attached at each vertex of $B$.  Lemma~\ref{lem:thetacoef}, applied to
the chain $z_m^{(0)}$ on $K_{A,B_0}$, shows that the component of
$\pi_a(z_m^{(0)})$ at $h_{a,ab}$, $b\in B_0$, is $\pm z_{m-1}^{(0)}$ formed on
$K_{A\setminus a,B_0\setminus b}$, that is, the extension by zero of $z_{m-1}^{(0)}$
to $K_{A\setminus a,B}\cong K_{m,(m-1)-1+(k+1)}$, regarded as a chain of $\Gamma_a$.
By Lemma~\ref{lem:pendant}, applied with $\F_2$ coefficients and iterated over the
pendant edges, $H_{m-2}(\B_{m-1}K_{A\setminus a,B};\F_2)$ is a direct summand of
$H_{m-2}(\B_{m-1}\Gamma_a;\F_2)$.  Hence, if the statement holds for $(m-1,k+1)$, the
image of $[z_m^{(0)}]$ under $\pi_{a*}$ is nonzero, and so is $[z_m^{(0)}]$.

It remains to treat $m=2$, where $z_2^{(0)}$ is the star class $\sigma_b$ of the
vertex $b\in B$ of degree $3$ in $K_{3,1+k}$.  Note that a chain which is a boundary
in $K_{3,1+k}$ is a boundary in every larger $K_{3,1+k'}$, so it suffices to treat
$k\ge2$.  Then $K_{3,1+k}$ is $3$-connected and non-planar; by Ko--Park
\cite[Theorem~3.16]{KP}, $H_1(\B_2K_{3,1+k})\cong\Z^{\beta_1}\oplus\Z/2$, and the
element of order $2$ is represented by the star class of any vertex of degree at least
$3$ \cite[Theorem~3.16]{KP}, \cite[Lemma~C.4]{ADK}.  In particular $\sigma_b$ is nonzero modulo $2$.  (For
$k=0$ one can also explode $b$: $\Gamma_b$ is three disjoint arcs, and the component
of $\pi_b(\sigma_b)$ at $h_{b,a_jb}$ is $\pm([a_i]-[a_k])\ne0$ in $H_0(\B_1\Gamma_b)$;
this also shows that $\sigma_b$ is primitive.)
\end{proof}

We now return to $r\ge1$ and ask what becomes of $A_{m-1}$ inside the larger graphs
$K_{m+1,m+r-1}$.  For an $(m-1)$-subset $B'\subset B$, the cycle $z_m^{(0)}$ formed on
$K_{A,B'}$ is extended by zero to $K_{m+1,m+r-1}$ and denoted
$z_m^{(0)}|_{B'}$. This individual restricted theta cycle is distinct
from the full sum $z_m=z_m^{(r)}$. By the proof of
Proposition~\ref{prop:theta}(1),
\begin{equation}\label{eq:zsum}
z_m^{(r)}=\sum_{B'\in\binom{B}{m-1}}z_m^{(0)}\big|_{B'} .
\end{equation}
The bounding chain $w_m$ restricts in the same way.  For an $m$-subset $T\subset B$
let $w_T=\sum_{\alpha\in A}(-1)^{p(\alpha)}\Gamma^T_{A\setminus\alpha}$, where
$\Gamma^T_D$ is the sum over the bijections $D\to T$; thus
$w_m=\sum_{T\in\binom Bm}w_T$.

\subsection{The universal boundary formula}\label{sec:universal}
The matching chains and theta cycles constructed in this section are instances of one construction.  Let $K=K_{m+1,m+r-1}$
with $m\ge1$ and $r\ge1$.  For an $m$-matching $M$ of $K$ write $\alpha_M\in A$ for
its free $A$-vertex, $\beta_M\in\binom B{r-1}$ for its set of free $B$-vertices, and
$\gamma_M=\prod_{e\in M}\gamma_e$, the product taken in increasing order of the
$A$-endpoints.  For functions $h\colon\binom B{r-1}\to R$ and $g\colon\binom B{r}\to R$
put
\begin{equation}\label{eq:universal}
c_h=\sum_{M}(-1)^{p(\alpha_M)}\,h(\beta_M)\,\gamma_M\ \in S_m(K)\otimes R,
\qquad
z_g=\sum_{T\in\binom B{r}}g(T)\,z_m^{(0)}\big|_{B\setminus T}\ \in S_{m-1}(K)\otimes R .
\end{equation}
Writing $\mathbf1$ for the constant weight function, we have
$c_{\mathbf1}=w_m$ and $z_{\mathbf1}=z_m=z_m^{(r)}$ by \eqref{eq:zsum}; $c_{\delta_{B\setminus T}}=w_T$ for
$T\in\binom Bm$; and $z_{\delta_T}=z_m^{(0)}|_{B\setminus T}$ is a restricted theta
class.  Explicitly, $z_g$ is obtained from \eqref{eq:zm} by weighting the term with
crossings $\varphi\colon A\setminus\{a,a'\}\hookrightarrow B$ by
$g(B\setminus\operatorname{im}\varphi)$.  Every $z_g$ is a cycle by
Proposition~\ref{prop:theta}(1) (for $m=1$ it has degree zero). Since the coefficient of the monomial
$\prod_{a\in A\setminus\alpha}h_{a,a\sigma(a)}$ in $c_h$ is $\pm h(\beta)$, and the
coefficient of $[a]\prod_{d}h_{d,d\varphi(d)}$ in $z_g$ is $\pm g(B\setminus\operatorname{im}\varphi)$,
we have $c_h=0$ if and only if $h=0$, and $z_g=0$ if and only if $g=0$.

\begin{proposition}[Universal boundary formula]\label{prop:universal}
For every $h\colon\binom B{r-1}\to R$,
\[
\partial c_h=z_{U_{r-1}h},\qquad (U_{r-1}h)(T)=\sum_{b\in T}h(T\setminus b).
\]
Consequently $c_h$ is a cycle if and only if $U_{r-1}h=0$, and in that case $c_h$ is
the top cycle corresponding to $h$ in Theorem~\ref{thm:top}: its matching function
in the sense of Proposition~\ref{prop:Zcycles}, read off as the coefficient of the
monomial of half-edges at the tails, is $(-1)^{p(\alpha)}h(\beta)$, the function of
Step~4 in the proof of Theorem~\ref{thm:top}.  In particular every top cycle of
$K$ over $R$ is of the form $c_h$.
\end{proposition}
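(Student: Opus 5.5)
We compute $\partial c_h$ directly in the unreduced complex $S(K)$, one $m$-matching at a time. Since resting points have even degree and $\partial\gamma_e=[b]-[a]$ for $e=ab$, the Leibniz rule with Koszul signs gives
\[
\partial\gamma_M=\sum_{i=1}^m(-1)^{i-1}\,\gamma_{e_1}\cdots\gamma_{e_{i-1}}\bigl([b_i]-[a_i]\bigr)\gamma_{e_{i+1}}\cdots\gamma_{e_m}.
\]
Here $M=\{e_1,\dots,e_m\}$ with $e_i=a_ib_i$, listed in increasing order of $a_i$. Each term has one particle resting either at a $B$-vertex $b_i$ or at an $A$-vertex $a_i$, and the remaining $m-1$ crossings form an $(m-1)$-matching $M'=M\setminus e_i$.

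We sort the terms of $\partial c_h$ by the pair $(M',\text{resting vertex})$.

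The first type consists of terms with a particle resting at $b\in B$. Fix $M'$; it misses two $A$-vertices $a<a'$ and an $r$-set $B'$ with $b\in B'$. Exactly two matchings produce this term, namely $M'\cup\{ab\}$ with free vertex $a'$ and $M'\cup\{a'b\}$ with free vertex $a$. Both have the same $\beta=B'\setminus b$. We compare the signs $(-1)^{p(\alpha)}$ and the Koszul sign $(-1)^{i-1}$; the latter is the position of $ab$ or $a'b$ among the $A$-endpoints. This is exactly the sign bookkeeping of Step~2 of Theorem~\ref{thm:top}, read backwards, and shows that the two contributions cancel. This is the chain-level counterpart of relation (R1).

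The second type consists of terms with a particle resting at $a\in A$. Again fix $M'$ missing $a<a'$ in $A$ and $T=B'$ in $B$. The term $[a]\gamma_{M'}$ arises from $M'\cup\{ab\}$ with free vertex $a'$, for every $b\in T$. The sign is independent of $b$, so the total coefficient is $\pm\sum_{b\in T}h(T\setminus b)=\pm(U_{r-1}h)(T)$. The analogous term $[a']\gamma_{M'}$ arises from $M'\cup\{a'b\}$.

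It then remains to check that the resulting signs agree with those in \eqref{eq:zm}. There, the term is $(-1)^{p(a)+p(a')}([a]-[a'])\,\Gamma$, weighted by $g(B\setminus\operatorname{im}\varphi)=g(T)$. The sign of $[a]$ is $(-1)^{p(a')}$ times $-(-1)^{i-1}$, where $i-1=\#\{d\in A\setminus\{a,a'\}:d<a\}=p(a)-1$. This gives $(-1)^{p(a)+p(a')}$, as required. The symmetric computation for $[a']$ subtracts $1$ from the count because $a<a'$, and produces the opposite sign, matching $-[a']$. We must also move the resting point to the front, which is free since it has even degree. This identifies $\partial c_h$ with $z_{U_{r-1}h}$, up to a global sign fixed by the convention $\partial\gamma_e=[b]-[a]$. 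The sign computation of this step is the main obstacle: the whole argument reduces to a careful parity count, and I would do it by mirroring the position counts already made in Lemma~\ref{lem:thetacoef} and in the proof of Proposition~\ref{prop:theta}(1).

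The consequences follow directly. Since $z_g=0$ if and only if $g=0$, the chain $c_h$ is a cycle if and only if $U_{r-1}h=0$. The coefficient of $\prod_{a\ne\alpha}h_{a,a\sigma(a)}$ in $c_h$ is $(-1)^{p(\alpha)}h(\beta)$, since each $\gamma_e$ contributes its $A$-side half-edge with coefficient $+1$. This is the function $G$ of Step~4 of Theorem~\ref{thm:top}. Top cycles are determined by these coefficients (Proposition~\ref{prop:Zcycles}, via the reduced/unreduced identification), and Theorem~\ref{thm:top} shows that every top cycle has this form. Hence every top cycle equals some $c_h$ with $h\in\ker U_{r-1}$.
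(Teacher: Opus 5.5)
Your proposal is correct and follows essentially the same route as the paper. Expand $\partial\gamma_M$ by the Leibniz rule, cancel the $B$-resting faces in pairs (the chain-level form of (R1)), and collect the $A$-resting faces into $(U_{r-1}h)(T)$ with the sign $(-1)^{p(a)+p(a')}$ for $[a]$ and its negative for $[a']$, exactly as in \eqref{eq:zm}; then read off the remaining assertions from $z_g=0\Leftrightarrow g=0$ and Theorem~\ref{thm:top}.

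Your parity count already fixes the sign exactly, so you can drop the hedge ``up to a global sign''. For the $B$-resting faces, the two exponents are $p(a)+p(a')-1$ and $p(a)+p(a')-2$, with the same weight $h(B'\setminus b)$, so the two terms cancel.
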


\begin{proof}
In a matching with free $A$-vertex $\alpha$, the crossing at $a$
occupies position $p(a)-[\alpha<a]$. Differentiating that factor gives
$[b]-[a]$ with sign $(-1)^{p(a)-[\alpha<a]-1}$.  A face with a
particle resting at $b\in B$ arises from the two matchings $M'\cup\{ab\}$ and
$M'\cup\{a'b\}$; they have the same free $B$-set, hence the same weight $h(\beta)$,
and their signs differ by $[a'<a]-[a<a']=\pm1$, so they cancel.  A face with a particle resting at
$a\in A$ and crossings $M'$ arises from the matchings $M'\cup\{ab\}$, where $b$ runs
over the $r$ free vertices $B'=B\setminus V(M')$; the matching $M'\cup\{ab\}$ has
free $B$-set $B'\setminus b$, so the face occurs with total coefficient
$\sum_{b\in B'}h(B'\setminus b)=(U_{r-1}h)(B')$ times $(-1)^{p(\alpha)+p(a)-[\alpha<a]}$, where $\alpha$
is the other free $A$-vertex. For $a<\alpha$ this is the coefficient
of $[a]$ in the pair $\{a,\alpha\}$ of \eqref{eq:zm}; for $a>\alpha$
it has the opposite sign. Thus it is the sign of the corresponding term of $z_m^{(0)}|_{V(M')\cap B}=z_{\delta_{B'}}$.
Summing over $B'$ gives $\partial c_h=z_{U_{r-1}h}$.  The remaining assertions follow
from $z_g=0\Leftrightarrow g=0$ and from Theorem~\ref{thm:top}, whose proof
identifies the top cycles with the functions $(-1)^{p(\alpha)}h(\beta)$,
$h\in\ker U_{r-1}$.
\end{proof}

We first specialize to $h\equiv1$ and $h=\delta_{B\setminus T}$.
We then obtain the theta presentation of torsion and, for $r=m+1$,
a basis of free top homology from $\ker U_{r-1}$.

\begin{proposition}\label{prop:cubes}
For all $m\ge1$ and $r\ge0$, $\partial w_m=r\,z_m$ in $S(K_{m+1,m+r-1})$.  (For
$r=0$ both sides are zero, since there are no $m$-matchings; for $r=1$ the formula
reads $\partial w_m=z_m$.)
\end{proposition}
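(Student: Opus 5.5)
The plan is to read off Proposition~\ref{prop:cubes} from the universal boundary formula, Proposition~\ref{prop:universal}, by specializing to the constant weight $h=\mathbf 1$. For $r\ge1$ the identifications recorded after \eqref{eq:universal} give $c_{\mathbf 1}=w_m$ and $z_{\mathbf 1}=z_m$, the latter by \eqref{eq:zsum}. Lemma~\ref{lem:kerU}(2), applied with $j=r-1$, gives $U_{r-1}\mathbf 1=r\cdot\mathbf 1$. Since $g\mapsto z_g$ is linear, $z_{r\mathbf 1}=r\,z_{\mathbf 1}$, and so
\[
\partial w_m=\partial c_{\mathbf 1}=z_{U_{r-1}\mathbf 1}=z_{r\mathbf 1}=r\,z_m .
\]
For $r=1$ this specializes to $\partial w_m=z_m$, because $U_0\mathbf 1$ is the constant function $1$ on singletons.

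The case $r=0$ is handled separately, since Proposition~\ref{prop:universal} was stated only for $r\ge1$. Here $|B|=m-1<m$, so $K_{m+1,m-1}$ has no $m$-matchings. Therefore $w_m=\sum_\alpha(-1)^{p(\alpha)}\Gamma_{A\setminus\alpha}$ is an empty sum, because there are no injections $A\setminus\alpha\hookrightarrow B$, and $w_m=0$. The right-hand side $0\cdot z_m$ also vanishes, so the formula holds trivially.

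The step I expect to need care is checking that the identifications $c_{\mathbf 1}=w_m$ and $z_{\mathbf 1}=z_m^{(r)}$ hold on the nose, including signs.

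\emph{For $c_{\mathbf 1}$.} Each $m$-matching $M$ with free $A$-vertex $\alpha$ corresponds to exactly one injection $A\setminus\alpha\hookrightarrow B$. The product in $\Gamma_{A\setminus\alpha}$ and in $\gamma_M$ is taken in the same increasing order of $A$-endpoints, and both carry the same sign $(-1)^{p(\alpha)}$.

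\emph{For $z_{\mathbf 1}$.} Each term of \eqref{eq:zm}, indexed by a pair $a<a'$ and an injection $\varphi\colon A\setminus\{a,a'\}\hookrightarrow B$, appears exactly once in $z_m^{(0)}|_{B'}$, namely for $B'=\operatorname{im}\varphi$. It appears there with the same sign, because the induced order on $A$ is unchanged by the restriction.

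If one prefers to avoid the universal formula, the fallback is a direct computation. Differentiate each crossing $\gamma_{ab}$ to get $[b]-[a]$ with its Koszul sign. The terms with a particle resting at $b\in B$ cancel in pairs, coming from $M'\cup\{ab\}$ and $M'\cup\{\alpha b\}$. Each face with a particle resting at $a\in A$ receives exactly $r$ contributions, one for each free $B$-vertex. This is the argument in the proof of Proposition~\ref{prop:universal} with $h\equiv1$.
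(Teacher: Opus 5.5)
Your proposal is correct and is the paper's proof: for $r\ge1$, take $h\equiv1$ in Proposition~\ref{prop:universal}, use $U_{r-1}\mathbf 1=r\cdot\mathbf 1$ together with $c_{\mathbf 1}=w_m$ and $z_{\mathbf 1}=z_m$. For $r=0$, observe that there are no $m$-matchings, so $w_m=0=0\cdot z_m$.
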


\begin{proof}
For $r\ge1$, take $h\equiv1$ in Proposition~\ref{prop:universal};
then $U_{r-1}h=r\cdot1$. For $r=0$, there are no $m$-matchings
and both sides are zero.
\end{proof}
\begin{theorem}\label{thm:explicitgen}
Let $q\ge2$, $m\ge q$, and set $r=q$. Then $[z_m^{(q)}]\in H_{m-1}(\B_mK_{m+1,m+q-1};\Z)$ has order
exactly $q$.  For prime $q=p$, it generates the entire $p$-primary part $\Z/p$.  For a
prime power $q=p^a$, it generates a cyclic direct summand of order $q$ in the $p$-primary
part.
\end{theorem}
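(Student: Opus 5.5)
By Proposition~\ref{prop:cubes}, $\partial w_m=q\,z_m$, so $q[z_m]=0$ and the order of $[z_m]$ divides $q$. The real task is to show that the order is exactly $q$. My plan is to use the mod-$q$ Bockstein, or equivalently a universal coefficient pairing, instead of exhibiting a cocycle directly.

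Take $R=\Z/q$. By Lemma~\ref{lem:kerU}(2), the constant function $\mathbf1$ satisfies $U_{q-1}\mathbf1=q\cdot\mathbf1=0$ in $R$. Hence, by Proposition~\ref{prop:universal}, $c_{\mathbf1}\otimes R=w_m\otimes R$ is a mod-$q$ top cycle. Its Bockstein $\beta\colon H_m(\,\cdot\,;\Z/q)\to H_{m-1}(\,\cdot\,;\Z)$ is computed by lifting to the integral chain $w_m$ and dividing its boundary by $q$, which gives $\beta[w_m]=[z_m]$.

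From the universal coefficient sequence
\[
0\to H_m(\,\cdot\,;\Z)\otimes\Z/q\to H_m(\,\cdot\,;\Z/q)\to\Tor(H_{m-1},\Z/q)\to0,
\]
the Bockstein image of a class equals its image in $\Tor(H_{m-1},\Z/q)={}_qH_{m-1}$. So $[z_m]$ is nonzero of order exactly $q$ as long as two things hold. First, $[w_m]$ is not in the image of $H_m(\,\cdot\,;\Z)\otimes\Z/q$, which vanishes here by Corollary~\ref{cor:allprimes}(1), since $m\ge q=r$. Second, some multiple $d[w_m]$ with $d\mid q$ and $d<q$ must not already map to zero in the same sense. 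Concretely, I would instead show that $(q/\ell)[w_m]\ne0$ in $H_m(\,\cdot\,;\Z/q)$ for every prime $\ell\mid q$. Because $H_m(\,\cdot\,;\Z)=0$, the Bockstein is injective, so this gives $(q/\ell)[z_m]\ne0$. That nonvanishing is immediate from Proposition~\ref{prop:universal}: $c_h=0$ iff $h=0$, and $(q/\ell)\mathbf1\neq0$ in $\Z/q$. Top homology has no boundaries, so a nonzero top chain is a nonzero class. This settles the order.

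Next, for $q=p^a$, I want $\langle[z_m]\rangle$ to be a direct summand of order $q$ inside the $p$-primary part. Theorem~\ref{thm:exacttors} says this part has exactly one summand $\Z/q$ and all other summands have order at most $p^{a-1}$. In such a group, an element $x$ of order $q$ generates a direct summand: write $x=(y,t)$ with $y\in\Z/q$ and $t$ in the complement. Since $p^{a-1}t=0$, we get $p^{a-1}x=(p^{a-1}y,0)\ne0$, so $y$ is a unit, and then $\langle x\rangle$ together with the complement splits. This argument applies to the $p$-component of $[z_m]$, which is $[z_m]$ itself because $q$ is a prime power.

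For $q=p$ prime, Theorem~\ref{thm:exacttors} gives that the $p$-primary part is $\Z/p$, so an element of order $p$ generates it.

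The main obstacle is the sign and identification bookkeeping showing that the Bockstein of $[w_m\otimes\Z/q]$ is exactly $[z_m]$, with no stray boundary term. This reduces to Proposition~\ref{prop:cubes} and is already in hand. Everything after that is formal finite-abelian-group reasoning.
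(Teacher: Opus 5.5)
Your proposal is correct and follows the paper's proof essentially step for step. The mod-$q$ reduction of $w_m$ is a top cycle of order exactly $q$; your check via $c_h=0\iff h=0$ applied to $h=(q/\ell)\mathbf 1$ is the same as the paper's reading of a single $\pm1$ coefficient. The Bockstein is injective because $H_m(\B_mK;\Z)=0$, so it carries this order to $[z_m]$, and your direct-summand argument from Theorem~\ref{thm:exacttors} is the paper's.
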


\begin{proof}
Proposition~\ref{prop:cubes} gives $\partial w_m=qz_m$.  Since the chain groups are free,
$q\,\partial z_m=0$ implies $\partial z_m=0$.  The reduction $\bar w_m$ is a cycle with
coefficients in $\Z/q$.  Its coefficient at $h_{a_2,a_2b_1}\cdots h_{a_{m+1},a_{m+1}b_m}$
is $-1$, so this chain has order exactly $q$.  There are no chains above degree $m$, and
hence its homology class also has order $q$.

Since $m\ge r$, the integral top group is zero.  In the coefficient sequence
$0\to\Z\xrightarrow{\,q\,}\Z\to\Z/q\to0$, the Bockstein
\[
\beta_q\colon H_m(\B_mK;\Z/q)\longrightarrow H_{m-1}(\B_mK;\Z)
\]
is therefore injective and sends $[\bar w_m]$ to $[z_m]$.  This proves the exact order.
When $q=p$ is prime, the entire $p$-primary part is $\Z/p$ by
Theorem~\ref{thm:exacttors}.  When $q=p^a$, that theorem gives exactly one summand
$\Z/p^a$ and all other $p$-primary summands of exponent at most $p^{a-1}$.  An element of
order $p^a$ has unit coordinate in the first summand; its cyclic subgroup is consequently
a direct summand.
\end{proof}

\begin{remark}
In the identification of Theorem~\ref{thm:top}, $\bar w_m$ corresponds to the constant
function $h\equiv1$.  In the proof of Proposition~\ref{prop:cubes}, the cancelling faces
are relation (R1), and the faces of multiplicity $r$ are the relation
$U_{r-1}1=r\cdot1$.  For $m=3$, $r=3$ the cycle is
\[
z_3=\sum_{1\le i<k\le4}(-1)^{i+k}\bigl([a_i]-[a_k]\bigr)\sum_{b\ne b'}\gamma_{a_jb}\gamma_{a_lb'},\qquad\{j<l\}=\{1,2,3,4\}\setminus\{i,k\}.
\]
It consists of $240$ squares; the chain $w_3$ consists of $240$ cubes.
\end{remark}

\begin{lemma}\label{lem:wT}
For every $T\in\binom Bm$, $\ \partial w_T=\sum_{b\in T}z_m^{(0)}|_{T\setminus b}$.
\end{lemma}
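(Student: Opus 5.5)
The plan is to derive the lemma as a specialization of the universal boundary formula, Proposition~\ref{prop:universal}, with the weight function $h=\delta_{B\setminus T}$. As recorded just before that proposition, $c_{\delta_{B\setminus T}}=w_T$. Note that $B\setminus T$ has $m+r-1-m=r-1$ elements, so $\delta_{B\setminus T}$ is indeed a function on $\binom B{r-1}$. Proposition~\ref{prop:universal} then gives
\[
\partial w_T=z_{U_{r-1}\delta_{B\setminus T}},
\]
and it remains only to compute $U_{r-1}\delta_{B\setminus T}$ and to unwind the definition of $z_g$ in \eqref{eq:universal}.

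For an $r$-subset $T'\subset B$ we have $(U_{r-1}\delta_{B\setminus T})(T')=\#\{b\in T': T'\setminus b=B\setminus T\}$. This count equals $1$ exactly when $T'=(B\setminus T)\cup\{b\}$ for some $b\in T$, and $0$ otherwise. Hence
\[
U_{r-1}\delta_{B\setminus T}=\sum_{b\in T}\delta_{(B\setminus T)\cup b}.
\]
By \eqref{eq:universal}, $z_{\delta_{T'}}=z_m^{(0)}|_{B\setminus T'}$. Since $B\setminus((B\setminus T)\cup b)=T\setminus b$, we get
\[
z_{U_{r-1}\delta_{B\setminus T}}=\sum_{b\in T}z_m^{(0)}|_{T\setminus b}.
\]
This is the claimed formula.

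The only point needing care is the edge cases of Proposition~\ref{prop:universal}, which is stated for $r\ge1$. For $r=0$ there are no $m$-subsets $T$ of the $(m-1)$-element set $B$, so the lemma is vacuous. The main (mild) obstacle is confirming that $c_{\delta_{B\setminus T}}$ equals $w_T$ on the nose, signs included. This requires two checks. First, the $m$-matchings $M$ with $\beta_M=B\setminus T$ are exactly the bijections $A\setminus\alpha\to T$. Second, the sign $(-1)^{p(\alpha_M)}$ and the ordering of $\gamma_M$ by increasing $A$-endpoint match the definition of $\Gamma^T_{A\setminus\alpha}$. Both follow directly from the definitions, so the proof reduces to the computation above.
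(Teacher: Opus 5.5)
Your proposal is correct and follows the paper's proof: the paper also takes $h=\delta_{B\setminus T}$ in Proposition~\ref{prop:universal} and computes $U_{r-1}\delta_{B\setminus T}=\sum_{b\in T}\delta_{(B\setminus T)\cup b}$. Your extra checks (that $c_{\delta_{B\setminus T}}=w_T$ on the nose, and that the case $r=0$ is vacuous) are sound and only make explicit what the paper leaves implicit.
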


\begin{proof}
Take $h=\delta_{B\setminus T}$ in Proposition~\ref{prop:universal}.
Its image under $U_{r-1}$ is the sum of the indicators indexed by
$(B\setminus T)\cup\{b\}$, $b\in T$, giving the assertion.
\end{proof}
\begin{theorem}[Theta classes in complete bipartite graphs]\label{thm:thetatorsion}
Let $m\ge2$, $r\ge m$, $N=m+r-1$, $H=H_{m-1}(\B_mK_{m+1,m+r-1};\Z)$, and put
$x_{B'}=[z_m^{(0)}|_{B'}]\in H$ for $B'\in\binom B{m-1}$.
\begin{enumerate}[leftmargin=*]
\item For every $T\in\binom Bm$, $\sum_{B'\subset T}x_{B'}=0$.  That is, the vector
      $(x_{B'})_{B'}$ lies in $\ker\bigl(U_{m-1}(N)\otimes H\bigr)$.
\item Every $x_{B'}$ is a torsion element, of order dividing $\operatorname{lcm}(1,\dots,m)$.
\item Let $m=p$ be prime and $r=p$, so that $K=K_{p+1,2p-1}$ and the $p$-primary
      part of $H$ is $\Z/p$, generated by $[z_p]$ (Theorems~\ref{thm:exacttors}
      and~\ref{thm:explicitgen}).  Then the $p$-primary component of $x_{B'}$ is
      $[z_p]$ for every $B'$.  If $p$ is odd, the $2$-primary component of $x_{B'}$
      is also nonzero, so the order of $x_{B'}$ is divisible by $2p$.
\item In particular, in $H_1(\B_2K_{3,3})$ the star class of every vertex of $B$ is
      the element of order $2$, and in
      $H_2(\B_3K_{4,5})\cong\Z^{155}\oplus(\Z/2)^4\oplus\Z/3$ the theta class
      $x_{B'}$ has order exactly $6$ for every $B'$, and all the $x_{B'}$ are
      congruent to $[z_3]$ modulo $2$-torsion.
\end{enumerate}
\end{theorem}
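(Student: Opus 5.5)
The four parts are handled in order, each resting on an earlier result: Lemma~\ref{lem:wT} for the relations, Wilson's diagonal form for the exponent bound, Lemma~\ref{lem:kerU}(3) for the $p$-primary component, and Theorem~\ref{thm:thetanonzero} for the $2$-primary component.

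\emph{Part (1).} Since $r\ge m$, the chain $w_T$ exists for each $T\in\binom Bm$. Lemma~\ref{lem:wT} gives $\partial w_T=\sum_{b\in T}z_m^{(0)}|_{T\setminus b}$, so $\sum_{B'\subset T}x_{B'}=0$ in $H$. This is exactly the statement that $(x_{B'})$ lies in $\ker(U_{m-1}(N)\otimes H)$.

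\emph{Part (2).} Define $\varphi\colon\Z^{\binom B{m-1}}\to H$ by $\delta_{B'}\mapsto x_{B'}$. By (1), $\varphi$ vanishes on the image of the transpose $D_m$ of $U_{m-1}$, since $D_m\delta_T=\sum_{B'\subset T}\delta_{B'}$. Hence $\varphi$ factors through $\coker D_m=\coker W_{m-1,m}(N)$. The hypothesis $r\ge m$ gives $N\ge 2m-1$, so Theorem~\ref{thm:wilson} applies. The diagonal entries are $m-i$ for $0\le i\le m-1$, all nonzero, and the number of entries equals the number of rows. So this cokernel is finite with exponent $\operatorname{lcm}(1,\dots,m)$, and every $x_{B'}$ is torsion of order dividing that number.

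\emph{Part (3).} Here $m=r=p$ and $N=2p-1$. The $p$-primary part of $H$ is $\Z/p=\langle[z_p]\rangle$, so the $p$-primary component of $x_{B'}$ can be written $c_{B'}[z_p]$ with $c_{B'}\in\F_p$. Projecting the relations of (1) to the $p$-primary part gives $\sum_{B'\subset T}c_{B'}=0$ for every $T$. This says $c\in\ker(U_{p-1}(N)\otimes\F_p)$, which by Lemma~\ref{lem:kerU}(3) is spanned by the constant function, so $c\equiv c_0$.

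Summing over all $B'$ and using \eqref{eq:zsum} gives $[z_p]=\sum x_{B'}$. Its $p$-component is $\binom{2p-1}{p-1}c_0[z_p]$. By Lucas' theorem, $\binom{2p-1}{p-1}\equiv1\pmod p$, so $c_0=1$.

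For the $2$-primary component when $p$ is odd, use Theorem~\ref{thm:thetanonzero}: $x_{B'}$ has nonzero image in $H_{p-1}(\,\cdot\,;\F_2)$, so $x_{B'}\notin 2H$. If the $2$-primary component of $x_{B'}$ vanished, then $x_{B'}$ would be a torsion element of odd order $n$. Writing $1=2u+nv$ gives $x_{B'}=2(ux_{B'})$, which is divisible by $2$, a contradiction. Hence the order of $x_{B'}$ is divisible by $2p$.

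\emph{Part (4).} This is a specialization of the above.
\begin{itemize}
\item For $m=r=2$ the graph is $K_{3,3}$. Here $x_{\{b\}}$ is the star class of $b$ by Proposition~\ref{prop:theta}, and the torsion is $\Z/2$ by Corollary~\ref{cor:r2}. By (2) the class $x_{\{b\}}$ is torsion, and by Theorem~\ref{thm:thetanonzero} it is nonzero, so it is the element of order $2$.
\item For $m=r=3$, the structure $H\cong\Z^{155}\oplus(\Z/2)^4\oplus\Z/3$ comes from Corollary~\ref{cor:small-cases} with $f_3(3)=155$. The order of $x_{B'}$ divides $6$ by (2), and both its $3$- and $2$-components are nonzero by (3), so the order is exactly $6$.
\item Also by (3), $x_{B'}-[z_3]$ has zero $3$-component and order dividing $6$, so it is $2$-torsion.
\end{itemize}

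The main obstacle I expect is the bookkeeping in (3): checking that projecting the relations onto the cyclic $p$-part yields precisely membership in $\ker(U_{p-1}\otimes\F_p)$ in the orientation convention that Lemma~\ref{lem:kerU}(3) covers.
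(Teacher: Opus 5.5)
Your proof is correct and follows the paper's argument part by part. The differences are cosmetic. In (2) you factor $\delta_{B'}\mapsto x_{B'}$ through the finite cokernel of $W_{m-1,m}(N)$, whereas the paper reads the bound off $\ker(U_{m-1}(N)\otimes H)$; both rest on the same Wilson diagonal form. In (4), for $K_{3,3}$, you combine nonvanishing with $\Tors H_1(\B_2K_{3,3})\cong\Z/2$ instead of invoking (3). The bookkeeping concern you flag in (3) does not arise: since $m=r=p$, the relation matrix is exactly $U_{p-1}(2p-1)$, which is the case covered by Lemma~\ref{lem:kerU}(3).
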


\begin{proof}
(1) is Lemma~\ref{lem:wT}.

(2) Since $N\ge2m-1$, Theorem~\ref{thm:wilson} applies to $U_{m-1}(N)=W_{m-1,m}(N)^{\mathsf T}$:
there are unimodular $P,Q$ with $U_{m-1}(N)=PDQ$, where $D$ is rectangular diagonal
with nonzero entries $d_i=m-i$ of multiplicity $\nu_i=\binom Ni-\binom N{i-1}$,
$0\le i\le m-1$, and $\sum_i\nu_i=\binom N{m-1}$ is the number of columns.  Hence
$\ker(U_{m-1}(N)\otimes H)=Q^{-1}\ker(D\otimes H)$ and
$\ker(D\otimes H)=\bigoplus_iH[d_i]^{\nu_i}$.  Every coordinate of a vector in the
kernel is an integral combination of elements killed by some $d_i\le m$, hence is
killed by $\operatorname{lcm}(1,\dots,m)$.

(3) Write $H=F\oplus\bigoplus_\ell H_{(\ell)}$ with $F$ free and $H_{(\ell)}$ the
$\ell$-primary parts, and decompose the kernel accordingly.  Since $U_{m-1}(N)$ is
injective over $\Z$ (Lemma~\ref{lem:kerU}(1), as $N>2(m-1)$), the $F$-component of
$(x_{B'})$ vanishes; this is (2) again.  By Theorem~\ref{thm:exacttors},
$H_{(p)}\cong\Z/p$, and by Lemma~\ref{lem:kerU}(3), $\ker(U_{p-1}(2p-1)\otimes\Z/p)$
is spanned by the constant vector.  So the $p$-primary components of all $x_{B'}$ are
equal to one element $c\in H_{(p)}$.  Summing \eqref{eq:zsum} in homology,
$[z_p]=\sum_{B'}x_{B'}$, whose $p$-primary component is $\binom{2p-1}{p-1}c=c$,
because $\binom{2p-1}{p-1}\equiv\binom10\binom{p-1}{p-1}=1\pmod p$ by Lucas' theorem.
Hence $c=[z_p]$.  For the $2$-primary component, Theorem~\ref{thm:thetanonzero} with
$k=r$ shows that $x_{B'}$ is nonzero in $H\otimes\F_2$, i.e.\ $x_{B'}\notin2H$;
since $x_{B'}$ is torsion, its $2$-primary component is nonzero.

(4) For $K_{3,3}$, $B'=\{b\}$ and $x_{B'}$ is the star class $\sigma_b$; by (3) its
$2$-primary component is $[z_2]$, the generator of $\Tors H_1(\B_2K_{3,3})\cong\Z/2$,
and there is no other torsion, so $\sigma_b=[z_2]$.  For $K_{4,5}$, (2) and (3) give
$6x_{B'}=0$ and $2p=6\mid\ord(x_{B'})$.  The torsion subgroup is
$(\Z/2)^4\oplus\Z/3$ (Corollary~\ref{cor:exact}), so $x_{B'}-[z_3]$ is $2$-torsion.
\end{proof}

Theorem~\ref{thm:torsgen} below sharpens (2): for $r\ge m$ the order of $x_{B'}$ is
exactly $\operatorname{lcm}(1,\dots,m)$, and the classes $x_{B'}$ generate the torsion
subgroup.

\begin{remark}[From theta classes to torsion]\label{rem:inspiration}
Theorem~\ref{thm:thetatorsion} is how the torsion cycles of this paper were found.
The theta class $A_2$ has infinite order in the subdivision $K_{4,2}$ of
$\Theta_4$ by Proposition~\ref{prop:theta}. It retains infinite order in
$K_{4,3}$ and $K_{4,4}$ by Theorem~\ref{thm:theta-injective} and
Remark~\ref{rem:lowr}.
By Theorem~\ref{thm:thetatorsion},
it becomes a class of order $6$ in $K_{4,5}$, whose $3$-primary part is exactly the $3$-torsion $[z_3]$ of
Theorem~\ref{thm:explicitgen}; the star class of a degree-$3$ vertex, viewed in
$K_{3,3}$, becomes the $2$-torsion of Ko--Park.  The torsion cycles $z_m$ and the
bounding chains $w_m$ are the sums \eqref{eq:zsum} and $w_m=\sum_Tw_T$ of these
restricted classes, and Lemma~\ref{lem:wT} is the reason why the coefficient $r$
appears in $\partial w_m=rz_m$: each restricted class $z_m^{(0)}|_{B'}$ is
bounded by $\sum_{T\supset B'}w_T$ up to the other restricted classes, and $B'$ lies
in $r$ sets $T$. Theorem~\ref{thm:theta-injective} below shows that these
incidence relations are all the relations, including before the torsion range.
\end{remark}

\subsection{The theta subgroup and its presentation}\label{sec:thetapres}
The map $g\mapsto[z_g]$ is linear, and by Proposition~\ref{prop:universal} it kills
the image of $U_{r-1}$.  It therefore induces a homomorphism
\begin{equation}\label{eq:Phi}
\Phi\colon\coker\Bigl(U_{r-1}(N)\colon\Z^{\binom{[N]}{r-1}}\to\Z^{\binom{[N]}{r}}\Bigr)\longrightarrow H_{m-1}(\B_mK_{m+1,m+r-1};\Z),
\qquad N=m+r-1,
\end{equation}
with $\Phi(\delta_T)=x_{B\setminus T}$ in the notation of
Theorem~\ref{thm:thetatorsion}. We first show that this is a presentation of
the subgroup generated by theta classes, without a restriction to the torsion range.

\begin{theorem}[The theta subgroup]\label{thm:theta-injective}
Let $m\ge2$, $r\ge1$, and $K=K_{m+1,m+r-1}$.
The homomorphism $\Phi$ in \eqref{eq:Phi} is injective.
Its image is the subgroup generated by the restricted theta classes, and
contains the entire torsion subgroup of $H_{m-1}(\B_mK_{m+1,m+r-1};\Z)$.
In particular, all integral relations among these classes are generated by
\[
\sum_{B'\subset C,\ |B'|=m-1}x_{B'}=0,
\qquad C\in\binom Bm.
\]
\end{theorem}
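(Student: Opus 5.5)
The plan is to split the statement into three parts: first the torsion of $\coker U_{r-1}$, then the free part, and finally the assertion about the image.

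\emph{Torsion of the cokernel.} I will handle this with a Bockstein argument. Let $g\in\Z^{\binom{[N]}{r}}$ represent a torsion element of $\coker U_{r-1}$, say $ng=U_{r-1}h$ with $h$ integral and $n\ge1$. Reducing modulo $n$, $\bar h\in\ker(U_{r-1}\otimes\Z/n)$, so by Proposition~\ref{prop:universal} the chain $c_{\bar h}$ is a top cycle over $\Z/n$. Since $\partial c_h=z_{U_{r-1}h}=n\,z_g$, the Bockstein $\beta_n\colon H_m(\B_mK;\Z/n)\to H_{m-1}(\B_mK;\Z)$ sends $[c_{\bar h}]$ to $[z_g]=\Phi(g)$. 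By exactness, $\Phi(g)=0$ exactly when $[c_{\bar h}]$ is the reduction of an integral top class. There are no chains above degree $m$, so this means $\bar h$ is the reduction of some $h_0\in\ker U_{r-1}$ (again by Proposition~\ref{prop:universal} with $R=\Z$). Then $h=h_0+nh_1$, and $ng=U_{r-1}h=nU_{r-1}h_1$, so $g=U_{r-1}h_1$, which is zero in the cokernel. Hence $\Phi$ is injective on $\Tors\coker U_{r-1}$. By Theorem~\ref{thm:exacttors}, $\Tors\coker U_{r-1}\cong\Tors H_{m-1}(\B_mK;\Z)$ is finite. An injective map carries a finite group into the torsion subgroup, so comparing orders shows that $\Phi$ maps $\Tors\coker U_{r-1}$ isomorphically onto $\Tors H_{m-1}$. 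This gives the containment of the torsion subgroup in the image.

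\emph{Free part.} Here the remaining task is to show that $\Phi\otimes\Q$ is injective, i.e.\ that $[z_g]=0$ rationally forces $g\in\operatorname{im}(U_{r-1}\otimes\Q)$. This step, together with the reduction that follows, is where I expect the main obstacle. Once it is known, if $\Phi(g)=0$ then $g$ is torsion in the cokernel, and the first part gives $g=0$.

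My first attempt is induction on $m$ with the explosion projections \eqref{eq:explode} at vertices $a\in A$. By Lemma~\ref{lem:thetacoef}, the component of $\pi_a(z_g)$ at $h_{a,ab}$ is $\pm z_{g'}$, formed for the theta classes of $K_{A\setminus a,\,B\setminus b}$ inside $\Gamma_a$, with the weight $g'(T)=g(T)$ for $T\subseteq B\setminus b$. The pendant edges of $\Gamma_a$ split off by Lemma~\ref{lem:pendant}. The parameter pair is now $(m-1,r)$ with the same $r$, on the ground set $B\setminus b$. The inductive hypothesis then says that, for every $b$, the restriction of $g$ to $\binom{B\setminus b}{r}$ lies in the image of $U_{r-1}(N-1)\otimes\Q$.

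Over $\Q$ I would glue these restrictions using the Laplacian identity of Lemma~\ref{lem:identity}: decompose $\Q^{\binom{[N]}{r}}$ into $\Sigma_N$-isotypic pieces, and check that a vector whose restrictions to all $(N-1)$-subsets lie in the image of $U$ must itself lie in the image. If this gluing fails, the fallback is to compute ranks directly: show $\dim_\Q\operatorname{im}\Phi\otimes\Q=\rk\coker U_{r-1}=\binom Nr-\binom N{s-1}$ by exhibiting enough independent rational functionals. One candidate is to iterate the explosions at $m-1$ vertices of $B$, as in the primitivity argument of Proposition~\ref{prop:theta}, which reduces $z_g$ to $0$-dimensional classes whose coefficients are read off from $g$. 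The base case $m=2$ concerns star classes, and follows from the Ko--Park description of $H_1$.

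\emph{Image and relations.} The image of $\Phi$ is generated by the $\Phi(\delta_T)=x_{B\setminus T}$, so it is the subgroup generated by the restricted theta classes. Injectivity identifies the relations with the image of $U_{r-1}$. This image is spanned by $U_{r-1}\delta_S=\sum_{T\supset S}\delta_T$ for $S\in\binom B{r-1}$. Under $\Phi$ this becomes $\sum_{B'\subset C}x_{B'}$ with $C=B\setminus S\in\binom Bm$, which is exactly the displayed list of relations.
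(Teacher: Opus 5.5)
Your Bockstein argument in the first part is correct. It shows that $\Phi$ is injective on $\Tors\coker U_{r-1}$, and the order comparison with Theorem~\ref{thm:exacttors} then gives the torsion containment exactly as in the paper. Since $\coker U_{r-1}$ is finite for $r\ge m$, this already proves the theorem in that range, and your last part is fine once injectivity is known.

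The gap is the free part for $1\le r\le m-1$, where $\coker U_{r-1}$ has rank $\binom Nr-\binom N{r-1}>0$. You do not prove that $\Phi\otimes\Q$ is injective, and neither of your sketched routes can supply it.
\begin{itemize}
\item \emph{The explosion induction.} The component $\pm z_{g'}$ of $\pi_a(z_g)$ is only known to vanish in $H_{m-2}(\B_{m-1}\Gamma_a)$. But $\Gamma_a$ contains all of $K_{A\setminus a,B}$, not just $K_{A\setminus a,B\setminus b}$. Enlarging the second part can kill theta classes rationally, so your inductive hypothesis at $(m-1,r)$ on $B\setminus b$ does not apply. Concretely, take $(m,r)=(3,1)$. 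Then $\Gamma_a$ is $K_{3,3}$ with pendant edges. Every star class of $K_{3,3}$ is the element of order $2$ (Theorem~\ref{thm:thetatorsion}(4)), and it stays torsion in $\Gamma_a$ by Lemma~\ref{lem:pendant}. So $\pi_a\otimes\Q$ kills every theta class, although $\coker U_0(3)\cong\Z^2$. Formulated correctly, the induction runs at $(m-1,r+1)$ on all of $B$. There the cokernel is finite as soon as $r\ge m-2$, so it carries no rational information in that range.
\item \emph{The fallback via $B$-explosions.} Exploding $b_1,\dots,b_{m-1}$ leaves $K_{A,B\setminus B'}$ with pendant edges, which is connected for $r\ge1$. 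Hence the resulting class $\pm([a_m]-[a_{m+1}])$ is zero in $H_0$. The primitivity argument of Proposition~\ref{prop:theta} works only for $r=0$, where $\Gamma_B$ is a disjoint union of stars.
\end{itemize}

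The missing idea is to analyze an arbitrary bounding chain directly, as the paper does.
\begin{itemize}
\item Suppose $\partial c=z_g$ with $c\in S_m(K)_m$. Every monomial of $c$ is a product of $m$ half-edges at distinct vertices.
\item Since $z_g$ has no edge-variable terms, the edge-variable part of $\partial c$ vanishes. This is exactly conditions (a) and (b) of Lemma~\ref{lem:topcycle}. Condition (c) uses only these, so $c=\sum_M a_M\gamma_M$ over $m$-matchings.
\item Since $z_g$ has no resting vertex in $B$, the coefficients $a_M$ satisfy (R1). Steps~1--2 of the proof of Theorem~\ref{thm:top} then give $c=c_h$.
\item Hence $z_g=\partial c_h=z_{U_{r-1}h}$, and injectivity of $g\mapsto z_g$ on chains yields $g=U_{r-1}h$.
\end{itemize}
This works over $\Z$ and treats the free and torsion parts simultaneously, so your Bockstein step becomes unnecessary.
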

\begin{proof}
Suppose $[z_g]=0$, and choose a chain $c\in S_m(K)_m$ with $\partial c=z_g$.
Every monomial of $c$ consists of $m$ half-edges at distinct vertices:
degree and weight are both $m$, so there are no edge variables or resting
vertices. Separate the boundary into terms containing an edge variable and
terms containing a resting vertex. The former part is zero, since $z_g$
has no edge variables.

Encode $c$ by its coefficients on ordered tuples of half-edges, as in
Section~\ref{sec:matching}, but without imposing the slot-sum equations.
The edge-variable part of the differential is still \eqref{eq:dtop}.
Its vanishing gives exactly conditions (a) and (b) of Lemma~\ref{lem:topcycle}:
a coefficient is zero if a half-edge points to another occupied vertex,
and reversing an edge whose endpoints are unoccupied by the other slots changes its sign.
The proof of condition (c) uses only these two conditions, so the coefficients
vanish unless the edges form a matching. Thus
\[
c=\sum_M a_M\gamma_M,
\]
where $M$ runs over the $m$-matchings, oriented from $A$ to $B$ and ordered
by their $A$-endpoints. Indeed, expansion of $\gamma_M$ gives precisely
the signs under reversal just obtained.
Write $F$ for these coefficients on ordered oriented matchings, with
the usual signs under permutation and reversal.

There are also no terms of $z_g$ with a resting vertex in $B$.
For an $(m-1)$-matching $M'$ and a free vertex $b\in B$, the coefficient
with resting vertex $b$ therefore says
$F(ab,M')+F(a'b,M')=0$, where $a,a'$ are the two free vertices of $A$.
These are relations (R1) in the proof of Theorem~\ref{thm:top}.
Steps~1 and~2 of that proof use only (R1), and show that
$a_M=(-1)^{p(\alpha_M)}h(\beta_M)$ for a function $h$ on the free
$(r-1)$-subsets of $B$. Consequently $c=c_h$. Proposition~\ref{prop:universal}
now gives $z_g=\partial c_h=z_{U_{r-1}h}$, and injectivity of $g\mapsto z_g$
as a map of chains gives $g=U_{r-1}h$. This proves injectivity of $\Phi$.

Its image is generated by $\Phi(\delta_T)=x_{B\setminus T}$, $T\in\binom Br$,
and the column of $U_{r-1}$ indexed by $S\in\binom B{r-1}$, namely
$U_{r-1}\delta_S=\sum_{b\notin S}\delta_{S\cup b}$, gives exactly the displayed
relation for $C=B\setminus S$. Finally,
Theorem~\ref{thm:exacttors} identifies the torsion groups of the source
and target as abstract finite groups. The injection on torsion is
therefore surjective, proving the remaining assertion.
\end{proof}

To determine the orders of individual theta classes, we use a pairing
on the cokernel.

\begin{lemma}\label{lem:pairing}
Let $r\ge1$, $N=m+r-1$, $T_0\in\binom{[N]}{r}$, and let $1\le n\le m$.  If
$k\,\delta_{T_0}\in\operatorname{im}U_{r-1}(N)$, then $n\mid k$.  Hence the class of
$\delta_{T_0}$ in $\coker U_{r-1}(N)$, if it has finite order, has order divisible by
$\operatorname{lcm}(1,\dots,m)$.
\end{lemma}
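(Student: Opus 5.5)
The plan is to construct, for each $1\le n\le m$, a linear functional $\lambda_n\colon\Z^{\binom{[N]}{r}}\to\Z/n$ that vanishes on $\operatorname{im}U_{r-1}(N)$ and satisfies $\lambda_n(\delta_{T_0})=\pm1$. Given such a functional, $k\delta_{T_0}=U_{r-1}h$ forces $k\lambda_n(\delta_{T_0})=\lambda_n(U_{r-1}h)=0$ in $\Z/n$, hence $n\mid k$. Since this holds for every $n\le m$, the order of $\delta_{T_0}$ in the cokernel, if finite, is divisible by $\operatorname{lcm}(1,\dots,m)$. This proves the second sentence.

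To build $\lambda_n$, we use the kernel of the transpose. A functional vanishing on $\operatorname{im}U_{r-1}$ with values in $\Z/n$ is a function $g\colon\binom{[N]}{r}\to\Z/n$ with $D_rg=0$ modulo $n$, that is, $\sum_{T\supset S}g(T)\equiv0\pmod n$ for every $(r-1)$-set $S$. We want such a $g$ with $g(T_0)=1$. The model is the constant-function trick of Lemma~\ref{lem:kerU}(2), transported to the transpose. Choose a set $C\subset[N]\setminus T_0$ with $|C|=n-1$; this is possible because $N-r=m-1\ge n-1$. Put $W=T_0\cup C$, so that $|W|=r+n-1$, and define $g(T)=1$ if $T\subset W$ and $|T\cap C|=0$, else $0$. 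This guess is too naive. The better plan is to take $g(T)=\varepsilon(T)$ supported on $r$-subsets of $W$ containing a fixed $(r-n)$-subset $T_0\setminus P$, where $P\subset T_0$ with $|P|=n$. This reduces to the case of $n$-subsets of a $(2n-1)$-set $X=P\cup C$, which is exactly the setting of Lemma~\ref{lem:kerU}. There, $D$ applied to $g$ at an $(r-1)$-set $S$ counts the $r$-sets $T$ with $S\subset T\subset W$ and $T\supset T_0\setminus P$. That count is the number of ways to add one element of $X\setminus(S\cap X)$, if $S\supset T_0\setminus P$ and $S\subset W$.

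This count is $|X|-(n-1)=n$, which vanishes modulo $n$; otherwise it is $0$ or $1$. The problematic case is the one in which $S$ omits one element $t$ of $T_0\setminus P$: then only $T=S\cup t$ works, giving $1$. To fix this, we instead require $T\supset T_0\setminus P$ with a sign or weight that cancels these terms. The cleanest way is to pick $P=T_0$ when $n=r$. In general, we replace the construction by applying the dual Wilson normal form: Theorem~\ref{thm:wilson} gives $\coker U_{r-1}\otimes\Z/n\supseteq(\Z/n)$ summands, and we need a dual vector with unit value at $\delta_{T_0}$. Concretely, I would use the function $g(T)=(-1)^{|T\cap C'|}\binom{\cdot}{\cdot}$ built from a pairing $T_0\setminus P\leftrightarrow C'$ of disjoint elements, with $g(T)$ nonzero only when $T$ picks exactly one element from each pair $\{t,c\}$ and a fixed pattern on the rest. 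Choosing one element from each pair makes the terms from a missing pair cancel in sign, while the free part on $X$ contributes the multiple $n$.

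The main obstacle is this combinatorial construction, namely checking that the sum over supersets vanishes modulo $n$ for every $S$. This requires $N\ge(r-n)\cdot2+2n-1$, which we must compare against $N=m+r-1$; it holds when $m\ge r$, but may fail otherwise. If it fails, I would complement all subsets, as in the proof of Theorem~\ref{thm:exacttors}, to swap the roles of $r$ and $m$, and redo the construction in the complemented picture.
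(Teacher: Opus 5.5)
You have the right reduction: it suffices to find $g\colon\binom{[N]}{r}\to\Z/n$ with $\sum_{T\supset S}g(T)\equiv0\pmod n$ for every $(r-1)$-set $S$ and $g(T_0)=1$. But the proposal never produces such a $g$ in the case where the lemma has content.

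Your final signed-pairing weight is only sketched; it still contains a placeholder binomial. It needs $P\subset T_0$ with $|P|=n\le r$, and by your own count it needs $N\ge2r-1$, i.e.\ $r\le m$. However, the cokernel of $U_{r-1}(N)$ is finite only for $r\ge m$. For $r<m$ it has positive rank, and by $\Sigma_N$-symmetry every $\delta_T$ has infinite order, so only $k=0$ occurs there. Hence, even once made precise, your construction has content only at $r=m$. The range $r>m$, which is exactly where Theorem~\ref{thm:torsgen} uses the lemma, is deferred to an unexecuted ``complement and redo'' step.

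The appeal to Wilson's normal form does not fill this gap either. A $\Z/n$ summand of the cokernel does not by itself give a functional that takes a unit value on the particular class $\delta_{T_0}$. To get that, you would have to restrict to prime powers $n=p^a$ and use transitivity of $\Sigma_N$ on $r$-sets, which you do not do.

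The missing functional is one step from your first attempt: drop the condition $T\cap C=\varnothing$. Take $C\subset[N]\setminus T_0$ with $|C|=n-1$ (possible since $m-1\ge n-1$), put $W=T_0\cup C$, and let $g$ be the indicator of all $r$-subsets of $W$. For $|S|=r-1$, an $r$-set $T\supset S$ lies in $W$ only if $S\subset W$, and then $T=S\cup\{x\}$ with $x\in W\setminus S$. Since $|W\setminus S|=n$, this gives $\sum_{T\supset S}g(T)=n\,[S\subset W]\equiv0\pmod n$, while $g(T_0)=1$.

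This is exactly the paper's functional. There it is written with $Y=[N]\setminus W$, an $(m-n)$-subset of $[N]\setminus T_0$, as $\lambda(g)=\sum_Tg(T)\,y([N]\setminus T)$, where $y$ is the indicator of the $(m-1)$-sets containing $Y$. It works uniformly for all $r\ge1$ and $1\le n\le m$, with no case split, no signs, and no appeal to Wilson.
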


\begin{proof}
Put $B'_0=[N]\setminus T_0$, an $(m-1)$-set, and choose $Y\subset B'_0$ with
$|Y|=m-n$.  Let $y$ be the indicator function of the $(m-1)$-subsets containing
$Y$, and define the linear functional
$\lambda(g)=\sum_{T}g(T)\,y([N]\setminus T)$ on $\Z^{\binom{[N]}{r}}$.  For an
$(r-1)$-set $S$, the $m$-set $S^c=[N]\setminus S$ satisfies
\[
\lambda(U_{r-1}\delta_S)=\sum_{T\supset S}y(T^c)=\sum_{b\in S^c}y(S^c\setminus b)
=\#\{b\in S^c\setminus Y\}\cdot[\,Y\subset S^c\,]=n\,[\,Y\subset S^c\,]\equiv0\pmod n .
\]
So $\lambda$ vanishes modulo $n$ on $\operatorname{im}U_{r-1}$, while
$\lambda(\delta_{T_0})=y(B'_0)=1$.  Hence $k\delta_{T_0}\in\operatorname{im}U_{r-1}$
forces $k\equiv0\pmod n$.
\end{proof}

\begin{theorem}[Theta classes generate the torsion]\label{thm:torsgen}
Let $m\ge2$, $r\ge m$, $N=m+r-1$ and $K=K_{m+1,m+r-1}$.
\begin{enumerate}[leftmargin=*]
\item $\coker U_{r-1}(N)$ is finite, and $\Phi$ is an isomorphism
      \[
      \Phi\colon\coker U_{r-1}(N)\xrightarrow{\ \cong\ }\Tors H_{m-1}(\B_mK;\Z).
      \]
\item The restricted theta classes $x_{B'}=[z_m^{(0)}|_{B'}]$, $B'\in\binom B{m-1}$,
      generate $\Tors H_{m-1}(\B_mK;\Z)$, and the relations
      $\sum_{B'\subset T}x_{B'}=0$, $T\in\binom Bm$, generate all relations among
      them.
\item Every $x_{B'}$ has order exactly $\operatorname{lcm}(1,\dots,m)$, the exponent
      of $\Tors H_{m-1}(\B_mK;\Z)$.
\end{enumerate}
\end{theorem}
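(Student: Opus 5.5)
The plan is to combine Theorem~\ref{thm:theta-injective}, Theorem~\ref{thm:exacttors} and Lemma~\ref{lem:pairing}; essentially no new chain-level work should be needed.

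For (1), we first show that $\coker U_{r-1}(N)$ is finite when $r\ge m$. Here $s=\min\{m,r\}=m$. The Wilson diagonal form used in the proof of Theorem~\ref{thm:exacttors} has nonzero entries $d_i=m-i$, $0\le i\le m-1$, with multiplicities summing to $\binom N{m-1}$. Since $r\ge m$, we have $\binom N{r-1}\ge\binom N{r}$ exactly when $r-1\ge N-r$, i.e.\ $r\ge m$. Complementation shows $\binom Nr=\binom N{m-1}$. So the number of rows (the $r$-subsets) equals the number of nonzero diagonal entries, the diagonal form has full row rank, and the cokernel is $\bigoplus_i\Z/d_i$, which is finite. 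Then $\Phi$ is injective by Theorem~\ref{thm:theta-injective}, and its image lies in the torsion subgroup because the source is finite. The same theorem says the image contains all torsion, so $\Phi$ is an isomorphism onto $\Tors H_{m-1}$.

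Part (2) is then a restatement. The generators $\delta_T$ of the cokernel map to $x_{B\setminus T}$, and $T\mapsto B\setminus T$ is a bijection $\binom Br\to\binom B{m-1}$. The defining relations of the cokernel are the columns $U_{r-1}\delta_S$, which Theorem~\ref{thm:theta-injective} already translates into $\sum_{B'\subset C}x_{B'}=0$ for $C=B\setminus S\in\binom Bm$.

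For (3), the order of $x_{B'}$ equals the order of $\delta_{T_0}$ in the cokernel, where $T_0=B\setminus B'$, since $\Phi$ is injective. The upper bound $\operatorname{lcm}(1,\dots,m)$ comes from Theorem~\ref{thm:thetatorsion}(2), or directly: the cokernel is $\bigoplus_i\Z/(m-i)$, which has exponent $\operatorname{lcm}(1,\dots,m)$. For the lower bound, $\delta_{T_0}$ has finite order, so Lemma~\ref{lem:pairing} shows the order is divisible by every $n\le m$, hence by their lcm. That the exponent of $\Tors H_{m-1}$ equals $\operatorname{lcm}(1,\dots,m)$ is Corollary~\ref{cor:primary} with $s=m$.

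The only step needing care is the finiteness and row-count check in (1), namely that Wilson's hypothesis $N\ge 2m-1$ holds, via the complemented form $W_{m-1,m}(N)$. This gives $N=m+r-1\ge 2m-1$, so everything follows directly.
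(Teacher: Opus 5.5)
Your proposal is correct and follows essentially the same route as the paper. The Wilson diagonal form has $\binom N{m-1}=\binom Nr$ nonzero entries $m-i$, one for each row, so the cokernel is finite with exponent $\operatorname{lcm}(1,\dots,m)$; Theorem~\ref{thm:theta-injective} then gives injectivity and that the image contains all torsion, and Lemma~\ref{lem:pairing} supplies the lower bound on each order. The comparison $\binom N{r-1}\ge\binom Nr$ you include is harmless but unnecessary, since only the row count matters.
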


\begin{proof}
Put $q=\operatorname{lcm}(1,\dots,m)$.  Since $r\ge m$ we have $s=m$ in
Theorem~\ref{thm:exacttors}, so $q$ is the exponent of $\Tors H_{m-1}(\B_mK;\Z)$
(Corollary~\ref{cor:primary}).  In the proof of Theorem~\ref{thm:exacttors},
$U_{r-1}(N)$ is equivalent to a diagonal matrix with nonzero entries $d_i=m-i$,
$0\le i\le m-1$, of total multiplicity $\binom N{m-1}=\binom N{r}$, the number of
rows.  Hence $\coker U_{r-1}(N)\cong\bigoplus_i(\Z/d_i)^{\nu_i}$ is finite, killed by
$q$, and of the same order as $\Tors H_{m-1}(\B_mK;\Z)$ by \eqref{eq:torsformula}.

(1) By Theorem~\ref{thm:theta-injective}, $\Phi$ is injective and its
image contains the entire torsion subgroup. Since its source is finite,
its image is exactly that torsion subgroup.

(2) The classes $\Phi(\delta_T)=x_{B\setminus T}$ generate because the $\delta_T$
generate $\coker U_{r-1}(N)$, and by (1) the relations among them are generated by the
columns $U_{r-1}\delta_S=\sum_{b\notin S}\delta_{S\cup b}$ of $U_{r-1}$, which in the
complementary indexing $B'=B\setminus T$ read $\sum_{B'\subset T'}x_{B'}=0$ with
$T'=B\setminus S\in\binom Bm$; these are the relations of Lemma~\ref{lem:wT}.

(3) By (1), the order of $x_{B'}$ equals the order of $\delta_{B\setminus B'}$ in
$\coker U_{r-1}(N)$, which divides $q$ and is divisible by $q$ by
Lemma~\ref{lem:pairing}.
\end{proof}

Theorem~\ref{thm:torsgen} sharpens Theorem~\ref{thm:thetatorsion}(2) and recovers
its part~(4): in $H_2(\B_3K_{4,5})$ the ten theta classes $x_{B'}$ have order exactly
$6$ and generate $\Z/3\oplus(\Z/2)^4$, with the ten relations of
Lemma~\ref{lem:wT}, while in $H_1(\B_2K_{3,3})$ the three star classes are equal to
the element of order $2$.  In general the $p$-primary part of $x_{B'}$ is nonzero
for every prime $p\le m$, in accordance with Corollary~\ref{cor:primary}.

\begin{remark}[The range $1\le r\le m-1$]\label{rem:lowr}
In this range the theta subgroup has free rank
\[
\binom N{r}-\binom N{r-1}>0,\qquad N=m+r-1,
\]
by Theorems~\ref{thm:theta-injective} and~\ref{thm:exacttors}.
Each $\delta_T$ has infinite order in $\coker U_{r-1}(N)$: if it lay in
$\operatorname{im}U_{r-1}\otimes\Q$, then by the symmetry of $[N]$ so would
every $\delta_{T'}$, contradicting the rank. Hence every restricted
theta class $x_{B'}$ has infinite order. Together with
Proposition~\ref{prop:theta}, this shows that the theta class on
$K_{m+1,m-1}$ survives with infinite order throughout $0\le r\le m-1$;
at $r=m$ it becomes torsion of order $\operatorname{lcm}(1,\ldots,m)$.
\end{remark}

\subsection{Free top homology: loop classes}\label{sec:loopclasses}
Theorem~\ref{thm:exacttors} identifies
\[
H_m(\B_mK_{m+1,2m};\Z)\cong\ker U_m(2m).
\]
This group is free of Catalan rank $C_m=\frac1{m+1}\binom{2m}m$, and by
Proposition~\ref{prop:universal} its elements are the cycles $c_h$ with
$h\in\ker U_m(2m)$.  For $m=1$ the graph $K_{2,2}$ is a circle,
$\ker U_1(2)$ is spanned by $h=\delta_{b_1}-\delta_{b_2}$, and
$c_h=\pm(\gamma_{a_1b_1}-\gamma_{a_2b_1}+\gamma_{a_2b_2}-\gamma_{a_1b_2})$
is its fundamental cycle, the \emph{loop class}.  We now write down a
basis for every $m$.

A \emph{perfect matching} $P$ of $B$ is a partition of $B$ into $m$ pairs
$\{i_k<j_k\}$.  It is \emph{non-nesting} if no pair lies inside another, that is,
$i_k<i_l$ implies $j_k<j_l$.  There are $C_m$ non-nesting perfect matchings of
$[2m]$.  Define $h_P\colon\binom Bm\to\Z$ by
\[
h_P(F)=\begin{cases}(-1)^{\#\{k\colon j_k\in F\}}&\text{if $F$ contains exactly one element of each pair of $P$,}\\ 0&\text{otherwise,}\end{cases}
\]
and put
\[
L_P=c_{h_P}=\sum_M(-1)^{p(\alpha_M)}h_P(\beta_M)\gamma_M .
\]
Thus $L_P$ is the signed sum of the $m$-cubes in which the $m$ particles cross
the edges of an $m$-matching leaving one vertex of each pair of $P$ free, the sign
recording the free $A$-vertex and, for each pair, which of its two vertices is free.
For $m=1$ and $P=\{\{b_1,b_2\}\}$ this is the loop class.

Geometrically, let $UD_m(K)$ be Abrams' discretized configuration space
\cite{Abrams}, the cube complex whose cells are the products of $m$ pairwise
disjoint closed cells of $K$.  Sending a traversed edge $e=ab$ to $\gamma_e$ and a
resting vertex $v$ to $[v]$ defines an injective chain map
$\iota\colon C_*(UD_m(K))\to S(K)$ into the weight-$m$ part.  Let $X_P\subset UD_m(K)$
be the union of the closed $m$-cubes of the matchings $M$ with $h_P(\beta_M)\ne0$,
that is, the configurations in which no pair of $P$ carries two particles.

\begin{theorem}[Loop classes]\label{thm:loop}
Let $m\ge1$ and $K=K_{m+1,2m}$.
\begin{enumerate}[leftmargin=*]
\item For every perfect matching $P$ of $B$, $h_P\in\ker U_m(2m)$; hence $L_P$ is a
      cycle and $[L_P]\ne0$.
\item The classes $[L_P]$, $P$ non-nesting, form a basis of
      $H_m(\B_mK;\Z)\cong\Z^{C_m}$.  In representation-theoretic terms,
      $\ker U_m(2m)$ is the integral Specht module $S^{(m,m)}$ of $\Sigma_{2m}$, the
      $h_P$ are its polytabloids, and the $h_P$ with $P$ non-nesting are its
      standard basis.
\item $X_P$ is a connected orientable $m$-dimensional pseudomanifold: every
      $(m-1)$-cube of $X_P$ lies in exactly two $m$-cubes of $X_P$.  The chain
      $L_P$ is $\iota$ of its fundamental class.
\item For $m=2$, $UD_2(K_{3,4})$ is a deformation retract of the configuration
      space, and $X_P$ is a closed orientable surface of genus $2$ in which the three
      pairs of points over the configurations $\{a_i,a_j\}$ are identified.
      It has $19$ vertices, $48$ edges and $24$ squares,
      so $\chi(X_P)=-5$; the vertex links are $4$-cycles at the $12$ configurations
      $\{a,b\}$, $6$-cycles at the $4$ configurations $\{b,b'\}$ with $b,b'$ in
      different pairs, and two disjoint $4$-cycles at the $3$ configurations
      $\{a,a'\}$.
\end{enumerate}
\end{theorem}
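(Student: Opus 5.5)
The plan is to prove the four parts in order, with part~(2) carrying most of the weight. Parts~(1) and~(3) reduce to Proposition~\ref{prop:universal} once the right combinatorics of $h_P$ are in place.

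\emph{Part (1).} Take a $(m+1)$-set $T\subset B$ with $|B|=2m$. Compute $(U_mh_P)(T)=\sum_{b\in T}h_P(T\setminus b)$. A summand is nonzero only if $T\setminus b$ meets every pair of $P$ exactly once. This forces $T$ to contain exactly one full pair $\{i_k,j_k\}$ and one element of each other pair, with $b\in\{i_k,j_k\}$. The two surviving terms differ only in whether $j_k$ lies in the free set, so their signs are opposite and they cancel. If $T$ has any other shape, all terms vanish. Hence $h_P\in\ker U_m(2m)$. Proposition~\ref{prop:universal} then makes $L_P=c_{h_P}$ a cycle, and $L_P\ne0$ because $c_h=0$ only if $h=0$. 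Since there are no chains above degree $m$, the class $[L_P]$ is nonzero.

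\emph{Part (2).} By Theorem~\ref{thm:top} and Proposition~\ref{prop:universal}, it suffices to show that the $h_P$ with $P$ non-nesting form a $\Z$-basis of $\ker U_m(2m)$. I would identify $h_P$ with a polytabloid of shape $(m,m)$. The subset $F$ is the tabloid given by its first row, or its complement, depending on the convention. Each pair $\{i_k<j_k\}$ is a column of a tableau, and the column antisymmetrizer produces exactly the signed sum over choices of one element per pair. Non-nesting pairings correspond bijectively to standard tableaux of shape $(m,m)$: put the $i_k$ in row one and the $j_k$ in row two, both in increasing order. By the standard basis theorem (Garnir relations), these polytabloids are a $\Z$-basis of the integral Specht module $S^{(m,m)}$. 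It then remains to show $S^{(m,m)}_\Z=\ker U_m(2m)$ over $\Z$. Both are saturated sublattices of $\Z^{\binom{[2m]}{m}}$. The kernel is saturated automatically. For the Specht side I would check saturation directly: each standard $h_P$ has a leading tabloid, the one with $F=\{j_k\}$ or $\{i_k\}$, with coefficient $\pm1$ in a triangular order, so the span is saturated. The two lattices also have equal rank, since Theorem~\ref{thm:exacttors} gives $b=\binom{2m}m-\binom{2m}{m-1}=C_m$. Since $S^{(m,m)}\subseteq\ker$ by part~(1), the two saturated lattices coincide. \emph{I expect this saturation and triangularity check to be the main obstacle.} I would first try ordering subsets lexicographically and taking $\{i_1,\dots,i_m\}$ as the leading term, which is the standard tableau argument.

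\emph{Part (3).} Use the (R1) cancellation from the proof of Proposition~\ref{prop:universal}. A facet with a particle resting at $b\in B$ lies in exactly two cubes, $M'\cup\{ab\}$ and $M'\cup\{a'b\}$, and both belong to $X_P$ because they have the same free set. A facet with a particle resting at $a\in A$ with crossings $M'$ corresponds to the $r=m+1$ free vertices $B'$. The cubes of $X_P$ containing it are those $b\in B'$ with $h_P(B'\setminus b)\ne0$. By the computation in part~(1) there are exactly $0$ or $2$ such $b$, namely the two elements of the unique doubled pair. So every $(m-1)$-cube of $X_P$ lies in exactly two $m$-cubes. Orientability follows because $L_P$, whose coefficients are all $\pm1$ on the cubes of $X_P$, is a cycle, and so it is a fundamental class. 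For connectedness I would use the moves (R1), which move the free $A$-vertex, and swaps within a pair, showing these connect all cubes through codimension-one faces.

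\emph{Part (4).} This is a finite count for $K_{3,4}$ with $P=\{\{b_1,b_2\},\{b_3,b_4\}\}$. There are $24$ squares: three choices of free $a$, four choices of free set, and two bijections. Count the vertices and edges of $UD_2$ used, compute $\chi=19-48+24=-5$, and read off the links listed in the statement. Unpinching the three vertices with disconnected links gives $\chi=-2$ and a connected orientable surface, hence genus $2$. The deformation-retract claim is Abrams' criterion: each edge path between essential vertices has at least $m-1$ edges, which holds for $m=2$.
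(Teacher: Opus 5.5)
Your proposal is correct and follows the paper's proof essentially step for step. Part (1) uses the same cancellation inside the unique doubled pair, and part (3) reuses it for the two-cube count. Part (2) is the same comparison of two saturated lattices of rank $C_m$, and part (4) is the same count of cells and links followed by unpinching.

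The only real difference is in (2). You prove saturation of the standard-polytabloid lattice directly by a unitriangularity (leading-tabloid) argument. The paper instead cites James's theorem that standard polytabloids are independent over every field, and that theorem is proved by exactly this triangularity. The Garnir spanning statement you invoke is not needed: part (1) together with the rank count already forces the span of all polytabloids to equal $\ker U_m(2m)$.

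In (4) you take the numbers $19$, $48$ and the link shapes from the statement rather than deriving them. These still have to be checked from the explicit description of the cells, as the paper does.
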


\begin{proof}
(1) Let $T\in\binom B{m+1}$.  If $T$ misses a pair of $P$, then by counting it
contains two pairs entirely, and every $m$-subset of $T$ contains a full pair, so
$(U_mh_P)(T)=0$.  Otherwise $T$ meets every pair and contains exactly one pair
$\{i_k,j_k\}$ entirely; the only $m$-subsets $F\subset T$ with $h_P(F)\ne0$ are
$T\setminus i_k$ and $T\setminus j_k$, whose values differ by the factor $-1$ from
the $k$-th pair.  So $(U_mh_P)(T)=0$.  Since $h_P\ne0$, $L_P=c_{h_P}\ne0$, and there
are no boundaries in the top degree.

(2) A tabloid of shape $(m,m)$ is determined by its first row $F\in\binom Bm$, so
$M^{(m,m)}=\Z^{\binom Bm}$, and the polytabloid of a tableau $t$ with columns
$\{i_k,j_k\}$, $e_t=\sum_{\sigma\in C_t}\operatorname{sgn}(\sigma)\{\sigma t\}$, is
$h_P$ for the perfect matching $P$ formed by the columns of $t$; $t$ is standard
exactly when $P$ is non-nesting.  By James \cite[Theorem~8.4]{James}, the standard
polytabloids are linearly independent over every field.  Hence the lattice
$\mathcal{L}$ spanned by the $h_P$, $P$ non-nesting, has rank $C_m$ and its reduction
modulo every prime is injective, so $\mathcal{L}$ is a saturated sublattice of
$\Z^{\binom Bm}$.  By (1), $\mathcal{L}\subset\ker U_m(2m)$, which is saturated of the
same rank $C_m$ by Theorem~\ref{thm:exacttors}.  Hence $\mathcal{L}=\ker U_m(2m)$, and
Proposition~\ref{prop:universal} transports the basis to $H_m(\B_mK;\Z)$.

(3) An $(m-1)$-cube of $X_P$ is a face of an $m$-cube $\gamma_M$: either a particle
comes to rest at $b\in B$, with the remaining crossings $M'$ covering $m-1$
vertices of $A$ and one vertex from each of the other $m-1$ pairs, and then the
$m$-cubes of $X_P$ containing it are $M'\cup\{ab\}$ and $M'\cup\{a'b\}$ for the two
free $A$-vertices $a,a'$; or a particle rests at $a\in A$, the crossings $M'$ cover
$m-1$ pairs, one pair $\{i_k,j_k\}$ is untouched, and the $m$-cubes of $X_P$
containing the face are $M'\cup\{ai_k\}$ and $M'\cup\{aj_k\}$, in which the
resting particle crosses into the untouched pair.  In both cases exactly two.  The coefficients
$\pm1$ of $L_P$ are compatible orientations, because $L_P$ is a cycle.  $X_P$ is
connected: the second type of face lets one particle switch between the two
vertices of its pair, and the first type lets the free $A$-vertex trade places with
any occupied $A$-vertex, and these moves connect all pairs $(\alpha,\sigma)$.
Finally $\iota$ sends the fundamental class $\sum_M\pm\gamma_M$ of $X_P$ to $L_P$
by definition.

(4) Abrams' theorem \cite{Abrams} applies to $UD_2(K_{3,4})$, since every cycle of
$K_{3,4}$ has length at least $4>3$.  Let $P=\{\{b_1,b_2\},\{b_3,b_4\}\}$.  The
$0$-cells of $X_P$ are the $3$ configurations $\{a,a'\}$, the $12$ configurations
$\{a,b\}$ and the $4$ configurations $\{b,b'\}$ with $b\in\{b_1,b_2\}$,
$b'\in\{b_3,b_4\}$; the $1$-cells are the $12\cdot4=48$ pairs (edge $ab$, resting
vertex outside $ab$ and outside the pair of $b$); the $2$-cells are the $24$
matchings $\{ab,a'b'\}$ with $b,b'$ in different pairs.  The squares containing
$\{a,b\}$ are $\{ab'',a'b\}$ with $b''$ in the other pair and $a'\ne a$: four,
forming a $4$-cycle.  The squares containing $\{b,b'\}$ are the six ordered pairs
$(a,a')$ with $a\ne a'$. Their link edges join the direction in which
$b$ moves to $a$ to the direction in which $b'$ moves to $a'$; the link
is therefore $K_{3,3}$ minus a perfect matching, hence a $6$-cycle.
The squares containing
$\{a,a'\}$ are $\{ab,a'b'\}$ with $b$ and $b'$ in different pairs: eight, and two
squares share an edge at $\{a,a'\}$ only if the particle at $a$ crosses into the
same pair in both; so the link is the disjoint union of two $4$-cycles, one for
each assignment of the two pairs to $a$ and $a'$.  Separating the two sheets at
these three vertices gives a closed surface with $22$ vertices, $48$ edges and $24$
faces, connected by (3), orientable by (3), of Euler characteristic $-2$, hence
of genus $2$.
\end{proof}

\begin{remark}[The universal construction]\label{rem:universal}
Proposition~\ref{prop:universal} organizes the matching chains and theta cycles of this section.  The
matching chains $c_h$, $h\colon\binom B{r-1}\to R$, span the top chains that are
candidates for cycles, and the chains $z_g$, $g\colon\binom B{r}\to R$, are the
signed sums of restricted theta classes.  The boundary map between them is the
inclusion matrix $U_{r-1}$:
\[
\begin{array}{lll}
h\in\ker U_{r-1} &\leadsto& \text{top cycles; loop basis when }r=m+1,\\
h\equiv1,\ r=q &\leadsto& \partial w_m=qz_m,\\
h=\delta_{B\setminus T} &\leadsto&
 \partial w_T=\sum_{b\in T}z_m^{(0)}|_{T\setminus b},\\
g=\delta_T &\leadsto& \text{individual restricted theta cycles},\\
r=0 &\leadsto& \text{primitive generalized theta classes}.
\end{array}
\]
In the two extreme cases the crossings join the two parts: for $r=0$
the resting particle lies in the larger part $A$, whereas for $r=m+1$
the free $B$-vertices lie in the part of size $2m$.  The classical loop class ($m=1$, $r=2$) and the classical star class
($m=2$, $r=0$) sit at the two ends of this picture, and the torsion classes of
Theorem~B are what happens in between, when $U_{r-1}$ is neither injective nor
surjective modulo $r$.
\end{remark}

\section{Symmetry and disappearance of torsion}\label{sec:symmetry}

We return to the matching-function model of Section~\ref{sec:matching}
and allow both parts of the complete bipartite graph to vary.
The slot-permutation action relates top cycles to chessboard homology,
giving the rational representation decomposition of Theorem~D.
A modular argument then proves the odd-primary vanishing assertion
of that theorem. A $2$-local calculation determines the remaining
torsion for three particles and $a,b\ge5$. Combined with the exact torsion formula of
Section~\ref{sec:exacttors} and the explicit classes of
Section~\ref{sec:explicit}, these results describe contrasting
behaviours under graph inclusions.

\subsection{Slot modules and rational top homology}\label{sec:representations}
We now allow both part sizes to vary. Let $m\ge1$ and $a,b\ge m$,
orient the edges of $K_{a,b}$ from $[a]$ to $[b]$, and let $\Bbbk$ be
a field. The vertex-permutation group $\Sigma_a\times\Sigma_b$ acts
on homology. A second action, of $\Sigma_m$ permuting the slots of an
ordered matching, connects Proposition~\ref{prop:Zcycles} with chessboard
complexes. The two actions will be kept distinct.

Let $I_m(a;\Bbbk)$ be the space of functions on ordered injections
$\mathbf x=(x_1,\ldots,x_m):[m]\hookrightarrow[a]$. Define
$D_m(a;\Bbbk)\subset I_m(a;\Bbbk)$ by the equations
\begin{equation}\label{rep:eq:slot}
 \sum_{t\in[a]\setminus\{x_j:j\ne i\}}
 f(x_1,\ldots,x_{i-1},t,x_{i+1},\ldots,x_m)=0
 \qquad(1\le i\le m),
\end{equation}
for every choice of distinct values of the other $m-1$ coordinates.
Vertex permutations give an action of $\Sigma_a$; permuting the slots gives
a commuting action of $\Sigma_m$. These are the ordinary permutation actions
on functions, without an additional sign twist.

\begin{proposition}\label{rep:prop:arbitrary}
For every field $\Bbbk$ there is a natural $\Sigma_a\times\Sigma_b$-equivariant
isomorphism
\begin{equation}\label{rep:eq:allfields}
 H_m(\B_mK_{a,b};\Bbbk)\cong
 \operatorname{Hom}_{\Bbbk\Sigma_m}\bigl(\operatorname{sgn},D_m(a;\Bbbk)\otimes D_m(b;\Bbbk)\bigr).
\end{equation}
Here $\Sigma_m$ acts diagonally on the tensor product. Equivalently, the right
side is
\begin{equation}\label{rep:eq:homdual}
 \operatorname{Hom}_{\Bbbk\Sigma_m}\bigl(D_m(a;\Bbbk)^*\otimes\operatorname{sgn},D_m(b;\Bbbk)\bigr).
\end{equation}
\end{proposition}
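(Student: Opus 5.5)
We start from Proposition~\ref{prop:Zcycles}, which holds over any field $\Bbbk$. It identifies $H_m(\B_mK_{a,b};\Bbbk)$ with the space of functions $F$ on ordered oriented $m$-matchings that are alternating in the slots, odd under edge reversal, and satisfy \eqref{eq:Z}. Since every edge of $K_{a,b}$ joins $[a]$ to $[b]$, we use oddness to orient all edges from $[a]$ to $[b]$ and discard the reversal condition. An ordered $m$-matching is then exactly a pair of ordered injections $\mathbf x\colon[m]\hookrightarrow[a]$ and $\mathbf y\colon[m]\hookrightarrow[b]$, the $i$-th edge being $x_iy_i$. So a top cycle is a function $F(\mathbf x,\mathbf y)$, that is, an element of $I_m(a;\Bbbk)\otimes I_m(b;\Bbbk)$, since the injection sets are finite. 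The alternating condition says $F(\mathbf x\sigma,\mathbf y\sigma)=\operatorname{sgn}(\sigma)F(\mathbf x,\mathbf y)$. In other words, $F$ is a $\Sigma_m$-equivariant map $\operatorname{sgn}\to I_m(a)\otimes I_m(b)$ for the diagonal slot action.

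Next we translate \eqref{eq:Z}. Fix an ordered $(m-1)$-matching $M'$ and a vertex $v\notin V(M')$. If $v=x\in[a]$, the relation is $\sum_{t}F((x,t),M')=0$, with $t$ running over $[b]\setminus V(M')$. After moving the new edge into slot $i$ with the alternating sign, this is precisely the slot equation \eqref{rep:eq:slot} for the $b$-factor in slot $i$, with all other coordinates of both injections held fixed. Symmetrically, for $v\in[b]$ the oriented edge is $tv$ with $t\in[a]$, and the sign change from oddness is uniform across the terms. The relation is therefore the slot equation for the $a$-factor. We also need the converse: every slot equation in slot $i$ for the $a$-coordinates arises this way. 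Given the other coordinates of $\mathbf x$ and all of $\mathbf y$, the free value $y_i$ is a vertex $v\notin V(M')$, so each such equation is an instance of \eqref{eq:Z}. By alternation it is enough to check slot $1$.

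It remains to see that the simultaneous solution space of these equations inside $I_m(a)\otimes I_m(b)$ is $D_m(a)\otimes D_m(b)$. The equations for the $a$-factor form a linear map $\rho_a\otimes\mathrm{id}$, where $\rho_a\colon I_m(a)\to\bigoplus_i(\text{functions of the other coordinates})$. Likewise the $b$-equations form $\mathrm{id}\otimes\rho_b$. Over a field, $\ker(\rho_a\otimes\mathrm{id})\cap\ker(\mathrm{id}\otimes\rho_b)=\ker\rho_a\otimes\ker\rho_b$, by choosing bases adapted to the kernels. Both kernels are $\Sigma_m$-stable, so taking $\operatorname{sgn}$-isotypic invariants commutes with the intersection, and \eqref{rep:eq:allfields} follows.

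For equivariance we check that $\Sigma_a\times\Sigma_b$ acts on matching functions by permuting vertices. This action preserves the orientation $[a]\to[b]$, so no signs appear, and it matches the action on homology through the complex $S$. For this we must use the unreduced model, or check that the identification in Proposition~\ref{prop:Zcycles} is natural. This naturality is where we expect the most care to be needed, since $\Sw$ depends on a choice of $h_v$. The plan is to argue via the unreduced complex $S(K)$, in which top cycles embed equivariantly and the same matching coefficients are read off. Finally, \eqref{rep:eq:homdual} follows from the tensor–hom adjunction $\operatorname{Hom}(\operatorname{sgn},V\otimes W)\cong\operatorname{Hom}(V^*\otimes\operatorname{sgn},W)$ for finite-dimensional $V$, using $\operatorname{sgn}^{*}=\operatorname{sgn}$.
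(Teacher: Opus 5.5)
Your proposal is correct and follows essentially the same route as the paper: you identify top cycles via Proposition~\ref{prop:Zcycles} with sign-equivariant functions of pairs of injections, match the vertex relations at the two parts with the slot equations of the two factors, use $(D_m(a)\otimes I_m(b))\cap(I_m(a)\otimes D_m(b))=D_m(a)\otimes D_m(b)$ over a field, and conclude with tensor--Hom adjunction. Your remark that equivariance is cleanest to check in the unreduced complex $S(K)$, where top cycles are the whole top homology and no choice of $h_v$ intervenes, fills in a detail the paper leaves implicit.
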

\begin{proof}
An ordered $m$-matching is specified by two injections
$\mathbf x:[m]\hookrightarrow[a]$ and $\mathbf y:[m]\hookrightarrow[b]$,
with edges $(x_i,y_i)$ in slot order. Proposition~\ref{prop:Zcycles} identifies top homology with functions $F(\mathbf x,\mathbf y)$ that are
alternating under simultaneous slot permutations and satisfy the vertex
sum relations.

At a fixed first-part vertex, a vertex sum varies one coordinate of
$\mathbf y$, keeping all of $\mathbf x$ and the other coordinates of
$\mathbf y$ fixed. Thus these relations give
$I_m(a;\Bbbk)\otimes D_m(b;\Bbbk)$. The second-part relations similarly give
$D_m(a;\Bbbk)\otimes I_m(b;\Bbbk)$; reversing the new edge contributes a common
minus sign and does not change the zero-sum equation. The intersection
of these two subspaces is $D_m(a;\Bbbk)\otimes D_m(b;\Bbbk)$, as one sees by choosing
vector-space complements to the two $D_m$'s. Finally, alternating slot
permutations pick out precisely the sign-equivariant vectors, giving
\eqref{rep:eq:allfields}.

For finite-dimensional representations, tensor--Hom adjunction gives
\eqref{rep:eq:homdual}. Neither step uses averaging by $1/m!$, semisimplicity,
or exactness of the invariants functor. The proof therefore applies also
when $\operatorname{char}\Bbbk$ divides $m!$.
\end{proof}

Let $\Delta_{m,a}$ denote the chessboard complex whose faces are matchings
in $K_{m,a}$. Since $a\ge m$, it has dimension $m-1$.
Orient each top simplex by listing its rooks in increasing row order.
Its simplicial boundary is zero exactly when \eqref{rep:eq:slot} holds:
an $(m-2)$-simplex omits a unique row, and its possible completions vary
the column in that row. Permuting the rows changes this orientation by
the sign of the row permutation. Consequently
\begin{equation}\label{rep:eq:chessboard}
 \widetilde H_{m-1}(\Delta_{m,a};\Bbbk)
 \cong D_m(a;\Bbbk)\otimes\operatorname{sgn}_{\Sigma_m}
\end{equation}
as $\Sigma_a\times\Sigma_m$-modules, where the sign twist concerns only the
slot group. For $m=1$, reduced degree-zero chains give the same assertion.

Write $S^\rho$ for the rational Specht module of partition shape $\rho$,
and $\lambda'$ for the conjugate partition of $\lambda$. The external
tensor product for two distinct symmetric groups is denoted by $\boxtimes$.
Friedman and Hanlon \cite{FH} compute the combinatorial Laplacian of
$\Delta_{m,a}$ on each $\Sigma_a\times\Sigma_m$-isotypic component of the
top chain module, and the components with eigenvalue zero form the top
homology.  Reading off their result in the top degree gives
\begin{equation}\label{rep:eq:FH}
 \widetilde H_{m-1}(\Delta_{m,a};\Q)
 \cong
 \bigoplus_{\substack{\lambda\vdash m\\\lambda_1\le a-m}}
 S^{(a-m,\lambda)}\boxtimes S^{\lambda'}
\end{equation}
in the group order $\Sigma_a\times\Sigma_m$; in the notation of \cite{FH}
the summation index runs over the $(a-m,0)$-super-rectangular partitions
$\lambda$ of $m$, which are exactly those with $\lambda_1\le a-m$, and
$(a-m,\lambda)$ and $\lambda'$ are their minimally horizontally built
partitions of $a$ and $m$. The inequality makes $(a-m,\lambda)$ a partition
of $a$. If $a=m$, the sum is empty.
Twisting the second factor in \eqref{rep:eq:FH} by sign gives
\begin{equation}\label{rep:eq:Ddecomp}
 D_m(a;\Q)\cong
 \bigoplus_{\substack{\lambda\vdash m\\\lambda_1\le a-m}}
 S^{(a-m,\lambda)}\boxtimes S^\lambda.
\end{equation}

\begin{theorem}\label{rep:thm:rational}
For $m\ge1$ and $a,b\ge m$, there is a multiplicity-free decomposition
\begin{equation}\label{rep:eq:main}
 H_m(\B_mK_{a,b};\Q)\cong
 \bigoplus_{\substack{\lambda\vdash m\\
 \lambda_1\le a-m,\ \lambda'_1\le b-m}}
 S^{(a-m,\lambda)}\boxtimes S^{(b-m,\lambda')}.
\end{equation}
\end{theorem}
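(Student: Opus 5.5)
The plan is to combine Proposition~\ref{rep:prop:arbitrary} over $\Bbbk=\Q$ with the decomposition \eqref{rep:eq:Ddecomp}, and then evaluate the sign-isotypic part of a tensor product by character theory of $\Sigma_m$. By \eqref{rep:eq:allfields},
\[
H_m(\B_mK_{a,b};\Q)\cong\operatorname{Hom}_{\Q\Sigma_m}\bigl(\operatorname{sgn},D_m(a;\Q)\otimes D_m(b;\Q)\bigr),
\]
equivariantly for $\Sigma_a\times\Sigma_b$. Substituting \eqref{rep:eq:Ddecomp} for both factors writes $D_m(a;\Q)\otimes D_m(b;\Q)$ as a direct sum over pairs $\lambda,\mu\vdash m$ with $\lambda_1\le a-m$ and $\mu_1\le b-m$ of the modules
\[
S^{(a-m,\lambda)}\boxtimes S^{(b-m,\mu)}\boxtimes(S^\lambda\otimes S^\mu).
\]
Here $\Sigma_m$ acts diagonally on the last factor only. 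Since the slot action commutes with the vertex actions, the functor $\operatorname{Hom}_{\Q\Sigma_m}(\operatorname{sgn},-)$ passes through the first two factors.

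The key step is the classical fact that $\dim\operatorname{Hom}_{\Sigma_m}(\operatorname{sgn},S^\lambda\otimes S^\mu)=\langle\chi^\lambda\chi^\mu,\operatorname{sgn}\rangle$. This equals $\langle\chi^\mu,\chi^\lambda\cdot\operatorname{sgn}\rangle=\langle\chi^\mu,\chi^{\lambda'}\rangle$, using that Specht characters are real and $S^\lambda\otimes\operatorname{sgn}\cong S^{\lambda'}$ over $\Q$. It is therefore $1$ if $\mu=\lambda'$ and $0$ otherwise. Only the diagonal terms $\mu=\lambda'$ survive, each with multiplicity one. The constraint $\mu_1\le b-m$ becomes $\lambda'_1\le b-m$, which gives exactly the index set in \eqref{rep:eq:main}.

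Multiplicity-freeness then follows because the map $\lambda\mapsto\bigl((a-m,\lambda),(b-m,\lambda')\bigr)$ is injective. Indeed, removing the first row of $(a-m,\lambda)$ recovers $\lambda$; this requires $a-m\ge\lambda_1$, which is exactly the imposed condition. Distinct $\lambda$ therefore give non-isomorphic irreducible $\Sigma_a\times\Sigma_b$-modules.

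The main obstacle I expect is bookkeeping in the identification \eqref{rep:eq:Ddecomp}. I need the sign twist on the slot factor and the Friedman--Hanlon labelling to be correct, so that the slot factor of $D_m(a;\Q)$ really is $S^\lambda$ and not $S^{\lambda'}$. A wrong twist would turn the answer into $S^{(b-m,\lambda)}$ with the constraint $\lambda_1\le b-m$. I would check this against a small case, for instance the Catalan case $a=m+1$, $b=2m$. There Theorem~\ref{thm:loop} gives $\Q$-dimension $C_m=\dim S^{(m,m)}$ with trivial $\Sigma_{m+1}$-factor. The formula must then produce only $\lambda=(1^m)$: the constraint $\lambda_1\le 1$ forces this $\lambda$, with $\lambda'=(m)$. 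This gives $S^{(1,1^m)}\boxtimes S^{(m,m)}$. Since $S^{(1^{m+1})}$ is the sign representation, I would also reconcile this with the sign conventions in Proposition~\ref{prop:Zcycles}, where edges are oriented and vertex permutations of $A$ act with signs. The test case shows the Hom-identification only needs to hold up to the stated equivariance, possibly including a sign character on the first factor.
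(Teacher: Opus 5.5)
Your argument is the paper's proof: substitute \eqref{rep:eq:Ddecomp} into \eqref{rep:eq:allfields}, show that $\operatorname{Hom}_{\Q\Sigma_m}(\operatorname{sgn},S^\lambda\otimes S^\mu)$ is one-dimensional exactly when $\mu=\lambda'$ (the paper uses self-duality and Schur's lemma where you use characters), and read off multiplicity-freeness from the first-factor shapes. One correction to your sanity check: Theorem~\ref{thm:loop} does not give a trivial $\Sigma_{m+1}$-factor. The coefficient $(-1)^{p(\alpha_M)}$ in $L_P$ makes every transposition of $A$ act by $-1$, so the first factor is the sign representation $S^{(1,1^m)}$, exactly as the formula predicts. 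There is therefore no extra sign twist to allow for: the isomorphism \eqref{rep:eq:allfields} is genuinely $\Sigma_a\times\Sigma_b$-equivariant, because vertex permutations preserve the $A$-to-$B$ orientation.
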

\begin{proof}
Insert \eqref{rep:eq:Ddecomp} for both factors in \eqref{rep:eq:allfields}.
The slot-group contribution from a pair $(\lambda,\mu)$ is
\[
 \operatorname{Hom}_{\Q\Sigma_m}(\operatorname{sgn},S^\lambda\otimes S^\mu).
\]
Rational Specht modules are absolutely irreducible and self-dual, and
$S^\lambda\otimes\operatorname{sgn}\cong S^{\lambda'}$. Schur's lemma therefore says
that the displayed space has dimension one if $\mu=\lambda'$ and is
zero otherwise. This gives \eqref{rep:eq:main}. Distinct $\lambda$ give
distinct first-factor shapes, so there are no repeated summands.
\end{proof}

\begin{corollary}\label{rep:cor:rank}
Let $f^\rho$ be the number of standard Young tableaux of shape $\rho$.
Then
\begin{equation}\label{rep:eq:rank}
 \rk H_m(\B_mK_{a,b};\Z)=
 \sum_{\substack{\lambda\vdash m\\
 \lambda_1\le a-m,\ \lambda'_1\le b-m}}
 f^{(a-m,\lambda)}f^{(b-m,\lambda')}.
\end{equation}
Each factor is evaluated by the hook-length formula
\[
 f^\rho=\frac{|\rho|!}{\prod_{x\in\rho}h(x)}.
\]
Moreover,
\begin{equation}\label{rep:eq:nonzero}
 H_m(\B_mK_{a,b};\Q)\ne0
 \quad\Longleftrightarrow\quad (a-m)(b-m)\ge m.
\end{equation}
\end{corollary}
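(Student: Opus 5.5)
The plan is to read off both assertions from Theorem~\ref{rep:thm:rational}, using only dimension counts and an elementary statement about partitions fitting in a box.

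For the rank formula \eqref{rep:eq:rank}, I would first note that $H_m(\B_mK_{a,b};\Z)$ is free by Observation~\ref{obs:top}. The universal coefficient theorem gives
\[
H_m(\B_mK_{a,b};\Q)\cong H_m(\B_mK_{a,b};\Z)\otimes\Q,
\]
because $\Tor(H_{m-1},\Q)=0$. Hence the integral rank equals $\dim_\Q H_m(\B_mK_{a,b};\Q)$. Next I would take dimensions in the decomposition \eqref{rep:eq:main}. The dimension of an external tensor product is the product of the dimensions of its factors. The rational Specht module $S^\rho$ has dimension $f^\rho$, the number of standard Young tableaux of shape $\rho$; this holds because its standard polytabloids form a basis (James, as already cited in the proof of Theorem~\ref{thm:loop}). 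Summing over the index set of \eqref{rep:eq:main} gives \eqref{rep:eq:rank}. The hook-length formula for $f^\rho$ is then quoted as the classical Frame--Robinson--Thrall theorem.

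For the criterion \eqref{rep:eq:nonzero}, every summand in \eqref{rep:eq:main} is nonzero, since $f^\rho\ge1$ for every partition $\rho$. So $H_m(\B_mK_{a,b};\Q)\ne0$ if and only if the index set is nonempty. That is, we need some $\lambda\vdash m$ with $\lambda_1\le a-m$ and $\lambda'_1\le b-m$, which means the Young diagram of $\lambda$ fits inside a rectangle with $b-m$ rows and $a-m$ columns.

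If such a $\lambda$ exists, then $m=|\lambda|\le\lambda_1\lambda'_1\le(a-m)(b-m)$. Conversely, suppose $(a-m)(b-m)\ge m$. Since $m\ge1$, this forces $a-m\ge1$ and $b-m\ge1$. Write $m=k(a-m)+\rho$ with $0\le\rho<a-m$, and take
\[
\lambda=\bigl((a-m)^k,\rho\bigr),
\]
omitting the last part when $\rho=0$. Then $\lambda_1\le a-m$. The number of rows is $\lambda'_1=\lceil m/(a-m)\rceil$, and this is at most $b-m$ because $m/(a-m)\le b-m$ and $b-m$ is an integer.

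No step here is a real obstacle. The only points needing care are the degenerate case $a=m$ or $b=m$, where the product is $0<m$ and the index set is visibly empty, and the rounding argument in the converse.
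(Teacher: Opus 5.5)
Your proposal is correct and follows essentially the same route as the paper: freeness of the top homology makes the integral rank equal to the rational dimension, taking dimensions in \eqref{rep:eq:main} gives \eqref{rep:eq:rank}, and \eqref{rep:eq:nonzero} reduces to the existence of a partition of $m$ fitting in a rectangle with $a-m$ columns and $b-m$ rows. Your explicit choice $\lambda=((a-m)^k,\rho)$ fills in the paper's one-line assertion that such a diagram exists exactly when the rectangle has at least $m$ boxes. (Rename the remainder, though, since $\rho$ already denotes a shape in $f^\rho$.)
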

\begin{proof}
The weight-$m$ graph configuration chain complex is free abelian and has
no chains above degree $m$. Its top homology is therefore free abelian.
Its rank equals its rational dimension, yielding \eqref{rep:eq:rank}.
The allowable diagrams are exactly the partitions of $m$ contained in
a rectangle with $a-m$ columns and $b-m$ rows. Such a diagram exists
exactly when the rectangle has at least $m$ boxes.
\end{proof}

\begin{remark}
The irreducible summands are indexed by partitions, not by individual
tableaux. After choosing standard Specht bases, their basis vectors are
indexed by pairs of standard Young tableaux of the indicated shapes.
Formula \eqref{rep:eq:rank} determines the integral top homology as an abstract
abelian group, but a rational decomposition alone does not determine its
integral equivariant lattice or give an integral direct-sum decomposition.
\end{remark}

\begin{remark}
For $a=m+1$, the only possible partition is $\lambda=(1^m)$.
Thus for $b\ge2m$ the rational top homology is
$\operatorname{sgn}_{\Sigma_{m+1}}\boxtimes S^{(b-m,m)}$.
At $b=2m$ this recovers the Specht module $S^{(m,m)}$ and its Catalan
rank, together with the sign action on the first part; see
Section~\ref{sec:loopclasses} for the integral polytabloid construction.
\end{remark}

\begin{example}\label{ex:rep-k55}
For $(m,a,b)=(3,5,5)$, the only allowed partition is $(2,1)$, so
\begin{equation}\label{rep:eq:25Q}
 H_3(\B_3K_{5,5};\Q)
 \cong S^{(2,2,1)}\boxtimes S^{(2,2,1)},
 \qquad \dim_\Q H_3=5\cdot5=25.
\end{equation}
Indeed $f^{(2,2,1)}=5!/(4\cdot2\cdot3\cdot1\cdot1)=5$.

\end{example}

\begin{corollary}[Polynomial top rank]\label{cor:rep-polynomial}
For fixed $m$, there is a polynomial $P_m(a,b)\in\Q[a,b]$ of degree $m$
in each variable such that
\[
\rk H_m(\B_mK_{a,b};\Z)=P_m(a,b)\qquad(a,b\ge2m).
\]
Its term of total degree $2m$ is $a^mb^m/m!$.
\end{corollary}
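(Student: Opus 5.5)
Start from Corollary~\ref{rep:cor:rank}, which writes the rank as the sum over $\lambda\vdash m$ with $\lambda_1\le a-m$, $\lambda'_1\le b-m$ of $f^{(a-m,\lambda)}f^{(b-m,\lambda')}$. For $a,b\ge2m$ both constraints hold automatically, since $\lambda_1\le m\le a-m$ and $\lambda'_1\le m\le b-m$. The index set is therefore all partitions of $m$, independent of $a,b$. It then suffices to show that for a fixed $\lambda\vdash m$ the function $a\mapsto f^{(a-m,\lambda)}$ is a polynomial of degree exactly $m$ in $a$ for $a\ge2m$, with leading coefficient $f^\lambda/m!$. The product over $\lambda$ of such polynomials in $a$ and $b$, summed over $\lambda$, then gives $P_m$.

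For the one-variable claim, apply the hook-length formula to $\rho=(a-m,\lambda)$, which is a genuine partition because $a-m\ge m\ge\lambda_1$.
\begin{itemize}
\item The boxes in rows $2,3,\dots$ form the diagram of $\lambda$, and their hook lengths are exactly those of $\lambda$. Their product is $m!/f^\lambda$.
\item In the first row, the box in column $j$ has hook length $(a-m-j)+1+\lambda'_j$, where $\lambda'_j=0$ for $j>\lambda_1$.
\item For $j>\lambda_1$ these hooks are $a-m-j+1$, running over $1,\dots,a-m-\lambda_1$, with product $(a-m-\lambda_1)!$.
\item For $j\le\lambda_1$ the hooks are $a-m-j+1+\lambda'_j$.
\end{itemize}
Hence
\[
f^{(a-m,\lambda)}=\frac{a!\,f^\lambda}{m!\,(a-m-\lambda_1)!\,\prod_{j=1}^{\lambda_1}(a-m-j+1+\lambda'_j)} .
\]

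The main point is that this expression is a polynomial in $a$ and not merely a rational function. Write $a!/(a-m-\lambda_1)!=\prod_{i=0}^{m+\lambda_1-1}(a-i)$, a product of $m+\lambda_1$ consecutive linear factors in $a$. The denominator factors $a-(m+j-1-\lambda'_j)$ for $j=1,\dots,\lambda_1$ have distinct shifts, because $j-\lambda'_j$ is strictly increasing in $j$. These shifts lie in $[0,m+\lambda_1-1]$, since $\lambda'_j\le m$ and $j\ge1$, and $m+j-1-\lambda'_j\le m+\lambda_1-1$. So each denominator factor cancels a distinct numerator factor.

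What remains is a product of $m$ linear factors in $a$ times $f^\lambda/m!$. This is a polynomial of degree $m$ with leading term $f^\lambda a^m/m!$, valid for all $a\ge2m$. Alternatively, one can avoid the cancellation argument: $f^{(a-m,\lambda)}$ equals $\binom{a}{m}$ times a sum over skew-tableau counts, but the hook computation above is the cleanest route.

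Multiplying, $P_m(a,b)=\sum_{\lambda\vdash m}p_\lambda(a)\,p_{\lambda'}(b)$ has degree at most $m$ in each variable. Its part of total degree $2m$ is $\sum_\lambda f^\lambda f^{\lambda'}a^mb^m/(m!)^2$. Since $f^{\lambda'}=f^\lambda$ and $\sum_\lambda (f^\lambda)^2=m!$, this equals $a^mb^m/m!$. In particular the degree in each variable is exactly $m$.
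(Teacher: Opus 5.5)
Your proof is correct and follows the paper's argument. For $a,b\ge2m$ every $\lambda\vdash m$ appears in \eqref{rep:eq:rank}; each $f^{(a-m,\lambda)}$ is a degree-$m$ polynomial in $a$ with leading coefficient $f^\lambda/m!$; and the sum is evaluated using $f^{\lambda'}=f^\lambda$ and $\sum_\lambda(f^\lambda)^2=m!$. The paper only asserts that the hook-length formula gives a polynomial, whereas your cancellation of the distinct shifts $m+j-1-\lambda'_j$ against the factors of $a!/(a-m-\lambda_1)!$ supplies that detail. Your side remark about $\binom am$ times skew-tableau counts is imprecise (e.g.\ $f^{(a-1,1)}=a-1$ is not a multiple of $\binom a1$), but you never use it.
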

\begin{proof}
In this range every partition of $m$ occurs in \eqref{rep:eq:rank}.
For fixed $\lambda\vdash m$, the hook-length formula makes
$f^{(a-m,\lambda)}$ a polynomial in $a$ of degree $m$, with leading
coefficient $f^\lambda/m!$. Since $f^{\lambda'}=f^\lambda$ and
$\sum_{\lambda\vdash m}(f^\lambda)^2=m!$, summing the products gives
both assertions.
\end{proof}

\subsection{Odd-primary vanishing}\label{sec:modular-vanishing}
We now assume $m\ge2$ and $a,b\ge2m-1$. Write $G=\Sigma_m$ and
$\varepsilon=\operatorname{sgn}_G$. The bounds on the two parts allow us to use
shellability; the additional bound $p\ge m$ will make the chain modules
projective.

\medskip\noindent\textbf{The join and its chain modules.}\enspace
Put
\[
 X=\Delta_{m,a}*\Delta_{m,b},\qquad n=2m-1.
\]
The group $G$ permutes the row labels simultaneously in the two factors;
the two sets of columns are fixed. By Ziegler's theorem \cite{Ziegler},
both chessboard complexes are shellable, with reduced homology concentrated
in degree $m-1$, and that integral homology is free. The join formula gives,
over any field $\Bbbk$,
\[
 \widetilde H_i(X;\Bbbk)=0\quad(i\ne n),\qquad
 K_{\Bbbk}:=\widetilde H_n(X;\Bbbk)
 \cong\widetilde H_{m-1}(\Delta_{m,a};\Bbbk)
 \otimes\widetilde H_{m-1}(\Delta_{m,b};\Bbbk).
\]
The join identification is $G$-equivariant: $G$ preserves both factors,
so no interchange of the factors is involved. By \eqref{rep:eq:chessboard},
the two slot-sign twists cancel, giving
\begin{equation}\label{rep:eq:joinidentity}
 H_m(\B_mK_{a,b};\Bbbk)\cong\operatorname{Hom}_{\Bbbk G}(\varepsilon,K_{\Bbbk}).
\end{equation}

Every nonempty simplex of $X$ is a pair of rook placements, not both empty.
If their occupied row sets have union $U$, its stabilizer is the full
symmetric group on the unused rows, hence isomorphic to $\Sigma_{m-|U|}$.
Indeed, a permutation preserving the simplex must fix the row of each
rook because it fixes that rook's column and preserves the two factors.
This stabilizer fixes the simplex pointwise and therefore acts trivially
on its orientation. Consequently each simplicial chain module $C_i(X;\Bbbk)$
is a direct sum of modules induced from the trivial representation of
such stabilizers. If $p\ge m$ and $\Bbbk=\F_p$, all their orders are prime to
$p$, since $m-|U|\le m-1$. Thus every $C_i(X;\Bbbk)$ is projective.

Writing $P_i=C_i(X;\Bbbk)$, the augmented chain complex is the exact sequence
\begin{equation}\label{rep:eq:joinresolution}
 0\longrightarrow K_{\Bbbk}\longrightarrow P_n\longrightarrow P_{n-1}
 \longrightarrow\cdots\longrightarrow P_0\longrightarrow \Bbbk
 \longrightarrow0.
\end{equation}
For $\Bbbk=\Q$, or $\Bbbk=\F_p$ with $p\ge m$, it is the beginning of a projective resolution of the trivial module:
one can extend it on the left by a projective resolution of $K_{\Bbbk}$.

\medskip\noindent\textbf{The Euler characteristic correction.}\enspace
\begin{lemma}\label{rep:lem:correction}
If $p\ge m$ is odd, then
\begin{align}
 &\dim_{\F_p}H_m(\B_mK_{a,b};\F_p)
 -\dim_\Q H_m(\B_mK_{a,b};\Q)\notag\\
 &\hspace{2em}=\sum_{i=0}^{2m-2}(-1)^i
 \dim_{\F_p}H_i(G;\F_p^{\varepsilon}).\label{rep:eq:correction}
\end{align}
\end{lemma}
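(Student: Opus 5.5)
We compare $\operatorname{Hom}_{\Bbbk G}(\varepsilon,K_\Bbbk)$ over $\Q$ and over $\F_p$ using the resolution \eqref{rep:eq:joinresolution}; this space is the top homology by \eqref{rep:eq:joinidentity}. Since $p$ is odd, $\varepsilon$ is a one-dimensional module on which $G$ acts by $\pm1$, and $\operatorname{Hom}_{\Bbbk G}(\varepsilon,-)=(-\otimes\varepsilon)^G$.

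First, apply $\operatorname{Hom}_{\Bbbk G}(\varepsilon,-)$ to \eqref{rep:eq:joinresolution} as a complex. Splice it as $0\to K_\Bbbk\to P_n\to\cdots\to P_0\to\Bbbk\to0$, viewed as the truncation of a projective resolution of $\Bbbk$. Then $\operatorname{Ext}$-dimension shifting gives the following: the cohomology of $0\to\operatorname{Hom}(\varepsilon,K_\Bbbk)\to\operatorname{Hom}(\varepsilon,P_n)\to\cdots\to\operatorname{Hom}(\varepsilon,P_0)\to0$ computes $\operatorname{Ext}^i_{\Bbbk G}(\varepsilon,\Bbbk)$ in degrees $i\le n-1$. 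It is cleaner to dualize into homology. Since the $P_i$ are projective, hence injective over the self-injective algebra $\Bbbk G$, we may instead apply $\varepsilon\otimes_{\Bbbk G}-$, equivalently coinvariants of the $\varepsilon$-twist. This computes $H_i(G;\varepsilon)=\operatorname{Tor}_i^{\Bbbk G}(\varepsilon,\Bbbk)$ for $i\le n-1=2m-2$. At the left end, the map $\varepsilon\otimes_G K_\Bbbk\to\varepsilon\otimes_G P_n$ has kernel $H_n(G;\varepsilon)$ and image the $n$-cycles. On the other hand, projectivity makes the norm map identify invariants and coinvariants of projective modules, so the invariant functor $\operatorname{Hom}_{\Bbbk G}(\varepsilon,-)$ is exact on the $P_i$. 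It is convenient to use the left exactness of invariants directly: $\operatorname{Hom}(\varepsilon,K_\Bbbk)$ is the kernel of $\operatorname{Hom}(\varepsilon,P_n)\to\operatorname{Hom}(\varepsilon,P_{n-1})$.

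Next, take Euler characteristics of the complex $C^\Bbbk:\ 0\to\operatorname{Hom}(\varepsilon,P_n)\to\cdots\to\operatorname{Hom}(\varepsilon,P_0)\to\operatorname{Hom}(\varepsilon,\Bbbk)\to 0$. For projective $P$, $\dim_{\F_p}\operatorname{Hom}(\varepsilon,P\otimes\F_p)=\dim_\Q\operatorname{Hom}(\varepsilon,P\otimes\Q)$. Indeed, each $P_i$ is a sum of modules induced from trivial modules of $p'$-subgroups $\Sigma_{m-|U|}$, and by Frobenius reciprocity the dimension is the number of such summands whose stabilizer has trivial restriction of $\varepsilon$, a characteristic-free count because $p$ is odd. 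Hence $\sum(-1)^i\dim$ of the terms, excluding the trivial-module end, agrees over $\Q$ and $\F_p$. The cohomology of $C^\Bbbk$ is $\operatorname{Hom}(\varepsilon,K_\Bbbk)$ at the left end and $\operatorname{Ext}^{j}_{\Bbbk G}(\varepsilon,\Bbbk)$ in the intermediate places, shifted from $\operatorname{Hom}(\varepsilon,P_{j})$ to the next. Over $\Q$ all $\operatorname{Ext}^{>0}$ vanish, and $\operatorname{Hom}_{\Q G}(\varepsilon,\Q)=0$ since $m\ge2$. Subtracting the two Euler characteristics leaves $\dim H_m(\F_p)-\dim H_m(\Q)$ equal to an alternating sum of $\dim\operatorname{Ext}^j_{\F_pG}(\varepsilon,\F_p)$ for $0\le j\le 2m-2$. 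Finally, $\operatorname{Ext}^j(\varepsilon,\F_p)=H^j(G;\varepsilon)$, which is dual to $H_j(G;\F_p^\varepsilon)$ because $\varepsilon$ is self-dual. This gives \eqref{rep:eq:correction}, once the sign $(-1)^{n}$ relating positions is checked.

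\textbf{Main obstacle.} The main obstacle is the bookkeeping of indices and signs at the two ends of the truncated resolution. One must verify that degree $j$ of the complex contributes exactly $\operatorname{Ext}^j$ for $j\le n-1=2m-2$, with no stray $\operatorname{Ext}^n$ term. This holds because the left end is $\operatorname{Hom}(\varepsilon,K_\Bbbk)$ itself, which is the kernel, by left exactness. Among the remaining checks, the one I would make first is that the overall sign comes out as $(-1)^i$ with $K$ positive. I would also confirm the characteristic-independence of $\dim\operatorname{Hom}(\varepsilon,P_i)$ via the stabilizer description above.
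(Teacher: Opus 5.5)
This is essentially the paper's proof, and it goes through. Left exactness identifies the top kernel of $\operatorname{Hom}_{\Bbbk G}(\varepsilon,P_\bullet)$ with $\operatorname{Hom}_{\Bbbk G}(\varepsilon,K_\Bbbk)$. The natural norm isomorphism for projective modules identifies this complex with $\varepsilon\otimes_{\Bbbk G}P_\bullet$. Frobenius reciprocity makes the $c_i$ independent of the (odd or zero) characteristic, and the two Euler characteristics are then subtracted.

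Two of your intermediate claims are wrong as stated, though neither affects the outcome.

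\begin{enumerate}[leftmargin=*]
\item The covariant functor $\operatorname{Hom}_{\Bbbk G}(\varepsilon,-)$, applied to a projective resolution of $\Bbbk$, does not compute $\operatorname{Ext}^j_{\Bbbk G}(\varepsilon,\Bbbk)$ by dimension shifting. That would need a projective resolution of $\varepsilon$ or an injective resolution of $\Bbbk$. What the norm isomorphism gives is that its homology in position $j$, for $0\le j\le n-1$, is $\Tor^{\Bbbk G}_j(\varepsilon,\Bbbk)=H_j(G;\Bbbk^{\varepsilon})$ itself. So your closing passage through $\operatorname{Ext}^j$ and duality is a detour. It happens to give the right dimensions only because $\varepsilon$ is self-dual.
\item The kernel of $\varepsilon\otimes_{\Bbbk G}K_\Bbbk\to\varepsilon\otimes_{\Bbbk G}P_n$ is $H_{n+1}(G;\varepsilon)$, not $H_n(G;\varepsilon)$. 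Its image is in general a proper subspace of the $n$-cycles, with quotient $H_n(G;\varepsilon)$. You rightly switch to left exactness of invariants at the top, which is what the paper does.
\end{enumerate}

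The sign you deferred is fine. $H_j(G;\F_p^{\varepsilon})$ sits in position $j$, and $\operatorname{Hom}(\varepsilon,K_\Bbbk)$ sits in position $n=2m-1$, which is odd. So the $\F_p$ Euler characteristic is $\sum_{j=0}^{2m-2}(-1)^j\dim H_j(G;\F_p^{\varepsilon})-\dim_{\F_p}H_m(\B_mK_{a,b};\F_p)$. The rational one is $-\dim_\Q H_m(\B_mK_{a,b};\Q)$. Subtraction gives exactly the stated formula.
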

\begin{proof}
For a field $\Bbbk$ let $A_i(\Bbbk)=\operatorname{Hom}_{\Bbbk G}(\varepsilon,C_i(X;\Bbbk))$, with the
induced chain differential, and omit the augmentation from this complex.
Its top kernel is $\operatorname{Hom}_{\Bbbk G}(\varepsilon,K_{\Bbbk})$ by left exactness, so its
top homology has the dimension in \eqref{rep:eq:joinidentity}.

For a projective $\Bbbk G$-module $M$, the norm map
\[
 (\varepsilon\otimes M)_G\longrightarrow(\varepsilon\otimes M)^G,
 \qquad [v]\longmapsto\sum_{g\in G}gv,
\]
is an isomorphism, natural in $M$. This is checked for the regular module
and hence for its direct summands; it does not require division by $|G|$.
Since $(\varepsilon\otimes M)^G\cong\operatorname{Hom}_{\Bbbk G}(\varepsilon,M)$,
these maps identify the complex $A_\bullet(\Bbbk)$ with the sign-twisted
coinvariant complex of $P_\bullet$ when $\Bbbk=\F_p$.
By \eqref{rep:eq:joinresolution}, its homology in degrees $0\le i<n$ is
$H_i(G;\Bbbk^{\varepsilon})$, while in the top degree the truncated complex has
kernel given by \eqref{rep:eq:joinidentity}.

Set $c_i=\dim_{\Bbbk} A_i(\Bbbk)$. These numbers are the same over $\Q$ and
over every field of odd characteristic. In fact, Frobenius reciprocity
shows that each simplex orbit contributes one if the sign character is
trivial on its stabilizer, and zero otherwise. Here the stabilizer is
$\Sigma_t$, and the contribution is one exactly when $t\le1$.
Euler characteristic, using that $n=2m-1$ is odd, now gives
\[
 \sum_{i=0}^n(-1)^ic_i
 =\sum_{i=0}^{n-1}(-1)^i\dim_{\Bbbk} H_i(G;\Bbbk^{\varepsilon})
 -\dim_{\Bbbk} H_m(\B_mK_{a,b};\Bbbk)
\]
for $\Bbbk=\F_p$.
Over $\Q$, exactness of $\operatorname{Hom}_{\Q G}(\varepsilon,-)$ and
$\operatorname{Hom}_{\Q G}(\varepsilon,\Q)=0$ show that $A_\bullet(\Q)$ has homology
only at its top. Thus its Euler characteristic is
$-\dim_\Q H_m(\B_mK_{a,b};\Q)$. Subtraction proves the formula.
\end{proof}

\begin{lemma}\label{rep:lem:signhomology}
For an odd prime $p$ and $i\ge0$,
\[
 \dim_{\F_p}H_i(\Sigma_p;\F_p^{\operatorname{sgn}})=
 \begin{cases}
 1,&i\equiv p-2\text{ or }p-1\pmod{2(p-1)},\\
 0,&\text{otherwise}.
 \end{cases}
\]
\end{lemma}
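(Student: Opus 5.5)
The plan is to reduce everything to the Sylow $p$-subgroup $C_p=\langle\tau\rangle$, $\tau$ a $p$-cycle, and its normalizer $N=C_p\rtimes C_{p-1}$. Since $p\nmid[\Sigma_p:C_p]=(p-1)!$, the transfer makes $H_i(\Sigma_p;\F_p^{\operatorname{sgn}})$ a direct summand of $H_i(C_p;\F_p^{\operatorname{sgn}})$. By Swan's theorem, since $C_p$ is abelian, the image is exactly the $N$-invariants. Equivalently, because $[N:C_p]=p-1$ is prime to $p$, the image is $H_i(N;\F_p^{\operatorname{sgn}})=H_i(C_p;\F_p)_{C_{p-1}}$; here the action of $C_{p-1}$ on homology is twisted by the sign character restricted to $N$. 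The sign is trivial on $C_p$, since a $p$-cycle is even for odd $p$, so $\F_p^{\operatorname{sgn}}$ restricts to the trivial module on $C_p$.

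The next step is to record $H_*(C_p;\F_p)$ with its $\operatorname{Aut}(C_p)=\F_p^\times$-action. In each degree $i\ge0$ it is one-dimensional. A unit $u\in\F_p^\times$ acts by $u^{k}$ on degrees $2k-1$ and $2k$ (degree $0$ trivially). This follows from the standard periodic resolution: the generator in degree $2k$ is $\beta$-related to a power of the degree-$2$ class, which transforms by $u$, or dually from cohomology $\Lambda[x]\otimes\F_p[y]$ with $x,y$ each scaled by $u$. The normalizer $C_{p-1}$ acts on $C_p$ through a generator $g$ with $g\tau g^{-1}=\tau^{\omega}$, $\omega$ a primitive root mod $p$.

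Then I compute the sign of $g$. As a permutation of $\Z/p$, $g$ is $x\mapsto\omega x$, which fixes $0$ and is a $(p-1)$-cycle on $\F_p^\times$. It is therefore odd, so $\operatorname{sgn}(g)=-1=\omega^{(p-1)/2}$. Hence $g$ acts on the twisted homology in degree $2k-1$ or $2k$ by $\omega^{k+(p-1)/2}$ (with $k=0$ in degree $0$). The coinvariants are nonzero, and then one-dimensional, exactly when this scalar is $1$, i.e. $k\equiv (p-1)/2\pmod{p-1}$. Writing $k=(p-1)/2+j(p-1)$, the degrees are $2k-1=p-2+2j(p-1)$ and $2k=p-1+2j(p-1)$. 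Degree $0$ gives $\omega^{(p-1)/2}=-1\neq1$, so it contributes nothing, consistent with the list. This yields exactly $i\equiv p-2$ or $p-1\pmod{2(p-1)}$.

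The main obstacle is getting the two characters right. The first is the precise weight by which $\operatorname{Aut}(C_p)$ acts on $H_{2k-1}$ and $H_{2k}$, including the direction of the action on homology versus cohomology. The second is the sign of the normalizer generator. For the first, I would verify it directly with the explicit periodic resolution, $N$ and $\tau-1$ alternating, and the chain map induced by $\tau\mapsto\tau^u$: in degrees $2k-1,2k$ it is multiplication by $(1+\tau+\dots+\tau^{u-1})^{k}$, which augments to $u^k$. An inversion $u\mapsto u^{-1}$ would not change the vanishing set anyway, since the condition $\omega^{\pm k}=-1$ is symmetric.
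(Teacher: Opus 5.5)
Your route is essentially the paper's. You reduce to the normalizer $C_p\rtimes\F_p^\times$ of a $p$-cycle, use that a unit $c$ acts on $H_{2k-1}(C_p;\F_p)$ and $H_{2k}(C_p;\F_p)$ by $c^{\pm k}$, and twist by $\operatorname{sgn}(c)=c^{(p-1)/2}$. The paper works in cohomology ($\Lambda(u)\otimes\F_p[v]$) and dualizes at the end, whereas you work directly in homology with coinvariants; that difference is harmless. Your weight computation, the sign of $x\mapsto\omega x$, and the degree bookkeeping are all correct.

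The step that is not justified as written is the reduction from $\Sigma_p$ to $N$ ``by Swan's theorem, since $C_p$ is abelian''. Swan's theorem is a statement about trivial coefficients, and it can fail for twisted ones. In the stable-element argument one writes $g=cn$ with $n\in N_G(P)$ and $c\in C_G(P\cap{}^gP)$, and $c$ still acts on $H^*(P\cap{}^gP;M)$ through its action on $M$. For instance, take $G=\Sigma_3\times\Sigma_3$, $p=2$, and $M=V\boxtimes\F_2$ with $V$ the projective $2$-dimensional simple module. Then $H^{>0}(G;M)=0$, but $H^{>0}(P;M)^{N_G(P)}\cong\F_2$ in every positive degree.

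What makes your conclusion true here is the trivial-intersection property that the paper invokes. Distinct subgroups of order $p$ meet trivially, so for $g\notin N$ the stable-element condition lives in $H^{>0}(1;M)=0$ and is vacuous. Hence in positive degrees the stable elements are exactly the $N$-invariants. Degree $0$ is handled directly, since $\Sigma_p$ contains odd permutations and $p\ne2$.

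Alternatively, restrict to $A_p$, which has index prime to $p$. There the coefficients are trivial and Swan's theorem applies; then take the $(-1)$-eigenspace of the $\Sigma_p/A_p$-action. With either repair your argument is complete.
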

\begin{proof}
We recall the normalizer computation underlying this standard formula;
we use the stable-elements method for group cohomology
\cite[Chapter~XII]{CE}; see also \cite[Chapter~VI]{AM} and Swan's work
on groups with cyclic Sylow $p$-subgroups \cite{Swan}.
Let $C_p\le\Sigma_p$ be generated by a $p$-cycle. Its normalizer is
$C_p\rtimes\F_p^\times$. Distinct subgroups of order $p$ have trivial
intersection, so the stable-elements description identifies positive-degree
cohomology with coefficients in sign with the normalizer-invariant part
of $H^*(C_p;\F_p)\otimes\operatorname{sgn}$.

Write $H^*(C_p;\F_p)=\Lambda(u)\otimes\F_p[v]$, with $|u|=1$ and
$|v|=2$. A multiplier $c\in\F_p^\times$ acts on each of $u,v$ by $c$
(or by $c^{-1}$ under the opposite action convention), and its sign
as a permutation of $\F_p$ is $c^{(p-1)/2}$. Thus the invariant monomials
have degrees $p-2$ and $p-1$ modulo $2(p-1)$, one in each such degree.
Degree-zero cohomology is zero because sign is nontrivial.
Finally, over a field the dual of group homology with coefficients in
a finite-dimensional module is cohomology with coefficients in its dual;
sign is self-dual. This gives the stated homology dimensions.
\end{proof}

\begin{theorem}\label{rep:thm:generalvanishing}
Let $m\ge2$, $a,b\ge2m-1$, and let $p\ge m$ be an odd prime. Then
\[
 \dim_{\F_p}H_m(\B_mK_{a,b};\F_p)
 =\dim_\Q H_m(\B_mK_{a,b};\Q),
 \qquad H_{m-1}(\B_mK_{a,b};\Z)[p]=0.
\]
In particular the latter integral group has no $p$-primary torsion.
\end{theorem}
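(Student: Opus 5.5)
By Lemma~\ref{rep:lem:correction}, the statement reduces to showing that the alternating sum
\[
\sum_{i=0}^{2m-2}(-1)^i\dim_{\F_p}H_i(\Sigma_m;\F_p^{\varepsilon})
\]
vanishes whenever $p\ge m$ is odd.

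We split into two cases.

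\emph{Case $p>m$.} Here $|\Sigma_m|$ is prime to $p$, so $\F_p\Sigma_m$ is semisimple. Then $H_i(\Sigma_m;\F_p^{\varepsilon})=0$ for $i>0$. In degree zero it equals the coinvariants of the sign module, which vanish because the sign character is nontrivial for $m\ge2$ and $p\ne2$. Every term of the sum is therefore zero.

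\emph{Case $p=m$.} Here Lemma~\ref{rep:lem:signhomology} applies, and the indices $0\le i\le 2p-2$ are examined one by one. The residues $i\equiv p-2,p-1\pmod{2(p-1)}$ occur in this range exactly at $i=p-2$ and $i=p-1$. The next candidates would be $3p-4$ and $3p-3$, and $3p-4\le 2p-2$ forces $p\le2$, which is excluded. So exactly two degrees contribute, each of dimension one, with opposite signs $(-1)^{p-2}$ and $(-1)^{p-1}$. The sum is $0$. The main thing to check here is the range endpoint $2m-2=2p-2$: it must not cut between the paired degrees $p-2,p-1$, and it does not, since both lie below it.

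With the correction term zero, Lemma~\ref{rep:lem:correction} gives $\dim_{\F_p}H_m(\B_mK_{a,b};\F_p)=\dim_\Q H_m(\B_mK_{a,b};\Q)$. Next apply the universal coefficient theorem. The integral group $H_m(\B_mK_{a,b};\Z)$ is free of rank $\dim_\Q H_m(\;\cdot\;;\Q)$ by Observation~\ref{obs:top}. Since there are no chains above degree $m$, the universal coefficient sequence gives
\[
\dim_{\F_p}H_m(\;\cdot\;;\F_p)=\rk H_m(\;\cdot\;;\Z)+\dim_{\F_p}\Tor\bigl(H_{m-1}(\;\cdot\;;\Z),\F_p\bigr).
\]
Equality of dimensions therefore forces $\Tor(H_{m-1},\F_p)=H_{m-1}(\B_mK_{a,b};\Z)[p]=0$. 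A finitely generated abelian group with no elements of order $p$ has no $p$-primary torsion, which proves the final assertion.

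The hypotheses $m\ge2$ and $a,b\ge 2m-1$ enter only through Lemma~\ref{rep:lem:correction}. That lemma uses shellability of the chessboard complexes and projectivity of the chain modules. Beyond it, the only genuine work is the degree bookkeeping in the case $p=m$.
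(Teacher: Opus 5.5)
Your proposal is correct and follows the paper's own proof essentially step for step: you reduce to Lemma~\ref{rep:lem:correction}, kill the correction term by semisimplicity when $p>m$ and by the cancelling pair of degrees $p-2,p-1$ from Lemma~\ref{rep:lem:signhomology} when $p=m$, and then conclude with the universal coefficient sequence using freeness of the integral top homology. Your extra check that no further contributing degree lies in the range $0\le i\le 2p-2$ is a detail the paper leaves implicit.
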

\begin{proof}
If $p>m$, positive-degree group homology of $\Sigma_m$ over $\F_p$
vanishes by semisimplicity. Degree zero with sign coefficients also
vanishes because $m\ge2$ and $p$ is odd. Thus the correction in
\eqref{rep:eq:correction} is zero.

If $p=m$, Lemma~\ref{rep:lem:signhomology} shows that the only nonzero
terms in the range $0\le i\le2p-2$ occur in degrees $p-2$ and $p-1$.
Both have dimension one and their signs in the Euler sum are opposite.
The correction is again zero.

The integral top homology is free. Hence the universal coefficient
sequence
\[
 0\longrightarrow H_m(\B_mK_{a,b};\Z)\otimes\F_p
 \longrightarrow H_m(\B_mK_{a,b};\F_p)
 \longrightarrow H_{m-1}(\B_mK_{a,b};\Z)[p]
 \longrightarrow0
\]
and equality of dimensions prove the torsion assertion.
\end{proof}

\subsection{The \texorpdfstring{$2$}{2}-primary torsion for three particles}\label{sec:two-primary-three}
For three particles the slot-module identity determines the full torsion
for all $a,b\ge5$. We work over $\Z/2^\ell$ for every $\ell\ge1$ to
distinguish order-$2$ torsion from higher powers of $2$.

Put $R=\Z_{(2)}$, $G=\Sigma_3$, and let $\varepsilon$ be its sign module.
Let $P=R[G/C_3]$ be the regular module of the quotient $C_2$, inflated to
$G$, and let $V=\{(x_1,x_2,x_3)\in R^3:x_1+x_2+x_3=0\}$ with the
permutation action. For $n\ge5$ put
\[
 u_n=\binom n3-\binom n2,\qquad
 v_n=\frac{n(n-2)(n-4)}3,
\]
and define $D_n=D_3(n;R)$ by the slot-sum equations
\eqref{rep:eq:slot} over $R$.

\begin{lemma}\label{rep:lem:two-local-slot}
For $n\ge5$, there is an isomorphism of $RG$-modules
\[
 D_n\cong\varepsilon^{\oplus(n-1)}\oplus P^{\oplus u_n}
          \oplus V^{\oplus v_n}.
\]
This assertion concerns the slot group only. In particular, the summands
$P$ are retained as integral modules, not split into trivial and sign
summands as they would be over $\Q$.
\end{lemma}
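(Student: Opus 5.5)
The plan is to combine the rational decomposition \eqref{rep:eq:Ddecomp} with the structure theory of $R\Sigma_3$-lattices at the prime $2$, and then to fix the one remaining integral ambiguity with a Tate-cohomology computation on the chessboard complex.

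\emph{Step 1: $D_n$ is a lattice.} $D_n$ is the kernel of a map between free $R$-modules, so it is an $R$-lattice. By \eqref{rep:eq:chessboard} and Ziegler's shellability it is $\widetilde H_2(\Delta_{3,n};R)\otimes\varepsilon$, and this homology is free over $R$.

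\emph{Step 2: reduce to $C_2$-lattices.} Let $c$ generate $C_3\trianglelefteq G$. Since $3$ is a unit in $R$, the central idempotent $e=(1+c+c^2)/3$ splits every $RG$-lattice as $M=eM\oplus(1-e)M$.
\begin{itemize}
\item The part $eM=M^{C_3}$ is an $R[C_2]$-lattice. Over the local PID $R=\Z_{(2)}$, the indecomposable $R[C_2]$-lattices are the trivial lattice $R$, the sign lattice $\varepsilon$, and the regular lattice $P$ (Diederichsen--Reiner).
\item The order $(1-e)RG$ is the twisted group ring $R[\omega]*C_2$. Here $R[\omega]/R$ is unramified at $2$ and $C_2$ acts as its Galois group, so this order is isomorphic to $\operatorname{End}_R(R[\omega])\cong M_2(R)$. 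Its lattices are therefore direct sums of the unique simple lattice, which is $V$.
\end{itemize}
Hence
\[
D_n\cong R^{x}\oplus\varepsilon^{y}\oplus P^{z}\oplus V^{w}
\]
for some $x,y,z,w\ge0$.

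\emph{Step 3: rational constraints.} Tensoring with $\Q$ and comparing with \eqref{rep:eq:Ddecomp} gives three equations. Each right-hand side is the dimension $f^{(n-3,\lambda)}$ of the $\Sigma_n$-factor attached to the relevant slot irreducible.
\begin{itemize}
\item Trivial character, $\lambda=(3)$: $x+z=f^{(n-3,3)}=u_n$.
\item Sign character, $\lambda=(1^3)$: $y+z=f^{(n-3,1,1,1)}=\binom{n-1}3$.
\item Two-dimensional character, $\lambda=(2,1)$: $w=f^{(n-3,2,1)}$. Evaluating this by the hook-length formula should give $v_n$.
\end{itemize}
The identity $\binom{n-1}3-u_n=n-1$ is elementary; for instance it holds at $n=5,6$. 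So the whole statement reduces to proving $x=0$.

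\emph{Step 4: show $x=0$.} The number of trivial summands is detected by the Tate group $\widehat H^0(G;D_n)$. This group equals $\widehat H^0(C_2;D_n^{C_3})$, since restriction to a Sylow $2$-subgroup is injective and here even an isomorphism because $C_3$ has order prime to $2$. On the indecomposable lattices, $R$ contributes $\Z/2$, while $\varepsilon$, $P$ and $V$ contribute $0$. So I must show $\widehat H^0(G;D_n)=0$.

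For this I use the augmented oriented chain complex of $\Delta_{3,n}$ over $R$:
\[
0\to D_n\otimes\varepsilon\to C_2\to C_1\to C_0\to R\to0,
\]
which is exact by shellability. The chain modules have the following structure.
\begin{itemize}
\item $C_2$ and $C_1$ are free $RG$-modules. A simplex with at least two rooks has trivial stabilizer, because a permutation fixing it must fix the rows of both rooks, and hence the third row as well.
\item $C_0\cong R[G/\Sigma_2]^{\oplus n}$, where the stabilizer of a vertex fixes it pointwise, so there is no orientation twist.
\end{itemize}
Dimension shifting across the free terms gives
\[
\widehat H^{k}(G;D_n\otimes\varepsilon)\cong\widehat H^{k-2}(G;L),\qquad L=\ker(C_0\to R).
\]
The sequence $0\to L\to C_0\to R\to0$ then computes $\widehat H^*(G;L)$ from $\widehat H^*(G;R[G/\Sigma_2])=\widehat H^*(\Sigma_2;R)$ and the map induced by the augmentation. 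I then untwist by $\varepsilon$. Over $R$ one has $\widehat H^0(C_2;M\otimes\varepsilon)\cong\widehat H^{-1}(C_2;M)$, so the computation must yield $\widehat H^{-1}(G;D_n\otimes\varepsilon)=0$, and as a check $\widehat H^{0}(G;D_n\otimes\varepsilon)\cong(\Z/2)^{n-1}$, which accounts for the $n-1$ sign summands.

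The main obstacle is this last bookkeeping. I need the exact degrees of the shifts, the parity conventions of $2$-periodic cohomology for $C_2$, the sign twist coming from the orientation of the chessboard complex, and the rank of the connecting map $\widehat H^*(\Sigma_2;R)^{\oplus n}\to\widehat H^*(G;R)$. If this gets tangled, my fallback is to verify $x=0$ directly. I would show that $(D_n/2D_n)^{C_2}$ has dimension $z+y$, with no excess beyond the rank of $D_n^{C_2}$, using the explicit description of $D_n^{C_3}$ as functions on ordered injections.
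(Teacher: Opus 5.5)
Your route is correct, and it differs from the paper's. The paper never classifies lattices and never uses the rational decomposition. It applies $e$ directly to the augmented chain sequence of $\Delta_{3,n}$, which gives an exact sequence $0\to eT\to A^{\binom n3}\to A^{\binom n2}\to R^n\to R\to0$ over the local ring $A=R[C_2]$. It then peels off kernels by explicit changes of basis: an epimorphism $A^N\to R^d$ has kernel $\varepsilon^{\oplus d}\oplus A^{N-d}$, and the sign twist of this fact handles the next map. This produces $eT\cong R^{n-1}\oplus A^{u_n}$ outright. The paper identifies the $(1-e)$-block with $M_2(R)$ by a determinant check, and obtains $v_n$ from an Euler-characteristic rank count rather than the hook-length formula.

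You instead import the Diederichsen--Reiner classification and the Friedman--Hanlon decomposition \eqref{rep:eq:Ddecomp}, which reduce everything to the single unknown $x$, and then detect $x$ with $\widehat H^0$. Your approach is more structural, using Tate cohomology as a detector of trivial summands, but it rests on heavier external inputs. The paper's is self-contained. Both depend on the same two facts: $C_1,C_2$ are free, and $C_0\cong R[G/\Sigma_2]^{\oplus n}$.

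The bookkeeping you left open does close, and you should write it out. You have $\widehat H^0(G;D_n)\cong\widehat H^{-1}(G;T)\cong\widehat H^{-3}(G;L)$. Consider the exact segment $\widehat H^{-4}(G;C_0)\to\widehat H^{-4}(G;R)\to\widehat H^{-3}(G;L)\to\widehat H^{-3}(G;C_0)$:
\begin{itemize}
\item By Shapiro's lemma the first map is a sum of $n$ corestrictions from $\Sigma_2$. Each is surjective, because $\operatorname{cor}\circ\operatorname{res}$ is multiplication by the unit $3$.
\item The last group is $\widehat H^{-3}(\Sigma_2;R)^{\oplus n}=0$.
\end{itemize}
Hence $x=0$. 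The even-degree version of the same sequence gives $(\Z/2)^{n-1}$, which confirms $y=n-1$.

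Your hook-length evaluation is right: $f^{(n-3,2,1)}=n(n-2)(n-4)/3=v_n$. For $n=5$ the $\lambda=(3)$ term is absent, which is consistent with $u_5=0$.

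One justification needs repair. The identity $\widehat H^*(G;M)\cong\widehat H^*(C_2;M^{C_3})$ holds because $|C_3|$ is invertible in $R$: then $N_GM=N_{C_2}(M^{C_3})$, and in all degrees the Hochschild--Serre spectral sequence collapses. It does not follow from a general claim that restriction to a Sylow subgroup is an isomorphism; for $\Sigma_3$ it happens to be one, but only by a stable-elements argument. Similarly, the twist formula holds over $G$ directly, via $0\to\varepsilon\to R[G/C_3]\to R\to0$ and the vanishing of $\widehat H^*(C_3;-)$. Your fallback is unnecessary.
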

\begin{proof}
For $c=(123)$ the element $e=(1+c+c^2)/3$ is a central idempotent of
$RG$, and $eRG\cong A:=R[C_2]$. Put
$T=\widetilde H_2(\Delta_{3,n};R)$, so that
$D_n\cong T\otimes\varepsilon$ by \eqref{rep:eq:chessboard}.
The chessboard complex has $3n$ vertices, $3n(n-1)$ edges, and
$n(n-1)(n-2)$ triangles. The slot group acts freely on the edges and
triangles; each of the $n$ vertex orbits is the three-point permutation
module. Applying $e$ to the augmented chain sequence therefore gives
\begin{equation}\label{rep:eq:two-local-chess}
 0\longrightarrow eT\longrightarrow A^{\binom n3}
 \longrightarrow A^{\binom n2}\longrightarrow R^n
 \longrightarrow R\longrightarrow0.
\end{equation}
Exactness follows from the concentration of reduced chessboard homology
in degree two. The action on $R^n$ is trivial and its augmentation is
summation, with kernel $R^{n-1}$.

Here $A$ is local, with maximal ideal $(2,t-1)$ for the generator $t$ of
$C_2$. An epimorphism $A^N\to R^d$ can be put, by a change of basis,
in the form of $d$ coordinate augmentations followed by zero coordinates;
its kernel is $\varepsilon^{\oplus d}\oplus A^{N-d}$.
The same assertion after twisting by sign gives kernel
$R^d\oplus A^{N-d}$ for an epimorphism $A^N\to\varepsilon^{\oplus d}$.
Apply the first assertion to the map onto $R^{n-1}$ in
\eqref{rep:eq:two-local-chess}. Split off the resulting free $A$-summand
from the preceding epimorphism; its complementary source is free since
$A$ is local. Applying the sign-twisted assertion to the remaining map
then gives
\[
 eT\cong R^{n-1}\oplus A^{\binom n3-\binom n2}.
\]
Since $A\otimes\varepsilon\cong A$, this proves
$eD_n\cong\varepsilon^{\oplus(n-1)}\oplus P^{\oplus u_n}$.

The action on $V$ identifies $(1-e)RG$ with $M_2(R)$. Indeed, in the
basis $(1,-1,0),(0,1,-1)$, the matrices of $c$ and $t=(12)$ are
\[
 c=\begin{pmatrix}0&-1\\1&-1\end{pmatrix},\qquad
 t=\begin{pmatrix}-1&1\\0&1\end{pmatrix}.
\]
The four matrices $I,c,t,ct$ have coordinate determinant $\pm3$, a unit
of $R$, so the action map is surjective. Both algebras are free of rank
four, hence the map is an isomorphism. Matrix units consequently identify
every $R$-free module in this block with a sum of copies of $V$.
The rank of $T$ is $n^3-6n^2+8n-1$, by its Euler characteristic.
Subtracting the rank $(n-1)+2u_n$ of $eD_n$ leaves $2v_n$, proving
$(1-e)D_n\cong V^{\oplus v_n}$.
\end{proof}

\begin{proposition}\label{rep:prop:two-primary-three}
For $a,b\ge5$ and $S=\Z/2^\ell$, $\ell\ge1$,
\[
 H_3(\B_3K_{a,b};S)
 \cong S^{\beta_3(a,b)}\oplus S[2]^{(a-1)(b-1)},
 \qquad S[2]=\{x\in S:2x=0\},
\]
where $\beta_3(a,b)=\rk H_3(\B_3K_{a,b};\Z)$ is given by
\eqref{rep:eq:rank}. Explicitly,
\[
 \beta_3(a,b)=(a-1)u_b+(b-1)u_a+2u_au_b+v_av_b.
\]
Consequently the full torsion subgroup is
\[
 \Tors H_2(\B_3K_{a,b};\Z)\cong(\Z/2)^{(a-1)(b-1)}.
\]
\end{proposition}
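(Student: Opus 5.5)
We will compute $H_3(\B_3K_{a,b};S)$ from Proposition~\ref{rep:prop:arbitrary}, which holds over any coefficient ring once we replace the field by $S$. The proof of \eqref{rep:eq:allfields} used only the slot-sum and vertex-sum relations and the alternating condition, with no division. The intersection step needs the two $D$'s to be direct summands of the $I$'s, i.e.\ cokernels free over $S$. This holds because the slot equations come from the free chain complex of the chessboard complex, whose homology is free and concentrated in one degree (Ziegler). So
\[
H_3(\B_3K_{a,b};S)\cong\operatorname{Hom}_{SG}\bigl(\varepsilon,D_a\otimes_R D_b\otimes_R S\bigr),\qquad G=\Sigma_3,
\]
where $D_n=D_3(n;R)$ over $R=\Z_{(2)}$ and we use $D_n\otimes_R S=D_3(n;S)$, again by the freeness just noted.

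Next we insert Lemma~\ref{rep:lem:two-local-slot} for both factors and compute $\operatorname{Hom}_{SG}(\varepsilon,X\otimes Y\otimes S)$ for each pair $X,Y\in\{\varepsilon,P,V\}$. Since $3$ is a unit, everything factors through the central idempotent $e$ and the block $M_2(R)$.
\begin{itemize}
\item $\varepsilon\otimes\varepsilon$ is trivial, so the sign-invariants are $S[2]$. This pair contributes $(a-1)(b-1)$ copies of $S[2]$.
\item $\varepsilon\otimes P\cong P\cong R[C_2]$, which is free over $C_2$. Its sign-invariants are $\cong S$ (the antisymmetric vector). This gives $(a-1)u_b+(b-1)u_a$ copies of $S$.
\item $P\otimes P\cong P^2$ as a $C_2$-module, giving $2u_au_b$ copies of $S$.
\item $P\otimes V$ and $\varepsilon\otimes V$ lie in the $(1-e)$ block, which contains no $\varepsilon$. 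These contribute $0$.
\item $V\otimes V$: here $(1-e)(V\otimes V)\cong V$ and $e(V\otimes V)$ is a rank-two $A$-module. We must check that $e(V\otimes V)\cong A=R[C_2]$ rather than $R\oplus\varepsilon$. If so, each pair contributes exactly one copy of $S$, giving $v_av_b$ copies.
\end{itemize}
For the last item, observe that $V\otimes V\cong\operatorname{End}(V)$ via the self-duality of $V$ over $R$ (invertible form up to a unit). Then $e\operatorname{End}(V)$ is spanned by $I$ and the $c$-invariant antisymmetric part. I expect one explicit basis computation with the matrices of $c$ and $t$ given in the lemma to show that $t$ swaps a basis of this rank-two piece. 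This is the main obstacle, since a wrong answer here would introduce spurious $S[2]$ summands and more torsion.

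Summing the contributions gives
\[
H_3(\B_3K_{a,b};S)\cong S^{\beta}\oplus S[2]^{(a-1)(b-1)},\qquad \beta=(a-1)u_b+(b-1)u_a+2u_au_b+v_av_b.
\]
To identify $\beta$ with $\beta_3(a,b)$, compare ranks. Setting $S=\Q$ in the same computation (or reading off \eqref{rep:eq:rank}) gives the free rank $\beta$. For the torsion, use the universal coefficient theorem with $S=\Z/2^\ell$ and cancel the finite summand $S^\beta$, exactly as in the proof of Theorem~\ref{thm:exacttors}. This gives $\Tor(H_2,\Z/2^\ell)\cong(\Z/2)^{(a-1)(b-1)}$ for every $\ell$, so the $2$-primary torsion is $(\Z/2)^{(a-1)(b-1)}$. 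For odd primes: $3$-primary torsion vanishes by Theorem~\ref{rep:thm:generalvanishing} with $m=p=3$, which applies since $a,b\ge5=2m-1$. Primes $p>3$ are excluded by the same theorem, or by the transfer argument. This gives the full torsion subgroup.
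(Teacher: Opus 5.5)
Your outline follows the paper's proof step by step:
\begin{itemize}
\item base change of the slot-module Hom formula to $S$, justified (as in the paper) by split exactness of the chessboard chain sequence for $n\ge5$;
\item insertion of Lemma~\ref{rep:lem:two-local-slot};
\item a pair-by-pair count of sign-equivariant vectors;
\item the same count over $\Q$ to get the rank;
\item cancellation in the universal coefficient sequence at $\ell=1,2$;
\item Theorem~\ref{rep:thm:generalvanishing} for every odd prime.
\end{itemize}

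The one step you leave open is the $V\otimes V$ term, and your description of it is wrong over $R$. The identity and the $c$-invariant skew element $c-c^{-1}=I+2c$ span only the index-$2$ sublattice $RI\oplus R(I+2c)$ of $e\operatorname{End}_R(V)$. On that sublattice $t$ fixes $I$ and negates $I+2c$, so it is $R\oplus\varepsilon$. That is exactly the answer that would produce a spurious $S[2]$.

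The correct lattice is the commutant of the companion matrix $c$, namely $R[c]=Rc\oplus Rc^2$, using $I+c+c^2=0$. Conjugation by $t$ swaps $c$ and $c^2=c^{-1}$. Hence $e(V\otimes V)\cong A$, and each pair contributes one free copy of $S$, as you predicted.

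The paper avoids this lattice question entirely. The perfect wedge pairing with $\det V=\varepsilon$ gives $V^*\otimes\varepsilon\cong V$. Therefore $\operatorname{Hom}_{SG}(\varepsilon,V_S\otimes V_S)\cong\operatorname{End}_{SG}(V_S)\cong S$, where the last isomorphism holds because the action spans $M_2(S)$.

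Finally, drop ``or by the transfer argument'' for $p>3$. The transfer only embeds the $p$-local homology into that of the ordered configuration space, whose torsion-freeness is conjectural, so it proves nothing here. Theorem~\ref{rep:thm:generalvanishing} already covers every odd $p\ge3$.
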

\begin{proof}
For $n\ge5$, the augmented simplicial chain sequence of $\Delta_{3,n}$,
with its top kernel included, is split exact as a sequence of abelian
groups: the reduced integral homology is free and concentrated in degree
two. Thus formation of the top kernel commutes with base change, and
its inclusion in the top chain module is split over the coefficient ring.
Using \eqref{rep:eq:chessboard}, the tensor-intersection argument of
Proposition~\ref{rep:prop:arbitrary} therefore applies over $S$, giving
\[
 H_3(\B_3K_{a,b};S)
 \cong\operatorname{Hom}_{SG}
 (\varepsilon_S,D_{a,S}\otimes_S D_{b,S}),
 \qquad D_{n,S}=D_n\otimes_R S.
\]

Apply Lemma~\ref{rep:lem:two-local-slot}. For the summands in the
$C_3$-trivial block, we have
\[
 \varepsilon_S\otimes\varepsilon_S\cong S,\qquad
 \varepsilon_S\otimes P_S\cong P_S,\qquad
 P_S\otimes P_S\cong P_S^{\oplus2}.
\]
The sign-equivariant vectors in $S$ form $S[2]$, whereas those in $P_S$
form a free copy of $S$, generated by the difference of its two basis
vectors. Tensor products of $V_S$ with either $\varepsilon_S$ or $P_S$
have no sign-equivariant vectors, since their $C_3$-invariant part is zero.
Finally, the perfect wedge pairing and $\det V_S=\varepsilon_S$ give
\[
 \operatorname{Hom}_{SG}(\varepsilon_S,V_S\otimes_S V_S)
 \cong\operatorname{End}_{SG}(V_S)\cong S;
\]
the last equality follows because the action spans $M_2(S)$.
Thus the only nonfree contribution is $S[2]^{(a-1)(b-1)}$, and the
free contributions have the asserted total rank $\beta_3(a,b)$.
The same calculation over $\Q$ identifies this rank with
\eqref{rep:eq:rank}.

The integral top homology is free of rank $\beta_3(a,b)$. The split
universal coefficient sequence and cancellation of the common finite
summand $S^{\beta_3(a,b)}$ give
\[
 \Tor\bigl(H_2(\B_3K_{a,b};\Z),S\bigr)
 \cong S[2]^{(a-1)(b-1)}\qquad(\ell\ge1).
\]
Taking $\ell=1$ counts the $2$-primary cyclic factors, and $\ell=2$
excludes factors of order $4$ or higher. Every odd prime satisfies
$p\ge3$, so Theorem~\ref{rep:thm:generalvanishing} excludes all odd
 torsion, completing the proof.
\end{proof}

\begin{remark}[External products]\label{rem:external-products}
Following \cite[Definition~5.10]{ADK} and \cite[Section~3.1]{AK},
for disjoint subgraphs $\Gamma_1,\Gamma_2\subset\Gamma$ we call the map
\[
 H_i(\B_{n_1}\Gamma_1;\Z)\otimes H_j(\B_{n_2}\Gamma_2;\Z)
 \longrightarrow H_{i+j}(\B_{n_1+n_2}\Gamma;\Z)
\]
obtained by the homology cross product followed by the union map
$(\xi_1,\xi_2)\mapsto\xi_1\cup\xi_2$ the \emph{external product}.
We denote its value by $\alpha\times\beta$; in particular,
$[C]\times\sigma$ denotes the product of a loop class and a star class
supported in disjoint subgraphs. The construction depends on the chosen
disjoint supports.
\end{remark}

The multiplicity $(a-1)(b-1)$ in Proposition~\ref{rep:prop:two-primary-three}
is the first Betti number of $K_{a,b}$. The following geometric description
explains this equality.

\begin{corollary}[Loop--star generators]\label{cor:loop-star-torsion}
Let $\Gamma=K_{a,b}$ with $a,b\ge5$, and enumerate its parts as
$A=\{a_0,\ldots,a_{a-1}\}$ and $B=\{b_0,\ldots,b_{b-1}\}$.
For $1\le i<a$ and $1\le j<b$, let
\[
 C_{ij}=a_0b_0a_ib_ja_0
\]
be the indicated $4$-cycle, and let $\Lambda_{ij}\cong K_{a-2,b-2}$ be
the induced subgraph on the vertices outside $C_{ij}$. Choose a star
class $\sigma_{ij}\in H_1(\B_2\Lambda_{ij};\Z)$. Then
\[
 t_{ij}=[C_{ij}]\times\sigma_{ij}
 \in H_2(\B_3\Gamma;\Z)
\]
form an $\F_2$-basis of $\Tors H_2(\B_3\Gamma;\Z)$.
In particular, every torsion class is a sum of loop--star products.
There is a canonical isomorphism
\[
 \Theta\colon\Tors H_2(\B_3\Gamma;\Z)
 \xrightarrow{\ \cong\ }H_1(\Gamma;\F_2)
\]
sending $t_{ij}$ to $[C_{ij}]$ modulo $2$.
\end{corollary}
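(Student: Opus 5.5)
Since Proposition~\ref{rep:prop:two-primary-three} already gives $\Tors H_2(\B_3\Gamma;\Z)\cong(\Z/2)^{(a-1)(b-1)}$, a counting argument reduces the corollary to two claims. First, each $t_{ij}$ is $2$-torsion. Second, the $(a-1)(b-1)$ classes $t_{ij}$ are linearly independent over $\F_2$. Independence is tested against a homomorphism $\Theta$ to $H_1(\Gamma;\F_2)$ with $\Theta(t_{ij})=[C_{ij}]$ mod $2$. The cycles $C_{ij}=a_0b_0a_ib_ja_0$ are the fundamental cycles of the spanning tree consisting of all edges at $a_0$ and at $b_0$. They therefore form a basis of $H_1(\Gamma;\F_2)$, of dimension $(a-1)(b-1)$. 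Once such a $\Theta$ exists, it is surjective from a group of the same order, hence an isomorphism, and the $t_{ij}$ are a basis.

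For $2t_{ij}=0$ I would use bilinearity of the external product (Remark~\ref{rem:external-products}): $2t_{ij}=[C_{ij}]\times2\sigma_{ij}$. The graph $\Lambda_{ij}\cong K_{a-2,b-2}$ has both parts of size at least $3$, so it is simple, $3$-connected and non-planar. By Ko--Park \cite[Theorem~3.16]{KP}, as used in the proof of Theorem~\ref{thm:thetanonzero}, the star class of any vertex of degree at least $3$ in $\Lambda_{ij}$ represents the element of order $2$ of $H_1(\B_2\Lambda_{ij})$. Hence $2\sigma_{ij}=0$ already in $\Lambda_{ij}$. I will need to check that every admissible choice of $\sigma_{ij}$ (any vertex, any three half-edges, either orientation) gives this element. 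Where the choice is not literally that element, $t_{ij}$ would change only by an external product of a loop class with a class in $2H_1$, and this is harmless modulo $2$ and for the basis claim.

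To construct $\Theta$, I would pass through the Bockstein. Because the integral top homology is free, the universal coefficient sequence identifies $\Tors H_2\cong\Tor(H_2,\F_2)$ with $H_3(\B_3\Gamma;\F_2)/(H_3(\B_3\Gamma;\Z)\otimes\F_2)$, via the injective Bockstein on this quotient. Each $t_{ij}$ is the Bockstein of $[C_{ij}]\times\tilde\sigma_{ij}$. Here $\tilde\sigma_{ij}\in H_2(\B_2\Lambda_{ij};\F_2)$ is the mod-$2$ lift of the star class, which exists because $\sigma_{ij}$ has order $2$. Concretely, $\tilde\sigma_{ij}$ is a mod-$2$ top cycle in a $K_{3,3}\subset\Lambda_{ij}$, namely the reduction of $\bar w_2$ from Theorem~\ref{thm:explicitgen}. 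By Proposition~\ref{prop:Zcycles}, a mod-$2$ top class is a function $F$ on $3$-matchings satisfying the vertex relations \eqref{eq:Z}. I would then define an $\F_2$-valued edge function $\theta(F)$ on $\Gamma$ by a local count. One natural candidate is $\theta(F)(e)=\sum F(\{e\}\cup M'')$, with $M''$ running over $2$-matchings of a fixed auxiliary type, for instance those lying in a $K_{3,3}$ disjoint from $e$ and weighted so as to pick out the constant-function component in the sense of Lemma~\ref{lem:kerU}(2),(3) with $p=2$. The aim is for \eqref{eq:Z} to force $\theta(F)$ to be a $1$-cycle mod $2$.

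Three things then need to be checked for $\theta$. First, it sends the lift of $t_{ij}$ to $C_{ij}$. Second, it vanishes on reductions of integral top cycles; the guiding idea is that integral top cycles have even local sums, because the integral kernel carries no constant-function component modulo $2$. Third, it is independent of all choices. The main obstacle is finding an invariant definition of $\theta$ for which the second point holds. My first attempt would use the decomposition of Lemma~\ref{rep:lem:two-local-slot}. There the non-free contribution $S[2]^{(a-1)(b-1)}$ comes from the summands $\varepsilon^{\oplus(a-1)}\otimes\varepsilon^{\oplus(b-1)}$. I would identify $\varepsilon^{\oplus(n-1)}$, with its $\Sigma_n$-action, with the reduced permutation module of the $n$ vertices of a part. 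Then $\varepsilon^{\oplus(a-1)}\otimes\varepsilon^{\oplus(b-1)}$ becomes $\Sigma_a\times\Sigma_b$-equivariantly the cycle space $H_1(K_{a,b})$. This would make $\Theta$ canonical, and would leave only the verification on the explicit classes $t_{ij}$.
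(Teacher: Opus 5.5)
Your reduction matches the paper's. Proposition~\ref{rep:prop:two-primary-three} plus a count reduces the corollary to two claims: $2t_{ij}=0$, which you handle as the paper does via Ko--Park/ADK and bilinearity, and the $\F_2$-independence of the $t_{ij}$. You also correctly note that the $C_{ij}$ are the fundamental cycles of the double star at $a_0,b_0$.

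There is a genuine gap: the independence step, which is the real content of the corollary, is not proved, because you never construct $\Theta$. Your candidate $\theta(F)(e)$ depends on an unspecified auxiliary family of $2$-matchings and an unspecified weighting. None of the three required properties is checked: that $\theta(F)$ is a cycle, that $\theta$ vanishes on reductions of integral top cycles, and that it sends the lift of $t_{ij}$ to $C_{ij}$. The unweighted version already fails. If the auxiliary $K_{3,3}$ is the one supporting $\tilde\sigma_{ij}$, the lift $[C_{ij}]\times\bar w_2$ takes the value $1$ on all $18$ two-matchings of that $K_{3,3}$, for each edge of $C_{ij}$, so the plain sum is $0$. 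Moreover, the vanishing must hold on a lattice of rank $\beta_3(a,b)$, which local counts give you no handle on.

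The slot-module fallback does not close the gap either. Lemma~\ref{rep:lem:two-local-slot} is an isomorphism of $R\Sigma_3$-modules obtained by non-canonical changes of basis; the paper stresses that it concerns the slot group only. It therefore does not identify the $S[2]^{(a-1)(b-1)}$ contribution $\Sigma_a\times\Sigma_b$-equivariantly with $H_1(K_{a,b};\F_2)$. Even with such an identification, you would still have to trace the explicit product cycle through it.

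The idea you are missing, which the paper uses, is to detect $\rho_2(t_{ij})\in H_2(\B_3\Gamma;\F_2)$ directly by a degree-two cohomology class, instead of working with Bockstein preimages in the top degree. Let $\operatorname{tr}\colon H_1(\B_3\Gamma;\Z)\to H_1(\Gamma;\Z)$ be the total trace, the sum of the particle trajectories. Let $\eta\in H^1(\B_3\Gamma;\F_2)$ be the parity of the induced permutation.

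Represent $t_{ij}$ by a torus. The loop circle has trace $[C_{ij}]$ and even parity. The star exchange has trace $0$, since it lies in a tree, and odd parity. Hence, for $\alpha\in H^1(\Gamma;\F_2)$,
\[
\langle\operatorname{tr}^*\alpha\smile\eta,\rho_2(t_{ij})\rangle=\langle\alpha,[C_{ij}]\rangle .
\]

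Define $\Theta$ by this pairing. It is a choice-free homomorphism on $\Tors H_2$ sending the $t_{ij}$ to the basis $[C_{ij}]$. Independence and the isomorphism follow at once, and no vanishing on integral top classes ever has to be checked.
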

\begin{proof}
The cycles $C_{ij}$ form an integral basis of $H_1(\Gamma;\Z)$: they
are the fundamental cycles for the spanning tree consisting of all
edges incident to $a_0$ or $b_0$. Each $\Lambda_{ij}$ is $3$-connected
and non-planar, so its star classes have order $2$
\cite[Lemmas~C.4 and~C.9]{ADK}. Since $C_{ij}$ and $\Lambda_{ij}$ are
disjoint, the union map above is defined and $2t_{ij}=0$.
It remains to prove that these products are independent.

For a loop in $\B_3\Gamma$, summing the trajectories of its three
particles gives an integral $1$-cycle in $\Gamma$, even when the
particles are permuted. This construction is invariant under homotopy
and additive under concatenation, and defines the total trace
\[
 \operatorname{tr}\colon H_1(\B_3\Gamma;\Z)\longrightarrow H_1(\Gamma;\Z).
\]
Let $\eta\in H^1(\B_3\Gamma;\F_2)$ be the character recording the
parity of the permutation of the three particles, obtained from the
ordered configuration covering. For $\alpha\in H^1(\Gamma;\F_2)$,
write $\operatorname{tr}^*\alpha$ for the degree-one cohomology class
obtained by composition with total trace.

Represent $t_{ij}$ by a torus whose first circle moves one particle
around $C_{ij}$ and whose second circle exchanges two particles in
the chosen star. Along the first circle, total trace is $[C_{ij}]$
and permutation parity is zero. Along the second, total trace is zero,
because the trajectories lie in a tree, and permutation parity is one.
Consequently, writing $\rho_2$ for reduction of homology modulo $2$,
\begin{equation}\label{eq:loop-star-pairing}
 \bigl\langle\operatorname{tr}^*\alpha\smile\eta,\rho_2(t_{ij})\bigr\rangle
 =\langle\alpha,[C_{ij}]\rangle.
\end{equation}
Taking the cohomology basis dual to the $C_{ij}$ proves that the
$(a-1)(b-1)$ products are nonzero and independent. They therefore form
a basis of the torsion subgroup by Proposition~\ref{rep:prop:two-primary-three}.

For a torsion class $t$, define $\Theta(t)$ by the identities
\[
 \langle\alpha,\Theta(t)\rangle
 =\bigl\langle\operatorname{tr}^*\alpha\smile\eta,\rho_2(t)\bigr\rangle
 \quad\text{for all }\alpha\in H^1(\Gamma;\F_2).
\]
The perfect pairing over $\F_2$ makes this definition unique.
Equation~\eqref{eq:loop-star-pairing} shows that $\Theta$ sends the
displayed basis to the cycle basis, hence is an isomorphism. Its
definition uses no choices of a spanning tree or star representatives.
\end{proof}

\subsection{Behaviour under graph inclusions}\label{sec:inclusion-behaviour}
The vanishing theorem contrasts with the torsion formula of
Section~\ref{sec:exacttors}. Together they show that graph inclusions
can destroy odd torsion when the first part is enlarged. In contrast,
keeping that part fixed and enlarging only the second part preserves
the order of the explicit class on $K_{m+1,m+q-1}$ for every $m\ge q\ge2$.
\begin{corollary}\label{rep:cor:allprimecounterexample}
For each odd prime $p$, the inclusion
\[
 K_{p+1,\,2p-1}\hookrightarrow K_{2p-1,\,2p-1}
\]
is a counterexample to upward persistence of the existence of $p$-torsion
in degree $p-1$ with $p$ particles. More precisely, the source homology
has a $\Z/p$ direct summand, whereas
\[
 H_{p-1}(\B_pK_{2p-1,\,2p-1};\Z)[p]=0.
\]
The induced map annihilates the entire $p$-primary torsion of the source.
\end{corollary}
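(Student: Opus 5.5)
The corollary combines three earlier results: the torsion formula of Theorem~\ref{thm:exacttors} for the source, the vanishing Theorem~\ref{rep:thm:generalvanishing} for the target, and finiteness of the $p$-primary part.

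For the source, take $m=r=p$ in Theorem~\ref{thm:exacttors}. The graph is $K_{m+1,m+r-1}=K_{p+1,2p-1}$, and that theorem (the case $a=1$) says the $p$-primary part of $H_{p-1}(\B_pK_{p+1,2p-1};\Z)$ is $\Z/p$. A primary component of a finitely generated abelian group is a direct summand, so $\Z/p$ is a direct summand. Theorem~\ref{thm:explicitgen} adds that it is generated by the explicit class $[z_p^{(p)}]$.

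For the target, apply Theorem~\ref{rep:thm:generalvanishing} with $m=p$ and $a=b=2p-1$. Its hypotheses are $m\ge2$, which holds since $p\ge3$; $a,b\ge2m-1$, which holds with equality; and $p\ge m$ with $p$ an odd prime, which holds with $p=m$. It gives $H_{p-1}(\B_pK_{2p-1,2p-1};\Z)[p]=0$. A finitely generated abelian group with no element of order $p$ has zero $p$-primary part. So the target has no $p$-primary torsion.

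For the induced map, view the inclusion $K_{p+1,2p-1}\hookrightarrow K_{2p-1,2p-1}$ as enlarging the first part from $p+1$ to $2p-1$ vertices. It induces a map of configuration spaces and hence a homomorphism on $H_{p-1}$. A homomorphism sends an element of order $p$ to an element killed by $p$, and the only such element of the target is $0$. Hence the whole $p$-primary summand $\Z/p$, and in particular $[z_p^{(p)}]$, maps to zero. This is the failure of upward persistence claimed in the statement.

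No step is a real obstacle, since the corollary is a direct consequence of earlier results. The only point to check is the boundary case $p=m$ in Theorem~\ref{rep:thm:generalvanishing}. That case rests on Lemma~\ref{rep:lem:signhomology}: the nonzero sign-twisted homology of $\Sigma_p$ in the range $0\le i\le 2p-2$ sits in degrees $p-2$ and $p-1$, so the two contributions cancel in the Euler correction. This is exactly why the target uses $2p-1$ on both sides, the smallest part sizes that theorem allows.
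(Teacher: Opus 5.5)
Your proposal is correct and follows the paper's proof. The paper also takes $m=r=p$ in Theorem~\ref{thm:exacttors}, where the $j=p$ summand $\Z/p$ has multiplicity $\binom{2p-1}{0}-\binom{2p-1}{-1}=1$, and applies Theorem~\ref{rep:thm:generalvanishing} to the target with $m=p$, $a=b=2p-1$; it then concludes that a homomorphism carries $p$-power torsion into $p$-power torsion, which is zero in the target.
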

\begin{proof}
Apply Theorem~\ref{thm:exacttors} with
$m=p$ and $r=p$. Its $j=p$ summand is $\Z/p$, with multiplicity
$\binom{2p-1}{0}-\binom{2p-1}{-1}=1$.
Apply Theorem~\ref{rep:thm:generalvanishing} to the target with $m=p$ and
$a=b=2p-1$. A homomorphism sends a $p$-power-torsion element to a
$p$-power-torsion element, so the final assertion follows as well.
\end{proof}

The target conclusion concerns the specified prime $p$; it does not
exclude torsion at smaller primes. The source assertion uses Theorem~\ref{thm:exacttors}; the rational representation formula alone does not detect torsion.

For $p=3$, Proposition~\ref{rep:prop:two-primary-three} gives the more
precise target description
$\Tors H_2(\B_3K_{5,5};\Z)\cong(\Z/2)^{16}$.

\begin{proposition}[Persistence under enlargement of the second part]\label{prop:second-part-persistence}
Let $q\ge2$, $m\ge q$, and $r\ge q$. Under the inclusion
\[
\iota_r\colon K_{m+1,m+q-1}\hookrightarrow K_{m+1,m+r-1}
\]
obtained by adding vertices to the second part, the image of
$[z_m^{(q)}]$ has order exactly $q$.
If $m=q=p$ is prime, the induced map on $p$-primary torsion is an
isomorphism; both groups are $\Z/p$.
\end{proposition}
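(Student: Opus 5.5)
The plan is to push the Bockstein argument of Theorem~\ref{thm:explicitgen} forward along $\iota_r$, using the fact that $z_m^{(q)}$ is itself a weighted sum of restricted theta cycles. By \eqref{eq:zsum}, the extension by zero of $z_m^{(q)}$ to $K=K_{m+1,m+r-1}$ is
\[
\iota_r(z_m^{(q)})=\sum_{B'\in\binom{B_0}{m-1}}z_m^{(0)}\big|_{B'},
\]
where $B_0\subset B$ is the original second part, $|B_0|=m+q-1$. In the notation of \eqref{eq:universal} this is $z_g$ with $g$ the indicator of those $r$-subsets $T$ whose complement $B\setminus T$ lies in $B_0$. Equivalently, $g=\mathbf 1_{\{T\supseteq B\setminus B_0\}}$, where $B\setminus B_0$ has $r-q$ elements.

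For the upper bound, $\iota_r$ is a chain map and $\partial w_m=q z_m^{(q)}$ by Proposition~\ref{prop:cubes}, so $q\,\iota_{r*}[z_m^{(q)}]=0$. Hence the order divides $q$.

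For the lower bound, I would use the injective map $\Phi$ of Theorem~\ref{thm:theta-injective}. Since $\Phi$ is injective, the order of $\iota_{r*}[z_m^{(q)}]=\Phi(g)$ equals the order of $g$ in $\coker U_{r-1}(N)$, with $N=m+r-1$. It therefore suffices to show that $k g\in\operatorname{im}U_{r-1}$ forces $q\mid k$. To do this I would construct an integral functional $\lambda$ on $\Z^{\binom{[N]}{r}}$ that vanishes modulo $q$ on $\operatorname{im}U_{r-1}$ and satisfies $\lambda(g)\equiv1\pmod q$, in the spirit of Lemma~\ref{lem:pairing}.

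The natural candidate is $\lambda(f)=\sum_{T\subseteq B_0^c\cup\,\cdot}$, or more precisely $\lambda(f)=\sum_T f(T)\,y(T)$ with $y$ the indicator of $\{T\supseteq B\setminus B_0\}$. Then $\lambda(g)$ counts the $r$-sets containing the fixed $(r-q)$-set $Y=B\setminus B_0$. There are $\binom{m+q-1}{q}$ of them, which may not be a unit mod $q$. For a column $U_{r-1}\delta_S$, one gets $\lambda(U_{r-1}\delta_S)=\#\{b\notin S: Y\subseteq S\cup b\}$. If $Y\subseteq S$ this is $N-r+1=m$; if $|Y\setminus S|=1$ it is $1$; otherwise it is $0$. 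That is not $\equiv0\pmod q$, so this first guess fails.

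I would instead choose $y$ supported on the single $q$-layer: fix an $(m-1)$-set $B'_0\subseteq B_0$ and let $y(T)=1$ exactly when $T\cap B'_0=\varnothing$ and $T\supseteq Y$. Equivalently, $T=Y\cup T_0$ with $T_0$ a $q$-subset of $B_0\setminus B'_0$, which forces $T=B\setminus B'_0$. A column value then becomes the number of $b\in S^c\setminus B'_0$ with $S\cup b$ of that form. This is nonzero only if $S=(B\setminus B'_0)\setminus\{b\}$, where it equals $1$, so this choice also fails. The right functional is probably the pushforward of the $n=q$ functional from Lemma~\ref{lem:pairing} applied in $K_{m+1,m+q-1}$, composed with restriction.

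The cleanest route, then, is a retraction rather than a direct pairing. I would look for a chain map $\rho\colon S(K_{m+1,m+r-1})\to S(K_{m+1,m+q-1})$, or at least a homomorphism on $H_{m-1}$ or on the theta subgroups, with $\rho\circ\iota_r=\mathrm{id}$ on $[z_m^{(q)}]$. On the cokernel side, the candidate is the map $\coker U_{r-1}(N)\to\coker U_{q-1}(m+q-1)$ given by $f\mapsto f(\,\cdot\,\cup Y)$. It must send $\operatorname{im}U_{r-1}$ into $\operatorname{im}U_{q-1}$, modulo terms that can be absorbed. Checking this compatibility is the main obstacle; I expect it to need a sign/weight correction from the columns with $|Y\setminus S|=1$. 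If the compatibility holds, $g$ maps to $\mathbf 1$, which has order $q$ by Theorem~\ref{thm:explicitgen}, and the lower bound follows.

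For $m=q=p$, both $p$-primary parts are $\Z/p$: the source by Theorem~\ref{thm:exacttors}, and the target by Theorem~\ref{thm:exacttors} with $s=\min\{p,r\}=p$, which gives a unique $\Z/p$ summand and no higher $p$-powers. The generator $[z_p^{(p)}]$ maps to an element of order $p$, so the induced map on $p$-primary parts is an isomorphism.
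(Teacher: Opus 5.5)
The frame of your proposal is the paper's, and those parts are correct: the upper bound from $\partial w_m=qz_m^{(q)}$, the identification of the image as $\Phi([g])$ with $g=\sum_{B'\in\binom{B_0}{m-1}}\delta_{B\setminus B'}$, the use of injectivity of $\Phi$ (Theorem~\ref{thm:theta-injective}) to reduce to the order of $[g]$ in $\coker U_{r-1}(N)$, and the concluding $m=q=p$ step.

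There is a genuine gap: the lower bound, which is the actual content, is never proved, and the retraction you propose does not work. For $r>q$, take $y\in Y=B\setminus B_0$, $S_0\in\binom{B_0}{q}$ and $S=(Y\setminus\{y\})\cup S_0$. The map $f\mapsto f(\,\cdot\,\cup Y)$ sends the column $U_{r-1}\delta_S$ to the single vector $\delta_{S_0}$. That vector is not in $\operatorname{im}U_{q-1}(m+q-1)$: Lemma~\ref{lem:pairing} with $n=2$ shows that no odd multiple of it is. You give no correction that fixes this. Your trial functionals fail for a related reason. You placed $Y$ inside $T$, whereas in Lemma~\ref{lem:pairing} the set $Y$ lies in the complementary $(m-1)$-set $B\setminus T$.

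No retraction is needed. The functional of Lemma~\ref{lem:pairing} works directly on the target cokernel once $Y$ is chosen inside $B_0$.
\begin{itemize}
\item For each maximal prime power $d=p^a$ dividing $q$, pick $Y\subseteq B_0$ with $|Y|=m-d$, and set $\lambda_Y(f)=\sum_{T\colon Y\subseteq B\setminus T}f(T)$.
\item Since $|B\setminus S|=m$ for $S\in\binom B{r-1}$, you get $\lambda_Y(U_{r-1}\delta_S)=d\,[Y\subseteq B\setminus S]$. So $\lambda_Y$ descends to a homomorphism $\coker U_{r-1}(N)\to\Z/d$.
\item On $g$ it gives $\lambda_Y(g)=\#\{B'\in\binom{B_0}{m-1}\colon Y\subseteq B'\}=\binom{q+d-1}{d-1}$. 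This is $\equiv1\pmod p$ by Lucas' theorem, because $d\mid q$.
\end{itemize}
Hence $d$ divides the order of $[g]$ for every such $d$, and so $q$ does.

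The splitting into prime powers is essential. Taking $d=q$ directly gives $\lambda_Y(g)=\binom{2q-1}{q-1}$. That is a unit modulo $q$ when $q$ is a prime power, but not in general: for $q=6$ it is $\binom{11}{5}=462\equiv0\pmod 6$.
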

\begin{proof}
Let $B_0\subseteq B$ be the second part of the source, so
$|B_0|=m+q-1$ and $|B|=m+r-1$. Use the same ordering of $A$ in both graphs.
The inclusion of chain complexes sends each restricted theta cycle
$z_m^{(0)}|_{B'}$ of the source to the restricted theta cycle of the
target with the same $B'$. By \eqref{eq:zsum}, the image of the source class is
\[
\Phi([g]),\qquad
g=\sum_{B'\in\binom{B_0}{m-1}}\delta_{B\setminus B'}.
\]
It is killed by $q$, since the source class has order $q$ by
Theorem~\ref{thm:explicitgen}. We prove that its order is divisible
by every maximal prime power $d=p^a$ dividing $q$.

Choose $Y\subset B_0$ with $|Y|=m-d$, and define
\[
\lambda_Y(f)=\sum_{\substack{T\in\binom B{r}\\Y\subseteq B\setminus T}}f(T).
\]
For $S\in\binom B{r-1}$, the calculation in Lemma~\ref{lem:pairing} gives
\[
\lambda_Y(U_{r-1}\delta_S)=d\,[\,Y\subseteq B\setminus S\,].
\]
Thus $\lambda_Y$ descends to a homomorphism
$\coker U_{r-1}\to\Z/d$. On the indicated element it takes the value
\[
\lambda_Y(g)=\binom{|B_0|-|Y|}{m-1-|Y|}
=\binom{q+d-1}{d-1}\equiv1\pmod p.
\]
For the congruence, $d=p^a$ divides $q$, so the lowest $a$ base-$p$
digits of $q+d-1$ and $d-1$ are all $p-1$, while all higher digits
of $d-1$ are zero; Lucas' theorem applies. This value is a unit
modulo $d$, and therefore $[g]$ has order divisible by $d$.
Theorem~\ref{thm:theta-injective} preserves this order under $\Phi$.
Doing this for each prime divisor of $q$ proves that the image has
order exactly $q$.

When $m=q=p$, Theorem~\ref{thm:exacttors} gives exactly one
$p$-primary cyclic factor, of order $p$, in both source and target.
The nonzero induced map is consequently an isomorphism.
\end{proof}

In the prime case $m=q=p$, the surviving class $\iota_{r*}[z_p^{(p)}]$ should be distinguished
from the class $[z_p^{(r)}]$ defined using all vertices of the target.
The former is the sum of the restricted theta classes supported on
$(p-1)$-subsets of $B_0$, whereas the latter sums over all such subsets
of $B$. For example, on $K_{4,6}$ the class $[z_3^{(4)}]$ has zero
$3$-primary component, since $\partial w_3=4z_3^{(4)}$, while the
image of $[z_3^{(3)}]$ from $K_{4,5}$ has order $3$.

\section{Pendant edges, loops and multiple edges}\label{sec:pendant}

We study how attaching pendant edges, loops and parallel edges changes
configuration-space homology. We begin with an integral splitting for
pendant attachment, which gives torsion-preservation results and serves
as the starting point for the loop calculation. We then consider
parallel edges and their effect on torsion.

\subsection{Pendant edges}\label{sec:pendant-edges}

Let $\Gamma$ be a finite graph, let $v$ be a nonisolated vertex with $d=d(v)\ge1$, and
let $\Gamma'=\Gamma\cup\ell$, where $\ell$ joins $v$ to a new univalent vertex.  Let
$\Gamma_v$ be the vertex explosion: $v$ is replaced by $d$ univalent vertices.  Then
$E(\Gamma_v)=E(\Gamma)$ and $\Sw(\Gamma_v)=\Z[E]\otimes\bigotimes_{x\ne v}\Sw(x)$.  Let
$\iota\colon\Sw(\Gamma_v)\to\Sw(\Gamma)$ be the inclusion of the summand with $v$ in state
$\varnothing$.

\begin{lemma}[Integral pendant-edge splitting]\label{lem:pendant}
There is a chain homotopy equivalence over the ring $\Z[E(\Gamma)]$
\[
\Sw(\Gamma')\;\simeq\;\Sw(\Gamma)\ \oplus\ \bigoplus_{j\ge1}\Sw(\Gamma_v)[1,j+1]^{\oplus(d-1)}.
\]
Consequently, for every $i,n\ge0$,
\begin{equation}\label{eq:pendantsplit}
H_i(\B_n\Gamma')\cong H_i(\B_n\Gamma)\oplus\bigoplus_{k=0}^{n-2}H_{i-1}(\B_k\Gamma_v)^{\oplus(d-1)}.
\end{equation}
The inclusion $\Gamma\hookrightarrow\Gamma'$ realizes the first direct summand.  These
statements remain valid with coefficients in any commutative ring.
\end{lemma}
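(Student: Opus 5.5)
The plan is to compute $\Sw(\Gamma')$ directly, choosing the reduction at $v$ cleverly, and then cancel an acyclic piece. At $v$ in $\Gamma'$ the half-edges are $h_1,\dots,h_d$ (from $\Gamma$) and the new half-edge $h_\ell$. We choose $h_\ell$ as the reference half-edge $h_v$ at $v$. At the new univalent vertex $w$ the reduced factor is just $\Z\langle\varnothing\rangle$. Then
\[
\Sw(\Gamma')=\Z[E][\ell]\otimes\Bigl(\Z\langle\varnothing\rangle\oplus\bigoplus_{i=1}^d\Z\langle h_i-h_\ell\rangle\Bigr)\otimes\bigotimes_{x\ne v}\Sw(x),
\]
with $\partial(h_i-h_\ell)=e_i-\ell$.

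First I filter by $\ell$-degree and write every chain as a polynomial in $\ell$ with coefficients in $C:=\Z[E]\otimes\bigotimes_{x\ne v}\Sw(x)\otimes\Sw'(v)$. Here $\Sw'(v)$ is the part of the factor at $v$ not involving $\ell$.

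Second, I change basis in the vertex factor to $g_1=h_1-h_\ell$ and $g_i'=(h_i-h_\ell)-(h_1-h_\ell)=h_i-h_1$ for $i\ge2$. The span of $\varnothing$ and the $g'_i$, tensored with $\Z[E]\otimes\bigotimes_{x\ne v}\Sw(x)$, is exactly $\Sw(\Gamma)$ reduced with reference $h_1$. It does not involve $\ell$, so $\Sw(\Gamma')=\Sw(\Gamma)[\ell]\oplus g_1\cdot(\cdots)[\ell]$ as modules, with $\partial g_1=e_1-\ell$.

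Third, I write $\Sw(\Gamma)[\ell]=\bigoplus_{j\ge0}\ell^j\Sw(\Gamma)$. The $j=0$ piece is the subcomplex $\Sw(\Gamma)$, included by $\Gamma\hookrightarrow\Gamma'$. The $g_1$-terms cancel against the $\ell^{j}$ multiples for $j\ge1$ via the relation $\ell\equiv e_1$ modulo $\partial g_1$. The complex is the mapping cone of multiplication by $(\ell-e_1)$ on a module $M=\Z[E][\ell]\otimes(\cdots)$, where $M$ contains the $g'$-states. Only the part of $M$ with $v$ in state $\varnothing$ maps isomorphically onto the relevant quotient; this part is $\Sw(\Gamma_v)[\ell]$. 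The terms $g_1g'_i$ cannot occur, since there is one state per vertex, so the $g'_i$-part of $M$ is absent. The cone of $\ell-e_1$ on $\Z[E][\ell]\otimes\Sw(\Gamma_v)$ is quasi-isomorphic to $\Sw(\Gamma_v)$ with $\ell$ set to $e_1$.

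The remaining $\ell^j$-multiples of the $d-1$ states $g'_i$ with $v$ occupied are the surviving summands. I would show they form, for each $j\ge1$ and each $i=2,\dots,d$, a copy of $\Sw(\Gamma_v)$. The copy is shifted by one in degree (the half-edge) and by $j+1$ in weight ($\ell^j$ plus the particle). Its differential is $e_i-e_1$ times the $\Sw(\Gamma_v)$-differential, which becomes the differential of $\Sw(\Gamma_v)[1,j+1]$ once the attaching term is split off.

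I expect this bookkeeping to be the main obstacle. Multiplication by $\ell^j$ with $j\ge1$, after the homotopy identifying $\ell$ with $e_1$, must give a genuine splitting rather than an extension. Concretely, I must check that the component of $\partial(\ell^j g'_i y)$ landing in $\Sw(\Gamma)$ is null-homotopic. I would try the explicit homotopy $\ell^j g'_i y\mapsto \ell^{j-1}g_1 g'_i y$; this looks ill-formed, so instead I would use the standard Koszul-type contraction $\ell^j x\mapsto \sum_k \ell^{k}e_1^{j-1-k}g_1x$ on the $\varnothing$-at-$v$ part, and verify the homotopy identities by the $\Z[E]$-linearity of $\partial$.

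Passing to weight $n$ then gives \eqref{eq:pendantsplit}. The summand $\Sw(\Gamma_v)[1,j+1]$ contributes $H_{i-1}(\B_{n-j-1}\Gamma_v)$, and setting $k=n-j-1$ with $j\ge1$ gives $0\le k\le n-2$. All maps are defined over $\Z$ and are $\Z[E]$-linear, so tensoring with any commutative ring preserves the homotopy equivalence and the formula.
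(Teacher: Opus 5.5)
Your strategy is the paper's, and the setup is correct. After your change of basis, $g_1=h_1-h_\ell$ and $g'_i=h_i-h_1$ are the paper's $-g$ and $\xi_h$ (with $h_v=h_1$), and $\Sw(\Gamma')=\Sw(\Gamma)[\ell]\oplus g_1\Sw(\Gamma_v)[\ell]$ is the paper's decomposition. The gap is in the step you flag as the main obstacle: the homotopy you propose there does not close it.

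Write $T=\Sw(\Gamma_v)$. The differential is $\partial(\ell^jg'_iy)=(e_i-e_1)\ell^jy-\ell^jg'_i\partial y$; it is not ``$e_i-e_1$ times the differential''. So the span of $\ell^jg'_iT$ is not a subcomplex. To split off a copy of $T[1,j+1]$, you must null-homotope the attaching map $y\mapsto(e_i-e_1)\ell^jy$. Your Koszul contraction $\ell^jx\mapsto\sum_k\ell^ke_1^{j-1-k}g_1x$ only shows that this map is homotopic to $y\mapsto(e_i-e_1)e_1^jy\in T$. That map is not null-homotopic inside $T[\ell]\oplus g_1T[\ell]$. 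For example, if $\Gamma$ is a star, then $T=\Z[E]$ in degree $0$ with zero differential, and $(e_i-e_1)\ell^j\notin(\ell-e_1)\Z[E][\ell]$ because it does not vanish at $\ell=e_1$.

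The missing ingredient is a null-homotopy that uses the $j=0$ copy of $g'_iT$ inside $\Sw(\Gamma)$: $(e_i-e_1)e_1^jy=\partial(e_1^jg'_iy)+e_1^jg'_i\partial y$. Combining both homotopies, the correct generator is
\[
\tilde z_{i,j}=(\ell^j-e_1^j)\,g'_i+(e_i-e_1)\tfrac{\ell^j-e_1^j}{\ell-e_1}\,g_1 .
\]
This is a cycle, so $\tilde z_{i,j}T\cong T[1,j+1]$ is an honest subcomplex. Together with $\Sw(\Gamma)$ and the cone $(\ell-e_1)T[\ell]\oplus g_1T[\ell]$ of an isomorphism, it gives a direct-sum decomposition of complexes by a unitriangular change of basis. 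Note that the acyclic piece to cancel is $(\ell-e_1)T[\ell]\oplus g_1T[\ell]$, not $T[\ell]\oplus g_1T[\ell]$: the latter has homology $T$ and is not a direct summand as a complex.

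The awkwardness comes from using $\ell$ as the polynomial variable. Since $\partial g_1=e_1-\ell$, the differential does not preserve $\ell$-degree, so ``filtering by $\ell$-degree'' gives no splitting. The paper uses $u=\ell-e_1$ instead, so that $\Z[E][\ell]=\Z[E][u]$ and $\partial g_1=-u$. Then $u$-degree plus the number of $g_1$'s is a grading preserved by $\partial$. The generators $z_{i,j}=u^jg'_i+(e_i-e_1)u^{j-1}g_1$ are visibly cycles, and each graded piece is exactly $\cone(\mathrm{id}_T)\oplus T[1,j+1]^{\oplus(d-1)}$. No homotopies are needed, and the chain homotopy equivalence, together with base change to any ring, is immediate.
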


\begin{proof}
Put $T=\Sw(\Gamma_v)$, $S=\Sw(\Gamma)$, and retain the special half-edge $h_v$ at $v$.
For every other old half-edge $h$ at $v$, write $\xi_h=h-h_v$ and $a_h=e(h)-e(h_v)$, so
that $\partial\xi_h=a_h$.  Take $g=h_\ell-h_v$ and $u=\ell-e(h_v)$.  Then $\partial g=u$
and
\[
\Sw(\Gamma')=S[u]\oplus g\,T[u],\qquad\partial(gy)=u\,\iota(y)-g\,\partial y.
\]
Give $u^aX$ auxiliary degree $a$ and $u^agY$ auxiliary degree $a+1$.  The differential
preserves this degree; degree zero is precisely $S$.  For each $j\ge1$ the remaining
piece is
\[
C_j=u^jT\ \oplus\ \bigoplus_{h\ne h_v}u^j\xi_hT\ \oplus\ u^{j-1}gT .
\]
Make the triangular change of generators
\[
z_{h,j}=u^j\xi_h-a_h\,u^{j-1}g .
\]
This is an invertible graded $\Z[E]$-linear change of basis.  The generator $z_{h,j}$ has
bidegree $(1,j+1)$ and satisfies $\partial z_{h,j}=0$, so $z_{h,j}T\cong T[1,j+1]$ as
chain complexes.  The remaining subcomplex $u^jT\oplus u^{j-1}gT$ is a shifted cone of
the identity on $T$, hence contractible.  Explicitly, a contraction sends $u^jy$ to
$u^{j-1}gy$ and sends $u^{j-1}gy$ to zero.  Thus
\[
C_j\cong\cone(\mathrm{id}_T)[0,j]\oplus T[1,j+1]^{\oplus(d-1)} .
\]
Discarding the contractible pieces proves the chain homotopy equivalence.  In total
weight $n$ the second summand has weight $k=n-j-1$ in $T$, which gives
\eqref{eq:pendantsplit}.  The construction also identifies the original complex with the
degree-zero summand, and works over every coefficient ring without a flatness assumption.
\end{proof}

\begin{corollary}[All primes and all degrees]\label{cor:pendantall}
For every prime $p$,
\[
\Tors_pH_i(\B_n\Gamma')\cong\Tors_pH_i(\B_n\Gamma)\oplus\bigoplus_{k=0}^{n-2}\Tors_pH_{i-1}(\B_k\Gamma_v)^{\oplus(d-1)}.
\]
Thus pendant attachment never kills torsion.  For $d\ge2$ it preserves the entire
$p$-primary subgroup in degree $i$, weight $n$, if and only if $H_{i-1}(\B_k\Gamma_v)$ has
no $p$-torsion for $0\le k\le n-2$.  For $d=1$ it preserves the whole homology in every
degree and weight.
\end{corollary}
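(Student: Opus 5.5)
The plan is to read everything off the integral splitting \eqref{eq:pendantsplit} of Lemma~\ref{lem:pendant}. That splitting is an isomorphism of abelian groups, $H_i(\B_n\Gamma')\cong H_i(\B_n\Gamma)\oplus\bigoplus_{k=0}^{n-2}H_{i-1}(\B_k\Gamma_v)^{\oplus(d-1)}$. For finitely generated abelian groups, taking the $p$-primary torsion subgroup commutes with finite direct sums, since $\Tors_p(X\oplus Y)=\Tors_pX\oplus\Tors_pY$. Applying this to both sides gives the displayed formula directly, and it holds in every degree $i$ and weight $n$.

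Next come the qualitative consequences.

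\textbf{Never kills torsion.} The inclusion $\Gamma\hookrightarrow\Gamma'$ realizes the first summand by Lemma~\ref{lem:pendant}. Hence the induced map $H_i(\B_n\Gamma)\to H_i(\B_n\Gamma')$ is split injective, and in particular it is injective on $\Tors_p$.

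\textbf{Preservation criterion for $d\ge2$.} Here the multiplicity $d-1$ is at least $1$. The extra summand $\bigoplus_{k}\Tors_pH_{i-1}(\B_k\Gamma_v)^{\oplus(d-1)}$ therefore vanishes exactly when each $H_{i-1}(\B_k\Gamma_v)$, $0\le k\le n-2$, has no $p$-torsion. I read ``preserves the entire $p$-primary subgroup'' as saying that the inclusion induces an isomorphism on $\Tors_p$. Given split injectivity, this is equivalent to the vanishing of the complement.

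There is one point to check. An abstract isomorphism $\Tors_p H_i(\B_n\Gamma)\cong\Tors_pH_i(\B_n\Gamma')$ would already force the complement to vanish, because these are finite groups and orders compare. So both readings of ``preserves'' give the same criterion.

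\textbf{The case $d=1$.} The extra summands carry multiplicity $d-1=0$, so \eqref{eq:pendantsplit} shows that the inclusion induces an isomorphism on all of $H_i(\B_n\cdot)$, for every $i$ and $n$.

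As a small remark, this case is also geometrically evident: attaching a pendant edge at a univalent vertex is just a subdivision of an edge, up to homeomorphism.

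I do not expect a real obstacle. The only care needed is to make sure the splitting of Lemma~\ref{lem:pendant} is compatible with the map induced by the inclusion, which the lemma states explicitly, and with the finiteness used in the order comparison. The finiteness holds because all the complexes are finitely generated in each bidegree.
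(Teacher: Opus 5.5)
Your proposal is correct and is the paper's argument: take $p$-primary torsion in the integral splitting of Lemma~\ref{lem:pendant}, and use finiteness of the torsion groups to get the criterion. Your additional remarks are correct elaborations of that one-line proof, namely the split injectivity of the inclusion, the order comparison showing that both readings of ``preserves'' give the same criterion, and the homeomorphism $\Gamma'\cong\Gamma$ when $d=1$.
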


\begin{proof}
Take the torsion subgroups in \eqref{eq:pendantsplit}.  The criterion follows since all
these torsion groups are finite.
\end{proof}

\begin{theorem}\label{thm:pendant}
Attaching a pendant edge at a nonisolated vertex preserves odd torsion in $H_2$: for
every odd prime $p$ and every $n$,
\[
\Tors_pH_2(\B_n\Gamma')\cong\Tors_pH_2(\B_n\Gamma).
\]
\end{theorem}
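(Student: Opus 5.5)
By Corollary~\ref{cor:pendantall} with $i=2$, it suffices to show that $H_1(\B_k\Gamma_v)$ has no odd torsion for every $k$ with $0\le k\le n-2$. The $p$-primary part of $H_2(\B_n\Gamma')$ is the $p$-primary part of $H_2(\B_n\Gamma)$ plus $(d-1)$ copies of $\bigoplus_{k=0}^{n-2}\Tors_pH_1(\B_k\Gamma_v)$. For $d=1$ there is nothing to prove, since the extra summand has multiplicity zero. So the whole argument reduces to a statement about first homology of the exploded graph.

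For this I invoke Theorem~\ref{thm:KP}. For every finite graph and every particle number, the torsion of $H_1(\B_k\Gamma_v)$ is a direct sum of copies of $\Z/2$. The graph $\Gamma_v$ may be disconnected, because exploding a cut vertex can separate it, and it may contain isolated pieces. Our convention takes $H_1(\B_k\Gamma_v)$ to be the homology of the whole configuration space. That space is a disjoint union, over distributions of the $k$ particles among the components, of products of configuration spaces of the components.

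By the K\"unneth formula, $H_1$ of such a product is a sum of terms $H_1\otimes H_0$ and $\Tor(H_0,H_0)$. Each $H_0$ is free, so all $\Tor$ terms vanish and no new torsion is created. The remark after Theorem~\ref{thm:KP} records exactly this. The cases $k=0,1$ are trivial: $H_1(\B_0)=0$, and $H_1(\B_1\Gamma_v)=H_1(\Gamma_v)$ is free.

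The only delicate point is the bookkeeping for disconnected $\Gamma_v$. Components of $\Gamma_v$ that are single vertices or arcs have free, contractible-per-component configuration spaces, so the Ko--Park statement applies component by component. Once this is checked, odd torsion in $H_1(\B_k\Gamma_v)$ vanishes for every odd prime $p$ and every $k$. The direct sum formula then gives $\Tors_pH_2(\B_n\Gamma')\cong\Tors_pH_2(\B_n\Gamma)$, with the isomorphism induced by the inclusion $\Gamma\hookrightarrow\Gamma'$ as in Lemma~\ref{lem:pendant}.
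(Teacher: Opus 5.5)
Your proposal is correct and follows the paper's proof. Both apply Corollary~\ref{cor:pendantall} in degree $2$ and observe that the only extra summands are copies of $\Tors_pH_1(\B_k\Gamma_v)$, which vanish for odd $p$ by Theorem~\ref{thm:KP}; the paper also handles disconnected graphs there by the K\"unneth formula, as you do.
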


\begin{proof}
Every extra torsion summand in Corollary~\ref{cor:pendantall} is $p$-torsion in an
$H_1(\B_k\Gamma_v)$, which is zero for odd $p$ by Theorem~\ref{thm:KP}.  Repeating the
argument treats any finite sequence of such attachments.
\end{proof}

\begin{theorem}[One degree below the particle number]\label{cor:topminusone}
Let $m\ge2$.  Attaching a pendant edge at any vertex of a finite graph preserves the
full torsion subgroup of $H_{m-1}(\B_m\Gamma;\Z)$, including the $2$-primary part.  At a
nonisolated vertex the stronger formula is
\[
H_{m-1}(\B_m\Gamma')\cong H_{m-1}(\B_m\Gamma)\oplus H_{m-2}(\B_{m-2}\Gamma_v)^{\oplus(d-1)},
\]
and the added summand is free abelian.  Moreover, if $J$ is a disjoint union of arcs and
points, then
\[
\Tors H_{m-1}(\B_m(\Gamma\sqcup J))\cong\Tors H_{m-1}(\B_m\Gamma).
\]
\end{theorem}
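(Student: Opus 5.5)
We specialize Lemma~\ref{lem:pendant} to degree $i=m-1$ and weight $n=m$. Formula \eqref{eq:pendantsplit} gives
\[
H_{m-1}(\B_m\Gamma')\cong H_{m-1}(\B_m\Gamma)\oplus\bigoplus_{k=0}^{m-2}H_{m-2}(\B_k\Gamma_v)^{\oplus(d-1)} .
\]
For $k<m-2$ the summand $H_{m-2}(\B_k\Gamma_v)$ vanishes by Observation~\ref{obs:top}, since the degree exceeds the weight. Only $k=m-2$ survives, which yields the displayed stronger formula.

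The added summand $H_{m-2}(\B_{m-2}\Gamma_v)$ is top homology, with degree equal to weight. By Observation~\ref{obs:top} it is a subgroup of a free abelian group of finite rank, hence free. When $m=2$ it is $H_0(\B_0\Gamma_v)\cong\Z$, which is also free. Taking torsion subgroups then gives $\Tors H_{m-1}(\B_m\Gamma')\cong\Tors H_{m-1}(\B_m\Gamma)$ at a nonisolated vertex. For $d=1$ the sum is empty anyway.

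At an isolated vertex $v$, attaching a pendant edge replaces the component $\{v\}$ by an arc. So this case reduces to the disjoint-union statement: we compare $\Gamma\sqcup\{\mathrm{pt}\}$, $\Gamma\sqcup I$ and $\Gamma$.

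For the disjoint-union statement, we use that $\UConf_m(\Gamma\sqcup J)$ is the disjoint union over $k+l=m$ of $\UConf_k(\Gamma)\times\UConf_l(J)$. Each component of $\UConf_l(J)$ is contractible or empty when $J$ is a disjoint union of arcs and points: configurations on an arc form an open simplex, and configurations distributed among components multiply. Hence
\[
H_{m-1}(\B_m(\Gamma\sqcup J))\cong\bigoplus_{k=0}^{m}H_{m-1}(\B_k\Gamma)^{\oplus c_{m-k}},
\]
where $c_l$ is the number of components of $\UConf_l(J)$, and $c_0=1$. Equivalently, this follows from the K\"unneth formula on \'Swi\k{a}tkowski complexes, since $\Sw(J)$ is acyclic in each weight up to $H_0$.

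By Observation~\ref{obs:top}, $H_{m-1}(\B_k\Gamma)=0$ for $k<m-1$, and $H_{m-1}(\B_{m-1}\Gamma)$ is free. So the only torsion comes from $k=m$, with multiplicity $c_0=1$, and $\Tors H_{m-1}(\B_m(\Gamma\sqcup J))\cong\Tors H_{m-1}(\B_m\Gamma)$.

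The step needing the most care is this disjoint-union reduction. We must make sure the torsion of the $k=m$ term enters with multiplicity exactly one. We must also confirm that the $k=m-1$ term, although it may appear with multiplicity $c_1>1$, contributes only free groups. Both points follow from Observation~\ref{obs:top}, so the obstacle is mainly bookkeeping of the components of $\UConf_l(J)$. The isolated-vertex case of the first claim then follows by applying the disjoint-union statement twice, with $J=\{\mathrm{pt}\}$ and with $J=I$.
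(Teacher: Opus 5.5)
Your proposal is correct and follows essentially the same route as the paper: you specialize the pendant splitting \eqref{eq:pendantsplit} to bidegree $(m-1,m)$ and use Observation~\ref{obs:top} to kill the terms $k<m-2$ and to see that the $k=m-2$ term is free top homology. You then handle $J$, and through it the isolated-vertex case, by the K\"unneth decomposition with contractible components of $\UConf_l(J)$, where only the $a=m$ term carries torsion, and it occurs once. (One small correction: $\UConf_l$ of a closed arc is a convex region that includes boundary points, not an open simplex, but it is still contractible, which is all you use.)
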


\begin{proof}
In \eqref{eq:pendantsplit}, with $i=m-1$ and $n=m$, the groups at $k<m-2$ vanish, and
the group at $k=m-2$ is top homology, hence free by Observation~\ref{obs:top}.

Every component of every configuration space of $J$ is contractible.  The K\"unneth
decomposition therefore expresses $H_{m-1}(\B_m(\Gamma\sqcup J))$ as a sum of copies of
$H_{m-1}(\B_a\Gamma)$, with $a\le m$.  The term $a=m$ occurs once, the terms $a=m-1$ are
free top homology, and the terms $a<m-1$ vanish.  This proves the second assertion.  If
the attaching vertex is isolated, attaching the first edge replaces an isolated point
component by an arc component; the same argument shows that this also preserves the
indicated torsion subgroup.
\end{proof}

\begin{example}[Why the degree and prime restrictions matter]\label{ex:pendantfail}
Let $\Delta$ be a graph, attach a path $x-v-w$ at $x\in\Delta$, and call the resulting
graph $\Gamma$.  Attach a new pendant edge at the bivalent vertex $v$, obtaining
$\Gamma'$.  Then $\Gamma_v\cong(\Delta\cup\text{pendant at }x)\sqcup I$, where $I$ is an
arc.  With $\Delta=K_{3,3}$, Corollary~\ref{cor:pendantall} and Theorem~\ref{cor:topminusone}
give
\[
\Tors_2H_2(\B_4\Gamma')\cong\Tors_2H_2(\B_4\Gamma)\oplus\Z/2 .
\]
Only the extra term $H_1(\B_2\Gamma_v)$ has torsion, and its torsion is $\Z/2$ by
Corollary~\ref{cor:r2} and Theorem~\ref{cor:topminusone}. With $\Delta=K_{4,5}$ instead, the same splittings
give
\[
\Tors_3H_3(\B_5\Gamma')\cong\Tors_3H_3(\B_5\Gamma)\oplus\Z/3 .
\]
Here the only extra term with $3$-torsion is $H_2(\B_3\Gamma_v)$, whose $3$-primary part
is $\Z/3$ by Corollary~\ref{cor:exact}.  Thus neither all-prime invariance in $H_2$ for
arbitrary particle number, nor odd-torsion invariance in arbitrary degree, follows from
Theorem~\ref{thm:pendant}; both unrestricted assertions are false.
\end{example}

\subsection{Loops}
Let $\Gamma''=\Gamma\cup e_0$, where $e_0$ is a loop at $v$, i.e.\ a new edge
with both ends at $v$.  Up to homeomorphism, $\Gamma''$ is obtained by attaching a
simple cycle meeting the old graph only at $v$.  This is not the same as adding a path
between two distinct old vertices.  Keep $\Gamma'=\Gamma\cup\ell$ and $\Gamma_v$ as above.

\begin{proposition}\label{prop:loop}
There is an isomorphism of chain complexes
\[
\Sw(\Gamma'')\cong\Sw(\Gamma')\oplus c\cdot\Sw(\Gamma_v)[u],
\]
where $c$ has bidegree $(1,1)$ and $\partial(cy)=-c\,\partial y$.  Consequently, for all
$i$ and $n$,
\[
H_i(\B_n\Gamma'')\cong H_i(\B_n\Gamma')\oplus\bigoplus_{k=0}^{n-1}H_{i-1}(\B_k\Gamma_v).
\]
\end{proposition}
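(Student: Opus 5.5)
The plan is to mirror the proof of Lemma~\ref{lem:pendant}, working directly with the vertex factor at $v$.

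\emph{Vertex factor of $\Gamma''$.} Write $e_0$ for the loop and $h_0^+,h_0^-$ for its two half-edges at $v$. Then $\Sw(\Gamma'')=\Z[E\cup e_0]\otimes\Sw(v)''\otimes\bigotimes_{x\ne v}\Sw(x)$, where the reduced factor $\Sw(v)''$ has basis $\varnothing$, the old generators $\xi_h=h-h_v$, and the two new generators $h_0^\pm-h_v$. Put $g=h_0^+-h_v$, so $\partial g=e_0-e(h_v)=:u$, and put $c=h_0^--h_0^+$. Since both half-edges lie on the same edge $e_0$, we get $\partial c=e_0-e_0=0$. The element $c$ has bidegree $(1,1)$, and the Koszul rule gives $\partial(cy)=-c\,\partial y$.

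\emph{Comparison with $\Gamma'$.} For the pendant graph $\Gamma'=\Gamma\cup\ell$, the reduced factor at $v$ has basis $\varnothing$, the $\xi_h$, and $h_\ell-h_v$, with edge ring $\Z[E\cup\ell]$. Identify $\ell\leftrightarrow e_0$ and $h_\ell\leftrightarrow h_0^+$; this gives the polynomial variable $u$ in both complexes. Under this identification the span of $\varnothing$, the $\xi_h$ and $g$, tensored with $\Z[E\cup e_0]$ and the other vertex factors, is a subcomplex isomorphic to $\Sw(\Gamma')$, as the differentials agree term by term. The remaining summand is $c\cdot\Z[E\cup e_0]\otimes\bigotimes_{x\ne v}\Sw(x)=c\cdot\Sw(\Gamma_v)[e_0]$. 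Because $\partial c=0$, this summand is itself a subcomplex, so the decomposition is a direct sum of chain complexes, not just of modules. Writing $\Z[E\cup e_0]=\Z[E][u]$ via the change of variable $e_0=u+e(h_v)$ gives $c\cdot\Sw(\Gamma_v)[u]$. The key point to check is that no differential of an element of $\Sw(\Gamma')$ lands in the $c$-summand, and none from $c\,\Sw(\Gamma_v)[u]$ lands in $\Sw(\Gamma')$. The first holds because $\partial$ of the generators $\xi_h$ and $g$ involves only edge variables. The second is immediate from $\partial(cy)=-c\,\partial y$.

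\emph{Homology.} Since $u$ is a polynomial variable of bidegree $(0,1)$ with trivial differential, $c\,\Sw(\Gamma_v)[u]\cong\bigoplus_{j\ge0}\Sw(\Gamma_v)[1,j+1]$. In weight $n$, the $j$-th piece contributes $H_{i-1}(\B_k\Gamma_v)$ with $k=n-1-j$, so $k$ runs from $0$ to $n-1$. This gives the stated formula.

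The main obstacle is bookkeeping rather than a conceptual step. One must check that the identification of $\Sw(\Gamma')$ inside $\Sw(\Gamma'')$ respects the Koszul signs and the chosen reduction at $v$. One must also handle the degenerate case where $v$ was isolated in $\Gamma$: there the convention retains the unreduced factor, and I would treat it separately by direct inspection, or exclude it as in the pendant lemma.
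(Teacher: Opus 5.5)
Your proposal is correct and follows the paper's proof: you use the same basis $\{g,c\}$ of the new generators at $v$, identify the $g$-part with $\Sw(\Gamma')$ via $\ell\mapsto e_0$, and use $\partial c=0$ to see that $c\,\Sw(\Gamma_v)[u]$ is a complementary subcomplex. Your $c=h_0^--h_0^+$ is the negative of the paper's $g_1-g_2$, which is immaterial, and the isolated case you worry about is excluded by the paper's standing assumption that $v$ is nonisolated.
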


\begin{proof}
Let $h_1,h_2$ be the two half-edges of $e_0$ at $v$, and let $g_j=h_j-h_v$.  Then
$\partial g_1=\partial g_2=e_0-e(h_v)$, so $c=g_1-g_2$ is a cycle.  In the basis
$\{g_1,c\}$ of the new generators at $v$, write
\[
\Sw(\Gamma'')=A\oplus g_1B\oplus cB,\qquad A=\Sw(\Gamma)[u],\quad B=\Sw(\Gamma_v)[u],\quad u=e_0-e(h_v).
\]
The subcomplex $A\oplus g_1B$ is identified with $\Sw(\Gamma')$ by $e_0\mapsto\ell$ and
$g_1\mapsto h_\ell-h_v$; compare the proof of Lemma~\ref{lem:pendant}.  Since $\partial c=0$,
$\partial(cy)=-c\,\partial y$, so $cB$ is also a subcomplex.  Hence the sum is a direct sum of
complexes.

Finally, $H_i(cB)_n=\bigoplus_{j\ge0}u^jH_{i-1}(\B_{n-1-j}\Gamma_v)$.  Geometrically, one
particle runs once around the loop, $j$ particles rest on it, and the remaining
$k=n-1-j$ particles move in $\Gamma_v$.
\end{proof}

\begin{theorem}[Loop attachment at every prime]\label{cor:loop}
For every $i,n\ge0$,
\[
H_i(\B_n\Gamma'')\cong H_i(\B_n\Gamma)\oplus H_{i-1}(\B_{n-1}\Gamma_v)\oplus\bigoplus_{k=0}^{n-2}H_{i-1}(\B_k\Gamma_v)^{\oplus d}.
\]
The same direct-sum formula holds after taking $\Tors_p$, for every prime $p$.  In
particular, attaching a simple cycle at one nonisolated vertex preserves odd torsion in
$H_2$ for every particle number, but can create $2$-torsion there.  It never kills
existing torsion in any degree.  In degree $m-1$ with $m$ particles, the torsion formula
simplifies to
\[
\Tors_pH_{m-1}(\B_m\Gamma'')\cong\Tors_pH_{m-1}(\B_m\Gamma)\oplus\Tors_pH_{m-2}(\B_{m-1}\Gamma_v).
\]
\end{theorem}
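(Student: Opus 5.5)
The plan is to combine Proposition~\ref{prop:loop} with the integral pendant-edge splitting of Lemma~\ref{lem:pendant}. Proposition~\ref{prop:loop} gives
\[
H_i(\B_n\Gamma'')\cong H_i(\B_n\Gamma')\oplus\bigoplus_{k=0}^{n-1}H_{i-1}(\B_k\Gamma_v).
\]
The loop attaches at $v$, so we apply Lemma~\ref{lem:pendant} to $\Gamma'=\Gamma\cup\ell$ at the same vertex $v$, of degree $d$ in $\Gamma$. This yields
\[
H_i(\B_n\Gamma')\cong H_i(\B_n\Gamma)\oplus\bigoplus_{k=0}^{n-2}H_{i-1}(\B_k\Gamma_v)^{\oplus(d-1)}.
\]
Here the explosion of $v$ is the same graph $\Gamma_v$ in both statements. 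The explosion in Proposition~\ref{prop:loop} is taken in $\Gamma$, since the loop half-edges are split off separately. Substituting the second splitting into the first, the summands $k\le n-2$ combine to multiplicity $(d-1)+1=d$. The summand $k=n-1$ appears once, coming only from the loop term. This is the displayed formula.

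Since both splittings are isomorphisms of abelian groups, with chain-level splittings valid over $\Z$, taking $\Tors_p$ commutes with finite direct sums. This gives the $p$-primary formula at once. It also shows that attachment never kills torsion, because $\Tors_p H_i(\B_n\Gamma)$ is a direct summand.

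For odd torsion in $H_2$, we take $i=2$. The added summands are $H_1(\B_k\Gamma_v)$, whose torsion is $2$-primary by Theorem~\ref{thm:KP}, extended to disconnected $\Gamma_v$ by K\"unneth. Hence $\Tors_pH_2$ is unchanged for odd $p$. To show that $2$-torsion can be created, we need one example: take $\Gamma_v$ to contain a copy of $K_{3,3}$ component-wise with suitable $k$. For instance, let $\Gamma$ be $K_{3,3}$ with a pendant path whose interior vertex is $v$. Then $H_1(\B_{n-1}\Gamma_v)$ contains $\Z/2$, by Corollary~\ref{cor:r2} and Theorem~\ref{cor:topminusone}, while $\Gamma$ itself contributes no extra $2$-torsion beyond that of $K_{3,3}$. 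In this example the torsion count strictly increases.

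For the degree-$(m-1)$, weight-$m$ simplification, we set $i=m-1$ and $n=m$. The summands $H_{m-2}(\B_k\Gamma_v)$ with $k\le m-3$ vanish by Observation~\ref{obs:top}, since the degree exceeds the weight. The summand $k=m-2$ is top homology and hence free, also by Observation~\ref{obs:top}. Only the $k=m-1$ term $H_{m-2}(\B_{m-1}\Gamma_v)$ can carry torsion.

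The main point to check carefully is that the explosion used in Proposition~\ref{prop:loop} coincides with the one in Lemma~\ref{lem:pendant}. In both, $v$ is exploded in $\Gamma$ with its $d$ old half-edges, and the identification $A\oplus g_1B\cong\Sw(\Gamma')$ in the proof of Proposition~\ref{prop:loop} uses exactly that $B=\Sw(\Gamma_v)[u]$. The $2$-torsion creation example is the only part requiring a concrete construction.
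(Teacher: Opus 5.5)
Your proposal is correct and follows the paper's own argument. The paper likewise substitutes the pendant splitting \eqref{eq:pendantsplit} into Proposition~\ref{prop:loop}, uses Theorem~\ref{thm:KP} for $i=2$, and uses Observation~\ref{obs:top} to see that the terms with $k\le m-2$ are zero or free in bidegree $(m-1,m)$. Your $2$-torsion example is a variant of the paper's Example~\ref{ex:loopfail}, which attaches the loop at a univalent vertex rather than a bivalent one; note that your citation of Corollary~\ref{cor:r2} and Theorem~\ref{cor:topminusone} covers only $n=3$.
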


\begin{proof}
Combine Proposition~\ref{prop:loop} with \eqref{eq:pendantsplit}.  For $i=2$, the extra
$H_1$ groups have only $2$-torsion by Theorem~\ref{thm:KP}.  For $i=m-1$ and $n=m$, the
terms with $k\le m-2$ are free or zero by Observation~\ref{obs:top}.
\end{proof}

Since the torsion of $H_{i-1}$ of the exploded graph appears in $H_i$, loops raise the
degree of torsion.  Call a \emph{lollipop at $x$} an edge from $x$ to a new vertex $v$,
together with a loop at $v$.

\begin{corollary}\label{cor:alldegrees}
Let $L_0=K_{4,5}$, and let $L_{j+1}$ be $L_j$ with a lollipop attached at a vertex of
$L_j$.  Then $H_{2+j}(\B_{3+j}L_j;\Z)$ contains an element of order $3$ for every
$j\ge0$.  Hence $3$-torsion occurs in every degree $\ge2$.
\end{corollary}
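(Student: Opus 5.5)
Induction on $j$. The base case $j=0$ is Corollary~\ref{cor:exact}: $H_2(\B_3K_{4,5};\Z)$ contains $\Z/3$. Suppose $H_{2+j}(\B_{3+j}L_j;\Z)$ contains an element of order $3$, and set $m=3+j$. We pass to $L_{j+1}$ in two stages, using the formulas for pendant edges and loops.

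Let $x$ be the attaching vertex in $L_j$. First attach a pendant edge $\ell=xv$ to get $\Gamma=L_j\cup\ell$. Then $L_{j+1}=\Gamma''$ is $\Gamma$ with a loop $e_0$ at $v$. The vertex $v$ is nonisolated in $\Gamma$, with $d(v)=1$, so Theorem~\ref{cor:loop} applies with $i=m$ and $n=m+1$. The right-hand side of that theorem contains the direct summand $H_{m-1}(\B_m\Gamma_v)$. Exploding $v$, which has degree one in $\Gamma$, just replaces $\ell$ by an arc ending in a new univalent vertex, so $\Gamma_v\cong\Gamma$, up to homeomorphism.

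Next, Lemma~\ref{lem:pendant} with $d(x)-1$ copies shows that $H_{m-1}(\B_m L_j)$ is a direct summand of $H_{m-1}(\B_m\Gamma)$. Alternatively, Theorem~\ref{cor:topminusone} gives that the torsion is preserved in this bidegree. Hence $H_{m-1}(\B_m\Gamma_v)$ contains an element of order $3$. Via the summand $H_{i-1}(\B_{n-1}\Gamma_v)$, so does $H_m(\B_{m+1}L_{j+1})=H_{2+(j+1)}(\B_{3+(j+1)}L_{j+1})$.

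The only point needing care is that the exploded graph really is homeomorphic to $\Gamma$, and that $x$ is nonisolated so the lemma applies. The vertex $x$ is nonisolated because $L_j$ is connected: $K_{4,5}$ is connected, and attaching lollipops preserves connectedness. Homology depends only on homeomorphism type, so the identification suffices. The final claim follows because the degree $2+j$ ranges over all integers $\ge2$.
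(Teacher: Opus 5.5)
Your proof is correct and follows the paper's argument. Theorem~\ref{cor:loop}, which the paper applies in the form of Proposition~\ref{prop:loop} with $k=n-1$, makes $H_{2+j}(\B_{3+j}\Gamma_v)$ a direct summand of $H_{3+j}(\B_{4+j}L_{j+1})$, where $\Gamma_v\cong L_j\cup\text{pendant}$, and Lemma~\ref{lem:pendant} then embeds $H_{2+j}(\B_{3+j}L_j)$ as a summand of that group. The only difference is cosmetic: you cite Corollary~\ref{cor:exact} for the base case, where the paper cites Theorem~\ref{thm:explicitgen}.
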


\begin{proof}
Let $\Gamma=L_j\cup(\text{edge }xv)$, so that $L_{j+1}=\Gamma\cup(\text{loop at }v)$ and
$\Gamma_v\cong L_j\cup(\text{pendant at }x)$.  By Proposition~\ref{prop:loop} with $k=n-1$,
\[
H_{3+j}(\B_{4+j}L_{j+1})\supseteq H_{2+j}(\B_{3+j}(L_j\cup\text{pendant})).
\]
By Lemma~\ref{lem:pendant}, $H_i(\B_nL_j)$ is a direct summand of
$H_i(\B_n(L_j\cup\text{pendant}))$, so an element of order $3$ survives.  The case $j=0$
is Theorem~\ref{thm:explicitgen}.
\end{proof}

\subsection{Multiple edges}
Let $e$ be an edge of $\Gamma$ with distinct endpoints $u,w$, and let $\Gamma^+=\Gamma\cup e'$,
where $e'$ is a new edge parallel to $e$.  Let $\Gamma_u$, $\Gamma_w$ and $\Gamma_{uw}$ be the
vertex explosions of $\Gamma$ at $u$, at $w$, and at both.  In $\Gamma_{uw}$ the edge $e$
becomes an isolated arc.  Let
\[
S=\Sw(\Gamma),\qquad S_u=\iota\,\Sw(\Gamma_u),\qquad S_w=\iota\,\Sw(\Gamma_w),
\]
so $S_u$ and $S_w$ are the subcomplexes in which $u$, respectively $w$, is in state
$\varnothing$, and $S_u\cap S_w=\iota\,\Sw(\Gamma_{uw})$.

\begin{theorem}\label{thm:parallel}
There is a decomposition of chain complexes $\Sw(\Gamma^+)=\bigoplus_{j\ge0}C_j$ with
\[
C_0=S,\qquad C_1=\cone\bigl(S_u\oplus S_w\to S\bigr)[0,1],\qquad
C_j\simeq\bigl(S/(S_u+S_w)\bigr)[0,j]\ \ (j\ge2),
\]
where $[0,j]$ shifts the weight by $j$ and $\simeq$ denotes a quasi-isomorphism.
Consequently, for all $i$ and $n$,
\[
H_i(\B_n\Gamma^+)\cong H_i(\B_n\Gamma)\ \oplus\ H_i(C_1)_n\ \oplus\ \bigoplus_{k=0}^{n-4}H_{i-2}(\B_k\Gamma_{uw})^{\oplus(d(u)-1)(d(w)-1)},
\]
and $C_1$ sits in a long exact sequence
\[
\begin{aligned}
\cdots&\longrightarrow H_i(\B_{n-1}\Gamma_u)\oplus H_i(\B_{n-1}\Gamma_w)
 \longrightarrow H_i(\B_{n-1}\Gamma)\\
&\longrightarrow H_i(C_1)_n
 \longrightarrow H_{i-1}(\B_{n-1}\Gamma_u)\oplus H_{i-1}(\B_{n-1}\Gamma_w)
 \longrightarrow\cdots .
\end{aligned}
\]
\end{theorem}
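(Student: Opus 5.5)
We mimic the proof of Lemma~\ref{lem:pendant} and filter the reduced complex by powers of a new edge variable. Keep the special half-edges $h_u,h_w$ at $u,w$ as the half-edges of $e$. The new edge $e'$ adds one half-edge at each endpoint. Put $g_u=h'_u-h_u$ and $g_w=h'_w-h_w$, with $\partial g_u=\partial g_w=t$, where $t=e'-e$. Then, as a $\Z[E]$-module,
\[
\Sw(\Gamma^+)=S[t]\oplus g_u\,T_u[t]\oplus g_w\,T_w[t]\oplus g_ug_w\,T_{uw}[t].
\]
Here $T_u,T_w,T_{uw}$ denote the complexes of the explosions, included via $\iota$ as $S_u,S_w,S_u\cap S_w$. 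A generator $g_u$ forces $u$ to be in the state $g_u$, so the remaining factor lies in the part with $u$ in state $\varnothing$. The differentials are
\[
\partial(g_uy)=t\,y-g_u\partial y,\qquad
\partial(g_ug_wy)=t\,g_wy-t\,g_uy+g_ug_w\partial y .
\]
As in the pendant lemma, we assign auxiliary degree $a$ to $t^aX$, and $a+1$ or $a+2$ for one or two $g$'s. The differential preserves this degree, which gives the splitting $\bigoplus_jC_j$, and degree zero is $C_0=S$.

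For $j=1$, the summand is $C_1=tS\oplus g_uS_u\oplus g_wS_w$. Its differential sends $g_uy\mapsto ty$ and $g_wy\mapsto ty$, so it is literally the mapping cone of $S_u\oplus S_w\to S$ (sum of inclusions), shifted in weight by one. Its long exact sequence gives the displayed sequence, since the weight shifts by one.

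For $j\ge2$, the summand is
\[
C_j=t^jS\oplus t^{j-1}(g_uS_u\oplus g_wS_w)\oplus t^{j-2}g_ug_wS_{uw}.
\]
This is the total complex of the square
\[
S_{uw}\to S_u\oplus S_w\to S,
\]
where the first map is $y\mapsto(-y,y)$ up to sign. The sequence $0\to S_u\cap S_w\to S_u\oplus S_w\to S_u+S_w\to0$ is exact because these are direct-summand subcomplexes spanned by monomials. Hence the total complex is quasi-isomorphic to $S/(S_u+S_w)$, by the standard argument that an exact row collapses (filter by columns). Note that $C_1$ lacks the $S_{uw}$ term, since $g_ug_w$ has auxiliary degree $\ge2$.

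It remains to compute $S/(S_u+S_w)$. This quotient is spanned by monomials in which both $u$ and $w$ are in a non-$\varnothing$ state, namely $(h-h_u)$ with $h\ne h_u$ in the reduced factors. These are $d(u)-1$ choices at $u$ and $d(w)-1$ at $w$, each of bidegree $(1,1)$. With $e$ as the special edge at both vertices, the quotient is isomorphic to
\[
\bigoplus_{h,h'}(h-h_u)(h'-h_w)\,\Sw(\Gamma_{uw}).
\]
The coupling term $\partial(h-h_u)=e(h)-e$ lands in $S_u$ and so vanishes in the quotient. Each summand is therefore $\Sw(\Gamma_{uw})[2,2]$ with the sign-twisted differential. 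Combining with the shift $[0,j]$ for $j\ge2$, weight $n$ corresponds to weight $k=n-2-j\le n-4$ in $\Gamma_{uw}$, and the degree becomes $i-2$. This yields the stated sum.

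The main obstacle is verifying that the $C_j$ for $j\ge2$ really are subcomplexes with exactly these constituents, and that the quasi-isomorphism to the quotient holds over $\Z$. This requires tracking Koszul signs in $\partial(g_ug_wy)$ and checking that all $t$-powers match. I would first check this on $S$ with a single monomial basis, where everything is combinatorial.
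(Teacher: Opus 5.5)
Your proposal is correct and follows the paper's proof essentially step for step: you use the same generators $g_u,g_w$ with $\partial g_u=\partial g_w=t=e'-e$ and the same $t$-degree splitting, you identify $C_1$ as the weight-shifted mapping cone, and for $j\ge2$ you identify $C_j$ as the total complex of the square of inclusions, quasi-isomorphic to $S/(S_u+S_w)\cong\bigoplus\Sw(\Gamma_{uw})[2,2]$. There are two harmless slips: the sequence $0\to S_u\cap S_w\to S_u\oplus S_w\to S_u+S_w\to0$ is exact for any two subcomplexes, so no direct-summand argument is needed, and the even shift $[2,2]$ introduces no sign twist in the differential.
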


\begin{proof}
\emph{Choice of generators.}  Take the special half-edges $h_u$ and $h_w$ to be the two
half-edges of $e$.  The new half-edges $h'_u,h'_w$ of $e'$ give generators
$g_u=h'_u-h_u$ and $g_w=h'_w-h_w$ with
\[
\partial g_u=\partial g_w=e'-e=:t .
\]
Then $\Z[E][e']=\Z[E][t]$, and every cell of $\Sw(\Gamma^+)$ has a unique form
\[
t^aX,\qquad t^ag_uY,\qquad t^ag_wZ,\qquad t^ag_ug_wW,
\]
with $X\in S$, $Y\in S_u$, $Z\in S_w$ and $W\in S_u\cap S_w$.

\emph{The grading.}  Define the $t$-degree of a cell as $a$ plus the number of $g$'s.
The differential preserves it: $\partial g_u=\partial g_w=t$, and all other differentials
do not involve $t$.  Hence $\Sw(\Gamma^+)=\bigoplus_jC_j$, where $C_j$ is spanned by cells
of $t$-degree $j$.

\emph{Identifying the pieces.}
\begin{itemize}[leftmargin=*]
\item $C_0=S$.
\item $C_1=tS\oplus g_uS_u\oplus g_wS_w$, with
      $\partial(g_uY)=tY-g_u\partial Y$ and
      $\partial(g_wZ)=tZ-g_w\partial Z$, identifies with
      $\operatorname{cone}(f)[0,1]$, where
      $f\colon S_u\oplus S_w\to S$, $f(Y,Z)=Y+Z$, is the sum of the two
      inclusions.
\item For $j\ge2$, $C_j=t^jS\oplus t^{j-1}g_uS_u\oplus t^{j-1}g_wS_w\oplus t^{j-2}g_ug_w(S_u\cap S_w)$
      is the total complex of the commutative square of inclusions
      $S_u\cap S_w\to S_u,S_w\to S$.  Since these are inclusions of subcomplexes with
      $S_u\cap S_w$ as intersection, the total complex is quasi-isomorphic to
      $S/(S_u+S_w)$.
\end{itemize}

\emph{The quotient.}  The quotient $S/(S_u+S_w)$ is spanned by the cells in which both $u$
and $w$ carry generators.  The differential of those generators lands in $S_u+S_w$, so
\[
S/(S_u+S_w)\cong\bigoplus_{h\ne h_u,\ h'\ne h_w}\Sw(\Gamma_{uw})[2,2].
\]

\emph{Weights.}  In weight $n$, the piece $C_j$ involves $\Sw(\Gamma_{uw})$ in weight
$n-j-2$, so only $j\le n-2$, i.e.\ $k=n-j-2\ge0$, contributes.  The piece $C_1$ involves
$S$, $S_u$, $S_w$ in weight $n-1$.  This gives the stated formula and the long exact
sequence.
\end{proof}

\begin{theorem}[Parallel edges in top weight]\label{prop:parallel-top}
Let $\Gamma^+=\Gamma\cup e'$ be obtained by adding an edge parallel to
$e=uw$, with $u\ne w$.  For every $m\ge1$ and every commutative ring $R$,
\[
H_m(\B_m\Gamma^+;R)
\cong
H_m(\B_m\Gamma;R)
\oplus
H_{m-1}(\B_{m-1}\Gamma_{uw};R).
\]
Consequently,
\[
\Tors H_{m-1}(\B_m\Gamma^+;\Z)
\cong
\Tors H_{m-1}(\B_m\Gamma;\Z)
\oplus
\Tors H_{m-2}(\B_{m-1}\Gamma_{uw};\Z).
\]
The same statement holds primary-componentwise at every prime.
\end{theorem}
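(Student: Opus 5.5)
We restrict the decomposition of Theorem~\ref{thm:parallel} to degree $m$ and weight $m$, and show that only $C_0$ and the top piece survive. The torsion statement then follows from the universal coefficient theorem.

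\emph{Step 1: $C_0$ and the pieces $C_j$ with $j\ge2$.} The piece $C_0=S$ gives $H_m(\B_m\Gamma;R)$. For $j\ge2$, $C_j$ is quasi-isomorphic to
\[
\bigl(S/(S_u+S_w)\bigr)[0,j]\cong\bigoplus\Sw(\Gamma_{uw})[2,j+2].
\]
In weight $m$ this involves $\Sw(\Gamma_{uw})$ in weight $m-j-2$ and degree $m-2$. By Observation~\ref{obs:top} that chain group vanishes, since $m-2>m-j-2$. The same count holds over any $R$, because these are identifications of free chain complexes.

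\emph{Step 2: the piece $C_1$.} Here $C_1=\cone(f)[0,1]$ with $f\colon S_u\oplus S_w\to S$, in weight $m-1$. In degree $m$, the $tS$ part has degree-$m$ cells of weight $m-1$, which vanish. The $g_uS_u$ and $g_wS_w$ parts have degree-$(m-1)$ cells of weight $m-1$ in $S_u,S_w$; these are the top chains. So the top homology of $C_1$ is
\[
\ker\bigl(\partial\colon S_u\oplus S_w\to S\ \text{in degree }m-1\bigr)=\ker(f)\cap\ker(\partial_u\oplus\partial_w).
\]
Concretely, a cycle $g_uY+g_wZ$ needs $Y+Z=0$ in $S$ and $\partial Y=\partial Z=0$. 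Since $Y+Z=0$ forces $Y=-Z\in S_u\cap S_w=\iota\Sw(\Gamma_{uw})$, the kernel is the module of top cycles of $\Sw(\Gamma_{uw})$ in weight $m-1$. That module is $H_{m-1}(\B_{m-1}\Gamma_{uw};R)$, because nothing lies above the top degree. Equivalently, in the long exact sequence of Theorem~\ref{thm:parallel}, $H_m(\B_{m-1}\Gamma)=0$ and the next term is the kernel of $H_{m-1}(\B_{m-1}\Gamma_u)\oplus H_{m-1}(\B_{m-1}\Gamma_w)\to H_{m-1}(\B_{m-1}\Gamma)$. All three groups are submodules of chain modules, so this kernel is the intersection computed above. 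I would use the chain-level description, since it works over every $R$ without any exactness issues.

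\emph{Step 3: the torsion formula.} Summing the pieces gives the first isomorphism, naturally in $R$. For the torsion, set $R=\Z/\ell^a$ and use the split universal coefficient sequences for $\Gamma^+$, $\Gamma$ and $\Gamma_{uw}$. The integral top groups are free, and their ranks add, by the case $R=\Q$ together with $R=\Z$ freeness. We then cancel the common finite free $R$-summands, exactly as in the proof of Theorem~\ref{thm:exacttors}. This gives
\[
\Tor(H_{m-1}(\B_m\Gamma^+),\Z/\ell^a)\cong\Tor(H_{m-1}(\B_m\Gamma),\Z/\ell^a)\oplus\Tor(H_{m-2}(\B_{m-1}\Gamma_{uw}),\Z/\ell^a)
\]
for all $\ell$ and $a$. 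This identifies every elementary divisor, which yields the torsion isomorphism and its primary-componentwise form.

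The main obstacle is Step 2: checking that the cone differential's signs still give the condition $Y=-Z$, and that no degree-$m$ contribution from $tS$ survives. Handling isolated-vertex conventions in $\Gamma_{uw}$ (the arc $e$) is routine.
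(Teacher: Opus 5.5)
Your proposal is correct and follows the paper's proof: the pieces $C_{j\ge2}$ vanish in bidegree $(m,m)$ because $S/(S_u+S_w)$ is a sum of copies of $\Sw(\Gamma_{uw})[2,2]$, the piece $C_1$ contributes $H_{m-1}(\B_{m-1}\Gamma_{uw};R)$, and the torsion formula comes from cancelling common free summands in the split universal coefficient sequences over $\Z/\ell^a$. The only difference is in Step~2. The paper identifies the kernel of $H_{m-1}(\B_{m-1}\Gamma_u)\oplus H_{m-1}(\B_{m-1}\Gamma_w)\to H_{m-1}(\B_{m-1}\Gamma)$ with $H_{m-1}(S_u\cap S_w)_{m-1}$ using the sequence $0\to S_u\cap S_w\to S_u\oplus S_w\to S_u+S_w\to0$ and the vanishing of $H_m(S/(S_u+S_w))_{m-1}$. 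You instead note that $C_1$ has no chains in bidegree $(m+1,m)$ and read off its cycles directly from the basis ($Y=-Z\in S_u\cap S_w$ with $\partial Y=0$), which is shorter and makes validity over every $R$ immediate.
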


\begin{proof}
Use the notation of Theorem~\ref{thm:parallel}.  In degree $m$ and weight $m$,
the terms $C_{j\ge2}$ do not contribute: their homology groups are of the form
\[
H_{m-2}(\B_k\Gamma_{uw}),\qquad k\le m-4,
\]
and vanish because the homological degree exceeds the particle number.

The long exact sequence for $C_1$ therefore gives
\[
H_m(C_1)_m\cong
\ker\!\left(
H_{m-1}(\B_{m-1}\Gamma_u;R)\oplus
H_{m-1}(\B_{m-1}\Gamma_w;R)
\longrightarrow
H_{m-1}(\B_{m-1}\Gamma;R)
\right),
\]
since $H_m(\B_{m-1}-;R)=0$.

Put $T=S_u\cap S_w$ and $U=S_u+S_w$.  In weight $m-1$, the exact sequence
\[
0\longrightarrow T\longrightarrow S_u\oplus S_w\longrightarrow U\longrightarrow0
\]
shows that
\[
H_{m-1}(T)_{m-1}\longrightarrow
H_{m-1}(S_u\oplus S_w)_{m-1}
\]
is injective, because $H_m(U)_{m-1}=0$.  Moreover
\[
S/U\cong T[2,2]^{\oplus(d(u)-1)(d(w)-1)}
\]
and hence
\[
H_m(S/U)_{m-1}
\cong
H_{m-2}(T)_{m-3}^{\oplus(d(u)-1)(d(w)-1)}
=0.
\]
Thus $H_{m-1}(U)_{m-1}\to H_{m-1}(S)_{m-1}$ is injective, and the displayed kernel
is precisely
\[
H_{m-1}(T)_{m-1}\cong H_{m-1}(\B_{m-1}\Gamma_{uw};R).
\]
Together with $C_0=S$, this proves the first formula.

For the torsion statement, apply the first formula with
$R=\Z/\ell^a$ for every prime $\ell$ and every $a\ge1$.  The integral top homology
groups
\[
H_m(\B_m-;\Z),\qquad H_{m-1}(\B_{m-1}-;\Z)
\]
are free.  Comparing the split universal coefficient sequences and cancelling the
common free tensor summands gives
\[
\Tor\!\left(H_{m-1}(\B_m\Gamma^+;\Z),R\right)
\cong
\Tor\!\left(H_{m-1}(\B_m\Gamma;\Z),R\right)
\oplus
\Tor\!\left(H_{m-2}(\B_{m-1}\Gamma_{uw};\Z),R\right).
\]
Comparison for all $\ell$ and $a$ determines the elementary divisors and gives the
claimed torsion splitting.
\end{proof}

The index $j$ is an auxiliary algebraic grading: the variable $t=e'-e$ is a difference
of edge variables, not a count of particles on $e'$.  All following arguments use the
mapping-cone description of $C_1$.

\begin{corollary}\label{cor:parallel}
\begin{enumerate}[leftmargin=*]
\item $H_*(\B_n\Gamma)$ is a direct summand of $H_*(\B_n\Gamma^+)$.  Hence adding a parallel
      edge never kills torsion, and deleting one of several parallel edges never
      creates torsion.  This holds for every prime and every degree.
\item For every odd prime $p$,
\[
\Tors_pH_2(\B_n\Gamma^+)\cong\Tors_pH_2(\B_n\Gamma)\oplus\Tors_p\coker\Bigl(H_2(\B_{n-1}\Gamma_u)\oplus H_2(\B_{n-1}\Gamma_w)\to H_2(\B_{n-1}\Gamma)\Bigr).
\]
\item Suppose $d(u),d(w)\ge2$.  Then, in degrees $i\ge3$, $\Tors H_{i-2}(\B_k\Gamma_{uw})$ is a
      direct summand of $\Tors H_i(\B_n\Gamma^+)$ for each $k\le n-4$, with multiplicity
      $(d(u)-1)(d(w)-1)$.  In particular, if $H_2(\B_k\Gamma_{uw})$ has odd torsion for
      some $k\le n-4$, then adding $e'$ produces new odd torsion summands in
      $H_4(\B_n\Gamma^+)$.
\end{enumerate}
\end{corollary}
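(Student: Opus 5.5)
Everything here comes from Theorem~\ref{thm:parallel} together with Observation~\ref{obs:top} and Theorem~\ref{thm:KP}. Part (1) is immediate from the decomposition $\Sw(\Gamma^+)=\bigoplus_jC_j$ with $C_0=S=\Sw(\Gamma)$. This is a direct sum of chain complexes, so $H_*(\B_n\Gamma)$ is a direct summand of $H_*(\B_n\Gamma^+)$. Taking torsion subgroups, or $p$-primary parts, of a direct sum shows that adding $e'$ cannot kill torsion. Reading the inclusion in reverse shows that deleting one of several parallel edges cannot create torsion.

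\textbf{Part (2).} I will analyse the two remaining pieces in degree $2$. For $j\ge2$ the contribution is $H_0(\B_k\Gamma_{uw})$, which is free, so these pieces add no torsion. For $C_1$ I will use its long exact sequence and break $H_2(C_1)_n$ into a short exact sequence:
\[
0\to \coker\bigl(H_2(\B_{n-1}\Gamma_u)\oplus H_2(\B_{n-1}\Gamma_w)\to H_2(\B_{n-1}\Gamma)\bigr)\to H_2(C_1)_n\to K\to0,
\]
where $K=\ker\bigl(H_1(\B_{n-1}\Gamma_u)\oplus H_1(\B_{n-1}\Gamma_w)\to H_1(\B_{n-1}\Gamma)\bigr)$. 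By Theorem~\ref{thm:KP}, applied componentwise via K\"unneth for the disconnected exploded graphs, $K$ is a subgroup of a group whose torsion is $2$-primary.

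The step needing care is the passage to $p$-primary parts, because $\Tors_p$ is not exact in general. I will localize at $p$ instead. Over $\Z_{(p)}$, $K_{(p)}$ is a subgroup of a finitely generated free $\Z_{(p)}$-module, hence free. So the localized sequence splits, giving $\Tors_p H_2(C_1)_n\cong\Tors_p\coker(\cdots)$. Adding the $C_0$ summand then gives the formula in (2).

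\textbf{Part (3).} For $j\ge2$, the decomposition identifies $H_i(C_j)_n$ with a direct sum of $(d(u)-1)(d(w)-1)$ copies of $H_{i-2}(\B_k\Gamma_{uw})$, where $k=n-j-2$. Each of these is a direct summand of $H_i(\B_n\Gamma^+)$, so its torsion subgroup is a direct summand of the torsion of $H_i(\B_n\Gamma^+)$. The hypothesis $d(u),d(w)\ge2$ makes the multiplicity positive. I will check the index range against the displayed formula of Theorem~\ref{thm:parallel}, which records $k\le n-4$. Specializing to $i=4$ shows that odd torsion in $H_2(\B_k\Gamma_{uw})$ yields odd torsion in $H_4(\B_n\Gamma^+)$ that is new, in the sense that it lives in a summand outside $C_0$.
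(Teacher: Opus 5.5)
Your proposal is correct and follows the paper's proof: (1) and (3) are read off directly from the decomposition in Theorem~\ref{thm:parallel}. For (2) you argue as the paper does: the pieces $C_{j\ge2}$ contribute only free $H_0$'s, and the term after $H_2(C_1)_n$ in the long exact sequence is a subgroup of $H_1\oplus H_1$ with no odd torsion, so the sequence splits after localizing at $p$. You simply spell out the short exact sequence and why $K_{(p)}$ is free over $\Z_{(p)}$.
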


\begin{proof}
Part (1) is immediate from Theorem~\ref{thm:parallel}.  For (2): the summands coming
from $C_{j\ge2}$ contribute $H_0$'s to $H_2$, which are free.  In the long exact sequence
for $C_1$, the term following $H_2(C_1)$ is a subgroup of $H_1\oplus H_1$, which has no
odd torsion by Theorem~\ref{thm:KP}.  So after localizing at $p$ the sequence splits.
Part (3) is read off from Theorem~\ref{thm:parallel}.
\end{proof}

\begin{proposition}[The obstruction in top particle weight]\label{prop:Dn}
With the notation of Theorem~\ref{thm:parallel}, put
\[
T=S_u\cap S_w,\qquad U=S_u+S_w,\qquad
\mu=(d(u)-1)(d(w)-1).
\]
For $n\ge0$ define
\[
D_n=
\coker\!\left(
H_n(\B_n\Gamma_u)\oplus H_n(\B_n\Gamma_w)
\longrightarrow H_n(\B_n\Gamma)
\right).
\]
Then there is a short exact sequence
\[
0\longrightarrow E_n\longrightarrow D_n\longrightarrow L_n\longrightarrow0,
\]
where
\[
E_n=
\ker\!\left(
H_{n-1}(\B_n\Gamma_{uw})
\longrightarrow
H_{n-1}(\B_n\Gamma_u)\oplus H_{n-1}(\B_n\Gamma_w)
\right)
\]
and
\[
L_n\subseteq
H_{n-2}(\B_{n-2}\Gamma_{uw})^{\oplus\mu}.
\]
In particular $L_n$ is free abelian and
\[
\Tors D_n\cong\Tors E_n.
\]
\end{proposition}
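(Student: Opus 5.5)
The plan is to factor the map defining $D_n$ through the homology of $U=S_u+S_w$ and to use two short exact sequences of complexes. As in the proof of Theorem~\ref{prop:parallel-top}, these are the Mayer--Vietoris sequence
\[
0\to T\to S_u\oplus S_w\to U\to0
\]
and the pair sequence
\[
0\to U\to S\to S/U\to0 .
\]
Throughout we work in weight $n$ and identify $S$, $S_u$, $S_w$, $T$ with the reduced complexes of $\Gamma$, $\Gamma_u$, $\Gamma_w$, $\Gamma_{uw}$ via $\iota$. The map $H_n(S_u)_n\oplus H_n(S_w)_n\to H_n(S)_n$ is the composite
\[
H_n(S_u\oplus S_w)_n\xrightarrow{\ \alpha\ }H_n(U)_n\xrightarrow{\ \beta\ }H_n(S)_n .
\]
Hence there is a tautological exact sequence
\[
0\to\coker\alpha\to\coker(\beta\alpha)=D_n\to\coker\beta\to0,
\]
provided $\beta$ is injective. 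So the proof reduces to three computations.

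First, I will show that $\beta$ is injective and identify $\coker\beta$. Theorem~\ref{thm:parallel} gives $S/U\cong T[2,2]^{\oplus\mu}$. Thus
\[
H_{n+1}(S/U)_n\cong H_{n-1}(T)_{n-2}^{\oplus\mu}=0
\]
by Observation~\ref{obs:top}, since the degree exceeds the weight. The long exact sequence of the pair then shows that $\beta$ is injective. It also shows that
\[
L_n:=\coker\beta\ \hookrightarrow\ H_n(S/U)_n\cong H_{n-2}(T)_{n-2}^{\oplus\mu}=H_{n-2}(\B_{n-2}\Gamma_{uw})^{\oplus\mu}.
\]
This target is top homology, hence free by Observation~\ref{obs:top}, so $L_n$ is free abelian as a subgroup of a free group.

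Second, I will identify $\coker\alpha$. The Mayer--Vietoris long exact sequence in weight $n$ contains the segment
\[
H_n(S_u\oplus S_w)_n\xrightarrow{\alpha}H_n(U)_n\xrightarrow{\partial}H_{n-1}(T)_n\to H_{n-1}(S_u\oplus S_w)_n .
\]
Exactness gives $\coker\alpha\cong\operatorname{im}\partial$. By exactness again, $\operatorname{im}\partial$ is the kernel of $H_{n-1}(\B_n\Gamma_{uw})\to H_{n-1}(\B_n\Gamma_u)\oplus H_{n-1}(\B_n\Gamma_w)$, which is $E_n$. One routine point must be checked here: under the identifications $\iota$, the map $T\to S_u\oplus S_w$ is, up to the sign in the Mayer--Vietoris convention, the pair of maps induced by the inclusions of exploded graphs. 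A sign does not change kernels.

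Finally, since $L_n$ is free, the sequence $0\to E_n\to D_n\to L_n\to0$ splits. Therefore $\Tors D_n\cong\Tors E_n$.

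The only step needing real care is the weight and degree bookkeeping for $S/U$, namely the shift $[2,2]$ and the vanishing of $H_{n+1}(S/U)_n$. This bookkeeping is what makes $\beta$ injective and forces $L_n$ to land in free top homology. I would verify it directly from the description of $S/U$ in the proof of Theorem~\ref{thm:parallel}: each of its cells carries generators at both $u$ and $w$, which account for degree $2$ and weight $2$.
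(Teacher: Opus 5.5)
Your proposal is correct and matches the paper's proof step for step. You factor the map through $H_n(U)_n$, use $H_{n+1}(S/U)_n=0$ to make $H_n(U)_n\to H_n(S)_n$ injective, identify the first cokernel with $E_n$ via the Mayer--Vietoris connecting map, and embed the second cokernel in the free top homology $H_n(S/U)_n\cong H_{n-2}(\B_{n-2}\Gamma_{uw})^{\oplus\mu}$, which makes the sequence split.
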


\begin{proof}
The exact sequence
\[
0\longrightarrow T\xrightarrow{(1,-1)}
S_u\oplus S_w\longrightarrow U\longrightarrow0
\]
gives, in weight $n$,
\[
H_n(U)_n\big/
\operatorname{im}\!\left(
H_n(S_u)_n\oplus H_n(S_w)_n
\right)
\cong E_n.
\]
Indeed, this quotient is the image of the connecting map to
$H_{n-1}(T)_n$, hence the displayed kernel.

Next write
\[
Q=S/U\cong T[2,2]^{\oplus\mu}.
\]
Then
\[
H_{n+1}(Q)_n
\cong H_{n-1}(\B_{n-2}\Gamma_{uw})^{\oplus\mu}=0,
\]
because the homological degree exceeds the particle number.  Thus
$H_n(U)_n\to H_n(S)_n$ is injective.  Quotienting by the common image of
$H_n(S_u)_n\oplus H_n(S_w)_n$ gives
\[
0\longrightarrow E_n\longrightarrow D_n\longrightarrow L_n\longrightarrow0,
\]
where $L_n$ is the image of $D_n$ in
\[
H_n(Q)_n
\cong H_{n-2}(\B_{n-2}\Gamma_{uw})^{\oplus\mu}.
\]
The latter is top homology and hence free.  Therefore $L_n$ is free, the
short exact sequence splits as abelian groups, and
$\Tors D_n\cong\Tors E_n$.
\end{proof}

\begin{corollary}[Parallel edges with at most three particles]\label{cor:parallel3}
For every odd prime $p$ and every $0\le n\le3$,
\[
\Tors_pH_2(\B_n\Gamma^+)\cong\Tors_pH_2(\B_n\Gamma).
\]
Thus adding or deleting parallel edges does not change odd torsion in $H_2(\B_3\Gamma)$.
\end{corollary}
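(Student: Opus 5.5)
The plan is to read the statement off Corollary~\ref{cor:parallel}(2) and then bound the cokernel appearing there with Proposition~\ref{prop:Dn}. Fix an odd prime $p$ and $0\le n\le3$. By Corollary~\ref{cor:parallel}(2),
\[
\Tors_pH_2(\B_n\Gamma^+)\cong\Tors_pH_2(\B_n\Gamma)\oplus\Tors_p\coker\Bigl(H_2(\B_{n-1}\Gamma_u)\oplus H_2(\B_{n-1}\Gamma_w)\to H_2(\B_{n-1}\Gamma)\Bigr).
\]
So it suffices to show that the last cokernel has no $p$-torsion.

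\emph{Case $n\le2$.} Here $n-1\le1$, so $H_2(\B_{n-1}\Gamma)=0$ by Observation~\ref{obs:top}, since the degree exceeds the particle number. The cokernel is therefore zero. (For $n=0$ the conventions on negative weight make the extra term vanish as well.)

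\emph{Case $n=3$.} Now the cokernel is exactly the group $D_2$ of Proposition~\ref{prop:Dn}, namely top homology of $\B_2$ modulo the images from the two explosions. That proposition gives $\Tors D_2\cong\Tors E_2$, where
\[
E_2=\ker\bigl(H_1(\B_2\Gamma_{uw})\to H_1(\B_2\Gamma_u)\oplus H_2(\B_2\Gamma_w)\bigr)\subseteq H_1(\B_2\Gamma_{uw}).
\]
Taking $p$-primary parts, $\Tors_pD_2\cong\Tors_pE_2$, and this is a subgroup of $\Tors_pH_1(\B_2\Gamma_{uw})$. By Theorem~\ref{thm:KP}, which extends to the possibly disconnected graph $\Gamma_{uw}$ by K\"unneth as noted after its statement, the torsion of $H_1$ is all $2$-torsion. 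Hence $\Tors_pH_1(\B_2\Gamma_{uw})=0$ for odd $p$, so $\Tors_pD_2=0$.

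The final sentence of the statement follows by applying this to each parallel edge in turn, added or deleted.

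The only step needing care is the identification of the cokernel with $D_2$ and the use of $\Tors D_n\cong\Tors E_n$. This relies on the free quotient $L_2$ in Proposition~\ref{prop:Dn}. That is the point where the restriction to $n\le3$ enters: for larger $n$, the analogous term involves $H_{n-1}(\B_{n-1}\cdot)$ with $n-1\ge3$, and its $E$-part lives in $H_{n-2}$ of degree $\ge2$, where odd torsion can occur.
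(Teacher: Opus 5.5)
Your proof is correct, but it takes a longer route than the paper. The paper applies Theorem~\ref{prop:parallel-top} with $m=3$ directly. That gives the exact splitting $\Tors H_2(\B_3\Gamma^+)\cong\Tors H_2(\B_3\Gamma)\oplus\Tors H_1(\B_2\Gamma_{uw})$ at every prime, and Theorem~\ref{thm:KP} removes the second summand for odd $p$. For $n\le2$ the paper notes that both sides are torsion-free by Observation~\ref{obs:top}.

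You instead start from the odd-prime splitting of Corollary~\ref{cor:parallel}(2), identify its cokernel with $D_2$, and use Proposition~\ref{prop:Dn} with $n=2$ to embed $\Tors D_2\cong\Tors E_2$ into $\Tors H_1(\B_2\Gamma_{uw})$. Both arguments end with the same application of Ko--Park to the exploded graph $\Gamma_{uw}$. The difference is in what they deliver. Your chain only bounds the extra torsion by a subgroup, and only at odd primes, since Corollary~\ref{cor:parallel}(2) needs $p$ odd to split its long exact sequence. The paper's top-weight splitting identifies the extra torsion exactly at all primes, and is the shorter citation. Your route does have one advantage: it works with the cokernel that Question~\ref{q:parallel} asks about.

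Two small corrections. First, in your definition of $E_2$ the target should be $H_1(\B_2\Gamma_u)\oplus H_1(\B_2\Gamma_w)$, not $H_2(\B_2\Gamma_w)$. Second, your closing explanation of why $n\le3$ is needed is misdirected. For $n\ge4$, the cokernel in Corollary~\ref{cor:parallel}(2) is built from $H_2(\B_{n-1})$ with $n-1\ge3$ particles. That is below the top degree, so Proposition~\ref{prop:Dn} does not apply at all. The groups $H_{n-1}(\B_{n-1})$ you mention govern torsion in $H_{n-1}(\B_n\Gamma^+)$, which is a different question from torsion in $H_2$.
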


\begin{proof}
For $n\le2$, both groups are zero by Observation~\ref{obs:top}.  For $n=3$,
Theorem~\ref{prop:parallel-top} identifies the extra torsion with that of
$H_1(\B_2\Gamma_{uw})$, which has no odd torsion by Theorem~\ref{thm:KP}.
Iteration proves the assertion
for any number of parallel edges.
\end{proof}

For four or more particles, preservation of odd torsion in $H_2$ remains open
(Question~\ref{q:parallel}). By Corollary~\ref{cor:parallel}(2), the obstruction is
exactly the odd torsion of the cokernel appearing there.

\begin{example}[Loops and parallel edges can add torsion]\label{ex:loopfail}
Attach an edge $xv$ at a vertex of a graph $\Delta$, where $v$ is new, and call this
graph $\Gamma$.  Add a loop at the univalent vertex $v$ to obtain $\Gamma''$.  Since
$\Gamma_v\cong\Gamma$, Theorem~\ref{cor:loop} and the top-minus-one pendant invariance
give
\begin{align*}
\Delta=K_{3,3}:&\quad \Tors_2H_2(\B_3\Gamma'')\cong\Tors_2H_2(\B_3\Gamma)\oplus\Z/2,\\
\Delta=K_{4,5}:&\quad \Tors_3H_3(\B_4\Gamma'')\cong\Tors_3H_3(\B_4\Gamma)\oplus\Z/3 .
\end{align*}
In each case only the highest-weight added term can have the indicated torsion.  The
same examples arise from parallel-edge attachment: start with $\Delta\cup(x-v-w)$ and
double the edge $vw$.  The graph before attachment is homeomorphic to $\Gamma$, and the
graph afterwards to $\Gamma''$.  Thus $2$-primary invariance in
Corollary~\ref{cor:parallel3}, and invariance of odd torsion in all homological degrees,
would both be false.  All three attachment operations nevertheless preserve existing
torsion as a direct summand, by Lemma~\ref{lem:pendant}, Theorem~\ref{cor:loop}, and
Corollary~\ref{cor:parallel}(1).
\end{example}

\section{Minimality}\label{sec:minimal}

This section proves Theorem~C.  The question is whether any graph smaller than
$K_{m+1,m+q-1}$ carries an element of order $q$ in $H_{m-1}(\B_m)$, and the answer
is given for three successively larger notions of ``smaller'', each defined at the
beginning of its subsection: the subgraph relation $\subseteq$
(Section~\ref{sec:subgraphs}), the order $\preceq$ of homeomorphism types of
subgraphs of subdivisions (Section~\ref{sec:topminors}), and the full minor order
$\le$, in which edges may also be contracted and the resulting loops and parallel
edges are retained (Section~\ref{sec:minororder}); thus
$\Gamma\subseteq K\Rightarrow\Gamma\preceq K\Rightarrow\Gamma\le K$.  For subgraphs, extension by zero embeds top homology, and the mod-$q$ top
class of $K_{m+1,m+q-1}$ is nonzero on every matching, so it cannot come from a
proper subgraph.  For $\preceq$, a subgraph of a subdivision is a subgraph of
$K_{m+1,m+q-1}$ with pendant edges, arcs and points attached, and
Theorem~\ref{cor:topminusone} removes these without changing the torsion in
bidegree $(m-1,m)$.  For the minor order, the contractions of a complete bipartite
graph form a small family whose simple cores are joins $K_t+K_{s,s'}$; their top
homology is computed directly, and the loops and parallel edges created by
contraction are controlled by the splittings of Theorems~\ref{cor:loop}
and~\ref{prop:parallel-top}.  Section~\ref{sec:complete} treats $2$-torsion, where
the complete graphs $K_{2m+1}$ provide a second family of minimal graphs beside
$K_{m+1,m+1}$.

\subsection{Prime-power torsion and proper subgraphs}\label{sec:subgraphs}
For graphs $C$ and $\Gamma$ we write $C\subseteq\Gamma$ if $C$ is a subgraph of
$\Gamma$, that is, $C$ is obtained from $\Gamma$ by deleting vertices and edges, and
$C\subsetneq\Gamma$ if moreover $C\ne\Gamma$.  In this subsection we show that no
proper subgraph of $K_{m+1,m+q-1}$ has an element of order $q$ in $H_{m-1}(\B_m)$.
The tool is extension by zero, which embeds the top homology of a subgraph into that
of the ambient graph.

\begin{lemma}\label{lem:extend}
Let $\Gamma$ be a simple graph and $C\subseteq\Gamma$ a subgraph.  Extension by zero
defines an injective map
\[
H_m(\B_mC;R)\hookrightarrow H_m(\B_m\Gamma;R)
\]
for every $m\ge1$ and every commutative ring $R$.
\end{lemma}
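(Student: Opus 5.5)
The plan is to work at the level of chains in the top bidegree, where homology is simply the cycle module by Observation~\ref{obs:top}. The \'Swi\k{a}tkowski complex $S(C)$ sits naturally inside $S(\Gamma)$: a cell of $C$ (edge variables from $E(C)$, vertex states and half-edges at vertices of $C$) is also a cell of $\Gamma$. Since isolated vertices cannot be occupied in degree and weight $m$, we may discard them. We then use the unreduced complex, or equivalently the matching-function description of Proposition~\ref{prop:Zcycles}, so that no choice of special half-edges has to be compatible between $C$ and $\Gamma$. The map is then defined as follows: a top cycle of $C$, viewed as a function $F$ on ordered oriented $m$-matchings of $C$, is extended by zero to the $m$-matchings of $\Gamma$ that use an edge outside $C$.

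The main step is to check that the extended function still satisfies the conditions of Proposition~\ref{prop:Zcycles}. Alternation and oddness under reversal are clear. For \eqref{eq:Z} in $\Gamma$, fix an ordered $(m-1)$-matching $M'$ of $\Gamma$ and a vertex $v\notin V(M')$. There are two cases.
\begin{enumerate}[leftmargin=*]
\item If $M'$ uses an edge outside $C$, or $v\notin V(C)$, then every term vanishes by definition of the extension.
\item Otherwise $M'\subseteq C$ and $v\in V(C)$. The nonzero terms are exactly those $x\in N_C(v)\setminus V(M')$, because terms with $vx\notin E(C)$ are zero. The sum is therefore the relation \eqref{eq:Z} for $F$ in $C$, which holds.
\end{enumerate}
Hence the extension is a top cycle of $\Gamma$.

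Equivalently, at the chain level the differential \eqref{eq:dtop} of $\Gamma$ applied to a chain supported on $C$ only produces edge variables of $C$. The one point to verify carefully is the sum-zero condition in each slot. In the unreduced chain model this condition is imposed on $\Gamma$'s vertex factors, where the set $E_\Gamma(v)$ may be larger than $E_C(v)$. The extension by zero still has slot-sum zero, because the added entries are zero. This is really the only subtlety, and the matching-function formulation handles it automatically. We also need that the map is a well-defined homomorphism on homology; this is automatic since $H_m$ equals cycles on both sides.

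Injectivity is then immediate. Extension by zero is injective on functions, and there are no boundaries in degree $m$, so $H_m=\ker\partial_m$ on both sides and a nonzero cycle stays nonzero. Naturality is not required.

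Two points remain to address. First, $C$ need not have every vertex nonisolated in $\Gamma$, but we have already dropped the isolated vertices of $C$. Second, Proposition~\ref{prop:Zcycles} holds for arbitrary $R$, which gives the statement for every commutative ring. I expect no real obstacle; the only care needed is matching the slot-sum conditions of the two vertex factors, which the function model resolves.
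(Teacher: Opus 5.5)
Your proof is correct and is essentially the paper's own argument. You extend the matching function of Proposition~\ref{prop:Zcycles} by zero, verify \eqref{eq:Z} in $\Gamma$ by the same two cases (all terms vanish if $M'\not\subset C$ or $v\notin V(C)$; otherwise the sum reduces to $x\in N_C(v)\setminus V(M')$), and get injectivity from injectivity on functions, since $H_m$ is the cycle module.
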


\begin{proof}
Use Proposition~\ref{prop:Zcycles}.  Let $F$ be a matching function on $C$ satisfying
\eqref{eq:Z} for $C$, and extend it by zero to the ordered oriented $m$-matchings of
$\Gamma$.  The extension is alternating and odd under reversal.  Consider \eqref{eq:Z}
for $\Gamma$ at $(M',v)$.  If $M'\not\subset C$ or $v\notin V(C)$, every term vanishes.
Otherwise the terms with $vx\notin E(C)$ vanish, and the remaining sum runs over
$x\in N_C(v)\setminus V(M')$, which is \eqref{eq:Z} for $C$.  So the extension is a top
cycle of $\Gamma$, and the map is injective because it is injective on functions.
\end{proof}

\begin{theorem}\label{thm:subgraph}
Let $q=p^a$ with $p$ prime and $a\ge1$, let $m\ge q$, and put $N=m+q-1$ and
$K=K_{m+1,N}$.
\begin{enumerate}[leftmargin=*]
\item The $p$-primary part of $H_{m-1}(\B_mK;\Z)$ has exactly one cyclic summand of order
      $q$, and all its other cyclic summands have order less than $q$.
\item For every proper subgraph $C$ of $K$, the exponent of $H_m(\B_mC;\Z/q)$ is less
      than $q$.  Also $H_m(\B_mC;\Z)=0$.  Consequently $H_{m-1}(\B_mC;\Z)$ contains no
      element of order $q$.
\end{enumerate}
\end{theorem}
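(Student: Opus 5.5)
Part (1) is read off from Theorem~\ref{thm:exacttors} with $r=q$: the diagonal entries of $U_{q-1}(N)$ are $d_i=q-i$, $0\le i\le q-1$, and only $d_0=q$ has $p$-adic valuation $a$. It occurs with multiplicity $\nu_0=1$, while $v_p(q-i)=v_p(i)\le a-1$ for $1\le i\le q-1$. So nothing new is needed here.

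For (2), let $C\subsetneq K$. The vanishing $H_m(\B_mC;\Z)=0$ is immediate. By Lemma~\ref{lem:extend} with $R=\Z$, $H_m(\B_mC;\Z)$ embeds in $H_m(\B_mK;\Z)$, and the latter is $0$ by Corollary~\ref{cor:allprimes}(1), since $m\ge q=r$. The universal coefficient theorem then gives $H_m(\B_mC;\Z/q)\cong\Tor(H_{m-1}(\B_mC;\Z),\Z/q)$. This group has an element of order $q$ exactly when $H_{m-1}(\B_mC;\Z)$ has a cyclic summand of order divisible by $q$, i.e.\ an element of order $q$. Hence the last assertion reduces to the exponent bound on $H_m(\B_mC;\Z/q)$.

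For the exponent bound, I would again embed $H_m(\B_mC;\Z/q)$ into $H_m(\B_mK;\Z/q)\cong\ker(U_{q-1}\otimes\Z/q)$ by Lemma~\ref{lem:extend} and Theorem~\ref{thm:top}. The diagonal form shows
\[
\ker(U_{q-1}\otimes\Z/q)\cong\Z/q\oplus\bigoplus_{i\ge1}(\Z/q)[q-i]^{\nu_i},
\]
where every summand except the first has exponent $<q$. So an element $h$ of order $q$ must have a nonzero image in the quotient killed by $p^{a-1}$. I would identify this quotient map directly. Working modulo the subgroup of exponent $<q$, $p^{a-1}h$ is a nonzero multiple of $p^{a-1}\mathbf 1$, because the elements of order dividing $p$ killed by $p^{a-1}\cdot(\text{small part})$ vanish. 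Equivalently, $p^{a-1}h\in\ker(U_{q-1}\otimes\Z/q)[p]$ lies outside $p^{a-1}$ times the small part.

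The cleanest route is the pairing: the functional $\lambda$ of Lemma~\ref{lem:pairing}, dualized, should show that $h$ is congruent to a unit multiple of $\mathbf 1$ modulo $p$. Concretely, $p^{a-1}h$ lies in the $p$-torsion, which mod $p$ corresponds to $\ker(U_{q-1}\otimes\F_p)$. I would use Wilson's rank formula, as in Lemma~\ref{lem:kerU}(3), to show that this is spanned by the constants up to the lower-order summands. That gives $h\equiv u\mathbf 1$ on every $(q-1)$-subset $\beta$, with $u$ a unit mod $p$.

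Consequently the matching function $(-1)^{p(\alpha)}h(\beta)$ of $h$ is nonzero mod $p$ on every $m$-matching of $K$. But an extension by zero from a proper subgraph $C$ vanishes on some matching. Indeed, if $C$ misses an edge, some matching uses it; if $C$ misses a vertex, some matching covers it, or, when the missing vertex is in $A$, we use a missing edge at it. This contradiction yields the bound.

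The main obstacle is this middle step: showing that an order-$q$ class in $\ker(U_{q-1}\otimes\Z/q)$ is everywhere a unit mod $p$. I would try to prove that $\ker(U_{q-1}\otimes\F_p)$ intersected with the reduction of $p^{a-1}$-divisible classes is the line of constants. Failing that, I would argue directly with the diagonal basis: the $\Z/q$ summand is generated by a vector whose reduction mod $p$ is $\mathbf 1$, since $U_{q-1}\mathbf 1=q\mathbf 1$ realizes it.
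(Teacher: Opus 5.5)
Your skeleton matches the paper's, and those parts are correct:
\begin{itemize}
\item part (1) read off from Theorem~\ref{thm:exacttors} with $r=q$;
\item $H_m(\B_mC;\Z)=0$ by extension by zero into $H_m(\B_mK;\Z)=0$;
\item the universal coefficient reduction of the last assertion to an exponent bound;
\item the final contradiction from an $m$-matching through an edge of $K$ missing from $C$.
\end{itemize}

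The gap is the step you yourself call the main obstacle. You need that every element of order $q$ in $G=H_m(\B_mK;\Z/q)\cong\ker(U_{q-1}\otimes\Z/q)$ is congruent mod $p$, at every $(q-1)$-subset, to a unit multiple of $\mathbf 1$. The tool you propose fails once $a\ge2$. Wilson's rank formula over $\F_p$ describes $\ker(U_{q-1}\otimes\F_p)\cong G[p]$. This has dimension $\sum_{0\le i<q,\ p\mid i}\nu_i>1$; for $q=4$ it is $1+\binom N2-N$. So it is not the line of constants, and nothing about it can force an arbitrary order-$q$ element to be pointwise constant mod $p$. The ``dualized'' pairing is never specified. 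Your fallback about a generator of the $\Z/q$ summand still leaves open why the complementary summands cannot change the values mod $p$.

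The missing idea is to look at $p^{a-1}G$ rather than $G[p]$, and then read off coordinates.
\begin{itemize}
\item Since $G\cong\Z/q\oplus H$ with $p^{a-1}H=0$, the group $p^{a-1}G$ is cyclic of order $p$.
\item It contains $p^{a-1}\mathbf 1\neq0$, because $\mathbf 1$ has all entries equal to $1$.
\item Hence an element $x\in G$ of order $q$ satisfies $p^{a-1}x=c\,p^{a-1}\mathbf 1$ with $c\not\equiv0\pmod p$. This is essentially your own sentence.
\item That already finishes the step. In each coordinate $p^{a-1}\bigl(x(\beta)-c\bigr)=0$ in $\Z/p^a$. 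The elements of $\Z/p^a$ killed by $p^{a-1}$ are exactly those of $p\Z/p^a$, so $x(\beta)\equiv c\pmod p$ for every $\beta$.
\end{itemize}
The paper says the same thing as $x=c\mathbf 1+y$ with $p^{a-1}y=0$, so every value $y(M)$ lies in $p\Z/q$.

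With this in place your argument is complete. For $a=1$ your Wilson-rank route via Lemma~\ref{lem:kerU}(3) already suffices, since then $G$ itself is the line of constants.
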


\begin{proof}
(1) By Theorem~\ref{thm:exacttors} with $r=q$, the $p$-primary part is
$\bigoplus_{i=0}^{q-1}\bigl(\Z/p^{v_p(q-i)}\bigr)^{\binom Ni-\binom N{i-1}}$.  For $i=0$ the
exponent is $a$ and the multiplicity is $1$; for $1\le i\le q-1$ we have
$v_p(q-i)=v_p(i)\le a-1$.

(2) Let $G=H_m(\B_mK;\Z/q)$.  By Theorem~\ref{thm:top}, Lemma~\ref{lem:kerU}(1) and the
proof of Theorem~\ref{thm:exacttors},
\[
G\cong\ker(U_{q-1}\otimes\Z/q)\cong\Tor(\coker U_{q-1},\Z/q)\cong\Z/q\oplus H,
\]
where, since $\Tor(\Z/d,\Z/q)\cong\Z/\gcd(d,q)$, the group $H$ is a direct sum of cyclic
groups of order $p^{v_p(i)}$ with $1\le i\le q-1$; hence $p^{a-1}H=0$.

The constant function $\mathbf 1$ lies in $G$ (Lemma~\ref{lem:kerU}(2)) and has order
$q$, since it is a vector with entries $1$ in $(\Z/q)^{\binom{[N]}{q-1}}$.  Write
$\mathbf 1=(u,h)$ in $\Z/q\oplus H$; then $p^{a-1}\mathbf 1=(p^{a-1}u,0)\ne0$, so $u$ is a
unit modulo $p$.  Now let $x=(u',h')\in G$ have order $q$.  Again $u'$ is a unit, and
$y:=x-u'u^{-1}\mathbf 1\in H$ satisfies $p^{a-1}y=0$.  Thus
\begin{equation}\label{eq:orderq}
x=c\,\mathbf 1+y,\qquad c\in(\Z/q)^\times,\quad p^{a-1}y=0 .
\end{equation}
In terms of matching functions, $\mathbf 1$ is $F_0(M)=(-1)^{p(\alpha(M))}$, and $y(M)$
lies in $p\Z/q$ for every $m$-matching $M$, because $p^{a-1}y(M)=0$ in $\Z/q$.  Hence
$x(M)\equiv\pm c\not\equiv0\pmod p$: \emph{every element of order $q$ in $G$ is nonzero
on every $m$-matching of $K$.}

Let $C\subsetneq K$ and let $e$ be an edge of $K$ not in $C$.  The edge $e$ lies in
some $m$-matching $M$ of $K$: match its $A$-endpoint to its $B$-endpoint and $m-1$ of
the remaining $m$ vertices of $A$ into the remaining $m+q-2\ge m-1$ vertices of $B$.
By Lemma~\ref{lem:extend}, $H_m(\B_mC;\Z/q)$ embeds in $G$, and every element in its
image vanishes on $M$.  So the image contains no element of order $q$, and
$H_m(\B_mC;\Z/q)$ has exponent less than $q$.

Finally, $H_m(\B_mC;\Z)$ embeds in $H_m(\B_mK;\Z)=0$ by Lemma~\ref{lem:extend}, so the
universal coefficient theorem gives $H_m(\B_mC;\Z/q)\cong\Tor(H_{m-1}(\B_mC;\Z),\Z/q)$.
Since $\Tor(\Z/p^j,\Z/q)\cong\Z/p^{\min(j,a)}$, the exponent of this group is less than
$q$ exactly when $H_{m-1}(\B_mC;\Z)$ has no $p$-primary cyclic summand of order $\ge q$,
i.e.\ no element of order $q$.
\end{proof}

\begin{corollary}\label{cor:subgraphmin}
For every prime power $q$ and every $m\ge q$, the graph $K_{m+1,m+q-1}$ is minimal among
its subgraphs with respect to elements of order $q$ in $H_{m-1}(\B_m)$: it has exactly
one $p$-primary cyclic summand of order $q$, and no proper subgraph has an element of
order $q$.
\end{corollary}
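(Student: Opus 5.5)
This corollary is a direct repackaging of Theorem~\ref{thm:subgraph}, so the plan is to read off its two parts. Fix a prime power $q=p^a$ and $m\ge q$, and put $K=K_{m+1,m+q-1}$.

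First, I would apply Theorem~\ref{thm:subgraph}(1). It says that the $p$-primary part of $H_{m-1}(\B_mK;\Z)$ has exactly one cyclic summand of order $q$, with all other $p$-primary summands of order less than $q$. This gives the first clause of the corollary. In particular $K$ itself has an element of order $q$, namely a generator of that summand; Theorem~\ref{thm:explicitgen} even names one, $[z_m^{(q)}]$.

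Second, for minimality, I would let $C\subsetneq K$ and invoke Theorem~\ref{thm:subgraph}(2). It states that $H_{m-1}(\B_mC;\Z)$ has no element of order $q$. The only point to check is that an arbitrary proper subgraph (one with deleted vertices, deleted edges, or isolated vertices) is covered by that theorem. Its proof handles this: a proper subgraph misses at least one edge of $K$, since deleting a vertex deletes its incident edges, and $K$ has no isolated vertices. Lemma~\ref{lem:extend} applies to any subgraph, with isolated vertices irrelevant in top degree. Together these give the absence of order-$q$ elements for every $C\subsetneq K$.

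Combining the two parts, $K$ carries an element of order $q$ in $H_{m-1}(\B_m)$ while no proper subgraph does. This is exactly minimality among subgraphs. There is no real obstacle here; the only care needed is the observation that every proper subgraph omits some edge of $K$, and that edge extends to an $m$-matching. This was already established inside the proof of Theorem~\ref{thm:subgraph}, using $m+q-2\ge m-1$.
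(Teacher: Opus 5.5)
Your proposal is correct and matches the paper: the corollary is simply Theorem~\ref{thm:subgraph} restated, with part~(1) giving the unique $p$-primary summand of order $q$ and part~(2) excluding elements of order $q$ for every proper subgraph. Your remark that every proper subgraph omits some edge of $K$ (because $K$ has no isolated vertices) is exactly the step the paper uses implicitly when it chooses an edge $e\notin C$ in that proof.
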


\begin{remark}\label{rem:primepower}
\begin{enumerate}[leftmargin=*]
\item For $a=1$ the group $H$ in the proof is zero, so $H_m(\B_mK;\F_p)\cong\F_p$.  This
      reproves Lemma~\ref{lem:kerU}(3) from Wilson's diagonal form instead of his rank
      formula.
\item The restriction to prime powers is necessary.  For $q=6$, the proper subgraph
      $K_{m+1,m+4}$ of $K_{m+1,m+5}$ already has cyclic summands $\Z/2$ and $\Z/3$ in
      $H_{m-1}(\B_m)$ (Theorem~\ref{thm:exacttors} with $r=5$), hence elements of order
      $6$ and a direct summand $\Z/6\cong\Z/2\oplus\Z/3$; see
      Corollaries~\ref{cor:orderk} and~\ref{cor:orderk-minimal}.
\end{enumerate}
\end{remark}

\subsection{Topological minors}\label{sec:topminors}
For graphs $\Gamma$ and $K$, write $\Gamma\preceq K$ if $\Gamma$ is homeomorphic to a
subgraph of a subdivision of $K$, and $\Gamma\prec K$ if moreover $\Gamma$ is not
homeomorphic to $K$.  Since graph braid group homology depends only on the
homeomorphism type, this is the natural order for our purposes; it contains the
subgraph relation $\subseteq$ of Section~\ref{sec:subgraphs}, and
Remark~\ref{rem:topminor} compares it with the usual topological minor order.  The
passage from $\subseteq$ to $\preceq$ rests on the shape of a subgraph of a
subdivision (Lemma~\ref{lem:shape}) and on the pendant-edge invariance of
Theorem~\ref{cor:topminusone}.  We begin with a K\"unneth observation used for
disconnected graphs.

\begin{lemma}\label{lem:kunneth}
Let $p$ be an odd prime and $\Gamma=\Gamma_1\sqcup\Gamma_2$.  Then $H_2(\B_n\Gamma)$ has
$p$-torsion if and only if some $H_2(\B_a\Gamma_i)$ with $a\le n$ has $p$-torsion and
$\UConf_{n-a}(\Gamma_{3-i})\ne\varnothing$.
\end{lemma}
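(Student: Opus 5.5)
The plan is to use the Künneth decomposition of the configuration space of a disjoint union. We have
\[
\UConf_n(\Gamma_1\sqcup\Gamma_2)=\bigsqcup_{a+b=n}\UConf_a(\Gamma_1)\times\UConf_b(\Gamma_2).
\]
Each summand's homology is given by the integral Künneth formula:
\[
H_2(X\times Y)\cong\bigoplus_{i+j=2}H_i(X)\otimes H_j(Y)\ \oplus\bigoplus_{i+j=1}\Tor(H_i(X),H_j(Y)).
\]
So $H_2(\B_n\Gamma)$ is a direct sum over $a+b=n$ of these pieces, with $X=\UConf_a\Gamma_1$ and $Y=\UConf_b\Gamma_2$. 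It suffices to locate the $p$-torsion in each piece.

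\textbf{Term-by-term analysis.} For $p$ odd:
\begin{itemize}
\item $H_0$ of a nonempty space is free and nonzero.
\item The torsion of every $H_1$ is $2$-primary by Theorem~\ref{thm:KP}; for disconnected factors this follows from the connected case by Künneth, as noted after that theorem.
\item Hence $H_1(X)\otimes H_1(Y)$ has no $p$-torsion.
\item The $\Tor$ terms, $\Tor(H_0,H_1)$ and $\Tor(H_1,H_0)$, vanish because $H_0$ is free.
\item The remaining summands are $H_2(X)\otimes H_0(Y)$ and $H_0(X)\otimes H_2(Y)$. Since $H_0(Y)$ is free of rank equal to the number of components of $Y$, the $p$-torsion of $H_2(X)\otimes H_0(Y)$ is $\Tors_pH_2(X)^{\oplus\pi_0(Y)}$.
\end{itemize}
So this piece has $p$-torsion exactly when $H_2(\B_a\Gamma_1)$ has $p$-torsion and $\UConf_b\Gamma_2\ne\varnothing$, and symmetrically with the roles of $\Gamma_1$ and $\Gamma_2$ exchanged.

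\textbf{Conclusion.} Summing over $a$ gives both directions. If $H_2(\B_n\Gamma)$ has $p$-torsion, some piece has it, producing $a\le n$ and the nonemptiness condition. Conversely, the witnessing piece is a direct summand of $H_2(\B_n\Gamma)$.

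\textbf{Main obstacle.} The only real point is the claim that $H_1$ has no odd torsion even when the factor graph is disconnected. Handle this by reducing to connected components with the same Künneth argument one degree lower: $H_1$ of a product involves only $H_0\otimes H_1$ terms and $\Tor(H_0,H_0)=0$. One could alternatively invoke Theorem~\ref{thm:KP} as stated, since it already asserts the result for every finite graph. Finally, the edge cases where $\Gamma_i$ has no edges (so $\UConf_a$ is a finite set of points) or $\UConf_a\Gamma_i=\varnothing$ fit the same formula trivially.
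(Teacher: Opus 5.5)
Your proposal is correct and is essentially the paper's proof. Both decompose $\UConf_n(\Gamma_1\sqcup\Gamma_2)$ over $a+b=n$ and apply Künneth. Both kill $\Tor(H_0,H_1)$ because $H_0$ is free, and both rule out odd torsion in $H_1\otimes H_1$ by Theorem~\ref{thm:KP}. Your explicit treatment of the surviving $H_2\otimes H_0$ terms, which gives the nonemptiness condition, and of disconnected factors spells out what the paper leaves implicit.
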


\begin{proof}
Apply the K\"unneth formula to
$\UConf_n\Gamma=\bigsqcup_{a+b=n}\UConf_a\Gamma_1\times\UConf_b\Gamma_2$.  The terms
$\Tor(H_0,H_1)$ vanish because $H_0$ is free, and $H_1\otimes H_1$ has no odd torsion by
Theorem~\ref{thm:KP}.
\end{proof}

\begin{lemma}\label{lem:shape}
Let $\Gamma\preceq K$.  Then $\Gamma$ is homeomorphic to a disjoint union
$(\Lambda\cup P)\sqcup J$, where:
\begin{itemize}[leftmargin=*]
\item $\Lambda$ is a subgraph of $K$, obtained by deleting vertices and edges;
\item $P$ is a set of pendant edges attached at vertices of $\Lambda$;
\item $J$ is a disjoint union of arcs and points.
\end{itemize}
If $\Gamma\prec K$, then $\Lambda\ne K$.
\end{lemma}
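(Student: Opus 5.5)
We work with a concrete model. Let $\Gamma\preceq K$ and fix a subdivision $K'$ of $K$ together with a subgraph $\Gamma_0\subseteq K'$ homeomorphic to $\Gamma$. The vertices of $K'$ split into \emph{branch vertices}, which are the original vertices of $K$, and \emph{subdivision vertices} of degree $2$. Each edge $e$ of $K$ becomes a path $P_e$ in $K'$. The subgraph $\Gamma_0$ meets $P_e$ in a subgraph $\Gamma_0\cap P_e$, which is a disjoint union of subpaths and isolated points.

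First we classify these intersections. We let $\Lambda$ consist of the branch vertices lying in $\Gamma_0$, together with those edges $e$ of $K$ for which the whole path $P_e$ lies in $\Gamma_0$. This $\Lambda$ is a subgraph of $K$, with both endpoints of each retained edge in $\Lambda$.

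For every other edge $e$, the path $P_e$ is broken somewhere, so every component of $\Gamma_0\cap P_e$ is one of three kinds:
\begin{itemize}
\item a subpath containing exactly one endpoint of $P_e$, which is then a branch vertex in $\Lambda$;
\item a subpath or point containing no endpoint;
\item just the endpoint itself, which already belongs to $\Lambda$.
\end{itemize}
Components of the first kind are arcs attached at one vertex of $\Lambda$, with a free end that is univalent in $\Gamma_0$. Up to homeomorphism these are pendant edges, and we collect them in $P$. Components of the second kind are disjoint from everything else, since interior vertices of $P_e$ lie on no other path. They are arcs or points, and we collect them in $J$.

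Because the paths $P_e$ are internally disjoint and meet only at branch vertices, we obtain $\Gamma_0=(\Lambda'\cup P)\sqcup J$. Here $\Lambda'$ is the subdivision of $\Lambda$ formed by the full paths $P_e$ with $e\in\Lambda$, and $\Lambda'$ is homeomorphic to $\Lambda$. One detail needs care: a pendant arc might attach at a subdivision vertex. This cannot happen, because subdivision vertices have degree $2$ in $K'$, and every attachment point of a broken path's component is an endpoint of $P_e$, hence a branch vertex. This gives the claimed shape.

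For the last assertion, suppose that $\Lambda=K$. Then $\Gamma_0$ contains every full path $P_e$, hence all of $K'$. So $\Gamma_0=K'$, and therefore $P$ and $J$ are empty. Thus $\Gamma$ is homeomorphic to $K$, contradicting $\Gamma\prec K$.

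The main point needing care is this last step, in the case where $K$ has isolated vertices or degenerate edges. For the complete bipartite graphs used here this does not arise, and in general the same containment argument applies verbatim.
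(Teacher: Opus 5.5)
Your argument is correct and is the same as the paper's. Both classify $\Gamma_0\cap P_e$ edge by edge: full paths go into $\Lambda$, pieces through a branch vertex go into $P$, floating pieces go into $J$, and $\Lambda=K$ forces $\Gamma_0=K'$.

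One small correction to your closing remark. Loops matter in the classification step, not in the last step, and the paper reuses this proof for the multigraphs $K/F$, which do have loops. For a broken loop at $x$, the component through $x$ can have $x$ in its interior. Splitting it at $x$ yields two pendant edges, so the conclusion is unaffected.
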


\begin{proof}
Let $\Gamma$ be a subgraph of a subdivision $K'$ of $K$.  Each edge $f=xy$ of $K$
becomes a path $\pi_f$ in $K'$, and $\Gamma\cap\pi_f$ is one of the following:
\begin{itemize}[leftmargin=*]
\item all of $\pi_f$ (put $f$ into $\Lambda$);
\item a union of subpaths, where a subpath containing $x$ or $y$ is a pendant edge at
      that vertex, and every other subpath is an arc or a point in $J$.
\end{itemize}
The vertices of $K$ lying in $\Gamma$ are the vertices of $\Lambda$.  If $\Lambda=K$, then every
path $\pi_f$ is fully contained in $\Gamma$, so $P=J=\varnothing$ and $\Gamma\cong K$.
\end{proof}

\begin{lemma}[Closed connected subspaces]\label{lem:closed}
Let $K$ be a graph and $\Gamma$ a connected graph.  Then $\Gamma\preceq K$ if and only if
$\Gamma$ is homeomorphic to a closed connected subspace of $K$.
\end{lemma}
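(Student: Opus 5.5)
The plan is to prove the two implications separately, working with graphs as finite $1$-dimensional CW complexes.

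For the forward direction, suppose $\Gamma$ is homeomorphic to a subgraph $\Gamma_0$ of a subdivision $K'$ of $K$. Since $K'$ is homeomorphic to $K$ as a topological space, a homeomorphism $K'\to K$ carries $\Gamma_0$ onto a subspace of $K$. That subspace is closed, because a subgraph is a subcomplex and hence closed. It is connected because $\Gamma$ is connected. This direction is routine.

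For the converse, let $X\subseteq K$ be a closed connected subspace homeomorphic to $\Gamma$. The task is to produce a subdivision $K'$ of $K$ in which $X$ is a subcomplex. The key step is to understand $X\cap\bar f$ for each closed edge $f$ of $K$, viewed as the image of a path $[0,1]\to K$.
\begin{enumerate}[leftmargin=*]
\item Since $X$ is a finite graph, it has finitely many components and finite topology. Hence the preimage of $X\cap f$ in $[0,1]$ is a closed subset of the interval that is itself homeomorphic to a closed subset of a finite graph.
\item I will use the fact that $X$ is homeomorphic to a finite graph to show that this preimage has only finitely many components, each a point or a closed interval.
\end{enumerate}
Step 2 is where the hypothesis really enters. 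A closed subset of $[0,1]$ such as a Cantor set, or a sequence of points together with its limit, is not a finite graph. The argument is as follows. Each component of $X\cap f^\circ$ is open in $X\cap f^\circ$, because $X$ is locally connected: it is homeomorphic to a CW complex. Finitely many vertices of $\Gamma$ and finitely many edge interiors then force finitely many components. More precisely, infinitely many components would accumulate at some point $x\in X$. A small connected neighbourhood of $x$ in $X$ is a cone on finitely many points. Such a neighbourhood lies in an open star of $K$, and it cannot meet infinitely many disjoint components of $X\cap f$ within the arc near $x$ without containing the intervening arcs. Pinning down this local-finiteness argument cleanly is the step I expect to be the main obstacle. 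I would phrase it through local connectedness of $X$ at accumulation points, possibly citing the standard fact that a compact locally connected subset of a graph is a subgraph of a subdivision.

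Given finiteness, subdivide each edge $f$ at the finitely many endpoints of the components of $X\cap f$. Also subdivide at the isolated points, and add midpoints where needed so that the result is a simple CW structure on each edge. This gives a subdivision $K'$ of $K$, and $X$ is then a union of closed cells of $K'$, that is, a subgraph. Hence $\Gamma\cong X\preceq K$.

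Connectedness of $\Gamma$ is used only to match the statement; the argument does not otherwise require it.
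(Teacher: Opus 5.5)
Your proof is correct in outline, but you reach the key finiteness step by a different route from the paper.

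\textbf{The paper's route.} It uses global path-connectedness. After subdividing loop edges, each closed edge $e$ is an embedded interval. Take a path in $A$ from a point $x\in A\cap e^\circ$ to a point outside $e$. Up to its first exit from $e^\circ$, this path must sweep out the whole segment from $x$ to an endpoint of $e$. Hence every point of $A\cap e^\circ$ is joined inside $A\cap e$ to an endpoint, and $A\cap e$ consists of at most two closed intervals. The cases where $A$ is a point or $A\subseteq e$ are treated separately. No local analysis is needed, and the argument uses only that $A$ is closed and path-connected, not that it is homeomorphic to a graph.

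\textbf{Your route.} You use compactness together with local connectedness of $X$, inherited from $\Gamma$, at an accumulation point $x$ of components of the preimage of $X\cap f$ in $[0,1]$. Your sketch asserts one elementary fact without proving it. Let $x$ have a small conical neighbourhood in $K$, a cone on finitely many points. Then every connected subset of that cone containing $x$ meets each ray in an initial segment, because every point of the cone other than $x$ separates it. Apply this to a small connected neighbourhood of $x$ in $X$. All chosen points close to $x$ on the relevant ray then lie in one interval inside $X\cap f$ containing $x$. This contradicts their lying in distinct components. The argument works equally when $x$ is interior to $f$ or is a vertex, and working in the preimage handles loops.

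\textbf{Two cautions.} Your sentence that finitely many vertices and edge interiors ``force'' finitely many components is not an argument by itself. Also, you should not fall back on citing that a compact locally connected subset of a graph is a subgraph of a subdivision, since that is essentially the lemma.

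\textbf{What each route buys.} Yours never uses connectedness, so it gives directly the version for closed subspaces with finitely many components recorded in Remark~\ref{rem:topminor}. The paper's is shorter and applies to any closed path-connected subset; the disconnected case then follows componentwise.
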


\begin{proof}
A subgraph of a subdivision of $K$ is a closed subspace of the
underlying space of $K$, so the forward implication is immediate.
Conversely, let $A\subseteq K$ be a closed subspace homeomorphic to
$\Gamma$. Since $\Gamma$ is a connected finite graph, $A$ is compact
and path-connected.

After subdividing every loop edge of $K$, we may assume that each
closed edge is an embedded interval. If $A$ is a point, the conclusion
follows by making that point a vertex if necessary. If $A$ is contained
in a closed edge, compactness and connectedness imply that it is a
closed interval or a point, and subdivision at its endpoints gives the
conclusion.

Suppose now that $A$ is not contained in any closed edge. Let $e$ be a
closed edge and let $x\in A\cap e^\circ$. Choose $y\in A\setminus e$
and a path in $A$ from $x$ to $y$. At the first time this path exits
$e^\circ$, it reaches an endpoint of $e$. The part of the path before
and including that exit is a connected subset of $e$ containing $x$
and that endpoint, so it contains the whole interval between them.
Thus every point of $A\cap e^\circ$ is joined within $A\cap e$ to an
endpoint of $e$.

It follows that $A\cap e$ is empty or is a union of at most two closed
intervals, each containing an endpoint of $e$; intervals consisting of
a single endpoint are allowed. There are only finitely many edges.
Subdivide $K$ at the endpoints of these intervals that lie in edge
interiors. Then $A$ is a connected subgraph of the resulting
subdivision. Since $\Gamma$ is homeomorphic to $A$, this proves
$\Gamma\preceq K$.
\end{proof}

\begin{theorem}[Prime-power minimality for topological minors]\label{thm:topminimal}
Let $q=p^a$ with $p$ prime and $a\ge1$, let $m\ge q$, and put $K=K_{m+1,m+q-1}$.  If
$\Gamma\prec K$, then $H_{m-1}(\B_m\Gamma;\Z)$ has no element of order $q$.  Thus $K$ is
minimal in the topological minor order for elements of order $q$ in this degree and
particle number.
\end{theorem}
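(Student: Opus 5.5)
The plan is to reduce to the subgraph case already settled in Theorem~\ref{thm:subgraph} by means of the structural description in Lemma~\ref{lem:shape}. Let $\Gamma\prec K$. By Lemma~\ref{lem:shape}, $\Gamma$ is homeomorphic to $(\Lambda\cup P)\sqcup J$, where $\Lambda\subsetneq K$ is a proper subgraph, $P$ is a finite set of pendant edges attached at vertices of $\Lambda$, and $J$ is a disjoint union of arcs and points. Since graph braid group homology depends only on the homeomorphism type, we may replace $\Gamma$ by this model.

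First, remove $J$. The last assertion of Theorem~\ref{cor:topminusone} gives
\[
\Tors H_{m-1}(\B_m\Gamma)\cong\Tors H_{m-1}(\B_m(\Lambda\cup P)).
\]
Second, remove the pendant edges in $P$ one at a time. Each attachment is at a vertex of $\Lambda$, possibly an isolated one, and Theorem~\ref{cor:topminusone} shows that attaching a pendant edge at any vertex preserves the full torsion subgroup of $H_{m-1}(\B_m)$. Iterating over $P$ gives $\Tors H_{m-1}(\B_m(\Lambda\cup P))\cong\Tors H_{m-1}(\B_m\Lambda)$. One point needs care here. A later pendant edge may be attached at a vertex of $\Lambda$ whose degree has been changed by earlier attachments, and several pendant edges may share a vertex. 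Neither affects the argument, because the theorem applies at every vertex of any finite graph. We also need $m\ge2$, which holds since $m\ge q\ge2$.

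Third, since every element of finite order lies in the torsion subgroup, an element of order $q$ in $H_{m-1}(\B_m\Gamma;\Z)$ would yield one in $H_{m-1}(\B_m\Lambda;\Z)$. This contradicts Theorem~\ref{thm:subgraph}(2), because $\Lambda$ is a proper subgraph of $K$.

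The main thing to verify is that Lemma~\ref{lem:shape} really produces a subgraph $\Lambda$ of $K$ itself, not of a subdivision, and that $\Lambda\ne K$ whenever $\Gamma\not\cong K$. Both are stated in that lemma, so the remaining bookkeeping concerns degenerate cases. If $\Lambda$ is empty, then $\Gamma$ is a disjoint union of arcs and points. Its configuration spaces have contractible components, so the relevant homology is torsion-free. Alternatively, Theorem~\ref{thm:subgraph}(2) applies directly to the empty subgraph. If $\Lambda$ has isolated vertices, these are handled by the isolated-vertex clause of Theorem~\ref{cor:topminusone}. Once these cases are checked, minimality in the topological minor order follows by combining this with Theorem~\ref{thm:subgraph}(1), which guarantees that $K$ itself has an element of order $q$.
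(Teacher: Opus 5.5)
Your proposal is correct and follows the paper's proof essentially step for step: Lemma~\ref{lem:shape} writes $\Gamma\cong(\Lambda\cup P)\sqcup J$ with $\Lambda\subsetneq K$, Theorem~\ref{cor:topminusone} (including its isolated-vertex case) removes $P$ and $J$ without changing the torsion in bidegree $(m-1,m)$, and Theorem~\ref{thm:subgraph}(2) then rules out an element of order $q$. The only cosmetic difference is that the paper cites Theorem~\ref{thm:explicitgen} rather than Theorem~\ref{thm:subgraph}(1) for the existence of an element of order $q$ on $K$ itself, and either citation works.
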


\begin{proof}
By Lemma~\ref{lem:shape}, write $\Gamma\cong(\Lambda\cup P)\sqcup J$, where $\Lambda\subsetneq K$,
$P$ consists of pendant edges and $J$ consists of arcs and points.
Theorem~\ref{cor:topminusone}, including its isolated-vertex case, removes all edges
of $P$ without changing the torsion in the indicated bidegree.  The same corollary
removes $J$.  Hence
\begin{equation}\label{eq:reduce}
\Tors H_{m-1}(\B_m\Gamma;\Z)\cong\Tors H_{m-1}(\B_m\Lambda;\Z).
\end{equation}
The latter group has no element of order $q$ by Theorem~\ref{thm:subgraph}.  The full
graph $K$ has such an element by Theorem~\ref{thm:explicitgen}.
\end{proof}

\begin{theorem}\label{thm:minimal}
$H_2(\B_3K_{4,5})$ contains $\Z/3$, and $H_2(\B_3\Gamma)$ has no odd torsion for every
$\Gamma\prec K_{4,5}$.
\end{theorem}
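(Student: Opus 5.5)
The first assertion is already available: Corollary~\ref{cor:exact} (or Theorem~\ref{thm:exacttors} with $m=r=3$) gives $\Tors H_2(\B_3K_{4,5})\cong\Z/3\oplus(\Z/2)^4$. For the second assertion we must exclude $p$-torsion for every odd prime $p$, not only $p=3$. For $p\ge5$ no reduction is needed, but the argument for $p=3$ needs care with disconnected graphs.

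\emph{Reduction.} By Lemma~\ref{lem:shape}, write $\Gamma\cong(\Lambda\cup P)\sqcup J$ with $\Lambda\subsetneq K_{4,5}$. Theorem~\ref{cor:topminusone} with $m=3$, including its isolated-vertex case and the statement about $J$, removes $P$ and $J$ without changing $\Tors H_2(\B_3)$. This gives $\Tors H_2(\B_3\Gamma)\cong\Tors H_2(\B_3\Lambda)$, exactly as in \eqref{eq:reduce}. If $\Lambda$ is disconnected, this reduction still applies, because the relevant statements concern the whole configuration space. Alternatively, for odd $p$ one may apply Lemma~\ref{lem:kunneth} to the components of $\Lambda$.

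\emph{Primes $p\ge5$.} Since $\Lambda\subseteq K_{4,5}$, Lemma~\ref{lem:extend} embeds $H_3(\B_3\Lambda;\F_p)$ into $H_3(\B_3K_{4,5};\F_p)$. The latter is $\ker(U_2\otimes\F_p)=T_{\F_p}(5)=0$ by Theorem~\ref{thm:top} and Lemma~\ref{lem:T}. Moreover $H_3(\B_3\Lambda;\Z)$ embeds in $H_3(\B_3K_{4,5};\Z)=0$. The universal coefficient theorem therefore gives
\[
\Tor(H_2(\B_3\Lambda),\F_p)\cong H_3(\B_3\Lambda;\F_p)=0,
\]
so $H_2(\B_3\Lambda)$ has no $p$-torsion.

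\emph{The prime $3$.} This is Theorem~\ref{thm:subgraph}(2) with $q=p=3$, $a=1$, $m=3$. For a proper subgraph $\Lambda$, the group $H_3(\B_3\Lambda;\F_3)$ has exponent less than $3$, hence is zero. Concretely, $H_3(\B_3K_{4,5};\F_3)\cong\F_3$ is spanned by the constant matching function, which is nonzero on every matching and so cannot come from a proper subgraph. Since $H_3(\B_3\Lambda;\Z)=0$, the universal coefficient theorem again shows there is no $3$-torsion.

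The main point requiring care is that the reduction via Lemma~\ref{lem:shape} and Theorem~\ref{cor:topminusone} gives an isomorphism of full torsion subgroups in bidegree $(2,3)$, including for disconnected $\Lambda$ and for pendant edges at isolated vertices. This is covered by the isolated-vertex clause of Theorem~\ref{cor:topminusone}. Once that is in place, the subgraph case settles all odd primes uniformly, because the whole top homology of $K_{4,5}$ modulo $p$ is controlled by Lemma~\ref{lem:T}.
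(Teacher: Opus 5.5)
Your proposal is correct and follows the paper's proof essentially verbatim. You reduce via Lemma~\ref{lem:shape} and Theorem~\ref{cor:topminusone} to a proper subgraph $\Lambda\subsetneq K_{4,5}$ (this is \eqref{eq:reduce}), then use Theorem~\ref{thm:subgraph} for $p=3$, and for $p\ge5$ extension by zero into $H_3(\B_3K_{4,5};\F_p)\cong T_{\F_p}(5)=0$ together with the universal coefficient theorem. (Your side remark misplaces the emphasis: the reduction is needed at $p=3$ because the ``nonzero on every matching'' argument holds for $K_{4,5}$ itself rather than for its subdivisions, not because of disconnectedness.)
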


\begin{proof}
The first assertion is Corollary~\ref{cor:exact}.  For a proper topological minor use
\eqref{eq:reduce} with $m=q=3$ to reduce to a proper subgraph $\Lambda\subsetneq K_{4,5}$.
Theorem~\ref{thm:subgraph} excludes $3$-torsion.  For an odd prime $p\ge5$,
Lemma~\ref{lem:extend} embeds $H_3(\B_3\Lambda;\F_p)$ into
$H_3(\B_3K_{4,5};\F_p)\cong T_{\F_p}(5)=0$.  Also $H_3(\B_3\Lambda;\Z)=0$ by integral extension
by zero.  The universal coefficient theorem therefore excludes $p$-torsion in
$H_2(\B_3\Lambda;\Z)$ for all such $p$.
\end{proof}

\begin{remark}[Three levels of minimality for $3$-torsion]\label{rem:threelevels}
For $q=3$ the class $[z_3]\in H_2(\B_3K_{4,5})$ of order $3$ is minimal in three
independent senses.
\begin{enumerate}[leftmargin=*]
\item \emph{Homological degree.}  For every finite graph $\Gamma$ and every $n$, the
      groups $H_0(\B_n\Gamma)$ and $H_1(\B_n\Gamma)$ have no $3$-torsion: $H_0$ is free,
      and $H_1$ has only $2$-torsion by Theorem~\ref{thm:KP}.  Thus $3$-torsion first
      occurs in degree $2$, for any graph whatsoever.
\item \emph{Particle number.}  In degree $2$, torsion of any kind requires at least
      three particles (Observation~\ref{obs:top}).
\item \emph{The graph.}  With three particles and in degree $2$, no $\Gamma\prec K_{4,5}$
      has $3$-torsion (Theorem~\ref{thm:minimal}), while $K_{4,5}$ does.
\end{enumerate}
For a general prime power $q$, levels (2) and (3) hold with
\[
(2,3,K_{4,5})\quad\text{replaced by}\quad(q-1,q,K_{q+1,2q-1}),
\]
by Observation~\ref{obs:top} and Theorem~\ref{thm:topminimal};
level (1), namely that an element of order $q$ never occurs in $H_i(\B_n\Gamma)$ with
$i<q-1$, is known only for $q=2,3$ (Question~\ref{q:kuratowski}(2)). 
For $m>q$, the examples in degree $m-1$ do not attain the smallest
homological degree in which this paper constructs an element of order
$q$: the graph $K_{q+1,2q-1}$ already provides such an example in degree
$q-1$. This is a comparison between examples on different graphs; it
does not assert that the corresponding torsion class survives under
inclusion into a larger graph.
\end{remark}

\subsection{The minor order}\label{sec:minororder}
For a graph $K$ and a set $F$ of edges of $K$, let $K/F$ be the multigraph obtained by
contracting every edge of $F$; loops and parallel edges may arise.  Let
$\overline{K/F}$ denote its \emph{simple reduction}, obtained by deleting loops and
replacing each nonempty parallel class by one edge.  Write
\[
\Gamma\le K\quad\Longleftrightarrow\quad
\Gamma\preceq K/F\ \text{ for some }F,
\]
and $\Gamma<K$ if moreover $\Gamma$ is not homeomorphic to $K$.  This is the third
and largest of the three relations of this section.  Since deletions and
contractions commute, and contracting part of a subdivided edge is a re-subdivision,
$\le$ is the order generated by edge and vertex deletion, edge contraction and
subdivision, up to homeomorphism.  It contains $\preceq$ (take $F=\varnothing$), hence
also $\subseteq$, and it
contains the usual simple-graph minor order, in which loops and parallel classes are
simplified after contraction, since $\overline{K/F}\preceq K/F$.  Theorem~\ref{thm:minimal}
does not apply to $\le$ directly: contraction can create loops and parallel edges, and
there is no homological comparison between $K$ and $K/F$.  In this subsection we prove
prime-power minimality of $K_{m+1,m+q-1}$ for $\le$ in the bidegree of
Theorem~\ref{thm:topminimal}, with the odd-torsion minimality of $K_{4,5}$ in $H_2(\B_3)$
as a corollary.  The ingredients are a description of the contractions of complete
bipartite graphs, a formula for the top homology of their simple reductions, and the
top-weight splittings for loops and parallel edges from Section~\ref{sec:pendant}.

The first ingredient is that contractions of complete bipartite graphs form a very
small family.  For graphs $\Gamma_1,\Gamma_2$ let $\Gamma_1+\Gamma_2$ be their join,
obtained from $\Gamma_1\sqcup\Gamma_2$ by adding all edges between $V(\Gamma_1)$ and
$V(\Gamma_2)$; thus $K_t+K_{s,s'}$ has a clique of size $t$ joined to every vertex of
$K_{s,s'}$.

\begin{lemma}[Contractions of $K_{a,b}$]\label{lem:contract}
Let $K_{a,b}$ have bipartition $A\sqcup B$, let $F$ be a set of edges, and let
$M=K_{a,b}/F$.  Let
\[
C_1,\ldots,C_t
\]
be the nontrivial branch sets, and write
\[
\alpha_i=|C_i\cap A|,\qquad \beta_i=|C_i\cap B|.
\]
Thus $\alpha_i,\beta_i\ge1$.  Put
\[
s=a-\sum_{i=1}^t\alpha_i,\qquad
s'=b-\sum_{i=1}^t\beta_i.
\]
After deleting loops and retaining one edge in each parallel class, the resulting
simple graph is
\[
L\cong K_t+K_{s,s'}.
\]
In particular
\[
t+s=a-\sum_{i=1}^t(\alpha_i-1),\qquad
t+s'=b-\sum_{i=1}^t(\beta_i-1).
\]

More precisely, if $i\ne j$, the number of edges of $M$ joining the vertices
corresponding to $C_i$ and $C_j$ is
\[
\alpha_i\beta_j+\beta_i\alpha_j.
\]
The multiplicity between $C_i$ and a singleton vertex originating in $A$ is
$\beta_i$, and that between $C_i$ and a singleton originating in $B$ is $\alpha_i$.
If $F_i=F\cap E(K_{a,b}[C_i])$, then the number of loops at the vertex corresponding
to $C_i$ is
\[
\alpha_i\beta_i-|F_i|.
\]
In particular, the existence of a loop at $C_i$ forces
$\alpha_i,\beta_i\ge2$.

Conversely, every triple
\[
t\ge1,\qquad 0\le s\le a-t,\qquad 0\le s'\le b-t
\]
occurs as the simple reduction of a contraction of $K_{a,b}$.
\end{lemma}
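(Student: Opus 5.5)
The plan is to describe $M=K_{a,b}/F$ through its branch sets and then count edges directly. Contracting $F$ partitions $A\sqcup B$ into the vertex sets of the connected components of the spanning subgraph $(A\sqcup B,F)$; these are the branch sets. A branch set with at least two vertices spans a connected subgraph of the bipartite graph, so it must contain vertices from both parts. Hence $\alpha_i,\beta_i\ge1$ for the nontrivial sets $C_1,\dots,C_t$. The remaining vertices are singletons, and there are $s$ of them in $A$ and $s'$ of them in $B$. Every edge of $K_{a,b}$ either survives in $M$ or has been contracted. Contracted edges lie in $F$, and each edge of $F$ lies inside a single $C_i$.

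The multiplicities follow by counting edges of $K_{a,b}$ between two vertex sets. Between $C_i$ and $C_j$ the edges are the $A$–$B$ pairs, so there are $\alpha_i\beta_j+\beta_i\alpha_j$ of them. Between $C_i$ and a singleton $x\in A$ there are $\beta_i$, and for a singleton in $B$ there are $\alpha_i$. Two singletons are joined exactly when they lie in opposite parts, each time by one edge. The loops at $C_i$ are the edges inside $C_i$ that were not contracted. All $\alpha_i\beta_i$ edges of $K_{a,b}[C_i]$ lie inside $C_i$, and $|F_i|$ of them are contracted, which gives $\alpha_i\beta_i-|F_i|$ loops.

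If $\alpha_i=1$ or $\beta_i=1$, then $K_{a,b}[C_i]$ is a star. Since $C_i$ is connected by edges of $F_i$ and a star has no proper connected spanning subgraph, $F_i$ is the whole star and there are no loops. This proves the loop assertion.

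For the simple reduction, note that all the multiplicities $\alpha_i\beta_j+\beta_i\alpha_j$, $\beta_i$ and $\alpha_i$ are positive. So in $L$ the $t$ branch vertices form a clique and are adjacent to every singleton, while the singletons induce $K_{s,s'}$. This means $L\cong K_t+K_{s,s'}$. The displayed identities are then just $a=s+\sum\alpha_i$ and $b=s'+\sum\beta_i$ rewritten.

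For the converse, take $t\ge1$, $0\le s\le a-t$ and $0\le s'\le b-t$. Choose $\alpha_1,\dots,\alpha_t\ge1$ with sum $a-s\ge t$, and $\beta_i\ge1$ with sum $b-s'\ge t$. Partition disjoint subsets of $A$ and $B$ accordingly into sets $C_i$, and take $F$ to be a spanning tree of each $K_{\alpha_i,\beta_i}$, for instance a double star. Then the branch sets are exactly the $C_i$ together with singletons, and the first part gives $L\cong K_t+K_{s,s'}$.

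The only step needing care is the first: identifying the branch sets as the components of $F$, and checking that the nontrivial ones meet both parts. Everything after that is bookkeeping.
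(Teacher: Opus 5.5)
Your proposal is correct and follows the paper's proof essentially step for step. Like the paper, you take the branch sets to be the components of $(V,F)$, count the edges between branch sets directly, and use spanning trees in $K_{A_i,B_i}$ for the converse; the only cosmetic difference is the loop criterion, where you note that a star has no proper connected spanning subgraph while the paper uses the count $|F_i|\ge\alpha_i+\beta_i-1$, which is the same tree fact.
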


\begin{proof}
The vertices of $M$ are the vertex sets of the connected components of $(V,F)$.
Every nontrivial component meets both $A$ and $B$, while every other branch set is a
singleton.  Distinct branch sets are joined by all original edges running from the
$A$-part of one to the $B$-part of the other.  This gives the stated multiplicities
and shows that, after simplification, every mixed branch set is universal, the
remaining $A$-singletons and $B$-singletons span $K_{s,s'}$, and hence
$L\cong K_t+K_{s,s'}$.

Inside $C_i$ there are $\alpha_i\beta_i$ original edges, of which precisely the
edges of $F_i$ are contracted; all the others become loops.  Since $F_i$ connects
$C_i$, one has $|F_i|\ge\alpha_i+\beta_i-1$, so a loop can occur only when
$(\alpha_i-1)(\beta_i-1)>0$.

For the converse, choose disjoint subsets
\[
A=A_1\sqcup\cdots\sqcup A_t\sqcup A_0,\qquad
B=B_1\sqcup\cdots\sqcup B_t\sqcup B_0
\]
with every $A_i,B_i$ nonempty, $|A_0|=s$ and $|B_0|=s'$.  In each complete bipartite
graph $K_{A_i,B_i}$ choose a spanning tree and let $F$ be the union of these $t$
trees.  Its nontrivial branch sets are exactly $A_i\sqcup B_i$, so the simple
reduction is $K_t+K_{s,s'}$.
\end{proof}

\begin{proposition}[Top homology of the simple contraction cores]
\label{prop:join-top}
Let
\[
J=K_t+K_{s,b},\qquad d=t+s,
\]
and let $n\ge1$.
\begin{enumerate}[leftmargin=*]
\item If $d\le n$, then
\[
H_n(\B_nJ;R)=0
\]
for every commutative ring $R$.
\item If $n\ge2$ and $d=n+1$, let $D$ be the union of the $K_t$-vertices and the
$s$-part of $K_{s,b}$.  Then restriction to the spanning complete bipartite graph
$K_{D,B}\cong K_{n+1,b}$ induces
\[
H_n(\B_nJ;R)\cong H_n(\B_nK_{n+1,b};R)
\]
for every commutative ring $R$.
The right-hand side is zero for $b\le n$; if $b=n+\rho-1$ with $\rho\ge2$,
it is $\ker(U_{\rho-1}(n+\rho-1)\otimes R)$ by Theorem~\ref{thm:top}.
\end{enumerate}
\end{proposition}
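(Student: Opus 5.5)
The plan is to work directly with the matching-function description of Proposition~\ref{prop:Zcycles}, since $J=K_t+K_{s,b}$ is simple. Write $D$ for the set of $K_t$-vertices together with the $s$-part, so $|D|=d$, and $B$ for the $b$-part. The edges of $J$ are of two kinds: edges $DB$ (all present), and edges inside $D$, namely those with at least one endpoint in $K_t$. We show that a top cycle $F$ vanishes on every matching using an edge inside $D$. This gives (1) directly, and for (2) it reduces everything to matchings of $K_{D,B}$.

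\emph{Step 1 (counting).} An $n$-matching with $k$ edges inside $D$ and $n-k$ edges of type $DB$ covers $2k+(n-k)=n+k$ vertices of $D$. Hence $n+k\le d$.
In case (1), $d\le n$ forces $k=0$. Then every matching is a $DB$-matching covering $n\ge d$ vertices of $D$, so $n=d$ and $D$ is fully covered. Take $M'$ to be such a matching minus one edge $vx$ with $v\in D$, $x\in B$. At the now-free vertex $v\in D$, every neighbour of $v$ in $D$ is occupied by $M'$ (all of $D$ except $v$ is covered), so the only surviving terms of \eqref{eq:Z} are those with free $x'\in B$. It is cleaner to apply \eqref{eq:Z} at $x\in B$ instead: the neighbours of $x$ are all of $D$, and only $v$ is free. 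So \eqref{eq:Z} reads $F(xv,M')=0$, and therefore $F=0$. (If $d<n$ there are no matchings at all.)

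In case (2), $d=n+1$ allows $k\in\{0,1\}$.

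\emph{Step 2 (kill $k=1$).} Let $M$ contain an inner edge $uv$. Then $M$ covers all of $D$, and its other $n-1$ edges go to $B$. Remove a $DB$-edge $wy$ (it exists since $n\ge2$) and apply \eqref{eq:Z} at $y\in B$. The free vertices of $D$ are exactly $\{w\}$, so $F(yw,M\setminus wy)=0$, i.e.\ $F(M)=0$. Thus every top cycle is supported on $DB$-matchings.

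\emph{Step 3 (compare relations).} Restriction sends a top cycle of $J$ to a function on the $n$-matchings of $K_{D,B}$; it is injective by Step~2, and extension by zero (Lemma~\ref{lem:extend}) is a left inverse. It remains to check that the relations \eqref{eq:Z} for $J$ and for $K_{D,B}$ coincide on functions vanishing off $DB$-matchings. For $M'\subset K_{D,B}$ and $v\in B$ the neighbourhoods agree. For $v\in D$ the extra terms in $J$ involve inner edges $vx$, which vanish. For $M'$ containing an inner edge, every term vanishes by Step~2. The ring $R$ plays no role anywhere, which gives the isomorphism $H_n(\B_nJ;R)\cong H_n(\B_nK_{n+1,b};R)$. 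The final identification then comes from Theorem~\ref{thm:top} with $m=n$: writing $b=n+\rho-1$, the range $b\le n$ corresponds to $\rho\le1$, where Theorem~\ref{thm:top} gives zero.

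The main care point is Step~2's requirement $n\ge2$ (to have a $DB$-edge to remove), together with the sign and orientation conventions in \eqref{eq:Z}. The latter are harmless, since every relation used has a single term.
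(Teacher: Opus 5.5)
Your proposal is correct and follows the paper's proof essentially step for step. In both, a single-term relation \eqref{eq:Z} at a $B$-endpoint kills every matching when $d=n$, and every matching using an edge inside $D$ when $d=n+1$; restriction to $K_{D,B}$ and extension by zero are then inverse. Your explicit check that the relations for $J$ and for $K_{D,B}$ agree is a detail the paper leaves implicit.

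One small slip concerns $b\le n-2$. There Theorem~\ref{thm:top}, which is stated only for $r\ge0$, does not literally apply. But $K_{n+1,b}$ then has no $n$-matchings, so the vanishing is immediate; this is how the paper handles $b<n$.
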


\begin{proof}
Every edge of $J$ has at least one endpoint in $D$.

If $d<n$, there is no $n$-matching.  Suppose $d=n$.  Every $n$-matching uses every
vertex of $D$ and contains no edge with both endpoints in $D$.  Remove one edge
$db$ from such a matching.  In the relation \eqref{eq:Z} at the free vertex $b$,
all vertices of $D$ except $d$ are occupied, so the relation has the single term
$F(db,M')$.  Hence every top cycle vanishes.

Now assume $d=n+1$ and $n\ge2$.  If an $n$-matching $M$ contains an edge with both
endpoints in $D$, it contains exactly one such edge and the remaining $n-1$ edges
join $D$ to $B$.  Removing one of those $D$--$B$ edges and applying
\eqref{eq:Z} at its $B$-endpoint again gives a single-term relation, so every top
cycle vanishes on $M$.  Thus every top cycle is supported on the matchings of the
spanning $K_{D,B}$.  Conversely, extension by zero from $K_{D,B}$ to $J$ gives a
top cycle by Lemma~\ref{lem:extend}.  Restriction and extension by zero are inverse.
If $b<n$, the bipartite graph has no $n$-matching.  If $b=n$, removing an edge
from an $n$-matching leaves only one free $B$-vertex, so the relation
\eqref{eq:Z} at its $D$-endpoint has a single term and forces the matching value
to vanish.  For $b=n+\rho-1$ with $\rho\ge2$, apply Theorem~\ref{thm:top}.
\end{proof}

The next ingredient passes from a simple graph to all of its subgraphs at once.

\begin{lemma}[Prime-power inheritance criterion]\label{lem:inherit}
Let $\Gamma$ be a simple graph, $m\ge1$ and $q=p^a$ a prime power.  Suppose
\[
H_m(\B_m\Gamma;\Z)=0
\qquad\text{and}\qquad
\exp H_m(\B_m\Gamma;\Z/q)<q.
\]
Then $H_{m-1}(\B_mC;\Z)$ has no element of order $q$ for every subgraph
$C\subseteq\Gamma$.  For $q=p$ prime, the second hypothesis is simply
$H_m(\B_m\Gamma;\F_p)=0$.
\end{lemma}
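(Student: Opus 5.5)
The plan is to reuse the second half of the proof of Theorem~\ref{thm:subgraph}, with $\Gamma$ in place of $K$. Fix a subgraph $C\subseteq\Gamma$. Since $\Gamma$ is simple, $C$ is simple too, so Lemma~\ref{lem:extend} applies over both $\Z$ and $\Z/q$. It gives injections
\[
H_m(\B_mC;\Z)\hookrightarrow H_m(\B_m\Gamma;\Z)=0,\qquad
H_m(\B_mC;\Z/q)\hookrightarrow H_m(\B_m\Gamma;\Z/q).
\]
The first forces $H_m(\B_mC;\Z)=0$. The second shows that every element of $H_m(\B_mC;\Z/q)$ has order dividing $\exp H_m(\B_m\Gamma;\Z/q)$. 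That exponent is a proper divisor of $q=p^a$, hence divides $p^{a-1}$, so $p^{a-1}$ kills $H_m(\B_mC;\Z/q)$.

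Next apply the universal coefficient theorem to $C$ in degree $m$. Because $H_m(\B_mC;\Z)=0$, the tensor term vanishes, and
\[
H_m(\B_mC;\Z/q)\cong\Tor\bigl(H_{m-1}(\B_mC;\Z),\Z/q\bigr).
\]
Suppose $H_{m-1}(\B_mC;\Z)$ had an element of order $q$. Its torsion subgroup is finite, because the complex is finitely generated in each bidegree. So this element would force a $p$-primary cyclic summand $\Z/p^j$ with $j\ge a$. That summand contributes $\Tor(\Z/p^j,\Z/q)\cong\Z/q$, which contains an element of order $q$. This contradicts the exponent bound from the first step, so no element of order $q$ exists.

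The special case $q=p$ needs only a remark. An exponent of an $\F_p$-vector space below $p$ means exponent $1$, so the second hypothesis is exactly $H_m(\B_m\Gamma;\F_p)=0$.

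The only point needing care is the step from ``element of order $q$'' to ``cyclic summand of order at least $q$''. It uses the structure theorem for finitely generated abelian groups, applied to the $p$-primary part. An element of order $p^a$ in $\bigoplus\Z/p^{j_i}$ must have some $j_i\ge a$. Beyond that, the argument is formal.
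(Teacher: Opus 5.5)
Your proof is correct and matches the paper's argument: extension by zero (Lemma~\ref{lem:extend}) over $\Z$ and over $\Z/q$, then the universal coefficient identification $H_m(\B_mC;\Z/q)\cong\Tor(H_{m-1}(\B_mC;\Z),\Z/q)$, where an element of order $q$ would contradict the exponent bound. As a minor simplification, $\Tor(G,\Z/q)\cong G[q]$ for any abelian group $G$. So an element of order $q$ lands directly in the Tor term, and the appeal to finite generation and the structure theorem is not needed.
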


\begin{proof}
Lemma~\ref{lem:extend} embeds both $H_m(\B_mC;\Z)$ and
$H_m(\B_mC;\Z/q)$ into the corresponding groups for $\Gamma$.  Hence the integral top
group of $C$ is zero and the exponent of its $\Z/q$ top homology is less than $q$.
The universal coefficient theorem gives
\[
H_m(\B_mC;\Z/q)\cong\Tor(H_{m-1}(\B_mC;\Z),\Z/q).
\]
An element of order $q=p^a$ in $H_{m-1}(\B_mC;\Z)$ would contribute a cyclic summand of
order $q$ to the right-hand side, a contradiction.
\end{proof}

By Proposition~\ref{prop:Zcycles}, both hypotheses of Lemma~\ref{lem:inherit} are
statements about one integer matrix, the matrix $M_\Gamma$ of the relations \eqref{eq:Z}
on the $m$-matchings of $\Gamma$: they say that $M_\Gamma$ has full column rank and
that no nonzero Smith invariant is divisible by $q=p^a$.  For $a=1$ this says simply
that $p$ divides none of the nonzero invariants.

We say that a relation \eqref{eq:Z} is a \emph{single-term relation} if the sum has
exactly one term; it then forces that value of $F$ to vanish.  This simple device
disposes of most contractions by hand.

Proposition~\ref{prop:join-top} covers the one-edge contraction of $K_{4,5}$:
\[
H_3(\B_3(K_1+K_{3,4});R)\cong H_3(\B_3K_{4,4};R)\cong R[2]
\]
for every commutative ring $R$, by Corollary~\ref{cor:r2}.  Its top homology thus
vanishes over $\Z$ and over every field of odd characteristic, while the mod-$2$ top
class is the one inherited from $K_{4,4}$.

\begin{lemma}[A torsion-free threshold for contraction multigraphs]
\label{lem:minor-threshold}
Let $M=K_{a,b}/F$, and let
\[
K_t+K_{s,s'}
\]
be its simple reduction as in Lemma~\ref{lem:contract}, where $s$ refers to the
singleton vertices originating in the $a$-part.  Fix $n\ge1$.  If
\[
t+s\le n,
\]
then for every $\Gamma\preceq M$,
\[
H_{n-1}(\B_n\Gamma;\Z)
\]
is torsion-free.
\end{lemma}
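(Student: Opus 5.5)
Since $H_n(\B_n\Gamma;\Z)$ is free (Observation~\ref{obs:top}), the universal coefficient theorem reduces torsion-freeness of $H_{n-1}(\B_n\Gamma;\Z)$ to showing $H_n(\B_n\Gamma;\F_p)\cong H_n(\B_n\Gamma;\Z)\otimes\F_p$ for every prime $p$. The cleanest sufficient condition is $H_n(\B_n\Gamma;R)=0$ for every commutative ring $R$. I will try to reach this through the successive reductions already in the paper: from $\Gamma\preceq M$ to $M$ itself, and from the multigraph $M$ to its simple reduction. At each stage I will track top homology over an arbitrary ring.

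\emph{Step 1: topological minors.} By Lemma~\ref{lem:shape}, $\Gamma\cong(\Lambda\cup P)\sqcup J$ with $\Lambda$ a subgraph of $M$. Theorem~\ref{cor:topminusone} then gives $\Tors H_{n-1}(\B_n\Gamma)\cong\Tors H_{n-1}(\B_n\Lambda)$. I will check that its proof applies verbatim to multigraphs, since Lemma~\ref{lem:pendant} holds for arbitrary finite graphs. So it suffices to treat subgraphs $\Lambda\subseteq M$.

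\emph{Step 2: from $M$ to simple subgraphs.} Each subgraph $\Lambda$ of $M$ is obtained from a simple subgraph $\Lambda_0$ of the simple reduction $L=K_t+K_{s,s'}$ by adding loops and parallel edges. I will remove these one at a time using the top-weight formulas in the same bidegree. For a loop at $v$, Theorem~\ref{cor:loop} gives $\Tors H_{n-1}(\B_n\Lambda'')\cong\Tors H_{n-1}(\B_n\Lambda)\oplus\Tors H_{n-2}(\B_{n-1}\Lambda_v)$. For a parallel edge, Theorem~\ref{prop:parallel-top} gives the analogous splitting with $\Lambda_{uw}$. The extra terms have the same shape with one fewer particle, so the natural plan is an induction on $n$ (and on the number of extra edges). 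This requires the hypothesis to pass to the smaller graphs.

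Exploding $v$ produces a graph homeomorphic to a subgraph of a subdivision of $M$ with pendant edges at the former neighbours of $v$. To keep the induction closed, I will strengthen the statement: it should hold for all graphs $\Gamma$ admitting a vertex set $D$ with $|D|\le n$ that meets every edge not lying in a pendant arc or isolated piece. For $M$, take $D$ to be the $t$ branch vertices together with the $s$ singletons from the $a$-part. Every edge of $M$, including loops and parallel edges, has an endpoint in $D$, because $K_{s,s'}$ has no edges inside the $s'$-part.

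Exploding a vertex of $D$ lowers both $|D|$ and $n$ by one. Exploding $u,w$ at a parallel pair lowers $|D|$ by at least one, since one of $u,w$ lies in $D$, while $n$ drops to $n-1$. If the vertex exploded is not in $D$, it contributes nothing to $|D|$, and new pendants are harmless by Step~1. I will also verify $|D|\le n-1$ in each exploded graph. The base of the induction is $n\le 1$, where $H_0$ is free.

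\emph{Step 3: the simple core.} For a simple graph $\Lambda_0$ with such a cover $D$, $|D|\le n$, I will rerun the first half of the proof of Proposition~\ref{prop:join-top}. If $|D|<n$, there is no $n$-matching. If $|D|=n$, every matching uses each vertex of $D$ exactly once, and none has both ends in $D$. Removing one edge $db$ leaves $b$ with only the free neighbour $d$, so \eqref{eq:Z} at $b$ is a single-term relation. Hence $H_n(\B_n\Lambda_0;R)=0$ for every $R$, and in particular $\Tors H_{n-1}=0$.

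The \textbf{main obstacle} is Step~2: checking carefully that the cover bound $|D|\le n-1$ survives explosion in the loop and parallel-edge terms, including loops at singleton-free vertices and multiplicities between branch sets. An alternative I would try if this bookkeeping fails is to apply Step~3's single-term argument directly to multigraphs. That would show $H_n(\B_n\Lambda;R)=0$ for every subgraph $\Lambda$ of $M$ at once, via a multigraph version of Proposition~\ref{prop:Zcycles}, in which half-edges replace edges and loops contribute cancelling pairs. This would bypass the induction.
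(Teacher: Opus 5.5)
Your main route is the paper's proof. You reduce to a subgraph $\Lambda\subseteq M$ by Lemma~\ref{lem:shape} and Theorem~\ref{cor:topminusone}. You induct on $n$ and on the number of loops and surplus parallel edges via Theorems~\ref{cor:loop} and~\ref{prop:parallel-top}. You settle the simple core by the single-term argument of Proposition~\ref{prop:join-top}(1).

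The only difference is bookkeeping. The paper stays inside contractions of complete bipartite graphs, a family closed under vertex deletion. It uses Lemma~\ref{lem:contract} to see two things: loops sit at mixed branch sets, and no parallel class joins an $A$-singleton to a $B$-singleton. Hence deleting the exploded vertices lowers $t+s$. Your cover $D$, consisting of the mixed branch sets and the $A$-singletons, is the same parameter. It covers $M$ because every original edge has an $A$-endpoint. This formulation dissolves your announced ``main obstacle'': a loop at $v$ forces $v\in D$, and a parallel edge $uw$ forces $\{u,w\}\cap D\neq\varnothing$. It also gives the statement for every graph $(\Lambda\cup P)\sqcup J$ whose core $\Lambda$ has a vertex cover of at most $n$ vertices.

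You need two corrections.

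First, Theorem~\ref{cor:loop} requires $v$ to be nonisolated once the loop is deleted. If the loop is an isolated circle component, $\Lambda=S^1\sqcup\Lambda'$, the theorem does not apply, and you need a K\"unneth step as in the paper. Since $\UConf_k(S^1)$ has free homology concentrated in degrees $0$ and $1$, torsion in $H_{n-1}(\B_n\Lambda)$ can only come from $H_{n-1}(\B_n\Lambda')$ or from $H_{n-2}(\B_{n-1}\Lambda')$. Here $\Lambda'$ has the cover $D\setminus\{v\}$, so your inner and outer inductions handle both terms.

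Second, your fallback cannot work. Loops and parallel edges do not contribute cancelling pairs; they produce genuine top cycles, so top homology of these multigraphs need not vanish. For example, contracting a spanning tree of $K_{2,2}$ leaves one vertex with one loop. There $t+s=1=n$, yet $H_1(\B_1S^1)\cong\Z$. This is why the multigraph case must go through the torsion splittings rather than through vanishing over every ring.
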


\begin{proof}
The case $n=1$ is immediate because $H_0$ is free abelian.  Assume $n\ge2$.
By the proof of Lemma~\ref{lem:shape}, applied to the multigraph $M$, one may write
\[
\Gamma\cong(\Lambda\cup P)\sqcup J,
\]
where $\Lambda\subseteq M$, $P$ is a collection of pendant edges and $J$ is a disjoint
union of arcs and points.  Theorem~\ref{cor:topminusone} removes $P$ and $J$
without changing the torsion in $H_{n-1}(\B_n)$, so it is enough to treat $\Lambda$.

We use induction first on $n$ and, for fixed $n$, on the number of loops plus the
number of edges in excess of one in the parallel classes of $\Lambda$.
This family is closed under vertex deletion: deleting vertices of $M$ amounts to
deleting their entire branch sets from $K_{a,b}$ and restricting $F$ to the
remaining vertices.  The result is a contraction of the complete bipartite graph
on the remaining original vertices (allowing an empty part).  Thus the outer
induction applies to the graphs $M-v$ and $M-\{u,w\}$ below.  When it reaches
particle number one, torsion vanishes directly in $H_0$.

If $\Lambda$ is simple, it is a subgraph of $K_t+K_{s,s'}$.  By
Proposition~\ref{prop:join-top}(1),
\[
H_n(\B_n(K_t+K_{s,s'});R)=0
\]
for every commutative ring $R$.  Extension by zero therefore gives
$H_n(\B_n\Lambda;R)=0$ for every $R$.  Taking $R=\Z/\ell$ and using the universal
coefficient theorem shows that $H_{n-1}(\B_n\Lambda;\Z)$ is torsion-free.

Suppose next that $\Lambda=\Lambda_0\cup e'$ and $e'$ is parallel to an edge $e=uw$ of $\Lambda_0$.
Theorem~\ref{prop:parallel-top} gives
\[
\Tors H_{n-1}(\B_n\Lambda)
\cong
\Tors H_{n-1}(\B_n\Lambda_0)
\oplus
\Tors H_{n-2}(\B_{n-1}(\Lambda_0)_{uw}).
\]
The first summand vanishes by the inner induction.  Exploding both $u$ and $w$
turns every edge incident to $u$ or $w$ into a pendant edge or an isolated arc.
After removing those pieces, the remaining core is a subgraph of $M-\{u,w\}$.
A parallel class in a contraction of a complete bipartite graph cannot join an
$A$-singleton to a $B$-singleton; hence at least one of $u,w$ is a mixed branch set.
Deleting $u,w$ therefore lowers $t+s$ by at least one.  The simple reduction of
$M-\{u,w\}$ consequently has parameter at most
\[
(t+s)-1\le n-1.
\]
The outer induction, together with Theorem~\ref{cor:topminusone}, shows that the
second summand also vanishes.

Now suppose that $\Lambda$ contains a loop $e_0$ at $v$.  A loop in $M$ can occur only
at a mixed branch set, by Lemma~\ref{lem:contract}.  If $v$ is nonisolated after
deleting $e_0$, Theorem~\ref{cor:loop} in the bidegree $(n-1,n)$ gives
\[
\Tors H_{n-1}(\B_n\Lambda)
\cong
\Tors H_{n-1}(\B_n(\Lambda-e_0))
\oplus
\Tors H_{n-2}(\B_{n-1}(\Lambda-e_0)_v).
\]
The first term vanishes by the inner induction.  After exploding $v$, the remaining
core is a subgraph of $M-v$, whose simple reduction has parameter
$(t+s)-1\le n-1$; the second term therefore vanishes by the outer induction.

Finally, if the loop is an isolated circle component, write $\Lambda=S^1\sqcup \Lambda'$.  Since
$\UConf_k(S^1)\simeq S^1$ for $k\ge1$, its homology is free and concentrated in
degrees $0$ and $1$.  The K\"unneth formula in total degree $n-1$ reduces possible
torsion either to the same group for $\Lambda'$ with $n$ particles, or to
$H_{n-2}(\B_{n-1}\Lambda')$.  The former vanishes by the inner induction and the latter by
the outer induction, because deleting the mixed branch set supporting the loop lowers
$t+s$ by one.  This completes the induction.
\end{proof}

\begin{theorem}[Prime-power minimality in the full minor order]
\label{thm:strongmin}
Let
\[
q=p^a,\qquad a\ge1,\qquad m\ge q,
\qquad K=K_{m+1,m+q-1}.
\]
If $\Gamma<K$ in the order generated by edge deletion, vertex deletion, edge
contraction and subdivision, up to homeomorphism, then
\[
H_{m-1}(\B_m\Gamma;\Z)
\]
has no element of order $q$.  Thus $K_{m+1,m+q-1}$ is minimal for order-$q$ torsion
in the full minor order, with loops and parallel edges retained after contraction.
\end{theorem}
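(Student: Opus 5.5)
The plan is to reduce to a contraction multigraph and separate cases by the parameter $t+s$ of its simple reduction. Write $\Gamma\preceq M=K/F$. If $F=\varnothing$, then $\Gamma\prec K$ (it is not homeomorphic to $K$), and Theorem~\ref{thm:topminimal} applies directly. So assume $F\ne\varnothing$. By Lemma~\ref{lem:contract}, the simple reduction of $M$ is $K_t+K_{s,s'}$ with $t\ge1$, and
\[
t+s=m+1-\sum_i(\alpha_i-1),\qquad t+s'=m+q-1-\sum_i(\beta_i-1).
\]
If some $\alpha_i\ge2$, then $t+s\le m$, and Lemma~\ref{lem:minor-threshold} (with $n=m$, $a=m+1$) shows that $H_{m-1}(\B_m\Gamma;\Z)$ is torsion-free. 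It remains to treat the case in which all $\alpha_i=1$, so $t+s=m+1$.

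In this remaining case every branch set $C_i$ has a single $A$-vertex, so by Lemma~\ref{lem:contract} there are no loops. I would also check that no parallel edges can arise, since the multiplicity between $C_i$ and $C_j$ is $\beta_j+\beta_i\ge2$; this needs care. Because parallel classes do occur, I would run the same induction as in the proof of Lemma~\ref{lem:minor-threshold}: remove $P$ and $J$ by Theorem~\ref{cor:topminusone}, and peel parallel edges by Theorem~\ref{prop:parallel-top}. Each peel adds $\Tors H_{m-2}(\B_{m-1}(\Lambda_0)_{uw})$. After exploding $u,w$ and discarding pendant pieces, the core lies in $M-\{u,w\}$, which is a contraction of a complete bipartite graph with $t+s\le m$ for $m-1$ particles, so it is torsion-free by Lemma~\ref{lem:minor-threshold}. (A parallel pair always involves a mixed branch set, and deleting it lowers $t+s$ by one.) This reduces the problem to a simple subgraph $\Lambda$ of $J_0=K_t+K_{s,b'}$ with $d=t+s=m+1$ and $b'=s'$, where $t+b'=m+q-1-\sum(\beta_i-1)$. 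Since $\beta_i\ge1$, we have $b'\le m+q-2$, hence $b'\le m+q-2$ with $b'=m+\rho-1$ and $\rho\le q-1$.

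Now I apply Lemma~\ref{lem:inherit} to $J_0$. By Proposition~\ref{prop:join-top}(2), $H_m(\B_mJ_0;R)\cong H_m(\B_mK_{m+1,b'};R)$. If $b'\le m$ this is zero over every ring, which settles the case. Otherwise $\rho\ge2$, the group is $\ker(U_{\rho-1}(m+\rho-1)\otimes R)$, and Theorem~\ref{thm:exacttors} applies with $r=\rho\le q-1$. For $R=\Z$ the kernel is zero, because $\rho\le q-1<q\le m$ gives $m\ge\rho$. For $R=\Z/q$ it is $\bigoplus_{j\le\min(m,\rho)}(\Z/q)[j]^{\mu_j}$ with all $j<q$; hence $(\Z/q)[j]\cong\Z/p^{\min(a,v_p(j))}$ and $v_p(j)<a$. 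The exponent is therefore less than $q$, and Lemma~\ref{lem:inherit} excludes elements of order $q$ in every subgraph $\Lambda$.

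The main obstacle is the parallel-edge induction in the $t+s=m+1$ case. The first summand at each stage must be handled by the inner induction within the same ($t+s=m+1$) family, ending at the simple base case above. The second summand must genuinely land in the torsion-free threshold regime, which requires verifying that deleting the endpoints $u,w$ of a parallel class removes an $A$-vertex-bearing branch set. Here that holds because each $C_i$ contains exactly one $A$-vertex and any parallel class touches some $C_i$. The final sentence of the theorem then follows, since $K$ itself carries order $q$ by Theorem~\ref{thm:explicitgen}.
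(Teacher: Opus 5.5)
There is a genuine gap, in the step that restores parallel edges in the case $d=t+s=m+1$. The rest of your outline matches the paper's proof: $F=\varnothing$ via Theorem~\ref{thm:topminimal}, $d\le m$ via Lemma~\ref{lem:minor-threshold}, and the simple core $K_t+K_{s,s'}$ via Proposition~\ref{prop:join-top}(2), Theorem~\ref{thm:exacttors} and Lemma~\ref{lem:inherit}.

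\textbf{The off-by-one.} The extra summand from Theorem~\ref{prop:parallel-top} is $\Tors H_{m-2}(\B_{m-1}(\Lambda_0)_{uw})$, which is degree $n-1$ with $n=m-1$ particles. To make it vanish, Lemma~\ref{lem:minor-threshold} with $n=m-1$ requires the simple reduction of $M-\{u,w\}$ to have parameter $t+s\le m-1$. You only argue that deleting $u,w$ removes one mixed branch set. That gives $t+s\le m$, one too many, so the lemma does not apply. This is not a formality. At parameter $n+1$ the lemma's conclusion genuinely fails: $K_{m,b}$ has parameter $m$, and with $m-1$ particles it carries torsion in $H_{m-2}$ for $b\ge m+1$, of order $q$ once $m-1\ge q$ and $b$ is large. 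The one-step drop you borrowed works inside the proof of Lemma~\ref{lem:minor-threshold} only because there one starts at $t+s\le n$, not one above the threshold.

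\textbf{The missing observation.} When all $\alpha_i=1$, both endpoints of every extra parallel edge contribute to $d$. By Lemma~\ref{lem:contract}:
\begin{itemize}
\item a mixed branch set $C_i$ and a $B$-singleton are joined by $\alpha_i=1$ edge;
\item two singletons are joined by at most one edge.
\end{itemize}
So a parallel class joins either two mixed branch sets (multiplicity $\beta_i+\beta_j$), or a mixed branch set and an $A$-singleton (multiplicity $\beta_i$). In either case deleting $u$ and $w$ lowers $d$ by two. The core of $(\Lambda_0)_{uw}$, after removing pendant edges and arcs by Theorem~\ref{cor:topminusone}, then lies in a contraction with parameter $m-1$. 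That is exactly the threshold needed for $m-1$ particles.

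Also drop your aside that ``no parallel edges can arise''. It contradicts the multiplicity $\beta_i+\beta_j\ge2$ you compute in the same sentence. With these corrections your argument coincides with the paper's.
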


\begin{proof}
The full graph $K$ has an element of order $q$ by Theorem~\ref{thm:explicitgen}.
Let $\Gamma\preceq K/F$.  If $F=\varnothing$, the assertion is
Theorem~\ref{thm:topminimal}, so assume $F\ne\varnothing$ and put
\[
M=K/F.
\]
Use Lemma~\ref{lem:contract} with the $(m+1)$-vertex part as the first bipartition
class, and write the simple reduction of $M$ as
\[
L=K_t+K_{s,s'}.
\]
Set
\[
d=t+s.
\]
Since
\[
d=m+1-\sum_{i=1}^t(\alpha_i-1),
\]
one has $d\le m+1$.

If $d\le m$, Lemma~\ref{lem:minor-threshold} gives the stronger conclusion that
$H_{m-1}(\B_m\Gamma;\Z)$ is torsion-free.  It remains to consider $d=m+1$.

The equality $d=m+1$ forces
\[
\alpha_1=\cdots=\alpha_t=1.
\]
Hence $M$ has no loops.  Moreover $t\ge1$ and
\[
s'=m+q-1-\sum_{i=1}^t\beta_i\le m+q-2.
\]
By Proposition~\ref{prop:join-top}(2),
\[
H_m(\B_mL;R)\cong H_m(\B_mK_{m+1,s'};R).
\]
If $s'\le m$, the right side is zero for every $R$.  If $s'>m$, put
$r=s'-m+1$.  Then
\[
2\le r\le q-1\le m.
\]
Theorem~\ref{thm:exacttors} therefore gives $H_m(\B_mL;\Z)=0$, and, with coefficients
in $\Z/q$, $H_m(\B_mL;\Z/q)$ is a direct sum of groups $(\Z/q)[j]\cong\Z/\gcd(j,q)$ with
$2\le j\le r<q$, hence has exponent less than $q$.  By Lemma~\ref{lem:inherit},
$H_{m-1}(\B_mC;\Z)$ has no element of order $q$ for every simple subgraph $C\subseteq L$.

Now let $\Lambda\subseteq M$ be an arbitrary subgraph, and let $C$ be obtained from $\Lambda$
by retaining one edge in each parallel class.  There are no loops.  We add the
remaining parallel edges back one at a time.  Because every mixed branch set has
$\alpha_i=1$, a parallel class of $M$ is either between two mixed branch sets or
between a mixed branch set and an $A$-singleton; a mixed branch set and a
$B$-singleton are joined by only one edge.  Hence the two endpoints $u,w$ of every
extra parallel edge both contribute to $d=t+s$.  Deleting them lowers $d$ by two.

Suppose $\Lambda_1=\Lambda_0\cup e'$ is one step in this reconstruction.  By
Theorem~\ref{prop:parallel-top},
\[
\Tors H_{m-1}(\B_m\Lambda_1)
\cong
\Tors H_{m-1}(\B_m\Lambda_0)
\oplus
\Tors H_{m-2}(\B_{m-1}(\Lambda_0)_{uw}).
\]
After exploding $u,w$ and removing the resulting pendant edges and isolated arcs,
the remaining core is a subgraph of $M-\{u,w\}$.  Its simple reduction has parameter
\[
d-2=m-1.
\]
Lemma~\ref{lem:minor-threshold}, now with $n=m-1$, shows that the second summand is
torsion-free.  Thus adding contraction-created parallel edges does not change the
torsion subgroup in the bidegree $(m-1,m)$.  It follows that $H_{m-1}(\B_m\Lambda)$ has no
element of order $q$.

Finally, the proof of Lemma~\ref{lem:shape} applies to the multigraph $M$, so
\[
\Gamma\cong(\Lambda\cup P)\sqcup J
\]
with $P$ a collection of pendant edges and $J$ a disjoint union of arcs and points.
Theorem~\ref{cor:topminusone} removes $P$ and $J$ without changing the torsion in
$H_{m-1}(\B_m)$.  The result follows.
\end{proof}

\begin{corollary}[Minimality for arbitrary finite orders]\label{cor:orderk-minimal}
Let $k\ge2$, let $q=q(k)$ be the largest prime power dividing $k$, and let $m\ge q$.
Then $K=K_{m+1,m+q-1}$ is minimal in the full minor order for elements of order $k$
in $H_{m-1}(\B_m)$: the group $H_{m-1}(\B_mK;\Z)$ has a direct summand $\Z/k$,
whereas $H_{m-1}(\B_m\Gamma;\Z)$ has no element of order $k$ for every $\Gamma<K$.
\end{corollary}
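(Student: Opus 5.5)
The plan has two parts: existence of a $\Z/k$ summand on $K$, and absence of elements of order $k$ on every $\Gamma<K$. Both reduce to the prime-power case through $q=q(k)$.

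\emph{Existence.} Since $m\ge q$ and we take $r=q$, we have $\min\{m,r\}=q\ge q(k)$. Corollary~\ref{cor:orderk}, in the direction (3)$\Rightarrow$(2), then gives a direct summand $\Z/k$ of $H_{m-1}(\B_mK;\Z)$. Concretely, we choose one cyclic summand $\Z/p_i^{a_i}$ from the term $j=p_i^{a_i}\le q$ of \eqref{eq:torsformula} for each prime power $p_i^{a_i}$ exactly dividing $k$.

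\emph{Minimality.} Let $\Gamma<K$ and suppose $x\in H_{m-1}(\B_m\Gamma;\Z)$ has order $k$. Write $k=q\cdot k'$ with $\gcd(q,k')=1$. Then $k'x$ has order exactly $q$, since the order of a multiple $k'x$ is $k/\gcd(k,k')=k/k'=q$. This contradicts Theorem~\ref{thm:strongmin}, applied to the prime power $q=p^a$ with $m\ge q$ and the same graph $K=K_{m+1,m+q-1}$. That theorem asserts that no $\Gamma<K$ in the full minor order has an element of order $q$ in $H_{m-1}(\B_m\Gamma;\Z)$. Hence no element of order $k$ exists.

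Nothing here is a real obstacle; the substance is entirely in Theorem~\ref{thm:strongmin}. The only point to check is that the elementary reduction from order $k$ to order $q$ is legitimate, i.e.\ that $q$ is a genuine prime-power divisor of $k$, so that the prime-power hypothesis of Theorem~\ref{thm:strongmin} applies. This holds by the definition of $q(k)$. The minimality is with respect to order-$q$ torsion, and since $k$ is a multiple of $q$ it transfers to order $k$. It is worth noting, in line with Remark~\ref{rem:primepower}(2), why one cannot replace $q$ by $k$ itself in the graph parameter: the argument uses only the largest prime-power factor, and smaller graphs such as $K_{m+1,m+q'-1}$ with $q'<q$ still lack order-$q$ elements.
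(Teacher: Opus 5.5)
Your proof is correct and is the paper's argument: the $\Z/k$ summand comes from Corollary~\ref{cor:orderk}. Minimality follows because $(k/q)x$ would be an element of order $q$ on a proper minor, which contradicts Theorem~\ref{thm:strongmin}.
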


\begin{proof}
Corollary~\ref{cor:orderk} gives the direct summand $\Z/k$ for $K$.
If $x\in H_{m-1}(\B_m\Gamma;\Z)$ had order $k$ for a proper minor $\Gamma<K$,
then $(k/q)x$ would have order $q$, contradicting Theorem~\ref{thm:strongmin}.
\end{proof}

For a non-prime-power order $k$, the relevant graph parameter is $q(k)$ rather
than $k$. Thus this corollary does not remove the prime-power hypothesis from
Theorem~\ref{thm:strongmin} with its original parameterization; see
Remark~\ref{rem:primepower}(2).

\begin{corollary}\label{cor:fullminor-k45}
$K_{4,5}$ is minimal in the full minor order for odd torsion in $H_2(\B_3)$.
\end{corollary}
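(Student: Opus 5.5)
The plan is to split the assertion into existence for $K_{4,5}$ and absence for every $\Gamma<K_{4,5}$, and to handle the primes $p=3$ and $p\ge5$ separately.

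Existence is immediate from Corollary~\ref{cor:exact}: $H_2(\B_3K_{4,5})$ contains $\Z/3$. Since $m=3$ and $q=3$ satisfy $m\ge q$, Theorem~\ref{thm:strongmin} already rules out elements of order $3$ in $H_2(\B_3\Gamma;\Z)$ for every $\Gamma<K_{4,5}$. What remains is to exclude $p$-torsion for primes $p\ge5$.

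For $p\ge5$ I would follow the proof of Theorem~\ref{thm:strongmin} with $q$ replaced by $p$. Write $\Gamma\preceq M=K_{4,5}/F$. If $F=\varnothing$, Theorem~\ref{thm:minimal} applies directly. Otherwise let $d=t+s$ be the parameter of the simple reduction $L=K_t+K_{s,s'}$. If $d\le3$, Lemma~\ref{lem:minor-threshold} shows that $H_2(\B_3\Gamma;\Z)$ is torsion-free. If $d=4$, Proposition~\ref{prop:join-top}(2) gives
\[
H_3(\B_3L;R)\cong H_3(\B_3K_{4,s'};R)\qquad\text{with } s'\le4,
\]
and this vanishes for $s'\le3$. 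For $s'=4$ it equals $R[2]$ by Corollary~\ref{cor:r2}. In either case it is zero over $\Z$ and over $\F_p$ for $p\ge5$. Lemma~\ref{lem:inherit} (with $q=p$) then excludes $p$-torsion in $H_2(\B_3C)$ for every simple $C\subseteq L$.

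The only point needing care is restoring the parallel edges. In the case $d=4$ there are no loops, and I would reuse the reconstruction step verbatim. Theorem~\ref{prop:parallel-top} adds only $\Tors H_1(\B_2(\Lambda_0)_{uw})$, which has no odd torsion by Theorem~\ref{thm:KP}. This is also exactly Corollary~\ref{cor:parallel3}. Pendant edges and the components $J$ are then removed by Theorem~\ref{cor:topminusone}, which completes the argument.

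I expect no real obstacle. The one thing to check is that every odd prime $p\ge5$ is covered in both branches, and this holds because the relevant top groups are $0$ or $R[2]$.
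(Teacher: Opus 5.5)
Your proposal is correct and follows the paper's proof essentially step for step. You exclude $3$-torsion via Theorem~\ref{thm:strongmin}, and for $p\ge5$ you use the same case split on $F$ and on $d=t+s$, with Lemma~\ref{lem:minor-threshold}, Proposition~\ref{prop:join-top}(2), and extension by zero with universal coefficients. The only minor variation is in restoring the parallel edges. You appeal to Ko--Park (equivalently Corollary~\ref{cor:parallel3}), whereas the paper reuses the reconstruction step of Theorem~\ref{thm:strongmin}, which applies Lemma~\ref{lem:minor-threshold} to make the added summand torsion-free; both suffice for odd primes.
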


\begin{proof}
The full graph has $3$-torsion by Corollary~\ref{cor:exact}, and
Theorem~\ref{thm:strongmin} with $(q,m)=(3,3)$ excludes $3$-torsion from every
proper minor.  It remains to exclude torsion at odd primes $p\ge5$.

Write $\Gamma\preceq M=K_{4,5}/F$.  If $F=\varnothing$, this follows from
Theorem~\ref{thm:minimal}.  If $F\ne\varnothing$, use the notation $d=t+s$ of
the proof of Theorem~\ref{thm:strongmin}.  For $d\le3$, Lemma~\ref{lem:minor-threshold}
excludes all torsion.  For $d=4$, the simple reduction $L$ satisfies
$H_3(\B_3L;R)\cong H_3(\B_3K_{4,s'};R)$ with $s'\le4$.  This group is zero for
$s'\le3$ and is $R[2]$ for $s'=4$.  In particular it vanishes for $R=\Z$ and
$R=\F_p$ with $p$ odd.  Extension by zero and the universal coefficient theorem
therefore exclude odd torsion for every simple subgraph of $L$.  As proved in
Theorem~\ref{thm:strongmin}, restoring the parallel edges in the case $d=m+1$
does not change the torsion subgroup.  Finally Theorem~\ref{cor:topminusone}
removes the pendant edges and isolated arcs and points arising from subdivisions.
This excludes every odd prime and proves the assertion.
\end{proof}

\begin{remark}
Theorem~\ref{thm:strongmin} establishes prime-power minimality in the full
      multigraph minor order.  The key additional input beyond the simple-core
      argument is Theorem~\ref{prop:parallel-top}: in bidegree $(m-1,m)$ the
      torsion splits as a direct sum, so no nontrivial extension can create an
      element of order $q$.  Lemma~\ref{lem:minor-threshold} makes the additional
      summands vanish for the contraction-created parallel edges when $d=m+1$,
      and handles all loops and parallel edges when $d\le m$.
\end{remark}

\begin{remark}[Comparison with the usual topological minor order]\label{rem:topminor}
In the usual definition, $\Gamma$ is a topological minor of $K$ if some subdivision of
$\Gamma$ is isomorphic to a subgraph of $K$.  Our order $\preceq$ contains this order,
and it is strictly larger. 
Deleting an open subinterval whose closure is contained in the interior
of an edge $xy$ of $K$ leaves pendant stubs at both $x$ and $y$, and the
two stubs cannot in general be realized by two distinct edges of $K$.
For instance, $(K_{4,5}\setminus a_1b_1)$ with pendant edges at
$a_1$ and at $b_1$ has $11$ vertices, so no subdivision of it is a subgraph of
$K_{4,5}$, yet it is $\prec K_{4,5}$.  Minimality with respect to $\preceq$, as in
Theorems~\ref{thm:topminimal} and~\ref{thm:minimal}, is therefore the stronger
statement.  By Lemma~\ref{lem:closed}, for connected $\Gamma$ the relation
$\Gamma\preceq K$ means exactly that $\Gamma$ is homeomorphic to a closed connected
subspace of $K$; in general it means that $\Gamma$ is homeomorphic to a closed subspace
of $K$ with finitely many components.
\end{remark}

\begin{remark}\label{rem:twotorsion}
For $2$-torsion, $K_{4,5}$ is not minimal. Corollary~\ref{cor:r2} gives
\[
\Tors H_2(\B_3K_{4,4})\cong\Z/2,
\]
and $K_{4,4}$ is minimal
for $2$-torsion in $H_2(\B_3)$ by Theorem~\ref{thm:topminimal} with $(q,m)=(2,3)$.  It is
not the only minimal graph: see Section~\ref{sec:complete}.
Theorem~\ref{thm:topminimal} establishes prime-power minimality in the topological minor
order for every $m\ge q$, not just for $(q,m)=(2,2)$ or $(3,3)$.  The extra input is the
exact pendant splitting: in degree $m-1$ with $m$ particles its new terms are free for
dimensional reasons, irrespective of torsion in lower degrees at larger particle
numbers.
\end{remark}

\subsection{Two-torsion and the complete graphs}\label{sec:complete}
For $m=2$ the graphs $K_{3,3}$ and $K_5$ play symmetric roles: both have
$\Tors H_1(\B_2)\cong\Z/2$, and by Ko--Park and Kuratowski's theorem they are the minimal
graphs with $2$-torsion in $H_1(\B_2)$.  Geometrically, $\UConf_2(K_{3,3})$ and
$\UConf_2(K_5)$ deformation retract onto closed non-orientable surfaces,
of Euler characteristics $-3$ and $-5$ \cite{Abrams}, which is where $H_2(\B_2;R)\cong R[2]$ and the $\Z/2$ in $H_1$
come from.  Corollary~\ref{cor:r2} and
Theorem~\ref{thm:topminimal} generalize the first to $K_{m+1,m+1}$.  Here we generalize
the second to $K_{2m+1}$. Its minimality follows directly from the
description of its top cycles and extension by zero.

\begin{theorem}\label{thm:complete}
Let $m\ge2$.  For every commutative ring $R$,
\[
H_m(\B_mK_{2m+1};R)\cong R[2],
\]
spanned by the class of the constant matching function when $2R=0$.
Consequently,
\[
H_m(\B_mK_{2m+1};\Z)=0,\qquad
\Tors H_{m-1}(\B_mK_{2m+1};\Z)\cong\Z/2 .
\]
Moreover, if $\Gamma\prec K_{2m+1}$, then $H_{m-1}(\B_m\Gamma;\Z)$ is torsion-free.  In
particular $K_{2m+1}$ is minimal in the order $\preceq$ for $2$-torsion in
$H_{m-1}(\B_m)$, and $K_{m+1,m+1}$ and $K_{2m+1}$ are incomparable in this order.
\end{theorem}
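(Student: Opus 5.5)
The plan is to use Proposition~\ref{prop:Zcycles}: describe top cycles on $K_{2m+1}$ as matching functions, show they are forced to be constant up to sign with $2F=0$, and then argue as in Theorems~\ref{thm:subgraph} and~\ref{thm:topminimal}.

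\emph{Step 1: the relations.} An $m$-matching $M$ of $K_{2m+1}$ covers $2m$ vertices and leaves exactly one vertex $z$ free. An $(m-1)$-matching $M'$ leaves three free vertices $v,x,y$. Relation \eqref{eq:Z} at $v$ then reads
\[
F(vx,M')+F(vy,M')=0 .
\]
Combining this with oddness under reversal, $F(vx,M')=-F(xv,M')$, the relation at $x$ gives $F(xv,M')+F(xy,M')=0$. Hence $F(vx,M')=F(xy,M')$, and similarly $F(xy,M')=F(yv,M')$, where each edge is oriented cyclically.

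\emph{Step 2: comparing the three edges on $\{v,x,y\}$.} Put $a=F(vx,M')$. The relations at $v$, $x$ and $y$ together give $a+F(vy,M')=0$ with $F(vy,M')=-F(yv,M')=-a$, so $a-a=0$, which is vacuous. So the triangle relations only say that the cyclically oriented edges of each free triangle carry equal values. To get $2F=0$ I will bring in a second triangle. Starting from $M=(vx,M')$ with free vertex $y$, I pick an edge $e=cd\in M'$. The triangle on $\{c,d,y\}$ (with $M''=(vx,M'\setminus e)$) relates $F$ on $(vx,cd,\dots)$ to $(vx,dy,\dots)$ and $(vx,yc,\dots)$, and these move the free vertex around. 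Chaining such moves shows that $F(M)$ depends only on the orientation-and-order sign data. Going around a closed chain that reverses an edge returns $-F$, which forces $F=-F$. I expect this bookkeeping to be the main obstacle: I must choose an explicit cycle of moves, say among three matchings on a $5$-vertex sub-configuration, whose composite sign is $-1$, which yields $2F=0$. I must also check that the resulting normal form, the sign of the ordered oriented matching relative to a fixed reference, is consistent, so that the constant function (with those signs) really satisfies every relation when $2R=0$. Since $2R=0$ makes all signs irrelevant, consistency only requires each triangle relation $F+F=0$, which holds. This gives $H_m(\B_mK_{2m+1};R)\cong R[2]$.

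\emph{Step 3: integral consequences and minimality.} Taking $R=\Z$ gives $H_m=0$. Then the universal coefficient theorem gives $\Tor(H_{m-1},\Z/\ell^a)\cong(\Z/\ell^a)[2]$, so $\Tors H_{m-1}\cong\Z/2$, exactly as in Corollary~\ref{cor:r2}. For $\Gamma\prec K_{2m+1}$, Lemma~\ref{lem:shape} and Theorem~\ref{cor:topminusone} reduce to a proper subgraph $\Lambda$. Lemma~\ref{lem:extend} embeds $H_m(\B_m\Lambda;R)$ into $R[2]$. Over $\F_2$, the nonzero class is the constant function, which is nonzero on a matching containing a missing edge, so $H_m(\B_m\Lambda;\F_2)=0$. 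For odd $\ell$ we already have $H_m(\B_m\Lambda;\F_\ell)=0$, and $H_m(\B_m\Lambda;\Z)=0$. The universal coefficient theorem then makes $H_{m-1}(\B_m\Lambda)$ torsion-free. Finally, incomparability follows because each graph has $2$-torsion in this bidegree while every proper $\preceq$-minor of either graph has none. Neither graph is homeomorphic to the other, since their vertex counts ($2m+2$ versus $2m+1$, all of degree at least $3$) differ, so neither is $\preceq$ the other.
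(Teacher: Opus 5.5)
Your architecture is the paper's: triangle relations, a five-vertex configuration to force $2F=0$, connectivity, the universal coefficient theorem, and extension by zero. Step~3 is fine. Your homological proof of incomparability is a valid alternative to the paper's degree count: it uses Theorem~\ref{thm:topminimal} with $q=2$ for $K_{m+1,m+1}$, the minimality for $K_{2m+1}$, and the branch-point count $2m+2\ne 2m+1$. However, the core claim $H_m(\B_mK_{2m+1};R)\cong R[2]$ is not established, because Step~2 states two facts without proof.

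\emph{First gap: $2F=0$.} You say you ``must choose'' a closed chain of moves with composite sign $-1$, but you never exhibit one. Your suggested three-matching cycle cannot work. Three pairwise adjacent matchings all share one edge $e$, and the relations among them are consistent even over $\Z$: set the three cyclically oriented values equal to $1$. A frustrated chain needs five moves, for instance
\[
F(12,34)=F(25,34)=F(25,41)=F(53,41)=F(53,12)=F(34,12)=-F(12,34).
\]
Each equality is a rotation in one slot, with the remaining $m-2$ edges held fixed in the later slots. The last equality is alternation. The paper proceeds differently: it restricts to the $K_5$ on the five vertices not covered by an $(m-2)$-submatching and imports $H_2(\B_2K_5;R)\cong R[2]$ from Ko--Park together with $\chi(\B_2K_5)=-5$.

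\emph{Second gap: connectivity.} Your sentence ``chaining such moves shows that $F(M)$ depends only on the sign data'' asserts that the graph of $m$-matchings under rotations is connected. This is essential. Without it you only get $R[2]^c$, where $c$ is the number of components. That would break the count $\Tors H_{m-1}\cong\Z/2$. It would also break your minimality step, which needs every nonzero mod-$2$ top class to be the constant, hence nonzero on a matching through the missing edge.

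The paper proves connectivity in two steps. A rotation can move the missed vertex anywhere. With $f$ missed, the three rotations $xy\to fx$, $zw\to yz$, $fx\to xw$ replace $\{xy,zw\}$ by $\{xw,yz\}$ and leave $f$ missed again. Such exchanges connect all perfect matchings of $K_{2m+1}\setminus f$.
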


\begin{proof}
Let $F$ be a matching function on $K_{2m+1}$ as in Proposition~\ref{prop:Zcycles}.  An
$(m-1)$-matching $M'$ leaves exactly three free vertices, which are pairwise adjacent.
Hence every relation \eqref{eq:Z} has exactly two terms, and the three relations at the
free vertices of $M'$ are those of Example~\ref{ex:C3}: for free vertices $x,y,f$,
\begin{equation}\label{eq:rotation}
F(xy,M')=F(yf,M')=F(fx,M').
\end{equation}

\emph{Step 1: $2F=0$.}  Let $M$ be an $m$-matching, and split $M=N\cup M''$ with $N$
consisting of two edges of $M$; this uses $m\ge2$.  Let $S$ be the set of five vertices
not covered by $M''$, and define $G(N')=F(N',M'')$ for every ordered oriented
$2$-matching $N'$ of the complete graph $K_5$ on $S$, with the edges of $N'$ listed
first.  For a $1$-matching $N''$ of $K_5[S]$ and a vertex $v\in S$ not covered by it,
the free neighbours of $v$ in $K_{2m+1}$ relative to $N''\cup M''$ are exactly the two
remaining vertices of $S$.  So the relations \eqref{eq:Z} of $K_{2m+1}$ at
$(N''\cup M'',v)$ are precisely the relations of $K_5$ at $(N'',v)$, and $G$ is a matching
function of $K_5$ with two particles.  By Theorem~\ref{thm:KP} in the form of
\cite[Theorem~3.16]{KP}, $H_1(\B_2K_5)\cong\Z^6\oplus\Z/2$, and Gal's formula gives
$\chi(\B_2K_5)=-5$, so $H_2(\B_2K_5;\Z)=0$.  The universal coefficient theorem yields
$H_2(\B_2K_5;R)\cong\Tor(\Z/2,R)=R[2]$, so $2G=0$ by
Proposition~\ref{prop:Zcycles}.  In particular $2F(M)=0$.

\emph{Step 2: connectivity.}  Call two $m$-matchings adjacent if one is obtained from
the other by replacing an edge $xy$ by $yf$ or $fx$, where $f$ is the vertex missed by
the first; by \eqref{eq:rotation} and Step~1, $F$ takes the same value on adjacent
matchings, whatever their orientation and order.  We claim that this graph on
$m$-matchings is connected.  Replacing $xy\in M$ by $fx$ moves the missed vertex from
$f$ to $y$, so every vertex can be made the missed one.  Given $M$ missing $f$ and two
edges $xy,zw\in M$, the three replacements $xy\to fx$, $zw\to yz$, $fx\to xw$ produce
the matching $M\setminus\{xy,zw\}\cup\{xw,yz\}$, again missing $f$.  Such exchanges
generate all perfect matchings of $K_{2m+1}\setminus f$, which proves the claim.

\emph{Step 3: the module.}  By Steps~1 and~2, $F$ is determined by the single value
$c=F(M_0)\in R[2]$ of a fixed matching $M_0$, and $F$ takes the value $c$ on every ordered
oriented $m$-matching.  Conversely, for $c\in R[2]$ the function with constant value $c$
is alternating and odd under reversal, because $c=-c$, and satisfies every relation
\eqref{eq:Z}, because each relation has two terms and $c+c=0$.  Hence
$H_m(\B_mK_{2m+1};R)\cong R[2]$.

\emph{Step 4: torsion.}  With $R=\Z$ the top homology vanishes, so the universal
coefficient theorem gives $H_m(\,\cdot\,;\Z/q)\cong\Tor(H_{m-1},\Z/q)$.  For $q=2$ this
is $\F_2$, for $q=4$ it is $\Z/2$, and for odd primes it is $0$; as in the proof of
Corollary~\ref{cor:r2}, $\Tors H_{m-1}(\B_mK_{2m+1})\cong\Z/2$.

\emph{Step 5: minimality.} Let $\Gamma\prec K_{2m+1}$. By Lemma~\ref{lem:shape},
$\Gamma\cong(\Lambda\cup P)\sqcup J$ with $\Lambda\subsetneq K_{2m+1}$.
Theorem~\ref{cor:topminusone} removes the pendant edges $P$ and the
disjoint arcs and points $J$ without changing the full torsion subgroup
in degree $m-1$ and weight $m$. It therefore suffices to treat $\Lambda$.
Choose an edge $e$ of $K_{2m+1}$ missing from $\Lambda$.
By Lemma~\ref{lem:extend}, every top matching function on $\Lambda$ extends by zero
to one on $K_{2m+1}$. By Steps~1--3 this extension is constant on all
$m$-matchings. There exists an $m$-matching containing $e$: pair any
$2m-2$ of the $2m-1$ remaining vertices. Its value under the extension
is zero, so the constant is zero. Thus $H_m(\B_m\Lambda;R)=0$ for every $R$.
The universal coefficient theorem, with $R=\F_\ell$ for each prime $\ell$,
now shows that $H_{m-1}(\B_m\Lambda;\Z)$ is torsion-free.
Finally, $K_{m+1,m+1}$ has $2m+2$ vertices of degree at least three,
whereas $K_{2m+1}$ has only $2m+1$; and $K_{2m+1}$ has vertices of degree
$2m>m+1$, the maximum degree in $K_{m+1,m+1}$. These obstructions persist
under subdivision and passage to a subgraph, so the two graphs are
incomparable in the order $\preceq$.
\end{proof}

\begin{remark}
\begin{enumerate}[leftmargin=*]
\item As in Section~\ref{sec:explicit}, the class of the constant function is
      represented by a chain $w$ which is a sum of the cubes of all $m$-matchings of
      $K_{2m+1}$ with coefficients $\pm1$.  Since $\bar w$ is a mod-$2$ cycle,
      $\partial w=2z$ for an integral $(m-1)$-chain $z$, which is a cycle, and
      $[z]=\beta[\bar w]$ generates $\Tors H_{m-1}(\B_mK_{2m+1})\cong\Z/2$ because the
      Bockstein is injective.  This is the analogue of Theorem~B for $q=2$.
\item For $m=1$ the statement fails: $K_3$ is a circle and $H_1(\B_1K_3;R)\cong R$.  The
      argument breaks down in Step~1, which needs two edges in a matching.
\item Thus, for three particles, $2$-torsion in $H_2$ has at least the two incomparable
      minimal graphs $K_{4,4}$ and $K_7$.  Unlike odd torsion, it can also be created by
      attaching a lollipop: by Theorem~\ref{cor:loop}, $\Tors_2H_2(\B_3)$ of
      $\Delta\cup(\text{lollipop at }x)$ contains $\Tors_2H_1(\B_2(\Delta\cup\text{pendant at }x))$,
      which is $\Z/2$ for $\Delta=K_{3,3}$ or $K_5$ (Example~\ref{ex:loopfail}).  These
      graphs are again incomparable with $K_{4,4}$ and $K_7$.  A complete list of the
      minimal graphs with $2$-torsion in $H_2(\B_3)$ is not known to us.
\end{enumerate}
\end{remark}

\section{Further questions}\label{sec:questions}

The preceding results leave open the lower Betti numbers, the remaining torsion,
and its behaviour under graph operations. We formulate questions arising from
these results.

\medskip\noindent\textbf{Betti numbers and polynomiality.}\enspace
The top homology and the torsion one degree below do not determine all Betti
numbers. These are known for $m\le3$ (Corollary~\ref{cor:small-cases}), but remain
open for $m\ge4$. A general formula would complement Ko--Park's calculation of
$H_1$ \cite{KP} and the results for trees \cite{FS} and graphs of circumference
at most one \cite{AnKim}; for ordered configurations on trees with loops,
see \cite{CL}.

Now fix $m$ and vary the size $N=m+r-1$ of the second part.
For a fixed set $A$ of $m+1$ vertices, injections of finite sets induce graph
embeddings, making
\[
S\longmapsto H_i(\B_m(K_{A,S});\Z)
\]
an $\mathrm{FI}$-module, where $\mathrm{FI}$ is the category of finite sets and
injections. Each basis element of the unreduced \'Swi\k{a}tkowski complex in
weight $m$ uses at most $m$ labels in $S$. The chain groups are therefore finitely
generated, and so is homology by the Noetherian theorem \cite[Theorem~A]{CEFN}.
The same argument over any field $\F$, together with \cite[Theorem~B]{CEFN}, gives
a polynomial $P_{m,i,\F}\in\Q[r]$ with
\[
\dim_{\F}H_i(\B_mK_{m+1,m+r-1};\F)
=P_{m,i,\F}(r)
\quad\text{for all sufficiently large }r.
\]

The integral torsion submodule $T_i$ is also finitely generated, so its exponent
is bounded independently of $N$. For its $p$-primary part $T_{i,p}$, the layers
$p^{a-1}T_{i,p}/p^aT_{i,p}$ are finitely generated $\mathrm{FI}$-modules over
$\F_p$. Successive differences of their dimensions count the summands of exact
order $p^a$. Thus both the free rank and each prime-power multiplicity are
eventually polynomial in $r$; the polynomials and their optimal ranges remain
to be determined.

Theorem~\ref{thm:exacttors} already gives explicit polynomials for the
top free rank and for the torsion one degree below when $r\ge m$:
\[
\rk_{\Z}H_m
=\binom{m+r-1}{m}-\binom{m+r-1}{m-1},\qquad
\Tors H_{m-1}
\cong\bigoplus_{j=2}^{m}(\Z/j)^{
\binom{m+r-1}{m-j}-\binom{m+r-1}{m-j-1}}.
\]
Here the homology groups are those of $\B_mK_{m+1,m+r-1}$.
The integral top homology first becomes nonzero at $r=m+1$, one step
after these polynomial formulas begin to apply.
Corollary~\ref{cor:small-cases} shows that the free rank one degree below
also agrees with a polynomial throughout $r\ge m$ for $m\le3$.

For comparison with field coefficients, write $b_i$ for the free rank
and $t_{i,p}$ for the total number of $p$-primary cyclic summands in
integral degree $i$, with $t_{-1,p}=0$. The universal coefficient theorem gives
\[
\dim_{\F}H_i=b_i+t_{i,p}+t_{i-1,p}
\qquad(\operatorname{char}\F=p).
\]
Thus polynomial ranges over fields also depend on torsion one degree below.
The red ranges in Table~\ref{tab:small-cases} work in every characteristic but
need not be optimal in each one. Polynomiality concerns dimensions and
multiplicities, not isomorphisms induced by adding vertices.

Theorem~\ref{rep:thm:rational} also allows both part sizes to vary.
Corollary~\ref{cor:rep-polynomial} gives an explicit polynomial for the
top rank on $a,b\ge2m$, with leading term $a^mb^m/m!$.
The analogous lower-degree information is not supplied by the rational
top representation.

\begin{question}[Betti numbers and polynomiality]\label{q:betti}
Let $m\ge4$.
\begin{enumerate}[leftmargin=*]
\item Determine the Betti numbers of $\B_mK_{m+1,m+r-1}$. The torsion
      in degree $m-1$ is known for every $r\ge0$ (Theorem~\ref{thm:exacttors}), and
      for $1\le r\le m-1$ the theta subgroup contributes free rank
      $\binom{m+r-1}{r}-\binom{m+r-1}{r-1}$ by Theorem~\ref{thm:theta-injective},
      but the remaining free classes are not determined by that presentation.
\item Is $\rk_\Z H_{m-1}(\B_mK_{m+1,m+r-1};\Z)$ given by a polynomial in $r$ for
      every $r\ge m$? More generally, for fixed $m$ and $i<m$, are the free rank and
      the prime-power torsion multiplicities of $H_i(\B_mK_{a,b};\Z)$ eventually
      polynomial in $a$ and $b$, and with what ranges of validity?
\end{enumerate}
\end{question}

\medskip\noindent\textbf{Torsion outside the vanishing range.}\enspace
Proposition~\ref{rep:prop:arbitrary} expresses top homology over every
field as a Hom space for the slot group $\Sigma_m$.
Theorem~\ref{rep:thm:generalvanishing} evaluates its dimension for odd
$p\ge m$ in the range $a,b\ge2m-1$. For $p<m$, simplex stabilizers can
have order divisible by $p$, so the chain modules used in that proof
need not be projective. The rational decomposition alone does not
resolve this obstruction.

The $2$-primary calculation of Proposition~\ref{rep:prop:two-primary-three}
suggests a corresponding problem for more particles. For $a,b\ge2m-1$,
put $R=\Z_{(2)}$ and
$L=\operatorname{sgn}\otimes_R D_m(a;R)\otimes_R D_m(b;R)$, with the
diagonal slot action. The split chessboard chain sequences give the
same base-change argument as in that proposition. Comparing the
invariants sequence of $0\to L\xrightarrow{2^\ell}L\to L/2^\ell L\to0$
with universal coefficients identifies the $2^\ell$-torsion in
$H_{m-1}(\B_mK_{a,b};\Z)$ with $H^1(\Sigma_m;L)[2^\ell]$ for every
$\ell\ge1$. Thus the $2$-primary subgroup is abstractly isomorphic to
$H^1(\Sigma_m;L)$. The usual restriction--transfer argument annihilates
this group by $m!$ (\cite[Chapter~XII]{CE}), giving the uniform exponent
bound $2^{v_2(m!)}$, where $v_2$ denotes the $2$-adic valuation.

\begin{question}[Torsion outside the vanishing range]\label{q:small-prime-vanishing}
Let $m\ge4$ and $a,b\ge2m-1$.
\begin{enumerate}[leftmargin=*]
\item For an odd prime $p<m$, does
      $\dim_{\F_p}H_m(\B_mK_{a,b};\F_p)-\rk H_m(\B_mK_{a,b};\Z)$ vanish? If not,
      describe its values in terms of the modular slot representations
      $D_m(a;\F_p)$ and $D_m(b;\F_p)$. The first case not covered by
      Theorem~\ref{rep:thm:generalvanishing} is $m=4$, $p=3$.
\item Determine the cyclic factors of the $2$-primary subgroup of
      $H_{m-1}(\B_mK_{a,b};\Z)$ from the integral slot modules. What is the
      optimal exponent bound for fixed $m$, and when do factors of order $4$ or
      higher occur? The first case is $m=4$, $a,b\ge7$.
\end{enumerate}
\end{question}

\medskip\noindent\textbf{Where torsion disappears.}\enspace
Corollary~\ref{rep:cor:allprimecounterexample} gives a source and a target
between which the entire $p$-primary torsion disappears. It does not
locate the first inclusion that kills the original class, or determine
whether intermediate graphs carry other $p$-torsion classes.

The prime-power case lies outside the present projectivity argument:
if $q=p^a$ with $a\ge2$, then $p<q$. Enlargement of the second part alone
preserves the order by Proposition~\ref{prop:second-part-persistence}.

\begin{question}[Disappearance under enlargement of the first part]\label{q:first-vanishing}
Let $p$ be an odd prime, $q=p^a$ with $a\ge1$, and let $\tau_q$ be the order-$q$
class of Theorem~\ref{thm:explicitgen} on $K_{q+1,2q-1}$. Does its image in
$H_{q-1}(\B_qK_{s,2q-1};\Z)$ vanish for some $s\ge q+1$? If so, determine the
least such $s$. At which values of $s$, if any, does the entire $p$-primary
torsion vanish? For $a=1$, Corollary~\ref{rep:cor:allprimecounterexample} gives
vanishing at $s=2p-1$; is this the first value,
and in particular does $\tau_5$ survive in $H_4(\B_5K_{7,9};\Z)$? For $a\ge2$,
even the case $s=2q-1$ is open.
\end{question}

\medskip\noindent\textbf{Tableau cycles and integral lattices.}\enspace
Theorem~\ref{rep:thm:rational} identifies the rational representation,
but does not give an integral decomposition into Specht lattices.
The loop classes of Section~\ref{sec:loopclasses} supply an explicit
polytabloid construction in one boundary case.

\begin{question}[Explicit representatives]\label{q:tableau-cycles}
Can the summands in \eqref{rep:eq:main} be realized by explicit matching
cycles indexed by pairs of standard Young tableaux? Determine the
integral lattice generated by such cycles, its index in the full top
homology, and the maps induced by inclusions of complete bipartite
graphs. Can this construction describe the failure of integral lifting
of modular top cycles, and hence the torsion one degree below?
\end{question}

\medskip\noindent\textbf{Necessity.}\enspace
Theorem~C is a minimality statement: no graph below $K_{m+1,m+q-1}$ in the minor
order has an element of order $q$ in $H_{m-1}(\B_m)$.  The Ko--Park theorem for
$p=2$ says more, namely that $2$-torsion in $H_1$ occurs \emph{only} in the presence
of a $K_5$ or $K_{3,3}$ minor \cite[Theorem~3.16]{KP}.  Whether odd torsion is
likewise detected by a forbidden minor is the central structural question left open
here. A positive answer would give a necessary minor obstruction:
a graph without a $K_{p+1,2p-1}$ minor could not have $p$-torsion in its
braid-group homology. It would not give a sufficient condition.
A negative answer would exhibit a source of $p$-torsion without this
minor, as $K_5$ is a source of $2$-torsion without a $K_{3,3}$ minor.
For $p=2$ both Kuratowski graphs contribute, and Section~\ref{sec:complete} shows
that the complete graphs $K_{2m+1}$ continue to do so in every degree.
For odd $p$ we know of no independent minimal non-bipartite source.

A separate question concerns the least possible homological degree.
For every prime $p$ and every $i\ge p-1$, Theorem~\ref{thm:exacttors} gives
$p$-torsion in degree $i$: take $m=i+1$ and $r=p$.  The first $p$-torsion of this
paper thus appears in degree $p-1$, with $p$ particles, on $K_{p+1,2p-1}$.  By the
transfer argument of the introduction, $p$ particles are necessary for $p$-torsion
if the folklore torsion-freeness conjecture for ordered configuration spaces holds
\cite[Conjecture~1.7]{HKW}; whether degree $p-1$ is necessary as well is a
different question, about which nothing is known beyond $H_0$ and $H_1$.

\begin{question}[Necessary conditions for torsion of order $q$]\label{q:kuratowski}
Let $q=p^a$ be a prime power.
\begin{enumerate}[leftmargin=*]
\item For $p$ odd, does $p$-torsion in $H_*(\B_n\Gamma)$ imply that $\Gamma$ has a
      $K_{p+1,2p-1}$ minor? The restriction to odd primes is essential, since $K_5$
      has $2$-torsion in $H_1(\B_2K_5)$ but no $K_{3,3}$ minor. Already for $p=3$
      and $H_2(\B_3)$ the question is open.
\item Can an element of order $q$ occur in $H_i(\B_n\Gamma)$ for some finite graph
      $\Gamma$, some $n$, and some $i<q-1$? For $q=2,3$ the answer is no ($H_0$ is
      free, and $H_1$ has only $2$-torsion by Theorem~\ref{thm:KP}); the first open
      cases are $q=4$ in degree $2$ and $q=5$ in degrees $2,3$.
\end{enumerate}
A negative answer to \textup{(2)} would make $K_{q+1,2q-1}$ minimal for order-$q$
torsion in homological degree, particle number, and the order $\preceq$
simultaneously (Remark~\ref{rem:threelevels}).
\end{question}

A positive answer to Question~\ref{q:kuratowski}(1) would give a necessary minor
obstruction, not upward persistence, which fails by
Corollary~\ref{rep:cor:allprimecounterexample}. The minimality theorem concerns
the specified degree and particle number; Question~\ref{q:kuratowski}(2) asks
about lower degrees.

\medskip\noindent\textbf{Parallel edges.}\enspace
Corollary~\ref{cor:parallel3} proves that parallel edges preserve odd torsion in
$H_2$ for at most three particles. For larger particle numbers,
Corollary~\ref{cor:parallel}(1) still ensures that existing torsion survives as a
direct summand, but does not rule out new odd torsion.

\begin{question}[Odd torsion and parallel edges]\label{q:parallel}
Let $\Gamma$ be a finite graph with an edge $e=uw$, $u\ne w$, and let
$\Gamma^+$ be obtained by adding an edge parallel to $e$. For every odd prime
$p$ and every $n\ge4$, does the inclusion induce an isomorphism
\[
\Tors_pH_2(\B_n\Gamma)\longrightarrow\Tors_pH_2(\B_n\Gamma^+)?
\]
Equivalently, with $\Gamma_u$ and $\Gamma_w$ denoting the vertex explosions,
is the group
\[
\coker\!\left(
H_2(\B_{n-1}\Gamma_u)\oplus H_2(\B_{n-1}\Gamma_w)
\longrightarrow H_2(\B_{n-1}\Gamma)
\right)
\]
free of odd torsion for every $n\ge4$?
\end{question}

The equivalence follows from Corollary~\ref{cor:parallel}(2). Already for $n=4$
the cokernel involves $H_2(\B_3)$, outside the top-degree range of
Proposition~\ref{prop:Dn}.

\end{document}